\colorlet{shadecolor}{orange!15}
\theoremstyle{definition}
\theoremstyle{plain}
\newtheorem{thm}{Theorem}[section]
\newtheorem{lem}[thm]{Lemma}
\newtheorem{prop}[thm]{Proposition}
\newtheorem{cor}[thm]{Corollary}
\theoremstyle{remark}
\newtheorem{defn}[thm]{Definition}
\newtheorem{rem}[thm]{Remark}
\newtheorem{exam}[thm]{Example}
\newcommand{\cl}{\mathcal}
\newcommand{\wt}{\widetilde}
\newcommand{\sbf}{\boldsymbol}
\newcommand{\mbf}{\mathbf}
\newcommand{\bb}{\mathbb}
\newcommand{\mrm}{\mathrm}
\newcommand{\EqD}{\overset{d}{=}}
\def\pp#1{ \left(#1\right) }
\def\pb#1{ \left[#1\right] }
\def\pc#1{ \left\{#1\right\} }
\begin{document} 
\setcounter{section}{0}
\setcounter{figure}{0}
\setcounter{table}{0}
\setcounter{equation}{0}

\begin{center}
{\LARGE \bf  Multiple Extremal Integrals }\\
\end{center}

\begin{center}
Shuyang Bai$^{a}$, Jiemiao Chen$^{b}$
\end{center}

\footnotetext{Department of Statistics, University of Georgia, 310 Herty Drive, Athens, GA 30602, USA.
\par
$^{a}$\href{mailto:bsy9142@uga.edu}{bsy9142@uga.edu}\quad
$^{b}$\href{mailto:jc17876@uga.edu}{jc17876@uga.edu}
\par
The authors are ordered alphabetically.}

\begin{abstract}
We introduce the notion of multiple extremal integrals as an extension of single extremal integrals, which have played important roles in extreme value theory. The multiple extremal integrals are formulated in terms of a product-form random sup measure derived from the $\alpha$-Fr\'{e}chet random sup measure. We establish a LePage-type representation similar to that used for multiple sum-stable integrals, which have been extensively studied in the literature. This approach allows us to investigate the integrability, tail behavior, and independence properties of multiple extremal integrals. Additionally, we discuss an extension of a recently proposed stationary model that exhibits an unusual extremal clustering phenomenon, now constructed using multiple extremal integrals.

\end{abstract}

\textit{Keywords}: Multiple  stochastic integrals, Random sup measures, LePage representation, Extremes, $\alpha$-Fr\'{e}chet

Mathematics Subject Classification (2020) 60G70 (1st); 60F05 (2nd)

\section{Introduction}
 The main purpose of this paper is to introduce a notion of \emph{multiple extremal integral}, as an extension of the \emph{single extremal stochastic integral} that was   introduced by \cite[]{dehaan} and \cite[]{stoev2005extremal}:
\begin{equation}\label{undecoupled int}
  \leftindex^e \int_{E} f(u) M_\alpha (du),
\end{equation}
where $f$ is a univariate non-negative deterministic function on a measure space $(E, \mathcal{E}, \mu)$, and $M_\alpha$ is an independently scattered $\alpha$-Fr\'echet random sup measure ($\alpha >0$) with control measure $\mu$ (see Definition \ref{defn s} below for more details).  The single extremal integral plays an important role in extreme value theory. For instance, it provides convenient representations for max-stable processes, which   facilitate the study of their dependence properties (e.g., \cite[]{kabluchko2009spectral},  \cite[]{wang2010structure}, \cite[]{wang2013ergodic}, \cite[]{dombry2017ergodic}).
In this work, we shall introduce a multiple extremal integral,  formally expressed as  
\begin{equation}\label{eq:mult int intro}
I_k^e(f)= \leftindex^e \int_{E^k} f\left(u_1, \ldots, u_k\right) M_\alpha\left(d u_1\right) \ldots M_\alpha\left(d u_k\right),    
\end{equation}
where $f$ is a $k$-variate non-negative deterministic function defined on the product measure space $(E^k, \mathcal{E}^k, \mu^k)$ with $k \in \bb{Z}_+:=\{1,2,\ldots\}$, which vanishes on the diagonal set $\left\{ u_i=u_j,\ i \neq j\right\}$.

Multiple stochastic integrals have a rich history dating back to the seminal work of \cite[]{ito1951multiple} for the case of Gaussian random measure, which modified an early idea of \cite[]{wiener1938homogeneous} by excluding the diagonal set from the integral. Since then multiple stochastic integrals with respect to Gaussian and non-Gaussian random measures have been   extensively studied in the literature, as summarized by the recent monographs \cite[]{peccati2011wiener, major2013multiple, kallenberg2017random}. To the best of our knowledge, the literature has focused on multiple stochastic integrals with respect to additive random measures.  Multiple stochastic integrals with respect to random sup measures as in \eqref{eq:mult int intro} do not seem to have been considered in the literature. Following the convention introduced by It\^o, we exclude the diagonal 
set in the integration, which considerably simplifies the theory. The study 
of multiple extremal integrals involving diagonal sets is left for future work.

As observed by \cite[]{stoev2005extremal}, an extremal integral \eqref{undecoupled int} with respect to an $\alpha$-Fr\'echet random sup measure has a close connection with a stable integral, an additive stochastic integral with respect to an $\alpha$-stable random measure. So, naturally, the development of theories for multiple extremal integrals also bears a close connection to that of multiple stable integrals (e.g., \cite[]{rosifiski1986ito}, \cite[]{krakowiak1986random}, \cite[]{samorodnitsky1989asymptotic}), which will be explored throughout the paper.

The content of the paper is summarized as follows: Section \ref{sec:contr} presents the construction of multiple extremal integrals, followed by an investigation of integrability in Section \ref{sec:int}. We then discuss further properties of multiple extremal integrals, including interchanging limit and integral sign (Section \ref{sec:limit}), distributional tail behavior of integrals of fixed order (Section \ref{sec:tail}), an independence criterion between multiple integrals (Section \ref{s7}). In Section \ref{sec:mult reg}, we  present  an extension of a recently introduced stationary model in \cite[]{bai2024phase} that exhibits an unusual extremal clustering phenomenon, now constructed based on  multiple extremal integrals.

Throughout the paper, 
the symbol $\vee$ stands for supremum, the symbol $\wedge$ stands for infimum, and   $\lambda$ is Lebesgue measure  on $\mathbb{R}^n$, $n \in \mathbb{Z}_+$ (or its Borel subsets).  
The underlying probability space will be denoted by $(\Omega,\mathcal{F},\mathbb{P})$ and $\bb{E}$ is the expectation sign. 
On a measure space with measure $\mu$, we use  $ L_+^\alpha(\mu)$ to denote the space of all nonnegative measurable functions whose $\alpha$-th moment is finite, $\alpha\in (0,\infty)$. Suppose $f(x)$ and $g(x)$ are positive functions near a point $a$ (possibly $0$ or $\infty$).
We write $f(x) \sim g(x)$ if $f(x) / g(x) \rightarrow 1$ as $x \rightarrow a$.   

\section{Multiple extremal integrals: constructions and basic properties}\label{sec:contr}
\subsection{\texorpdfstring{\ensuremath{\alpha}}{alpha}-Fr\'{e}chet random sup measures} \label{s2}

The concept of (independently scattered) \emph{$\alpha$-Fr\'{e}chet random sup measure} plays an important role in extreme value theory. Its definitions in the literature, on the other hand, show subtle differences.   When viewed as a set-indexed stochastic process, it is often constructed in a pathwise manner through a Poisson point process (e.g., \cite[]{dehaan}; see also Section \ref{alt ppp} below), or identified as a measurable random element taking value in the space of sup measures equipped with the sup vague topology (e.g., \cite[]{vervaat1988random}).   
In this work,  we shall work with the ``weak'' definition described by \cite[]{stoev2005extremal}  for the construction of multiple extremal integrals. 
Alternative   constructions will be mentioned in {Remark \ref{rem:alt constr} and Section \ref{alt ppp}}  below.
Throughout the paper, we assume that $(E, \mathcal{E}, \mu)$ is a $\sigma$-finite measure space, and the measure spaces mentioned are assumed to have nonzero measures.

\begin{defn}\label{defn s}
(\cite[Definition 2.1]{stoev2005extremal})  An (independently scattered) $\alpha$-Fr\'{e}chet random sup measure  with control measure $\mu$   
  is a set-indexed stochastic process $M_\alpha=\left(M_\alpha(A)\right)_{A\in \mathcal{E}}$, where each $M(A)$, $A\in \mathcal{E}$, is a random variable taking value in $[0,\infty]$, and  the following conditions are satisfied:
  
\begin{enumerate}
\item (independently scattered) For any collection of disjoint sets $A_j \in \mathcal{E}, 1 \leq j \leq n$, $n \in \bb{Z}_+$, the random variables $M_\alpha\left(A_j\right), 1 \leq j \leq n$, are independent.
\item ($\alpha$-Fr\'{e}chet marginal) For any $A \in \mathcal{E}$, we have
$$
\mathbb{P}\left\{M_\alpha(A) \leq x\right\}=\exp \left\{-\mu(A) x^{-\alpha}\right\} \text{ for } x\in (0,\infty),
$$
that is, $M_\alpha(A)$ is $\alpha$-Fr\'{e}chet with scale coefficient $(\mu(A))^{1 / \alpha}$. Here, when $\mu(A)=0$ or $\mu(A)=\infty$, we understand $M_\alpha(A)$ as a random variable taking value $0$ or $\infty$ almost surely (a.s.), respectively.
\item ($\sigma$-maxitive) For any collection of sets $A_j \in \mathcal{E}, j \in \bb{Z}_+$, we have that
    \begin{align}\label{eq:sigma max M_alpha}
        M_\alpha \left(\bigcup_{j\geq 1} A_i\right) = \bigvee_{j\geq 1} M_\alpha \left( A_j\right) \text{  a.s..  } 
    \end{align}
\end{enumerate}

\end{defn}
\begin{rem}\label{Rem:defn s}
   The definition stated here is slightly different from that of \cite[Definition 2.1]{stoev2005extremal}. First, in the $\sigma$-maxitive relation \eqref{eq:sigma max M_alpha}, we do not require disjointness of $A_j$'s, although the relation is equivalent to the one in \cite[]{stoev2005extremal} that requires disjointness.  Second, the domain of $M_\alpha$ is the full $\sigma$-field $\mathcal{E}$ instead of only those sets in $\mathcal{E}$ with finite $\mu$ measures.  The existence of $M_\alpha$ described in Definition \ref{defn s}  follows from a direct modification of the proof of \cite[Proposition 2.1]{stoev2005extremal}, by making use of a general version of Kolmogorov’s existence theorem (e.g., \cite[Theorem 8.23]{kallenberg2021foundations}) that allows for marginally $[0,\infty]$-valued processes.    The properties (i) $\sim$ (iii) above uniquely characterize the finite-dimensional distributions of the  process $M_\alpha$  in the sense that a $[0,\infty]$-valued process indexed by $\cl{E}$ has the same finite-dimensional distributions as $M_\alpha$  if and only if it satisfies these properties.
\end{rem}

Next, we describe a distributional representation of the random sup measure $M_\alpha$ known as the LePage representation, which will play a key role in the construction of multiple extremal integrals and the investigation of their distributional properties.  The origin of such a representation dates back to the spectral representation of max-stable processes \cite[]{dehaan}, and is analogous to the LePage series representation for stable, and more generally, infinitely divisible processes  \cite[]{lepage1989appendix,lepage1981multidimensional,lepage1981convergence}. The representation is also mentioned in \cite[Section 3]{stoev2005extremal}, although the version we shall describe allows an infinite control measure.

Since $\mu$ is a $\sigma$-finite measure,   there exists a probability measure $m$ equivalent to $\mu$, and hence there exists a version of the Radon-Nykodim derivative $\psi=\frac{d\mu}{dm}$ such that $\psi\in (0,\infty)$ $m$-a.e.. In particular, we can work with $\psi=1$ if $\mu$ is itself a probability measure. 
 

\begin{defn}\label{defn l}
(LePage representation) Suppose $m$ is a probability measure on $(E, \mathcal{E})$ equivalent to $\mu$ with  $\psi=\frac{d\mu}{dm}\in (0,\infty)$ $m$-a.e..
Let $\left(\Gamma_i \right)_{i \in \bb{Z}_+}$ be the arrival times of a standard Poisson process on $[0, \infty)$, and $\pp{T_i}_{i \in \bb{Z}_+}$ be  a sequence of i.i.d.\ random variables with distribution $m$, independent of $\pp{\Gamma_i}_{i \in \bb{Z}_+}$. Then we define a set-indexed process $\pp{M_\alpha^L(A)}_{A\in \cl{E}}$ by  
\begin{equation}
\left( M_\alpha^L(A)\right)_{A \in \mathcal{E}} =\left(\bigvee_{i \geq 1} \mathbf{1}_{\{T_i \in A\}} \psi(T_i)^{1 / \alpha} \Gamma_i^{-1 / \alpha}\right)_{A \in \mathcal{E}}.
\end{equation}
\end{defn}
It can be verified (see \eqref{m and ml1} below) that the LePage representation $M^L_\alpha$ also satisfies properties (i) $\sim$ (iii) of Definition \ref{defn s}. Further, due to commutativity of supremums and $\bigvee_{j\ge 1}\mathbf{1}_{\{T_i \in A_j\}}=  \mathbf{1}_{\{T_i \in \bigcup_{j\geq 1} A_j\}} $, $i\in \bb{Z}_+$, the definition leads to the following  pathwise  $\sigma$-maxitive property: for $\bb{P}$-a.e.\ element $\omega\in \Omega$, where $\Omega$ is the underlying probability space, the relation 
\begin{equation}\label{eq l sigma add}
M_\alpha^L\left(\bigcup_{j\geq 1} A_j\right)(\omega)=\bigvee_{j\geq 1} \pp{M_\alpha^L\left(A_j\right)(\omega)}
\end{equation}
 holds for any countable collection of sets $A_j \in \mathcal{E}, j \in \bb{Z}_+$, a property stronger than that in Definition \ref{defn s} (iii) (see Remark \ref{Rem:s vs l} below). 
 \paragraph{} Next, we briefly review the single extremal integral in \cite[]{stoev2005extremal}.   
 For a simple function $f(u)=\sum_{j=1}^N a_j \mathbf{1}_{A_j}(u)\in L_{+}^\alpha(\mu)$, $N \in \mathbb{Z}_+$, where $A_j \in \mathcal{E}$,  $a_j\ge 0$, $j=1, \ldots, N$, the extremal integral of $f$ with respect to $M_\alpha$ is defined as
$$\leftindex^e \int_E f(u) M_\alpha(d u):=\bigvee_{1 \leq j \leq N} a_j M_\alpha\left(A_j\right).$$
It then follows that the integral  has an $\alpha$-Fréchet distributon with scale coefficient $\pp{\sum_{j=1}^N a_j^{\alpha}\mu(A_j)}^{1/\alpha}$.
For a general $f \in L_+^\alpha(\mu)$, the extremal integral is defined by monotone approximation with simple functions (see \cite[(2.19)]{stoev2005extremal}).  The construction  implies that $\leftindex^e \int_{E} f(u) M_\alpha(d u)$ is  $\alpha$-Fréchet with scale coefficient $\left(\int_E f(u)^\alpha \mu(d u)\right)^{1 / \alpha}$. Thus, the condition $\int_E f^\alpha(u) \mu(du) < \infty$  characterizes the integrability of the single extremal integral.   Let $\mathcal{L}$ denote the class of all nonnegative measurable functions on $E$. If $f \in \mathcal{L}$ but $f \notin L_{+}^\alpha(\mu)$, then the extremal integral of $f$ is equal to $+\infty$ a.s..

Some basic properties of this integral are listed below.
\begin{enumerate}
    \item (max-linearity) For all $f, g \in \mathcal{L}$ and $a, b \geq 0$, we have
    $$\leftindex^e \int_{E}(a f(u) \vee b g(u)) M_\alpha(d u) \stackrel{\text { a.s. }}{=}\left(a \leftindex^e \int_{E} f(u) M_\alpha(d u)\right) \vee\left(b \leftindex^e \int_{E} g(u) M_\alpha(d u)\right).$$
    \item (independence) For $f, g \in \mathcal{L}$, the extremal integrals $\leftindex^e \int_{E} f(u) M_\alpha(d u)$ and $\leftindex^e \int_{E} g(u) M_\alpha(d u)$ are independent iff $f(u) g(u)=0  \ \mu$-a.e.
    \item  (monotonicity) For any $f, g \in \mathcal{L}$, we have    
$$
\leftindex^e \int_{E} f(u) M_\alpha(d u) \leq \leftindex^e \int_{E} g(u) M_\alpha(d u) \text { a.s., if and only if},\ f(u)\leq g(u)  \ \mu \text {-a.e.,}
$$
and the equality holds a.s. if and only if $f(u) = g(u) \ \mu\text {-a.e.}$.
\end{enumerate}
Recall $m, \psi, T_i$'s, $\Gamma_i$'s are as defined in Definition \ref{defn l}. 
Through a slight extension  of  \cite[Proposition 3.1]{stoev2005extremal} from the probability space on $[0,1]$ to the  probability space $(E,\mathcal{E}, m)$,  noting that $
\left(\int_E  f(x)^\alpha\psi(x) m(d x)\right)^{1 / \alpha}=\left(\int_E f(u)^\alpha \mu(d u)\right)^{1 / \alpha}
$, one has   the following LePage representation of a single extremal integral:
\begin{equation}\label{eq prep 2.4}
  \left( \leftindex^e \int_E f(u) M_\alpha(d u)\right)_{f \in \cl{L}}  \stackrel{d}{=}  \left(\bigvee_{k \geq 1} \Gamma_i^{-1 / \alpha} f\left(T_i\right) \psi\left(T_i\right)^{1 / \alpha}\right)_{f \in \cl{L}},
\end{equation}
where `` $\stackrel{d}{=}$ '' is understood as equality in finite-dimensional distributions (and will bear such a meaning between two processes throughout the paper),  
In particular,  by setting $f$ as indicators,
  we have 
\begin{align}\label{m and ml1}
    \pp{ M_\alpha(A)}_{ A \in \mathcal{E}} \stackrel{d}{=} \pp{ M_\alpha^L(A) }_{ A \in \mathcal{E}}.
\end{align}

\begin{rem}\label{Rem:s vs l}
In Appendix \ref{dd rsp}, we show that a random sup measure $M_\alpha$ satisfying Definition \ref{defn s}  admits a modification which is a LePage representation on the same probability space under suitable regularity condition on the space $(E,\cl{E},\mu)$. This means that for every fixed set, the values of $M_\alpha$ and its LePage representation agree on that set a.s..  As an intermediate step of deriving the aforementioned fact,  we show that for any random sup measure in the sense of  \cite[Definition 11.2]{vervaat1988random}, there exists a LePage representation which is indistinguishable from the former one.

On the other hand, there exist random sup measures that satisfy Definition \ref{defn s}, but do not admit  LePage representations pathwise. 
For instance, 
take $(E,\cl{E},\mu)=([0,1],\cl{B}([0,1]),\lambda)$. Let $(\Gamma_i)_{i\in \bb{Z}_+}$ and $(T_i)_{i\in \bb{Z}_+}$ be as in Definition \ref{defn l} with $\mu=m$ and $\psi\equiv1$. Additionally, let $T_0$ be a random variable independent of everything else with $T_0\EqD T_1$. Now consider a set-indexed process $\widehat{M}_\alpha$ defined by $\widehat{M}_\alpha(A)  = \pp{\bigvee_{i\geq 1}\boldsymbol{1}_{\left\{T_i \in  A\,\right\}} \Gamma_i^{-1 / \alpha} }+ \boldsymbol{1}_{\{ \{T_0\} =  A\}}$, $A\in \cl{E}$. It can be verified that $\widehat{M}_\alpha$ satisfies Definition \ref{defn s}, but it does not satisfy the pathwise $\sigma$-maxitive property \eqref{eq l sigma add}.
\end{rem}

\subsection{Product random sup measures}\label{S 2.2}

We will proceed to construct multiple extremal integrals based on an $\alpha$-Fr\'echet random sup measure $M_\alpha$ that satisfies Definition \ref{defn s}. For technical reasons (mainly for being able to approximate off-diagonal sets by rectangles; see Theorem \ref{claim 5.3} below), from now on we impose a mild assumption: The measurable space $(E,\cl{E})$ is a Borel space, that is, there exists a bijection (Borel isomorphism) $\iota:E \leftrightarrow S$ such that both $\iota$ and $\iota^{-1}$ are measurable, with $S$ being a Borel subset of $[0,1]$. In this description, the space $[0,1]$ can also be replaced by an arbitrary Polish space \cite[Theorem 1.8]{kallenberg2021foundations}.
Let 
\[D^{(k)}=\left\{\left(x_1, \ldots, x_k\right)\in E^{k}\mid x_i = x_j  \text{ for some } 1\leq i, j \leq k, i \neq j\right\}\]  
denote the diagonal set of $E^k$. Under the Borel assumption, the diagonal set $D^{(k)}\in \cl{E}^k$ \cite[Theorem 3]{hoffmann1971existence}. 
  
Introduce the following multi-index sets for $k \in \bb{Z}_+$:
\begin{equation}\label{no order}
 \mathcal{D}_{k}=\left\{\boldsymbol{j} = (j_1,\ldots
 ,j_k)\in \bb{Z}_+^k \mid \text { all } j_1, \ldots, j_k \text { are distinct}\right\},   
\end{equation}
and
\begin{equation}\label{D<}
\mathcal{D}_{k, <}=\left\{\boldsymbol{j}=\left(j_1, \ldots, j_k\right) \in \bb{Z}_+^k\mid  j_1<\ldots<j_k\right\}.
\end{equation}
Throughout, we write $\left(E^k, \mathcal{E}^k, \mu^k\right)$ for the $k$-fold product measure space, use boldface letters to denote vectors, and use the same regular letter with a subindex to denote its component; for example $\boldsymbol{i} =\left(i_1, \ldots, i_k\right) \in \mathbb{Z}_{+}^k$. For a sequence $(a_j)_{j\in \bb{Z}_+}$, and $\boldsymbol{j} \in \mathcal{D}_{k}$,  we write $$[a_{\boldsymbol{j}}]: = a_{j_1}a_{j_2}\ldots a_{j_k}$$ and $[a_{\boldsymbol{j}[m:n]}]: = a_{j_m}a_{j_{m+1}}\ldots a_{j_n}$, $1\le m\le n\le k$.

The initial step towards defining the multiple extremal integral is to construct a product-measure-like random sup measure,  denoted by $M_\alpha^{(k)}$, on the off-diagonal space $(E^{(k)}, \mathcal{E}^{(k)})$, where 
$$
E^{(k)}:= E^{k}\setminus D^{(k)},\qquad  \mathcal{E}^{(k)}: = \mathcal{E}^k \cap (D^{(k)})^c= \pc{ A\cap \pp{D^{(k)}}^c\mid A\in \cl{E}^k}. 
$$
We shall refer to $ \mathcal{E}^{(k)}$ as the off-diagonal $\sigma$-field.

A similar approach  was used in \cite[]{samorodnitsky1991construction} for constructing a multiple stable integral.
We shall slightly abuse the notation by regarding $\mu^k$ also as a measure defined on $(E^{(k)}, \mathcal{E}^{(k)})$ through restriction.
Consider first an off-diagonal rectangle of the form $ A_1 \times A_2 \times \cdots \times A_k$, where $A_i \in \mathcal{E}$, $1 \leq i \leq k$,  and the sets $A_i$, $1 \leq i \leq k$, are  pairwise  disjoint. Denote the collection of such off-diagonal rectangles by $\mathcal{C}_k$.  We first define the  random sup measure $M_\alpha^{(k)}$ on  $\mathcal{C}_k$ by setting
\begin{equation}\label{prod M_a}
\begin{aligned}
 M^{(k)}_\alpha \left(A_1 \times A_2 \times \ldots \times A_k\right) & =    M_\alpha(A_1 ) M_\alpha( A_2) \cdots  M_\alpha(A_k),
\end{aligned}
\end{equation}
which is not to be confused with an $\alpha$-Fréchet random sup measure governed by a product control measure.
Next, let $\mathcal{F}_k$ denote the collection of finite unions of off-diagonal rectangles  in $\mathcal{C}_k$. For $A=\bigcup_{i=1}^m B_i\in \cl{F}_k$, where $m\in \bb{Z}_+$ and $B_i \in \mathcal{C}_k$, $1 \leq i \leq m$, we identify $M_\alpha^{(k)}\left(A\right)$ a.s.\ by the maxitive relation
 \begin{equation}\label{max}
M_\alpha^{(k)}\left(\bigcup_{i=1}^m B_i\right):=\bigvee_{i=1}^m M_\alpha^{(k)}\left(B_i\right).
\end{equation}
Additionally,  it can be verified  (see Lemma \ref{d ind}) that the definition of  $M_\alpha^{(k)}\left(A\right)$ is independent of the choice of $B_i$'s.
We now show that $M_\alpha^{(k)}$ extends its domain from  $\mathcal{F}_k$  to the off-diagonal $\sigma$-field 
$
\mathcal{E}^{(k)}. 
$
  We start with the case where the control measure $\mu$ is a finite measure, which will later be relaxed to a $\sigma$-finite measure.

\begin{thm}\label{claim 5.3}
Suppose $\mu$ is a finite measure.
For any $A \in \mathcal{E}^{(k)}$, $k\in \bb{Z}_+$,  there exists a sequence  $(A_n)_{n \in \mathbb{Z}_+} $, with $A_n \in \mathcal{F}_k, n\in \bb{Z}_+$,  such that $\mu^k(A_n \Delta A) \rightarrow 0$ as $n\rightarrow\infty$. In addition, the $L^\gamma$  limit of $M_\alpha^{(k)}\left(A_n\right)$ as $n\rightarrow\infty$ exists for any $\gamma \in (0, \alpha)$ and does not depend on the choice of the approximation sequence $(A_n)_{n \in \mathbb{Z}_+} $.  Denoting this (a.s.) unique limit as   $M_\alpha^{(k)}(A)$,
we have 
\begin{equation}\label{rep}
\begin{aligned}
\bigl(M_\alpha^{(k)}(A)\bigr)_{A \in \mathcal{E}^{(k)}, \ k\in \bb{Z}_+}
 \ & \stackrel{d}{=}\ 
\Biggl(
   \ \bigvee_{\boldsymbol{j}\in \mathcal{D}_k}
   \pc{
      \prod_{r=1}^k \Gamma_{j_r}^{-1/\alpha}\, 
      \psi(T_{j_r})^{1/\alpha}
   }
   \mathbf{1}_{\{(T_{j_1},\dots,T_{j_k}) \in A\}}
\ \Biggr)_{A \in \mathcal{E}^{(k)}, \ k\in \bb{Z}_+} 
\\&=\pp{\bigvee_{\boldsymbol{j} \in \mathcal{D}_{k}}\left[\Gamma_{\boldsymbol{j}}\right]^{-1 / \alpha} \left[\psi(T_{\boldsymbol{j}})\right]^{1/\alpha}     \mathbf{1}_{\left\{T_{\boldsymbol{j}} \in A \right\}}}_{A \in \mathcal{E}^{(k)},\ k\in \bb{Z}_+}.
\end{aligned}
\end{equation}
Further, for any   $\gamma\in (0,\alpha)$ and $r>\alpha$, there is a constant $c>0$ depending on $(r,\gamma)$ only, such that
\begin{equation}\label{eq:M_alpha moment control}
    \left\|M_\alpha^{(k)}(A)\right\|_\gamma \leq c\left(\mu^k(A)\right)^{1 / r}
\end{equation}
for any $A\in \cl{E}^{(k)}$.  Here, $\|\cdot\|_\gamma=\left(\mathbb{E}|\cdot|^\gamma\right)^{1 / \gamma}$ is interpreted as the usual $L^\gamma$ quasi-norm when $0<\gamma<1$. 
Moreover, the product random sup measure $M_\alpha^{(k)}$ defined above satisfies $\sigma$-maxitivity, that is, for any collection of sets $B_j \in \mathcal{E}^{(k)}, j \in \mathbb{Z}_{+}$, we have 
\begin{equation}\label{eq:sigma maxitive M_alpha^k}
    M_\alpha^{(k)}\left(\bigcup_{i=1}^\infty B_i\right) =  \bigvee_{i=1}^\infty M_\alpha^{(k)}( B_i) \text{ a.s..}
\end{equation}
\end{thm}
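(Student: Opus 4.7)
My plan is to prove the theorem in three stages: first establish the LePage identity \eqref{rep} and the moment bound \eqref{eq:M_alpha moment control} on the class $\mathcal{F}_k$ of finite unions of off-diagonal rectangles using the single-integral representation \eqref{eq prep 2.4}; then approximate any $A\in\mathcal{E}^{(k)}$ by a sequence $A_n\in\mathcal{F}_k$ with $\mu^k(A_n\Delta A)\to 0$ using the Borel assumption on $E$; and finally extend everything to $\mathcal{E}^{(k)}$ by $L^\gamma$-continuity, deducing $\sigma$-maxitivity as a consequence. A convenient normalization is to take $m=\mu/\mu(E)$, which is legitimate since $\mu$ is finite; then $\psi\equiv\mu(E)$ is a positive constant and the Radon--Nikodym factors in the LePage series become trivial.

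For a single off-diagonal rectangle $A=A_1\times\cdots\times A_k$ with pairwise disjoint factors, the product definition \eqref{prod M_a} combined with \eqref{eq prep 2.4} applied jointly to $\mathbf{1}_{A_1},\ldots,\mathbf{1}_{A_k}$ yields the right-hand side of \eqref{rep} after expanding $\prod_\ell\bigvee_{i_\ell}(\cdots)=\bigvee_{\boldsymbol{i}}\prod_\ell(\cdots)$ and noting that the disjointness of the $A_\ell$'s forces any nonvanishing tuple $\boldsymbol{i}$ to lie in $\mathcal{D}_k$. Finite maxitivity \eqref{max} and Lemma \ref{d ind} extend this to all of $\mathcal{F}_k$ jointly across $A$ and $k$, since everything is driven by the same $M_\alpha$ on one side and the same Poisson data $(\Gamma_i,T_i)$ on the other. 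For the moment bound I would apply $(\bigvee_{\boldsymbol{j}} x_{\boldsymbol{j}})^r=\bigvee x_{\boldsymbol{j}}^r\le\sum x_{\boldsymbol{j}}^r$ in the LePage series with $r>\alpha$ and use independence of $(\Gamma_i)$ and $(T_i)$: the $T$-expectation reduces to $\mu(E)^{kr/\alpha-k}\mu^k(A)$ thanks to $\psi\equiv\mu(E)$, while the $\Gamma$-sum $\sum_{\boldsymbol{j}\in\mathcal{D}_k}\mathbb{E}[\prod_\ell\Gamma_{j_\ell}^{-r/\alpha}]$ is finite precisely because $r/\alpha>1$, controllable by H\"older across the $k$ factors together with the asymptotic $\mathbb{E}[\Gamma_j^{-p}]\asymp j^{-p}$. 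This yields $\|M_\alpha^{(k)}(A)\|_r\le c\,(\mu^k(A))^{1/r}$ on $\mathcal{F}_k$, and Lyapunov's inequality delivers the stated bound for every $\gamma\in(0,\alpha)\subseteq(0,r]$.

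For the approximation step I would pull back dyadic partitions of $[0,1]$ via the Borel isomorphism to build a refining sequence $\pi_n$ of finite measurable partitions of $E$ that separates points; martingale convergence produces $\widetilde{A}_n\in\sigma(\pi_n^k)$ with $\mu^k(\widetilde{A}_n\Delta A)\to 0$, and discarding cells of $\pi_n^k$ with some repeated index puts $A_n:=\widetilde{A}_n\cap\mathrm{OffDiag}_n$ inside $\mathcal{F}_k$. The central measure-theoretic verification is that the off-diagonal part of the diagonal cells satisfies $\mu^k(\mathrm{DiagCells}_n\setminus D^{(k)})\to 0$, which follows from convexity-based monotonicity such as $\sum_i\mu(C_i^{(n)})^2\downarrow\mu^k(D^{(k)})$ for $k=2$ and its $k$-fold analog, combined with $A\cap D^{(k)}=\emptyset$. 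Given this, the inequality $|M_\alpha^{(k)}(B)-M_\alpha^{(k)}(B')|\le M_\alpha^{(k)}(B\Delta B')$ (iterated from $|a\vee b-a\vee c|\le|b-c|$) together with the moment bound on $\mathcal{F}_k$ makes $(M_\alpha^{(k)}(A_n))_n$ Cauchy in $L^\gamma$ and independent of the chosen sequence; extending \eqref{rep} and \eqref{eq:M_alpha moment control} to $\mathcal{E}^{(k)}$ is then a routine $L^\gamma$-continuity argument. For $\sigma$-maxitivity I first extend finite maxitivity from $\mathcal{F}_k$ to $\mathcal{E}^{(k)}$ by the same approximation and then, for $(B_i)\subseteq\mathcal{E}^{(k)}$, pass $n\to\infty$ in $M_\alpha^{(k)}(\bigcup_{i=1}^n B_i)=\bigvee_{i=1}^n M_\alpha^{(k)}(B_i)$, matching the $L^\gamma$-limit on the left with the monotone a.s.\ limit on the right. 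The main obstacle I anticipate is the approximation step: $\mathcal{F}_k$ is not closed under complements in $E^{(k)}$, and the vanishing of the off-diagonal part of the diagonal cells requires a careful combinatorial accounting of atoms of $\mu$ via the Borel structure.
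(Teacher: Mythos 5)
Your overall architecture (LePage identity on $\mathcal{F}_k$ via \eqref{eq prep 2.4}, symmetric-difference approximation using the Borel assumption, then $L^\gamma$-continuity and $\sigma$-maxitivity) matches the paper's, and your rectangle computation and the inequality $|M_\alpha^{(k)}(B)-M_\alpha^{(k)}(B')|\le M_\alpha^{(k)}(B\Delta B')$ are fine. However, there is a fatal gap in your derivation of the moment bound \eqref{eq:M_alpha moment control}, and that bound is the engine of the whole extension argument.

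You propose to bound $\mathbb{E}\bigl[(\bigvee_{\boldsymbol{j}}x_{\boldsymbol{j}})^r\bigr]$ with $r>\alpha$ by $\sum_{\boldsymbol{j}\in\mathcal{D}_k}\mathbb{E}\bigl[\prod_\ell\Gamma_{j_\ell}^{-r/\alpha}\bigr]\cdot(\cdots)$ and claim this $\Gamma$-sum is finite ``because $r/\alpha>1$.'' It is not: $\mathbb{E}[\Gamma_j^{-p}]=\Gamma(j-p)/\Gamma(j)$ is \emph{infinite} for every $j\le p$, and since $p=r/\alpha>1$ the very first term ($j_\ell=1$) already diverges ($\mathbb{E}[\Gamma_1^{-p}]=\Gamma(1-p)=\infty$); the asymptotic $\mathbb{E}[\Gamma_j^{-p}]\asymp j^{-p}$ only holds for $j>p$, and H\"older across factors makes matters worse, not better. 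More fundamentally, the intermediate claim $\|M_\alpha^{(k)}(A)\|_r\le c(\mu^k(A))^{1/r}$ is false as a statement: for a rectangle, $M_\alpha^{(k)}(A)$ is a product of independent $\alpha$-Fr\'echet variables and has no finite moment of any order $\ge\alpha$, so there is nothing for Lyapunov's inequality to interpolate from. The correct inequality necessarily has a $\gamma$-quasi-norm with $\gamma<\alpha$ on the left and an $r$-norm of the \emph{integrand} on the right (the paper's Lemma \ref{lem 2.3}, which reduces to \cite[Corollary 2.1]{samorodnitsky1991construction}); its proof requires raising the supremum to a power $\beta>\alpha$ \emph{inside} a $\gamma/\beta<1$ quasi-norm and splitting the index set so that the finitely many terms with small $\Gamma$-indices are handled by their finite $\gamma$-th moments rather than $r$-th moments. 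Without this, your Cauchy-in-$L^\gamma$ step, the well-definedness of the limit, and the extension of \eqref{rep} to $\mathcal{E}^{(k)}$ all lack support. (A secondary, repairable difference: your partition/martingale approximation of $A$ by elements of $\mathcal{F}_k$ is more laborious than, but in principle equivalent to, the paper's outer-regularity argument on $\mathbb{R}^k$ transferred through the Borel isomorphism.)
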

\begin{proof}
The proof strategy is to relate   \eqref{rep} to the LePage series representation of positive multiple stable integrals, and make use of some known estimates of the latter.  The details are included in Appendix \ref{Pf 2.6}.  
\end{proof}

We now describe the construction of $M_\alpha^{(k)}$. Recall $\mu$ is a $\sigma$-finite   measure. Hence there exist sets $\left(E_n\right)_{n \in \bb{Z}_+}$ with $E_n \in \mathcal{E}$ such that $E=\bigcup_{n=1}^{\infty} E_n$, $E_n \subset E_{n+1}$ and $\mu\left(E_n\right)\in(0,\infty)$ for each $n \in \bb{Z}_+$.  We start with a fixed $\alpha$-Fréchet random sup measure $M_\alpha$ with control measure $\mu$.   Then $M_{\alpha,E_n}(\cdot):=M_\alpha(\cdot \cap E_n)$ is an $\alpha$-Fr\'echet random sup measure on $(E,\cl{E})$ in the sense of Definition \ref{defn s} with a finite control measure $\mu_n(\cdot):=\mu(\cdot\cap E_n)$. 
Now, for each $n\in\bb{Z}_+$, the product random sup measure  $M_{\alpha,E_n}^{(k)}$ on $(E^{(k)},\cl{E}^{(k)})$ can be constructed as in Theorem~\ref{claim 5.3}.  Then we define 
\begin{equation}\label{eq:gen M_alpha k def}
M_\alpha^{(k)}(A):=\bigvee_{n=1} M_{\alpha, E_n}^{(k)}(A), \quad A \in \mathcal{E}^{(k)},
\end{equation}
where the definition does not depend on the choice of the sequence $(E_n)_{n\in\bb{Z}_+}$ as shown below. 

\begin{thm}\label{thm sf case}
The product random sup measures defined in \eqref{eq:gen M_alpha k def}  admits the LePage representation \eqref{rep} jointly, and satisfies the $\sigma$-maxitive property \eqref{eq:sigma maxitive M_alpha^k}. If $(F_n)_{n\in \bb{Z}_+}$ is another sequence of subsets in $\cl{E}$ satisfying the same properties as $(E_n)_{n\in\bb{Z}_+}$, then we have for any $A\in \cl{E}^{(k)}$ that 
\[
\bigvee_{n=1}^\infty M_{\alpha,E_n}^{(k)}(A)=\bigvee_{n=1}^\infty M_{\alpha, F_n}^{(k)}(A) \ \text{a.s.}.
\]
\end{thm}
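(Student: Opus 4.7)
My strategy is to lift all three conclusions to a LePage probability space on which $M_\alpha$ is realized pathwise as in Definition~\ref{defn l}. The LePage representation is a distributional statement, while $\sigma$-maxitivity and non-dependence on $(E_n)$ are a.s.\ identities among countably many random variables built measurably from $M_\alpha$; in each case only the joint law of countably many functionals of $M_\alpha$ is relevant, so any pathwise identity established on a LePage probability space transfers to an arbitrary space supporting an $\alpha$-Fr\'echet $M_\alpha$. We may therefore assume $M_\alpha = M_\alpha^L$ with global Poisson points $(\Gamma_i, T_i)_{i \geq 1}$, $T_i$ i.i.d.\ from $m$, and $\psi = d\mu/dm \in (0,\infty)$ $m$-a.e.

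\textbf{Key identification.} On this LePage space, introduce
\[
L_n(A) := \bigvee_{\boldsymbol{j} \in \cl{D}_k} [\Gamma_{\boldsymbol{j}}]^{-1/\alpha} [\psi(T_{\boldsymbol{j}})]^{1/\alpha} \mathbf{1}_{\{T_{\boldsymbol{j}} \in A \cap E_n^k\}}, \qquad A \in \cl{E}^{(k)}.
\]
On an off-diagonal rectangle $A = A_1 \times \cdots \times A_k \in \cl{C}_k$, \eqref{prod M_a} combined with $M_{\alpha, E_n}(B) = M_\alpha^L(B \cap E_n)$ gives $M_{\alpha, E_n}^{(k)}(A) = \prod_{r=1}^k M_\alpha^L(A_r \cap E_n)$; the product of suprema coincides with $L_n(A)$ because disjointness of the $A_r$ restricts any non-trivial contribution to index tuples in $\cl{D}_k$. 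The identification extends to $\cl{F}_k$ via \eqref{max} and the pathwise $\sigma$-maxitivity of $L_n(\cdot)$ in its set argument, and then to all of $\cl{E}^{(k)}$ by the $L^\gamma$ approximation argument behind Theorem~\ref{claim 5.3}: both $M_{\alpha, E_n}^{(k)}$ and $L_n$ satisfy a bound of the form \eqref{eq:M_alpha moment control}, so along any $\mu^k$-approximating sequence in $\cl{F}_k$ they share the same $L^\gamma$ limit, hence agree a.s.

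\textbf{Consequences.} Because $\bigcup_n E_n^k = E^k \supseteq A$, the indicator $\mathbf{1}_{\{T_{\boldsymbol{j}} \in A \cap E_n^k\}}$ increases pathwise in $n$ to $\mathbf{1}_{\{T_{\boldsymbol{j}} \in A\}}$, so $L_n(A) \uparrow L(A)$ where $L(A)$ denotes the right-hand side of \eqref{rep}. Together with \eqref{eq:gen M_alpha k def} and the identification, this yields $M_\alpha^{(k)}(A) = L(A)$ a.s.\ jointly in $A$ and $k$, which is \eqref{rep}. The $\sigma$-maxitivity \eqref{eq:sigma maxitive M_alpha^k} is inherited from the pathwise identity $\mathbf{1}_{\{T_{\boldsymbol{j}} \in \bigcup_i B_i\}} = \bigvee_i \mathbf{1}_{\{T_{\boldsymbol{j}} \in B_i\}}$ satisfied by $L$, transferred via the joint-law reduction described above. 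Finally, applying the identification to $(F_n)$ in place of $(E_n)$ shows that, on the LePage space, $\bigvee_n M_{\alpha, F_n}^{(k)}(A) = L(A) = \bigvee_n M_{\alpha, E_n}^{(k)}(A)$ a.s.; the joint law of $\big(\bigvee_n M_{\alpha, E_n}^{(k)}(A), \bigvee_n M_{\alpha, F_n}^{(k)}(A)\big)$ is determined by that of $M_\alpha$, so this a.s.\ identity transfers to the original probability space.

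\textbf{Anticipated main obstacle.} The delicate point is the extension from $\cl{F}_k$ to $\cl{E}^{(k)}$ in the key identification: $L^\gamma$-continuity of $L_n$ along $\mu^k$-symmetric-difference approximations must be justified by a bound parallel to \eqref{eq:M_alpha moment control}, which is not stated separately in Theorem~\ref{claim 5.3} but falls out of the same positive-multiple-stable-integral estimates underpinning its proof. After that is established, the remainder is bookkeeping of pathwise monotone convergence and reductions between probability spaces realizing the same $M_\alpha$ in law.
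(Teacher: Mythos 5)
Your proof is correct, and for the first two conclusions it is essentially the paper's argument in pathwise clothing: the paper derives the joint distributional identity \eqref{eq:E_n prod series} and lets $n\to\infty$ using $\vee_n\mathbf{1}_{\{T_{\boldsymbol{j}}\in A\cap E_n^k\}}=\mathbf{1}_{\{T_{\boldsymbol{j}}\in A\}}$, then reads off $\sigma$-maxitivity from the LePage representation, which is exactly your $L_n\uparrow L$ step after transfer. The genuine divergence is in the third conclusion. The paper stays on the original probability space: it notes that $M_{\alpha,E_\ell}^{(k)}$ and $M_{\alpha,F_\ell}^{(k)}$ agree a.s.\ on rectangles contained in $E_n^k\cap F_n^k$ by \eqref{prod M_a}--\eqref{max}, extends this to $\cl{E}^{(k)}\cap(E_n^k\cap F_n^k)$ by applying Theorem \ref{claim 5.3} on the subspace $E_n\cap F_n$, uses monotonicity in $\ell$ from \eqref{eq:E_n prod series}, and finishes with $\sigma$-maxitivity as $n\to\infty$. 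You instead prove the pathwise identification $M_{\alpha,E_n}^{(k)}=L_n$ on a LePage space and transfer the resulting a.s.\ identity back as a law-determined event. Your route is more economical (one identification yields all three claims), but the transfer step carries the real burden: for ``the joint law of $\bigl(\bigvee_n M_{\alpha,E_n}^{(k)}(A),\bigvee_n M_{\alpha,F_n}^{(k)}(A)\bigr)$ is determined by that of $M_\alpha$'' to be legitimate, you must exhibit $M_{\alpha,E_n}^{(k)}(A)$ as a \emph{fixed} Borel functional of countably many coordinates $(M_\alpha(B))_B$, valid simultaneously on every space supporting such an $M_\alpha$. This is achievable --- take the deterministic approximating sequence from Lemma \ref{lem2.1} and a deterministic subsequence along which the $L^\gamma$ errors $c\,\mu_n^k(A_m\Delta A)^{\gamma/r}$ are summable, so the limit is a.s.\ a $\limsup$ of explicit finite-dimensional functionals --- but as written your proposal asserts rather than establishes it, and this is precisely the technicality the paper's more hands-on argument for the third conclusion is designed to avoid. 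With that point spelled out, your proof stands.
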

\begin{proof}
Suppose $m$, $\psi=d\mu/dm$, $\pp{T_i}_{i \in \bb{Z}_+}$ and $\pp{\Gamma_i}_{i \in \bb{Z}_+}$ are as in Definition \ref{defn l}. Note that $d\mu_n/dm=\psi \mathbf{1}_{E_n}$. 
By relation \eqref{m and ml1}, one can verify that the sequence of random sup measures $\pp{ M_{\alpha,E_n}}_{n\in \bb{Z}_+}$ admits the joint LePage representation  
$$
\pp{M_{\alpha,E_n}(A)}_{A\in \cl{E}, n\in \bb{Z}_+}  \stackrel{d}{=} \pp{\bigvee_{i \geq 1}\psi\left(T_i\right)^{1 / \alpha} \Gamma_i^{-1 / \alpha}  \mathbf{1}_{\left\{T_i \in  A \cap E_n\right\}}}_{A\in \cl{E}, n\in \bb{Z}_+},
$$
where ``$\stackrel{d}{=}$'' is understood as equality in finite-dimensional distributions in indices $A$ and $n$. It follows from this relation above and the construction of $M_{\alpha,E_n}^{(k)}$ in Theorem \ref{claim 5.3} (via also \eqref{prod M_a} and \eqref{max})  that 
\begin{equation}\label{eq:E_n prod series}
\pp{M_{\alpha,E_n}^{(k)}(A)}_{A\in \cl{E}^{(k)}, \ k\in \bb{Z}_+,   \, n\in \bb{Z}_+}  \stackrel{d}{=} \pp{ \bigvee_{\boldsymbol{j} \in \mathcal{D}_{k}}\left[\Gamma_{\boldsymbol{j}}\right]^{-1 / \alpha} \left[\psi(T_{\boldsymbol{j}})\right]^{1/\alpha}     \mathbf{1}_{\left\{T_{\boldsymbol{j}} \in A \cap E_n^{k} \right\}} }_{A \in \mathcal{E}^{(k)},  \ k\in \bb{Z}_+, \, n\in \bb{Z}_+}.
\end{equation}
The first conclusion regarding the LePage representation then follows from this and \eqref{eq:gen M_alpha k def}, noting that $\vee_{n\ge 1}\mathbf{1}_{\left\{T_{\boldsymbol{j}} \in A \cap E_n^k \right\}}=\mathbf{1}_{\left\{T_{\boldsymbol{j}} \in A  \right\}}$, $A\in \cl{E}^{(k)}$. The $\sigma$-maxitive property \eqref{eq:sigma maxitive M_alpha^k} follows from the LePage representation. 
 
Now we prove the last conclusion.  Let $M_{\alpha, E}=\bigvee_{\ell=1}^\infty M_{\alpha,E_\ell}^{(k)}$ and $M_{\alpha, F}= \bigvee_{\ell=1}^\infty M_{\alpha, F_\ell}^{(k)}$. The goal is to show $M^{(k)}_{\alpha, E}(A)=M^{(k)}_{\alpha, F}(A)$ a.s.\ for any $A\in\cl{E}^{(n)}$.
First, note that for any $n \in \mathbb{Z}_+$ and any $A \in \mathcal{F}_k$, $A \subset E_n^k \cap F_n^k$, we have 
  $  M_{\alpha, E_n}^{(k)}(A)
    =     M_{\alpha, F_n}^{(k)}(A)$ a.s. in view of \eqref{prod M_a} and \eqref{max}. 
 Based on the first conclusion of Theorem \ref{claim 5.3} but with $E$ replaced by  the subspace $E_n\cap F_n$,  we have for any $A \in \mathcal{E}^{(k)} \cap (E_n^k \cap F_n^k)$, $n \in \mathbb{Z}_+$, there exists a sequence of sets $\left(B_i\right)_{i \in \mathbb{Z}_{+}}$, where $B_i \in \mathcal{F}_k, B_i \subset E_n^k \cap F_n^k$ for each $i \in \mathbb{Z}_{+}$ such that $\mu^k\left( B_i \Delta A \right)\rightarrow 0$ as $i \rightarrow \infty$. So it follows from Theorem \ref{claim 5.3}  that $ M_{\alpha,E_\ell}^{(k)}\left(A\right) =   M_{\alpha,F_\ell}^{(k)}\left(A\right) $ a.s. for any $A \in \mathcal{E}^{(k)} \cap (E_n^k \cap F_n^k)$ if $\ell\ge n$. Note also that by \eqref{eq:E_n prod series}, we have $M_{\alpha,E_\ell}^{(k)}\left(A\right)\le M_{\alpha,E_{\ell+1}}^{(k)}\left(A\right)$ a.s., and a similar relation holds for  $F_\ell$.
    As a consequence, in view of \eqref{eq:gen M_alpha k def}, we have 
    $$M_{\alpha,E}^{(k)}\left(A  \cap E_n^k \cap F_n^k\right) = M_{\alpha, F}^{(k)}\left(A  \cap E_n^k \cap F_n^k\right) \ \text{a.s.}$$ for any $A \in \mathcal{E}^{(k)}$, $n \in \mathbb{Z}_+$.
    It suffices to let $n\rightarrow\infty$ in the relation above and apply the $\sigma$-maxitivity property, noting that $\cup_{n\ge 1} A\cap E_n^k \cap F_n^k  =A$.

\end{proof}

\begin{rem}
When \(\mu(E) = \infty\), it is possible that \(M^{(k)}_\alpha(A) = \infty\) a.s.\ even if 
\(\mu^k(A) < \infty\). This may happen for certain special sets \(A\) that are not contained in 
any rectangle of the form \(B_1 \times \cdots \times B_k\) with \(\mu(B_i) < \infty\) for all 
\(i\) (see Example~\ref{S1} below); otherwise, by monotonicity, \(M_\alpha^{(k)}(A) < \infty\) a.s.. 
This behavior contrasts with that of \(M_\alpha\), for which \(\mu(A) < \infty\) always implies 
\(M_\alpha(A) < \infty\) a.s.
\end{rem}

\subsection{Multiple extremal integrals}\label{s 2.3}
Up to this point, we have established the definition of the product random sup measure $M_\alpha^{(k)}(A)$ for $A$ in the off-diagonal $\sigma$-field $ \mathcal{E}^{(k)}$. Starting  with $M_\alpha^{(k)}$,  the construction of multiple extremal integral follows a routine path: First define the integral for simple functions on $\cl{E}^{(k)}$, and then extend to nonnegative measurable functions via monotone approximations. The details are given below.

Let $\mathcal{S}_k$, $k \in \bb{Z}_+$, be the collection of  non-negative  simple  functions on $\pp{E^{k},\cl{E}^{k}}$  vanishing on the diagonal set $D^{(k)}$, i.e.,  each $f \in \mathcal{S}_k$ is of the form
\begin{equation}\label{sim f}
    f\left(u_1, \ldots, u_k\right)=\sum_{i=1}^N a_i \mathbf{1}_{\{(u_1, \ldots, u_k) \in A_i\}}=\bigvee_{i=1}^N a_i \mathbf{1}_{\{(u_1, \ldots, u_k) \in A_i\}},
\end{equation}
where  $a_1, \ldots,a_N \in (0,\infty)$,  and $A_1, \ldots, A_N$ are  disjoint subsets  belonging to $\mathcal{E}^{(k)}$,  $N\in\bb{Z}_+$. We define the multiple extremal integral of the function $f$ in (\ref{sim f}) with respect to  $M^{(k)}_\alpha$ as 
\begin{equation}\label{simple Sk}
\begin{aligned}
I_k^e(f)  
:=  \bigvee_{i=1}^N a_i M^{(k)}_\alpha\left(A_i\right).
\end{aligned}
\end{equation}

We first state some elementary properties of multiple extremal integrals of simple functions.
\begin{prop}\label{I^e_k up}
   Suppose $f,g\in \mathcal{S}_k$, $k \in \mathbb{Z}_+$. 
\begin{enumerate}
\item (max-linearity) For any constants $a, b \geq 0$, we have  $I_k^e(af \vee b g)  =aI_k^e( f) \vee b I_k^e( g)$ a.s..

\item (monotonicity) If \( f(\boldsymbol{u}) \leq g(\boldsymbol{u}) \) for \( \mu^k \)-a.e. \( \boldsymbol{u} \in E^{k} \), then \( I^e_k(f) \leq I^e_k(g) \) a.s..
\item (triangle inequality) $I_k^e(f+g)\le I_k^e(f) + I_k^e(g)$ a.s..  
\end{enumerate}

\end{prop}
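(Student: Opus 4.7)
The plan is to reduce every assertion to an a.s.\ identity on a common refinement of the two simple functions, and then conclude via the maxitive property of $M_\alpha^{(k)}$ together with elementary pathwise manipulations.

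First I would fix representations $f = \bigvee_{i=1}^{N} a_i \mathbf{1}_{A_i}$ and $g = \bigvee_{j=1}^{M} b_j \mathbf{1}_{B_j}$ with disjoint $A_i$'s and disjoint $B_j$'s and form the common partition of $\bigcup_i A_i \cup \bigcup_j B_j$ into the (finitely many) pairwise disjoint atoms $\{C_\ell\}_{\ell=1}^{L}$ obtained from all nonempty intersections $A_i \cap B_j$ together with $A_i \setminus \bigcup_j B_j$ and $B_j \setminus \bigcup_i A_i$. On this partition $f = \bigvee_\ell c_\ell \mathbf{1}_{C_\ell}$ and $g = \bigvee_\ell d_\ell \mathbf{1}_{C_\ell}$, where $c_\ell,d_\ell \geq 0$ and at most one of them is zero per atom. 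Before proceeding I would verify the integral is unambiguous: by the finite maxitivity of $M_\alpha^{(k)}$, which is a special case of \eqref{eq:sigma maxitive M_alpha^k} established in Theorems \ref{claim 5.3} and \ref{thm sf case}, one has $M_\alpha^{(k)}(A_i) = \bigvee_{C_\ell \subset A_i} M_\alpha^{(k)}(C_\ell)$ a.s., so
\[
\bigvee_{i=1}^N a_i M_\alpha^{(k)}(A_i) \;=\; \bigvee_{\ell=1}^L c_\ell M_\alpha^{(k)}(C_\ell) \quad \text{a.s.},
\]
and the analogous identity holds for $g$. Null atoms (those with $\mu^k(C_\ell)=0$) contribute nothing since \eqref{eq:M_alpha moment control} forces $M_\alpha^{(k)}(C_\ell)=0$ a.s.\ in that case.

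With the common refinement in hand, the three claims follow by pathwise computation on the partition. For (i), I write $af \vee bg = \bigvee_\ell (ac_\ell \vee bd_\ell)\mathbf{1}_{C_\ell}$, apply the definition of $I_k^e$, and then use that scalar multiplication and $\vee$ distribute over $\vee$ to reorganize
\[
\bigvee_\ell \bigl[(ac_\ell)M_\alpha^{(k)}(C_\ell) \vee (bd_\ell)M_\alpha^{(k)}(C_\ell)\bigr] = \Bigl(a\bigvee_\ell c_\ell M_\alpha^{(k)}(C_\ell)\Bigr) \vee \Bigl(b\bigvee_\ell d_\ell M_\alpha^{(k)}(C_\ell)\Bigr).
\]
For (ii), the hypothesis $f \leq g$ $\mu^k$-a.e.\ gives $c_\ell \leq d_\ell$ on every atom with $\mu^k(C_\ell)>0$, while on atoms of $\mu^k$-measure zero the bound \eqref{eq:M_alpha moment control} kills the corresponding terms a.s., so term-by-term comparison yields $I_k^e(f)\le I_k^e(g)$ a.s. For (iii), disjointness of the $C_\ell$'s gives the pointwise identity $f+g = \bigvee_\ell (c_\ell+d_\ell)\mathbf{1}_{C_\ell}$, and the elementary inequality $\bigvee_\ell (x_\ell+y_\ell) \leq \bigvee_\ell x_\ell + \bigvee_\ell y_\ell$ applied with $x_\ell=c_\ell M_\alpha^{(k)}(C_\ell)$, $y_\ell=d_\ell M_\alpha^{(k)}(C_\ell)$ delivers the triangle inequality.

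I do not anticipate a genuine obstacle here; the result is essentially the extremal analogue of the corresponding fact for simple sum-stable integrals. The only point that requires a moment of care is ensuring that the atoms $C_\ell$ belong to a $\sigma$-field on which $M_\alpha^{(k)}$ is already defined with finite maxitivity, and the extension from $\mathcal{F}_k$ to $\mathcal{E}^{(k)}$ carried out in Theorems \ref{claim 5.3} and \ref{thm sf case} takes care of this automatically.
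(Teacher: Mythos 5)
Your proof is correct and follows essentially the route the paper intends: the paper simply defers to the analogous simple-function arguments in Stoev--Taqqu, which are exactly the common-refinement-plus-maxitivity computation you carry out, with the $\mu^k$-null atoms handled via the moment bound \eqref{eq:M_alpha moment control}. The only point worth noting is that your refinement step also supplies the (implicitly needed) well-definedness of $I_k^e$ on $\mathcal{S}_k$ across different disjoint representations, which is a welcome bonus rather than a deviation.
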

\begin{proof}
The proof is similar to those for Propositions 2.2 (i), 2.3 and 2.8 in \cite[]{stoev2005extremal}, and we omit the details. 
 
\end{proof}
We are now ready to define multiple extremal integral of a $k$-variate ($k \geq 1$) non-negative measurable function $f$ on $E^{k}$ that vanishes on the diagonal set $D^{(k)}$ (we will simply say $f$ vanishes on the diagonals in the rest of the paper). Suppose $f_n \in \mathcal{S}_k$, satisfy $f_n(\boldsymbol{u}) \leq f_{n+1}(\boldsymbol{u}), \boldsymbol{u} \in  E^{k}$,  $n\in \mathbb{Z}_+$, and $\lim_n f_n (\boldsymbol{u}) = f(\boldsymbol{u}) $,  $ \mu^k$-a.e., denoted by $f_n \nearrow f$.  One can take, for example, classically
$f_n(\boldsymbol{u})=\sum_{j=1}^{n 2^n-1} j / 2^n \mathbf{1}_{\left\{f \in\left[j / 2^n,(j+1) / 2^n\right)\right\}}(\boldsymbol{u})+ n  \mathbf{1}_{\pc{f\ge n}}$, $n\in \bb{Z}_+$. 

\begin{defn}\label{def:mult extr int}
Suppose $f:E^k\mapsto [0,\infty]$ is measurable and vanishes on the diagonals. 
  The multiple extremal integral $I_k^e(f)$  is defined as  the a.s.\ limit of $I_k^e(f_n)$ as $n\rightarrow\infty$, where $f_n \in \mathcal{S}_k$, $n \in \bb{Z}_+$, and $f_n \nearrow f$ as $n\rightarrow\infty$.
\end{defn}

It can be verified through an argument similar to the one in Lebesgue measure theory (e.g., \cite[Lemma 1.20]{kallenberg2021foundations}) that the $I_k^e(f)$ defined above does not depend on the choice of the approximation sequence $(f_n)_{n\in \bb{Z}_+}$. We defer the details to Lemma \ref{General consistency} in the Appendix.  Some basic properties of $I_k^e(f)$ are summarized below.

\begin{cor}\label{cor:gen int}
   The max-linearity, monotonicity and triangle inequality in Proposition \ref{I^e_k up}  extend to the case where the integrands $f$ and $g$ are general  measurable functions:  $E^{k}\mapsto [0,\infty]$ that vanish on the diagonals for $k \in \mathbb{Z}_+$.   In addition, we have the LePage representation
\begin{equation}\label{eq4}
\pp{  I_k^e(f) }_{f\in \cl{L}_k,\, k\in \bb{Z}_+}  \overset{d}{=}  \pp{S_k^e(f)}_{f\in \cl{L}_k,\,  k\in \bb{Z}_+} := \pp{\bigvee_{\boldsymbol{j} \in \mathcal{D}_k}f\left(T_{j_1}, \ldots, T_{j_k}\right) \left(\prod_{r=1}^k \Gamma_{j_r}^{-1 / \alpha} \psi\left(T_{j_r}\right)^{1 / \alpha}\right)}_{f\in \cl{L}_k,\, k\in \bb{Z}_+},
\end{equation}
where  
\begin{equation}\label{eq:cl L_k}
\cl{L}_k:=\pc{f: E^k \mapsto [0,\infty] \mid  f\text{ is measurable and vanishes on the diagonals}}.
\end{equation}
Also, $m, \psi$, $\pp{T_i}_{i \in \bb{Z}_+}$ and $\pp{\Gamma_i}_{i \in \bb{Z}_+}$ are as in Definition \ref{defn l}. 
\end{cor}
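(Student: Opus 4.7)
The entire corollary is proved by monotone approximation starting from Proposition \ref{I^e_k up}. For each $f\in\cl{L}_k$, pick a sequence of simple functions $f_n\in\cl{S}_k$ with $f_n\nearrow f$ $\mu^k$-a.e.; by Definition \ref{def:mult extr int} (and the consistency lemma referenced there), $I_k^e(f)=\lim_n I_k^e(f_n)$ a.s., and this limit is independent of the approximating sequence. All three algebraic properties then follow by picking approximations jointly and passing to the limit.

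\textbf{Extending max-linearity, monotonicity, triangle inequality.} Given $f,g\in\cl{L}_k$ and approximations $f_n\nearrow f$, $g_n\nearrow g$ in $\cl{S}_k$, the functions $af_n\vee bg_n$ and $f_n+g_n$ lie in $\cl{S}_k$ and increase to $af\vee bg$ and $f+g$ respectively. Applying Proposition \ref{I^e_k up} to each $n$ and letting $n\to\infty$ yields the max-linearity and triangle inequality. For monotonicity, assuming $f\le g$ $\mu^k$-a.e., the sequence $h_n:=f_n\vee g_n$ is in $\cl{S}_k$ and increases to $g$ $\mu^k$-a.e., so by the consistency part of the definition $I_k^e(h_n)\to I_k^e(g)$ a.s.; but $I_k^e(h_n)=I_k^e(f_n)\vee I_k^e(g_n)\ge I_k^e(f_n)$ by the simple-function case, so $I_k^e(g)\ge \lim_n I_k^e(f_n)=I_k^e(f)$ a.s.

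\textbf{LePage representation.} I first handle simple $f=\bigvee_{i=1}^N a_i\mathbf{1}_{A_i}\in\cl{S}_k$ with the $A_i\in\cl{E}^{(k)}$ pairwise disjoint. By definition \eqref{simple Sk} and the joint LePage representation of $M_\alpha^{(k)}$ established in Theorem \ref{thm sf case},
\[
I_k^e(f)=\bigvee_{i=1}^N a_i\,M_\alpha^{(k)}(A_i)\stackrel{d}{=}\bigvee_{i=1}^N a_i\bigvee_{\boldsymbol{j}\in\cl{D}_k}[\Gamma_{\boldsymbol{j}}]^{-1/\alpha}[\psi(T_{\boldsymbol{j}})]^{1/\alpha}\mathbf{1}_{\{T_{\boldsymbol{j}}\in A_i\}},
\]
and since the $A_i$ are disjoint, $\bigvee_i a_i\mathbf{1}_{\{T_{\boldsymbol{j}}\in A_i\}}=f(T_{j_1},\ldots,T_{j_k})$, which recovers $S_k^e(f)$. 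The representation in Theorem \ref{thm sf case} is joint across all $A\in\cl{E}^{(k)}$ and all $k$, so this distributional identity holds jointly for arbitrary finite families of simple integrands of arbitrary orders. For general $f^{(1)}\in\cl{L}_{k_1},\dots,f^{(r)}\in\cl{L}_{k_r}$, choose simple $f^{(\ell)}_n\nearrow f^{(\ell)}$; then on the left, $I_{k_\ell}^e(f^{(\ell)}_n)\to I_{k_\ell}^e(f^{(\ell)})$ a.s.\ by Definition \ref{def:mult extr int}. On the right, since $m\sim\mu$ implies $m^{k_\ell}\sim\mu^{k_\ell}$, the tuples $(T_{j_1},\ldots,T_{j_{k_\ell}})$ with $\boldsymbol{j}\in\cl{D}_{k_\ell}$ a.s.\ lie in the set where $f^{(\ell)}_n\nearrow f^{(\ell)}$ pointwise, so each summand of the supremum defining $S_{k_\ell}^e(f^{(\ell)}_n)$ increases to the corresponding summand of $S_{k_\ell}^e(f^{(\ell)})$, giving $S_{k_\ell}^e(f^{(\ell)}_n)\nearrow S_{k_\ell}^e(f^{(\ell)})$ a.s. Since finite-dimensional distributions are stable under a.s.\ (hence in-distribution) limits, the equality in finite-dimensional distribution passes from simple to general integrands.

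\textbf{Main obstacle.} The delicate point is the joint (in $f$ and $k$) nature of the LePage representation: passing to the limit in the approximations for several integrands simultaneously requires both a single underlying probability space carrying the full family $(\Gamma_i,T_i)$ on which the limits $S_{k_\ell}^e(f^{(\ell)}_n)\to S_{k_\ell}^e(f^{(\ell)})$ occur a.s., and the joint simple-function identity supplied by Theorem \ref{thm sf case}. Everything else is a routine monotone-approximation argument, modulo appealing to the consistency statement (Lemma \ref{General consistency} in the appendix) to justify that the a.s.\ limits defining $I_k^e(f)$ are unambiguous.
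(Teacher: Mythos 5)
Your proposal is correct and follows essentially the same route as the paper's own (very terse) proof: establish the three algebraic properties and the LePage identity first for simple functions via Proposition \ref{I^e_k up}, \eqref{simple Sk} and Theorem \ref{thm sf case}, then pass to general integrands by the monotone approximation of Definition \ref{def:mult extr int} together with the consistency Lemma \ref{General consistency}. Your additional observations — using $h_n=f_n\vee g_n$ for monotonicity, and the $m^k$-a.e.\ argument showing $S_k^e(f_n)\nearrow S_k^e(f)$ a.s.\ so that the joint finite-dimensional identity survives the limit — are exactly the details the paper leaves implicit.
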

\begin{proof}
 The first three claims concerning items in Proposition \ref{I^e_k up} follow from Proposition \ref{I^e_k up} and Definition \ref{def:mult extr int}.   To see relation \eqref{eq4}, first note that it holds when $ \cl{L}_k$ is replaced by $\cl{S}_k$  in view of Theorem \ref{thm sf case} and \eqref{simple Sk}. Then apply the approximation in Definition \ref{def:mult extr int}. 
  
\end{proof}

Given a multiple extremal integral $I_k(f)$ with respect to an $\alpha$-Fr\'echet  random sup measure $M_\alpha$, $\alpha>0$, we note that
the power transform $\pp{I_k^e(f)}^r$, $r>0$,   results in a multiple extremal integral as well, but with respect to the $(\alpha/r)$-Fr\'echet  random sup measure $M_{\alpha}^r$. When  $k =1$, this fact has been also mentioned in \cite[Proposition 2.9]{stoev2005extremal}.
Next, we show that the extremal integral remains invariant under any permutation of the coordinates of the integrand. For a measurable $f:E^{k}\mapsto [0,\infty]$ that vanishes on the diagonals, we define its max-symmetrization as
\begin{equation}\label{sym def}
\widetilde{f}\left(u_1, \ldots, u_k\right)=\bigvee_{\pi \in \Theta_k} f\left(u_{\pi(1)}, \ldots, u_{\pi(k)}\right),
\end{equation}
where $\Theta_k$ consists of all permutations (one-to-one mappings) $\pi:\{1,2, \ldots, k\} \mapsto\{1,2, \ldots, k\}$.   It is worth noting that, in contrast, a similar symmetrization often performed for multiple additive stochastic integrals involves an additive average over $\Theta_k$ instead.
\begin{prop}\label{sim inv}
 For a function  $f \in \cl{L}_k$, $k \in \mathbb{Z}_+$, we have  $I_k^e(f)=I_k^e(\widetilde{f})$ a.s.. 
\end{prop}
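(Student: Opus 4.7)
The plan is to combine the monotonicity already established for multiple extremal integrals with a direct comparison of the LePage series representations of $I_k^e(f)$ and $I_k^e(\widetilde{f})$.

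First I would observe that, by construction, $f(u_1,\dots,u_k) \le \widetilde{f}(u_1,\dots,u_k)$ pointwise, so the monotonicity statement in Corollary~\ref{cor:gen int} immediately gives the inequality $I_k^e(f) \le I_k^e(\widetilde{f})$ a.s. The remaining task is therefore to show $I_k^e(f) \stackrel{d}{=} I_k^e(\widetilde{f})$, after which an a.s.-inequality paired with equality in distribution upgrades to a.s.\ equality: if $X \le Y$ a.s.\ and $X \stackrel{d}{=} Y$, then for every rational $q$ one has $P(Y>q, X\le q)=0$, and taking a countable union yields $X=Y$ a.s. (the argument extends to $[0,\infty]$-valued variables).

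For the distributional identity I would invoke the LePage representation \eqref{eq4}. The crucial observation is that the weight $\prod_{r=1}^k \Gamma_{j_r}^{-1/\alpha}\psi(T_{j_r})^{1/\alpha}$ is invariant under any permutation of the indices $j_1,\dots,j_k$. Grouping the tuples in $\mathcal{D}_k$ by their underlying unordered set, indexed by $\mathcal{D}_{k,<}$, one obtains
\begin{align*}
S_k^e(f)
&= \bigvee_{\boldsymbol{j}\in \mathcal{D}_{k,<}}
 \left(\bigvee_{\pi\in\Theta_k} f(T_{j_{\pi(1)}},\dots,T_{j_{\pi(k)}})\right)
 \prod_{r=1}^k \Gamma_{j_r}^{-1/\alpha}\psi(T_{j_r})^{1/\alpha} \\
&= \bigvee_{\boldsymbol{j}\in \mathcal{D}_{k,<}}
 \widetilde{f}(T_{j_1},\dots,T_{j_k})\,
 \prod_{r=1}^k \Gamma_{j_r}^{-1/\alpha}\psi(T_{j_r})^{1/\alpha},
\end{align*}
by the very definition \eqref{sym def} of $\widetilde{f}$. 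On the other hand, applying \eqref{eq4} to $\widetilde{f}$ and using the symmetry of $\widetilde{f}$ to collapse each orbit of $\Theta_k$ to a single representative gives the identical right-hand side. Hence $S_k^e(f)=S_k^e(\widetilde{f})$ pathwise, which forces $I_k^e(f)\stackrel{d}{=} I_k^e(\widetilde{f})$.

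I do not expect a substantive obstacle here; the only point requiring care is that \eqref{eq4} is an equality in finite-dimensional distributions rather than a pathwise identity, so one cannot directly conclude a.s.\ equality from the collapsing of the LePage series alone. Bridging this gap is precisely the role of the monotonicity step, and the a.s.-inequality-plus-equal-distribution argument handles both the finite and the possibly infinite case uniformly.
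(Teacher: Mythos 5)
Your proof is correct, but it takes a genuinely different route from the paper's. The paper proceeds by the standard extension ladder entirely on the side of the actual random sup measure $M_\alpha$: the identity is immediate for $f=\mathbf{1}_A$ with $A$ an off-diagonal rectangle (commutativity of the product in \eqref{prod M_a}), then extends to finite unions via \eqref{max}, to general $A\in\cl{E}^{(k)}$ by symmetric-difference approximation and $\sigma$-maxitivity as in Theorem \ref{claim 5.3}, and finally to simple and general integrands by max-linearity and monotone approximation. You instead collapse the whole statement to a one-line pathwise identity $S_k^e(f)=S_k^e(\widetilde f)$ on the LePage side (which is correct: the weight $\prod_r\Gamma_{j_r}^{-1/\alpha}\psi(T_{j_r})^{1/\alpha}$ is permutation-invariant, so the double supremum over $\mathcal{D}_{k,<}\times\Theta_k$ reparametrizes $\mathcal{D}_k$), and then bridge the distributional-versus-pathwise gap with monotonicity plus the standard ``$X\le Y$ a.s.\ and $X\stackrel{d}{=}Y$ imply $X=Y$ a.s.'' lemma, which you correctly note survives $[0,\infty]$-valued variables. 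Your argument is shorter and avoids the approximation machinery, at the cost of leaning on Corollary \ref{cor:gen int} (both \eqref{eq4} and monotonicity for general integrands), which is indeed available at this point in the paper with no circularity. One small simplification: since \eqref{eq4} is an equality of \emph{finite-dimensional} distributions jointly in $f$, the pair $(I_k^e(f),I_k^e(\widetilde f))$ has the same law as $(S_k^e(f),S_k^e(\widetilde f))$, and the latter is supported on the diagonal of $[0,\infty]^2$; this gives the a.s.\ equality directly and makes the monotonicity step dispensable.
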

\begin{proof}
The argument is routine and we only provide a sketch.
In view of \eqref{prod M_a},  the conclusion holds for $f=\mbf{1}_A$ when $A\in \cl{C}_k$, and then extends to $A \in \mathcal{F}_k$ via \eqref{max}. Through a  set symmetric difference approximation  in the spirit of Theorem \ref{claim 5.3}, the conclusion extends to any $A\in \cl{E}^{(k)}\cap E_n$, where  each $E_n$, $n\in \bb{Z}_+$, is as in Theorem \ref{thm sf case}, and then to  general  $A\in \cl{E}^{(k)}$ by letting $n\rightarrow\infty$ and applying $\sigma$-maxitivity.  At last, the conclusion extends to $f\in \cl{S}_k$ by max-linearity, and then to general measurable $f$ via Definition \ref{def:mult extr int}.
\end{proof}

 \begin{rem}\label{rem:alt constr}
 
In this paper, we primarily develop the definition of multiple extremal integrals with respect to $M_\alpha$ given in Definition~\ref{defn s}, rather than the Poisson point process–based construction (i.e., using $M_\alpha^P$ in \eqref{eq:pois RSM} below). The reason is that the former notion is strictly weaker: every $M_\alpha^P$ is an $M_\alpha$, but not conversely (see Remark~\ref{Rem:s vs l}). This choice therefore affords greater generality. Nevertheless, the Poisson point process–based construction is discussed in Section~\ref{alt ppp}.

A third possible approach is to define multiple extremal integrals directly via the LePage   representation \eqref{eq4}, as in \cite[]{samorodnitsky1991construction} for multiple stable integrals. This method, however, requires the prior specification of a probability measure $m$ equivalent to $\mu$, with the associated Radon–Nikodym derivative $\psi = d\mu/dm \in (0,\infty)$ $m$-a.e.. While the choice of $m$ does not affect the distribution of the resulting multiple extremal integral, its introduction makes the construction less intrinsic.

 \end{rem}

 \begin{rem}  
 One may also construct multiple extremal integrals with respect to the other two classical types of max-stable distributions: \emph{Gumbel} and \emph{reverse Weibull}. Recall that if $\xi$ is a standard $\alpha$-Fr\'echet random variable with $\alpha \in (0,\infty)$, then $\ln(\xi^\alpha)$ has the standard Gumbel distribution, while $1/(-\xi)$ follows the standard $\alpha$-reverse-Weibull distribution. Therefore, once a multiple $\alpha$-Fr\'echet extremal integral $I_k^e(f)$ has been constructed, the monotone transformations $\ln\bigl( I_k^e(f)^\alpha \bigr)$ and $1/\bigl(-I_k^e(f)\bigr)$ can be viewed as multiple Gumbel and multiple reverse-Weibull extremal integrals, respectively. 
It is worth noting that, in the multiple Gumbel case, the multiplicative relations such as \eqref{prod M_a} are replaced by additive ones. Moreover, multiple extremal integrals with respect to \emph{random inf measures}---obtained by replacing the role of the supremum by the infimum in the definition of random sup measures---can be constructed analogously by exploiting the reflection relation between supremum and infimum. We omit the details.
\end{rem}

\subsection{Alternative construction via Poisson point process} \label{alt ppp}
 An alternative approach to constructing multiple extremal integrals can be formulated with the enumerated points of a Poisson point process.  For this purpose, we shall follow \cite[P.15]{kallenberg2021foundations} to assume  that the Borel space $(E,\cl{E})$ is  localized by a sequence $E_n\in \cl{E}$, $n\in \bb{Z}_+$, such that $E_n\subset E_{n+1}$ and $\cup_n E_n=E$. A subset $B\subset E$ is said to be bounded if $B\subset E_n$ for some $n\in \bb{Z}_+$. We also assume $\mu$ is a locally finite measure on $(E,\cl{E})$, i.e., $\mu$ is finite on bounded subsets in $\cl{E}$. Below we understand the Borel space $E\times (0,\infty)$ as localized by $E_n\times [1/n,\infty)$, $n\in \bb{Z}_+$.

Now let $N$ be a Poisson point process  on $S = E\times (0,\infty)$ with intensity measure $\mu \times \nu_\alpha$, where $\nu_\alpha(dx) = \alpha x^{-\alpha-1}dx$, $x > 0$. Let $\mathcal{S}$ be the Borel $\sigma$-field on $S$. We regard $N$ as a random element taking value in $\cl{M}_S$, the measurable space of locally finite measures on $(S,\mathcal{S})$; see, e.g., \cite[P.44]{kallenberg2021foundations}.   In view of \cite[Theorem 2.19 (i)]{kallenberg2021foundations}, one can enumerate the atoms of $N$ measurably, i.e.,  there exists a measurable mapping $\phi:\cl{M}_S\mapsto E^\infty \times (0,\infty)^\infty $, such that $N=\sum_{i=1}^\infty \delta_{(\xi_i,\eta_i)}$ with $(\xi_i,\eta_i)_{i\in \bb{Z}_+}:=\phi(N)$. 
A random sup measure can then be defined as
\begin{equation}\label{eq:pois RSM}     
M_\alpha^P(A):=\bigvee_{i \geq 1} \mathbf{1}_{\{\xi_i \in A\}} \eta_i, \ A\in \cl{E},
\end{equation}
which satisfies the pathwise $\sigma$-maxitive property \eqref{eq l sigma add} in addition to Definition \ref{defn s}. Moreover, for a function $f \in \cl{L}_k$, the multiple integral of $f$ with respect to $M_\alpha^P$ can be directly expressed as
\begin{equation}\label{eq:ppp rep}
\leftindex^e \int_{E^k} f\left(u_1, \ldots, u_k\right) M_\alpha^P\left(d u_1\right) \ldots M_\alpha^P\left(d u_k\right)= \kappa_f\pp{(\xi_i,\eta_i)_{i\in \bb{Z}_+}}:=\bigvee_{\boldsymbol{j} \in \mathcal{D}_{k}}f(\xi_{\mbf{j}}) \left[\eta_{\boldsymbol{j}}\right],
\end{equation}
so that the multiple integral can be identified as a measurable map $\kappa_f \circ \phi$ of $N$. On the other hand, with the notation in Definition \ref{defn l}, one may introduce  $\wt{N}:=\sum_{i=1}^\infty\delta_{(T_i, \, \psi(T_i)^{1/\alpha} \Gamma_i^{-1/\alpha})}$, which we claim to be a Poisson point process on $E\times (0,\infty)$ with intensity measure $\mu\times \nu_\alpha$ as well. Indeed, this can be derived by first noting that $\sum_{i=1}^\infty\delta_{(T_i, \,  \Gamma_i^{-1/\alpha})}$ is a Poisson point process with mean measure $m\times \nu_\alpha$, and then applying the mapping theorem \cite[Theorem 15.3]{kallenberg2021foundations} via the map $(x,y)\mapsto (x,\psi^{1/\alpha}(x)y)$. Hence 
\begin{equation}\label{D.2}
    \pp{\kappa_f\circ \phi(N)}_{f\in \cl{L}_k}\EqD \pp{\kappa_f\circ \phi(\wt{N})}_{f\in \cl{L}_k}=\pp{S_k^e(f)}_{f\in \cl{L}_k},
\end{equation}
where $S_k^e(f)$ is the LePage representation as in \eqref{eq4}.

\section{Integrability}\label{sec:int}

 
\begin{defn}
We say that  a function $f \in \cl{L}_k$ (see \eqref{eq:cl L_k}) is (multiple-)integrable with respect to a random sup measure $M_\alpha$ with control measure $\mu$ on $E$ as in Definition \ref{defn s}, if $I_k(f)<\infty$ a.s., where  $I_k(f)$ is the multiple extremal integral with respect to $M_\alpha$ as in Definition \ref{def:mult extr int}. 
\end{defn}
In view of \eqref{eq4},  for  a function $f \in \cl{L}_k$, we have 
\begin{equation}\label{eq class}
 \text{$I_k(f)<\infty$ a.s.\ if and only if  $S^e_k(f)<\infty$ a.s..}
\end{equation}
 We first establish a zero-one law for the integrability of multiple extremal integrals.
\begin{prop}\label{01law}
   For $f \in \mathcal{L}_k$, $k \in \mathbb{Z}_+$, $\mathbb{P}\left( I_k^e(f) < \infty \right) = 0 \text{ or } 1$. 
\end{prop}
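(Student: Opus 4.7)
The plan is to apply Kolmogorov's zero-one law to the i.i.d.\ sequence $X_i := (T_i, e_i)$, $i \in \bb{Z}_+$, where $e_i$ are the standard exponential interarrival times so that $\Gamma_i = e_1 + \cdots + e_i$. By the LePage representation \eqref{eq4}, $I_k^e(f) \EqD S_k^e(f)$, and $S_k^e(f)$ is $\sigma(X_i : i \geq 1)$-measurable. It therefore suffices to show that $\{S_k^e(f) < \infty\}$ lies, modulo $\bb{P}$-null sets, in the tail $\sigma$-field $\cl{T} := \bigcap_{n\geq 1}\sigma(X_i : i > n)$.

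We may assume $f$ is $\mu^k$-a.e.\ finite and $\psi$ is $m$-a.e.\ finite; otherwise $I_k^e(f) = \infty$ a.s.\ and the claim is immediate. For each fixed $n \geq 1$, introduce the shifted companion
\[
\wt{S}_k^{e,(n)}(f) := \bigvee_{\boldsymbol{j} \in \mathcal{D}_k} f(T_{n+j_1}, \ldots, T_{n+j_k})\prod_{r=1}^k \wt{\Gamma}_{j_r}^{-1/\alpha}\psi(T_{n+j_r})^{1/\alpha}, \qquad \wt{\Gamma}_j := e_{n+1} + \cdots + e_{n+j},
\]
which is $\sigma(X_i : i > n)$-measurable and, by the i.i.d.\ property of $(X_i)$, equal in distribution to $S_k^e(f)$. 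The task reduces to proving the event equality $\{S_k^e(f) < \infty\} = \{\wt{S}_k^{e,(n)}(f) < \infty\}$ $\bb{P}$-a.s.\ for every $n$, which places the event in $\sigma(X_i : i > n)$ up to null sets.

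I would proceed by induction on $k$. For $k=1$, a direct Poisson intensity calculation on $\{(x,y) \in E\times(0,\infty) : f(x)y > c\}$, using that $(T_i, \psi(T_i)^{1/\alpha}\Gamma_i^{-1/\alpha})_{i\geq 1}$ forms a Poisson process with intensity $\mu \otimes \nu_\alpha$ (see \eqref{D.2}), gives $\bb{P}(S_1^e(f) \leq c) = \exp\bigl(-c^{-\alpha}\int f^\alpha d\mu\bigr)$, so the probability is $0$ or $1$ according as $\int f^\alpha d\mu$ is infinite or finite; the two events have identical probabilities in $\{0,1\}$ and hence coincide a.s. For the inductive step with $k \geq 2$, decompose $S_k^e(f) = \bigvee_{I\subseteq \{1,\ldots,k\}}V_I$ by $I = \{r : j_r \leq n\}$. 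The piece $V_{\{1,\ldots,k\}}$ is a finite maximum (a.s.\ finite). The piece $V_\emptyset$ equals $\bigvee_{\boldsymbol{j}'} f(T_{n+\boldsymbol{j}'})\prod_r (\Gamma_n + \wt{\Gamma}_{j'_r})^{-1/\alpha}\psi^{1/\alpha}$ via $\Gamma_{n+j} = \Gamma_n + \wt{\Gamma}_j$; using the pointwise bound $2^{-1/\alpha}\wt{\Gamma}_j^{-1/\alpha} \leq (\Gamma_n+\wt{\Gamma}_j)^{-1/\alpha} \leq \wt{\Gamma}_j^{-1/\alpha}$ valid on $\{\wt{\Gamma}_j \geq \Gamma_n\}$ and the a.s.\ finite random index $R := \min\{j : \wt{\Gamma}_j \geq \Gamma_n\}$, the contributions to $V_\emptyset$ and $\wt{S}_k^{e,(n)}(f)$ from tuples with $\min j'_r \geq R$ agree up to the factor $2^{k/\alpha}$. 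Each mixed piece $V_I$ (for $\emptyset \neq I \subsetneq \{1,\ldots,k\}$) and each ``small-index'' contribution to $\wt{S}_k^{e,(n)}(f)$ (tuples with some $j'_r < R$) is a finite max of $(k-|I|)$-variate extremal sums whose integrand is $f$ partially evaluated at fixed $T$'s; conditioning on those fixed $T$'s and invoking the inductive equivalence of order $k - |I| < k$ places finiteness of each such sum in $\cl{T}$ modulo null sets.

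The main obstacle is the bookkeeping of the mixed pieces. Each $V_I$ in $S_k^e(f)$ couples a finite choice of low-index coordinates ($j_r \leq n$) with an infinite sup over high-index coordinates, while the corresponding small-index piece of $\wt{S}_k^{e,(n)}(f)$ couples a finite choice of indices with $j'_r < R$ with an infinite sup over indices $\geq R$. One must carefully pair each such $V_I$ with its counterpart and invoke the inductive equivalence for the associated $(k - |I|)$-variate sum with random (but conditionally fixed) integrand. Gluing these matches together delivers $\{S_k^e(f) < \infty\} = \{\wt{S}_k^{e,(n)}(f) < \infty\}$ a.s.\ for every $n$, which by Kolmogorov's zero-one law yields the desired dichotomy.
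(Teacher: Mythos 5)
Your route---Kolmogorov's zero--one law on the tail $\sigma$-field of $(T_i,e_i)_{i\ge 1}$, via comparison of $S_k^e(f)$ with a shifted copy---is genuinely different from the paper's. The paper first replaces $[\Gamma_{\boldsymbol j}]^{-1/\alpha}$ by the deterministic weights $[\boldsymbol j]^{-1/\alpha}$ using $\Gamma_j/j\to 1$ a.s., so that the event of finiteness becomes a function of $(T_i)_{i\ge 1}$ alone, and then observes that this event is exchangeable because a finite permutation $\pi_m$ distorts $[\boldsymbol j]$ only by a factor in $[m^{-k},m^{k}]$; the Hewitt--Savage zero--one law then finishes with no induction and no decomposition. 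Your base case $k=1$ and your treatment of the all-high-index piece $V_\emptyset$ (the sandwich $2^{-1/\alpha}\wt\Gamma_j^{-1/\alpha}\le(\Gamma_n+\wt\Gamma_j)^{-1/\alpha}\le\wt\Gamma_j^{-1/\alpha}$ for $j\ge R$) are fine.

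There is, however, a genuine gap in the inductive step, at the claim that conditioning plus the inductive hypothesis ``places finiteness of each such sum in $\cl T$ modulo null sets.'' For a mixed piece $V_I$ with $\emptyset\ne I\subsetneq\{1,\dots,k\}$, the inductive zero--one law applied conditionally on $(X_1,\dots,X_n)$ only yields that $\{V_I<\infty\}$ equals, modulo null sets, $\{(T_j)_{j\in I\text{-slots}}\in G\}$ for some measurable $G\subset E^{|I|}$ (the set of low arguments whose partial evaluation of $f$ produces an a.s.\ finite $(k-|I|)$-fold sup). This is \emph{not} a tail event unless $m^{|I|}(G)\in\{0,1\}$: for $k=2$ take $f(u_1,u_2)=g(u_1)h(u_2)\vee g(u_2)h(u_1)$ off the diagonal with $g\notin L^\alpha_+(m)$, $h\in L^\alpha_+(m)$, $0<m(h>0)<1$; then $\{V_{\{1\}}<\infty\}=\{h(T_1)=0\}$ modulo null sets, which depends nontrivially on $T_1$. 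For the same reason there is no term-by-term ``counterpart'' pairing: the mixed pieces of $S_k^e(f)$ involve $T_1,\dots,T_n$ while the small-index pieces of $\wt S_k^{e,(n)}(f)$ involve $T_{n+1},\dots,T_{n+R-1}$, so the corresponding finiteness events need not coincide. The argument can be repaired by a dichotomy you do not state: if $m^{|I|}(G^c)>0$ for some $I$, then infinitely many disjoint blocks of high-index $T$'s land in $G^c$ and (using the inductive hypothesis once more, to pass from the full $(k-|I|)$-fold sup to the sup over large indices only) each such block forces $V_\emptyset=\infty$, so both sides are a.s.\ infinite and the event equality is trivial; if instead $m^{|I|}(G^c)=0$ for every $I$, then all mixed and small-index pieces are a.s.\ finite and only the sandwiched large-index pieces matter. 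With that inserted your proof closes, but as written the key intermediate claim is false.
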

\begin{proof}
 First, we assume $\psi=1$ without loss of generality. Second, we assume that $f<\infty$ a.e.; otherwise, one can show $S_k^e(f)= \infty$ a.s..
Now we claim that
$$
\bigvee_{j \in \mathcal{D}_k} f\left(T_{\boldsymbol{j}}\right)\left[\Gamma_{\boldsymbol{j}}\right]^{-1 / \alpha} < \infty \text { a.s. } \quad \Longleftrightarrow \bigvee_{\boldsymbol{j} \in \mathcal{D}_k} f\left(T_{\boldsymbol{j}}\right)[\boldsymbol{j}]^{-1 / \alpha} < \infty \text { a.s.. }
$$
This follows because $\left(T_i\right)_{i \in \mathbb{Z}_{+}}$is independent of $\left(\Gamma_i\right)_{i \in \mathbb{Z}_{+}}$, and $\Gamma_j / j \rightarrow 1$ a.s.\ as $j \rightarrow \infty$, by the strong law of large numbers. Here, we may additionally assume $\alpha = 1$; otherwise apply the monotone power transform $x\mapsto x^{\alpha}$. Let 
$$A=\left\{ \bigvee_{\boldsymbol{j} \in \mathcal{D}_k} f\left(T_{\boldsymbol{j}}\right)[\boldsymbol{j}]^{-1}=\infty\right\}.$$
We shall show that $A$ is in the exchangeable $\sigma$-field w.r.t $\left(T_i\right)_{i \in \mathbb{Z}_{+}}$, so that the Hewitt–Savage zero-one law (\cite[Theorem 4.15]{kallenberg2021foundations}) applies. Next, we introduce $A_q=\{f\left(T_{\boldsymbol{j}}\right)[\boldsymbol{j}]^{-1} >q\text{ for some } \boldsymbol{j} \in \mathcal{D}_k\},q \in \mathbb{Q}_{+}$ (positive rationals).
Then $A=\bigcap_{q \in \mathbb{Z}_+} A_q$ modulo null sets. Now fix a bijection (permutation) $\pi_m: \mathbb{Z}_{+} \rightarrow \mathbb{Z}_{+}$ that is identity when restricted to $\{m+1, m+2, \ldots\}$,   $m \in \mathbb{Z}_{+}$. It suffices to show 
\begin{equation}\label{goal}
    A^{\pi_m}:=\left\{\bigvee_{\boldsymbol{j} \in \mathcal{D}_k} f\left(T_{\pi_m(\boldsymbol{j})}\right)[\boldsymbol{j}]^{-1}=\infty\right\} = A
\end{equation}
modulo null sets, where $\pi_m(\sbf{j})=(\pi_m(j_1),\ldots,\pi_m(j_k))$.
Set $ A_q^{\pi_m}=\{f\left(T_{\pi_m(\boldsymbol{j})}\right)[\boldsymbol{j}]^{-1} >q \text{ for some } \boldsymbol{j} \in \mathcal{D}_k\},q \in \mathbb{Q}_{+}$. Note that $A^{\pi_m} = \bigcap_{q \in \mathbb{Z}_{+}} A_q^{\pi_m}$ modulo null sets.
Hence, to verify \eqref{goal}, it is enough to show for any $q \in \mathbb{Q}_{+}$, $$A_q^{\pi_m} \subset A_{q m^{-k}} \text{ and }  A_q \subset A_{q m^{-k}}^{\pi_m}.$$
Indeed, this follows from the fact that    $m^{-k}\le \frac{[\pi_m(\sbf{j})]}{[\sbf{j}]}\le m^k$ for any $\boldsymbol{j} \in \mathcal{D}_k$.
\end{proof}

\subsection{Sufficient conditions}\label{suff sec}
Recall \cite[]{stoev2005extremal} established that in the case $k=1$, the condition $f\in L^\alpha_+(\mu)$ is  both necessary and sufficient   for the integrability (i.e., $I_1^e(f) < \infty$ a.s.). When $k\ge 2$, the condition $f\in L^\alpha_+(\mu^k)$ is only a necessary condition for integrability, but not a sufficient one.  We shall establish some sufficient conditions in this section.

First,  in view of \eqref{eq class}, it suffices to establish a sufficient condition for $S_k^e(f)<\infty$ a.s.. For this purpose, we shall take advantage of some known results on multiple stable integrals. Following \cite[]{samorodnitsky1989asymptotic},  we adopt the following notation: For a  function $f \in \cl{L}_k$, where $k\ge 2$, and a measure $\mu$ on $E$, set
\begin{equation}\label{defn l space}
\begin{aligned}
     L^\alpha \ln ^{k-1} L(f,\mu)   = \int_{E^k} f^\alpha(\boldsymbol{u}) \left(1+ (\ln_{+}f(\boldsymbol{u}))^{k-1} \right) \mu^k(d \boldsymbol{u}),
\end{aligned}
\end{equation}
where   $\ln_{+} x:=\ln (x\vee 1)$, $x\in [0,\infty]$.

The following theorem establishes a set of sufficient conditions for the integrability of multiple $\alpha$-Fr\'echet extremal integrals.

\begin{thm}\label{suff thm}
For a function $f \in \cl{L}_k$,  $k\ge 2$,  a sufficient condition for integrability  $I_k^e(f)< \infty$ a.s., is that there exists a probability measure $m$ on $E$ equivalent to $\mu$ with  $\psi=d\mu/dm\in (0,\infty)$ $m$-a.e., such that 
\begin{equation}\label{suff 2}
    L^\alpha \ln^{k-1} L(f\cdot(\psi^{\otimes k})^{1/\alpha}, m) < \infty,
\end{equation}
where  the $k$-variate tensor product function $\psi^{\otimes k}$ is defined by $\psi^{\otimes k}(x_1, ,\ldots,x_k): = \psi(x_1)\otimes \ldots \otimes\psi(x_k)$.

\end{thm}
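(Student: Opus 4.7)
The plan is to reduce the problem, via the LePage representation \eqref{eq4}, to a positive random series whose almost-sure finiteness is classical from the theory of multiple stable integrals. By \eqref{eq class} it suffices to prove $S_k^e(f) < \infty$ a.s.. Absorbing the Radon--Nikodym derivative into the integrand by setting $g(u_1,\ldots,u_k) := f(u_1,\ldots,u_k)\prod_{r=1}^k \psi(u_r)^{1/\alpha}$, I would rewrite
\begin{equation*}
S_k^e(f) \;=\; \bigvee_{\sbf{j}\in\cl{D}_k} g(T_{j_1},\ldots,T_{j_k})\,[\Gamma_{\sbf{j}}]^{-1/\alpha},
\end{equation*}
so that the hypothesis \eqref{suff 2} becomes the cleaner statement $L^\alpha \ln^{k-1}L(g,m) < \infty$ on the probability space $(E,\cl{E},m)$.

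Because every summand is non-negative, the supremum is dominated by the corresponding unordered sum:
\begin{equation*}
S_k^e(f) \;\leq\; \sum_{\sbf{j}\in\cl{D}_k} g(T_{j_1},\ldots,T_{j_k})\,[\Gamma_{\sbf{j}}]^{-1/\alpha}.
\end{equation*}
The right-hand side has precisely the form of the LePage-type series appearing in the representation of a positive multiple $\alpha$-stable integral of the off-diagonal kernel $g$ with respect to an $\alpha$-stable random measure on $(E,\cl{E},m)$, summed over distinct indices. I would then invoke the classical sufficient condition for almost-sure convergence of such series under the $L^\alpha \ln^{k-1} L$ assumption on the kernel, which is a well-known fact in the multiple stable literature (see, e.g., \cite{krakowiak1986random}, \cite{rosifiski1986ito}, \cite{samorodnitsky1989asymptotic}). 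The $\ln^{k-1}$ factor reflects the $k$-fold convolution of the heavy tails of the factors $\Gamma_{j_r}^{-1/\alpha}$. Finiteness of the upper bound then yields $S_k^e(f) < \infty$ a.s., and hence $I_k^e(f) < \infty$ a.s.\ via \eqref{eq class}.

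The main care, and where I expect the bulk of the write-up to lie, is in the precise invocation of the classical convergence result. The cited references typically formulate the condition either on $\cl{D}_{k,<}$ with a symmetrized kernel or on $\bb{Z}_+^k$ with appropriate combinatorial factors, and sometimes only for stability index $\alpha \in (0,1)$ where a bona fide positive $\alpha$-stable random measure exists. Here, however, the statement one actually needs is purely a statement about almost-sure summability of a non-negative random series indexed by $\cl{D}_k$, so it extends to all $\alpha>0$ without change, and the transition between $\cl{D}_k$, $\cl{D}_{k,<}$, and $\bb{Z}_+^k$ amounts only to accounting for symmetry factors (bounded by $k!$) and for excluding the diagonal (which one first splits off and handles separately by iterated induction on $k$). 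Once these reconciliations are spelled out, the conclusion follows immediately.
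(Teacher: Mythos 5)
Your overall strategy---reduce to the LePage series via \eqref{eq class}, absorb $\psi$ into the integrand, and compare the supremum with a positive multiple-stable-type series---is the same skeleton as the paper's proof, but there is a genuine gap at the comparison step that makes your argument fail for $\alpha\ge 1$.

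You bound $S_k^e(f)\le\sum_{\boldsymbol{j}\in\mathcal{D}_k} g(T_{\boldsymbol{j}})[\Gamma_{\boldsymbol{j}}]^{-1/\alpha}$ and then claim that, since this is merely a question of a.s.\ summability of a non-negative series, the classical convergence criteria extend ``to all $\alpha>0$ without change.'' That claim is false. Already for $k=1$ and $g=\mathbf{1}_A$ with $m(A)>0$, the sum $\sum_j \mathbf{1}_{\{T_j\in A\}}\Gamma_j^{-1/\alpha}$ diverges a.s.\ whenever $\alpha\ge 1$, because $\Gamma_j/j\to 1$ a.s.\ and $\sum_j j^{-1/\alpha}=\infty$; the same divergence propagates to the multiple series over $\mathcal{D}_k$. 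So for $\alpha\ge 1$ your upper bound is $+\infty$ a.s.\ for any non-trivial integrand, and the comparison yields no information about $S_k^e(f)$. The restriction to $\alpha\in(0,1)$ in the positive-stable literature is not an artifact of needing a ``bona fide'' stable measure---it is exactly the threshold at which the series itself stops converging.

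The missing idea, which is how the paper proceeds, is to apply a monotone power transform \emph{before} passing from supremum to sum: fix $r>\alpha$ and write $\bigl(S_k^e(f)\bigr)^r=\bigvee_{\boldsymbol{j}} f(T_{\boldsymbol{j}})^r[\Gamma_{\boldsymbol{j}}]^{-1/(\alpha/r)}\le\sum_{\boldsymbol{j}} f(T_{\boldsymbol{j}})^r[\Gamma_{\boldsymbol{j}}]^{-1/(\alpha/r)}$. This shifts the effective index to $\alpha/r<1$ (the paper in fact works with $\widetilde{\alpha}=2\alpha/r\in(0,2)$ and the kernel $g=f^{\alpha/\widetilde{\alpha}}$ so as to invoke the convergence criteria of \cite[]{samorodnitsky1989asymptotic} and \cite[]{breton2002multiple}), and the $L^\alpha\ln^{k-1}L$ condition on $f$ is equivalent to the corresponding condition on the transformed kernel. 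Two further points you gloss over but which the paper needs: the reduction (via symmetrization and Lemma \ref{lem eqv}) from $\mathcal{D}_k$ to $\mathcal{D}_{k,<}$, and the transfer from a general Borel space $(E,\mu)$ to $([0,1],\lambda)$ via a measurable map $\rho$ with $\mu=\lambda\circ\rho^{-1}$, since the cited convergence results are stated on $[0,1]$. These are routine once the power transform is in place, but the power transform itself is essential and cannot be omitted.
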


\begin{rem}
 We first note that the finiteness in \eqref{suff 2} depends on the choice of $m$: it can hold for some probability measure $m$ equivalent to $\mu$ but fail for another (see Example \ref{exam thm 3.2}). Moreover, in contrast to condition \eqref{suff 2}, the condition $L^\alpha \ln ^{k-1} L(f, \mu)<\infty$ is not sufficient (see Example \ref{not suff exam}). 
 Besides, we point out that when $k=2$, condition  \eqref{suff 2} is equivalent to
 \begin{equation}\label{I2}
\int_{E^{2}}f(s, t)^\alpha\left(1+\ln_{+}  \pp{\frac{f(s, t)}{\left(\int_E f(s, u)^\alpha  \mu(du)\right)^{1 / \alpha}\left(\int_E f(u, t)^\alpha \mu(du)\right)^{1 / \alpha}}}\right) \, \mu(ds)\mu(dt)<\infty,
\end{equation}
where the ratio inside $\ln_{+}$ is understood as $1$ in the case $0/0$, a celebrated necessary and sufficient condition for integrability of a double symmetric $\alpha$-stable integral \cite[]{rosifiski1986ito,kwapien1987double}. The proof of this equivalence will be deferred to Proposition \ref{prop k=2}.
\end{rem}

\begin{proof}[Proof of Theorem \ref{suff thm}]
 We now apply the equivalent characterization of integrability given in \eqref{eq class}. 
Without loss of generality, one may assume that $\mu$ is a probability measure and 
$\psi \equiv 1$. More precisely, by the LePage representation \eqref{eq4}, the original 
multiple extremal integral $I_k^e(f)$ with respect to the random sup measure $M_\alpha$ 
is equal in distribution to another multiple extremal integral whose integrand is 
$f \cdot (\psi^{\otimes k})^{1/\alpha}$, and whose  random sup measure is 
$\bigvee_{i=1}^{\infty} \mathbf{1}_{\{T_i \in \cdot\}}\, \Gamma_i^{-1/\alpha}$, with control 
measure given by the probability measure $m = \mathbb{P} \circ T_1^{-1}$.
Furthermore, it suffices to restrict attention to  symmetric $f$.  
Indeed, this follows from \eqref{sim inv} and the fact that  the relations \eqref{I2} and \eqref{suff 2}  hold   if and only if they hold for $\wt{f}$; see Lemma \ref{lem eqv} for a proof of these equivalences.

Next, suppose $r>\alpha$. Note
$$
S_k^e(f)<\infty \text { a.s. } \Longleftrightarrow\left(S_k^e(f)\right)^r <\infty \text { a.s.. }
$$
Moreover,
\begin{equation}\label{eq:S_k^r<inf}
\left(S_k^e(f)\right)^{r} = \bigvee_{\boldsymbol{j} \in \mathcal{D}_{k,<}} f\left(T_{\boldsymbol{j}}\right)^{r} \left[\Gamma_{\boldsymbol{j}}\right]^{-1 / (\alpha/r)} \le \sum_{\boldsymbol{j} \in \mathcal{D}_{k,<}} f\left(T_{\boldsymbol{j}}\right)^{r} \left[\Gamma_{\boldsymbol{j}}\right]^{-1 / (\alpha/r)},
\end{equation}
where the assumed symmetry of $f$  allows us to reduce the index set from $\cl{D}_k$ in \eqref{no order} to $\cl{D}_{k,<}$ in \eqref{D<}.  Hence, it suffices to identify sufficient conditions to ensure the last expression in \eqref{eq:S_k^r<inf} is a.s.\ finite for $k \geq2$. 

Now we claim that, under the Borel assumption on $E$, it suffices to verify the conditions \eqref{I2} and \eqref{suff 2}  ensuring that  the last expression in \eqref{eq:S_k^r<inf} is a.s.\ finite for $k \geq2$ in the special case  $E=[0,1]$, $\mu=\lambda$, with $T_j = U_j, j \in \mathbb{Z}_+,$ where $(U_j)_{j \in \mathbb{Z}_+}$ are i.i.d.\ uniform random variables on $[0,1]$.  Suppose the claim holds in this special case. Then for a general Borel space $E$, it follows from \cite[Lemma 4.22]{kallenberg2021foundations} that there exists a measurable map $\rho:[0,1] \mapsto E$ such that $\mu=\lambda \circ \rho^{-1}$, and hence $(\rho\left(U_j\right))_{j \in \mathbb{Z}_+} \stackrel{d}{=} (T_j)_{j \in \mathbb{Z}_+}$. Therefore, for $k >2$,
\begin{equation}\label{l1}
    L^\alpha \ln ^{k-1} L\left( f_\rho, \lambda^k\right) =L^\alpha \ln ^{k-1} L\left( f, \mu^k\right). 
\end{equation}
where $f_\rho: E^k \mapsto [0,\infty] $ is defined by $f_\rho(u_1 ,\ldots,u_k) = f(\rho(u_1), \rho(u_2)\ldots, \rho(u_k))$.
Since, by the claim above, the finiteness of the left-hand side of \eqref{l1} implies a.s. finiteness of 
$$\bigvee_{\boldsymbol{j} \in \mathcal{D}_k} f_\rho\left(U_{j_1}, \ldots, U_{j_k}\right)\left[\prod_{\ell=1}^k \Gamma_{j_\ell}^{-1 / \alpha} \right],$$
which equals in distribution to  $ S_k^e(f)$, it follows that $L^\alpha \ln ^{k-1} L\left( f, \mu^k\right) < \infty$ ensures $S_k^e(f) < \infty$ a.s.. An analogous argument applies to condition \eqref{I2} for $k=2$.

Next, assume $E = [0,1]$, $\mu = \lambda$, and $T_j=U_j, j \in \mathbb{Z}_{+}$. Let $\widetilde{\alpha} = 2\alpha/r \in (0,2)$, and set  $g = f^{\alpha/\wt{\alpha}}$.   It remains to show the condition \eqref{suff 2} implies that for $k \geq 2$,
\begin{equation}\label{eq:series finite 2}
\sum_{\boldsymbol{j} \in \mathcal{D}_{k, <}}g\left(U_{\boldsymbol{j}}\right)^2 \left[\Gamma_{\boldsymbol{j}}\right]^{-2 / \widetilde{\alpha}} < \infty \quad \text{a.s..}
\end{equation}
For $k>2$, the moment condition in  \cite[Theorem 5.3]{samorodnitsky1989asymptotic} implies their relation (5.2),  which translated into our context means   $L^{\wt{\alpha}} \ln ^{k-1} L(g, \lambda) < \infty$ implies (\ref{eq:series finite 2}). 
For $k = 2$, in view of \cite[Theorem 1.3. (i) and Theorem 2.1]{samorodnitsky1989asymptotic},
the condition (\ref{eq:series finite 2}) is equivalent to 
the convergence of the series representation of a double symmetric $\widetilde{\alpha}$-stable integral of $g$. Moreover, $L^{\wt{\alpha}} \ln ^{k-1} L(g, \lambda) < \infty$ is known to be sufficient for the existence of such an integral (see \cite[]{breton2002multiple}). Hence, a straightforward calculation shows that $L^{\alpha} \ln ^{k-1} L(f,\lambda) < \infty$, $k \geq 2$, serves as a sufficient condition for (\ref{eq:series finite 2}).

\end{proof}

\subsection{Necessary condition}\label{s dec}

In this section,  we shall establish  the following necessary condition for integrability.
\begin{thm}\label{thm:nec cond}
 For a  function $f \in \cl{L}_k$,  $k\in \bb{Z}_+$, a necessary condition for integrability $I_k^e(f)<\infty$ a.s., is $f \in L_{+}^\alpha\left(\mu^k\right)$.
\end{thm}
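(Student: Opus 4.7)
The plan is to argue by contrapositive: assume $\int_{E^k} f^\alpha\, d\mu^k = \infty$ and derive $I_k^e(f) = \infty$ a.s. By Proposition~\ref{sim inv} I may replace $f$ with its max-symmetrization (which still fails $L^\alpha_+(\mu^k)$), so WLOG $f$ is symmetric. The main idea is to bound $I_k^e(f)$ from below by a ``rainbow'' variant $J_k(f)$ built from $k$ \emph{independent} $\alpha$-Fr\'echet random sup measures, then prove $J_k(f)=\infty$ a.s.\ by induction on $k$, peeling off one coordinate at a time and applying the $k=1$ necessity conditionally at each step.

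To form the rainbow bound, I work in the PPP formulation of Section~\ref{alt ppp}: let $N$ be the underlying Poisson point process on $E\times(0,\infty)$ of intensity $\mu\otimes\nu_\alpha$, and color each atom independently and uniformly with one of $k$ colors. The thinned PPPs $N^{(1)},\ldots,N^{(k)}$ are then independent, each of intensity $(\mu/k)\otimes\nu_\alpha$, and any tuple with one atom from each $N^{(r)}$ automatically lies in $\mathcal{D}_k$. Writing $(u_j^{(r)},v_j^{(r)})$ for the atoms of $N^{(r)}$, this yields
\[
I_k^e(f) \;\ge\; J_k(f;\mu/k) := \bigvee_{j_r \in N^{(r)}} f\bigl(u_{j_1}^{(1)},\ldots,u_{j_k}^{(k)}\bigr)\prod_{r=1}^k v_{j_r}^{(r)},
\]
and I write $J_k(g;\nu)$ for the analogous construction with arbitrary $\sigma$-finite control $\nu$ in place of $\mu/k$.

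The induction claim is: $\int g^\alpha\, d\nu^k = \infty \Rightarrow J_k(g;\nu) = \infty$ a.s.\ for every such $(g,\nu)$. The base case $k=1$ is the single-integral necessity recalled in Section~\ref{s2}. For the inductive step, peel off the $k$-th PPP:
\[
J_k(g;\nu) = \bigvee_{i \in N^{(k)}} v_i^{(k)}\,\phi(u_i^{(k)}), \qquad \phi(u_k) := J_{k-1}\bigl(g(\cdot,u_k);\nu\bigr),
\]
where $\phi$ depends only on $N^{(1)},\ldots,N^{(k-1)}$ and is hence independent of $N^{(k)}$. A conditional application of the $k=1$ necessity to $\phi$ then gives $\mathbb{P}(J_k(g;\nu)=\infty \mid \phi) = \mathbf{1}_{\{\int \phi^\alpha d\nu =\infty\}}$, so it suffices to prove $\int \phi^\alpha d\nu = \infty$ a.s. Expanding $\phi(u_k)^\alpha = \bigvee_{\boldsymbol{i}} g(u_{i_1}^{(1)},\ldots,u_{i_{k-1}}^{(k-1)},u_k)^\alpha \prod_r (v_{i_r}^{(r)})^\alpha$ and using the elementary $\int \bigvee \ge \bigvee \int$,
\[
\int \phi(u_k)^\alpha d\nu(u_k) \;\ge\; \bigvee_{\boldsymbol{i}} \prod_{r=1}^{k-1} (v_{i_r}^{(r)})^\alpha\, h\bigl(u_{i_1}^{(1)},\ldots,u_{i_{k-1}}^{(k-1)}\bigr) = J_{k-1}(h^{1/\alpha};\nu)^\alpha,
\]
with $h(\boldsymbol{u}) := \int g(\boldsymbol{u},u_k)^\alpha d\nu(u_k)$. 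Fubini gives $\int h\, d\nu^{k-1} = \int g^\alpha d\nu^k = \infty$, so the inductive hypothesis applied to $h^{1/\alpha}$ yields $J_{k-1}(h^{1/\alpha};\nu)=\infty$ a.s., closing the induction.

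The most delicate step is the conditional application of the $k=1$ necessity to the random integrand $\phi$: one must verify joint measurability of $\phi$ in $(\omega,u_k)$ (immediate from countability of PPP atoms and measurability of $g$) and independence of $\phi$ from $N^{(k)}$ (built into the coloring). Apart from that, the induction is an elementary sup/integral exchange combined with Fubini, and the final conclusion $I_k^e(f) \ge J_k(f;\mu/k) = \infty$ a.s.\ follows, since $\int f^\alpha d(\mu/k)^k = k^{-k}\int f^\alpha d\mu^k = \infty$.
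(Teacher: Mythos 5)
Your argument is correct, and its first move coincides with the paper's: both decouple the $k$ coordinates. The paper does this via the max-stability superposition $M_\alpha \stackrel{d}{=} k^{-1/\alpha}\bigvee_{\ell=1}^k M_{\alpha,\ell}$ in Proposition \ref{prop 4.6}, which is exactly your uniform Poisson coloring in disguise — restricting the full off-diagonal supremum to ``rainbow'' tuples is the inequality \eqref{r4}. Where you genuinely diverge is in showing that the decoupled supremum is a.s.\ infinite when $\int_{E^k} f^\alpha\,d\mu^k=\infty$. The paper establishes the quantitative stochastic minorant $\int_{E^k} f^\alpha\,d\mu^k\cdot\prod_{i=1}^k Z_i \leq_{st} S_k^{e,\text{de}}(f)$ with $Z_i$ i.i.d.\ $\alpha$-Fr\'echet (Lemma \ref{lem 3.17}), obtained by peeling coordinates and invoking at each step the multivariate Fr\'echet stochastic-order inequality of Lemma \ref{jc lem}. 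You instead run a qualitative induction: condition on the first $k-1$ colors, apply the $k=1$ zero--infinity dichotomy to the frozen random integrand $\phi$, and propagate divergence down one level via $\int\sup\ge\sup\int$ and Tonelli. Both are coordinate-peeling reductions to the single-integral case; yours is somewhat more elementary in that it bypasses Lemma \ref{jc lem} and the Fr\'echet product entirely, at the cost of the joint-measurability and conditioning bookkeeping you correctly flag, whereas the paper's route produces a reusable quantitative lower bound rather than only the dichotomy. Two small points for a final writeup: state the induction claim for arbitrary nonnegative measurable integrands (not only those vanishing on diagonals), since $h^{1/\alpha}$ need not vanish there — your rainbow functional $J_{k-1}$ accommodates this, so the induction closes; and note explicitly that the pathwise bound $I_k^e(f)\ge J_k(f;\mu/k)$ holds for the Poisson-based version, which suffices because a.s.\ infiniteness is a distributional property transferred to general $M_\alpha$ by \eqref{eq class} and \eqref{D.2}.
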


This necessary condition is established through a
 decoupling argument: We  first establish a stochastic order between the multiple integral $I^e_k(f)$ and its decoupled version constructed using i.i.d.\ copies of the random sup measure defining $I^e_k(f)$. The idea can be illustrated by the following formal derivation. Suppose for simplicity $k=2$, and let $M_\alpha^{(1)}$ and $M_\alpha^{(2)}$ be i.i.d.\ copies of $M_\alpha$. By the max-stability property $M_\alpha\EqD 2^{-1/\alpha} \pp{M_\alpha^{(1)}\vee M_\alpha^{(2)}}$, one may formally write
 \begin{align*}
  &\leftindex^e \int_{E^2} f(u_1,u_2) M_\alpha(du_1)M_\alpha(du_2)\\ \EqD & 2^{-2/\alpha} \leftindex^e \int_{E^2} f(u_1,u_2) \pp{M_\alpha^{(1)}(du_1)\vee  M_\alpha^{(1)}(du_1)}  \pp{M_\alpha^{(2)}(du_2)\vee  M_\alpha^{(2)}(du_2)} \\
  \ge  & 2^{-2/\alpha} \leftindex^e \int_{E^2} f(u_1,u_2) M_\alpha^{(1)}(du_1)  M_\alpha^{(2)}(du_2)  \text{ a.s..}
 \end{align*}
 In the actual proof, we  work with the decoupled LePage representation  \eqref{S_de} below. After establishing this stochastic order relation,  we then show that $f \in L_{+}^\alpha\left(\mu^k\right)$ is  a necessary condition for integrability for the decoupled version.  In particular, Theorem \ref{thm:nec cond} follows from Proposition \ref{prop 4.6} and Lemma \ref{lem 3.17} below.
 
 Following the LePage representation notation as in Definition \ref{defn l}, let $\left(T_i^{(\ell)}\right)_{i\in \bb{Z}_+}$, $ \ell =1,\ldots, k$, be i.i.d.\ copies of $\left(T_i\right)_{i\in \bb{Z}_+}$, and let $ \pp{{\Gamma}_i^{(\ell)}}_{i\in \bb{Z}_+}$, $ \ell=1, \ldots, k$, be i.i.d.\ copies of $\pp{\Gamma_i}_{i\in \bb{Z}_+}$, and suppose the two collections of random variables are independent of each other.
For a function $f \in \cl{L}_k$, we set
\begin{equation}\label{S_de}
    S_k^{e,\text{de}}(f)  = \bigvee_{\boldsymbol{i} \in \bb{Z}_+^k}  f \left(T_{i_1}^{(1)}, \ldots ,T_{i_k}^{(k)}\right) \left(\prod_{\ell=1}^k \psi\left(T_{i_\ell}^{(\ell)}\right)\right)^{1 / \alpha}\left(\prod_{\ell=1}^k \Gamma_{i_\ell}^{(\ell)}\right)^{-1 / \alpha}.
\end{equation}
Below for two nonnegative random variables  $X$ and $Y$ (possibly taking value $\infty$),   we write $X \leq_{s t} Y$ to denote  $\mathbb{P}(X>x) \leq \mathbb{P}(Y>x)$ for all $x \in[0, \infty)$.  
The next result establishes a stochastic order relation between $I_k^e(f)$ and its decoupled version.
\begin{prop}\label{prop 4.6}
    Suppose  a function $f \in \cl{L}_k$, $k \geq 2$. Then we have $k^{-k/\alpha} S^{e,\text{de}}_k(f) \leq _{st}I^e_k(f)$.
\end{prop}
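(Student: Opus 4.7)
The plan is to construct a coupling on a single probability space on which $S_k^e(f)$ and $S_k^{e,\text{de}}(f)$ coexist and satisfy the pointwise almost sure inequality $S_k^e(f)\ge k^{-k/\alpha}S_k^{e,\text{de}}(f)$. Combined with the LePage identity $I_k^e(f)\stackrel{d}{=} S_k^e(f)$ from Corollary~\ref{cor:gen int}, this immediately yields the claimed stochastic order. The construction realizes, at the level of LePage series, the heuristic stated before the proposition: after merging $k$ independent LePage sequences and rescaling the time coordinate, one obtains a standard LePage sequence in which any tuple whose coordinates come from distinct copies is automatically indexed off-diagonally.

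Concretely, starting from the independent ingredients $(T_i^{(\ell)},\Gamma_i^{(\ell)})_{i\in \bb{Z}_+}$, $\ell=1,\ldots,k$, that define $S_k^{e,\text{de}}(f)$ in \eqref{S_de}, I would enumerate the merged point cloud $\{(T_i^{(\ell)},\Gamma_i^{(\ell)}):i\in\bb{Z}_+,\,1\le \ell\le k\}$ in increasing order of the second coordinate as $(T'_n,\Gamma'_n)_{n\in \bb{Z}_+}$ and set $T_n:=T'_n$ and $\Gamma_n:=k\Gamma'_n$. By the superposition property of independent Poisson processes and the marking theorem, $(k\Gamma'_n)_n$ are the arrival times of a standard rate-$1$ Poisson process and $(T'_n)_n$ are i.i.d.\ with distribution $m$, independent of $(\Gamma'_n)_n$; in particular $(T_n,\Gamma_n)_n$ is a genuine LePage sequence in the sense of Definition~\ref{defn l}. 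Therefore, by Corollary~\ref{cor:gen int}, the random variable
\[
S_k^e(f)=\bigvee_{\boldsymbol{j}\in \mathcal{D}_k} f(T_{j_1},\ldots,T_{j_k})\prod_{r=1}^k\psi(T_{j_r})^{1/\alpha}\Gamma_{j_r}^{-1/\alpha}=k^{-k/\alpha}\bigvee_{\boldsymbol{j}\in \mathcal{D}_k} f(T'_{j_1},\ldots,T'_{j_k})\prod_{r=1}^k\psi(T'_{j_r})^{1/\alpha}(\Gamma'_{j_r})^{-1/\alpha}
\]
computed on this coupled probability space has the same distribution as $I_k^e(f)$.

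The last step is a sub-supremum argument. For each $\boldsymbol{i}=(i_1,\ldots,i_k)\in \bb{Z}_+^k$, let $j_r(\boldsymbol{i})$ denote the rank of $\Gamma_{i_r}^{(r)}$ in the merged sequence, so that $T'_{j_r(\boldsymbol{i})}=T_{i_r}^{(r)}$ and $\Gamma'_{j_r(\boldsymbol{i})}=\Gamma_{i_r}^{(r)}$. Since the copy labels $1,\ldots,k$ are distinct, the labelled points $(r,i_r)$ are pairwise distinct, and hence so are their ranks; thus $(j_1(\boldsymbol{i}),\ldots,j_k(\boldsymbol{i}))\in \mathcal{D}_k$. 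Restricting the outer supremum in the display above to this subfamily of tuples reproduces precisely $k^{-k/\alpha}S_k^{e,\text{de}}(f)$, which gives $S_k^e(f)\ge k^{-k/\alpha}S_k^{e,\text{de}}(f)$ almost surely on the coupling and therefore the desired stochastic order. The only delicate point is the first step — verifying that the sorted merged sequence really does yield a standard LePage input — but this is a classical consequence of Poisson superposition and marking, after which the combinatorial sub-supremum bound is forced by the distinctness of the copy labels.
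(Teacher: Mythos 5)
Your proof is correct, and it takes a genuinely different route from the paper's. The paper also starts from the superposition idea, but it works at the level of random sup measures: it sets $\widehat{M}_\alpha=k^{-1/\alpha}\bigvee_{\ell\le k}M_{\alpha,\ell}$, invokes max-stability to get $\widehat{M}_\alpha\stackrel{d}{=}M_\alpha$, bounds $k^{-k/\alpha}S_k^{e,\text{de}}(f)$ by the supremum $\widehat{S}_\alpha^{[1:k]}(f)$ over all tuples of distinct labelled pairs $(i_d,\ell_d)$, and then needs a separate auxiliary result (Proposition \ref{prop B.5}, which re-runs the symmetric-difference approximation machinery of Theorem \ref{claim 5.3} for the decoupled series) to identify $\widehat{S}_\alpha^{[1:k]}(f)\stackrel{d}{=}\widehat{I}_k^e(f)\stackrel{d}{=}I_k^e(f)$. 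You instead realize the same superposition as an explicit pathwise coupling: merging the $k$ marked Poisson clouds, sorting by the $\Gamma$-coordinate, and rescaling by $k$ produces a bona fide LePage input in the sense of Definition \ref{defn l}, so the already-established identity \eqref{eq4} of Corollary \ref{cor:gen int} applies directly and Proposition \ref{prop B.5} is not needed; your rank map $\boldsymbol{i}\mapsto(j_1(\boldsymbol{i}),\ldots,j_k(\boldsymbol{i}))$ then plays exactly the role of the paper's restriction to $\ell_d=d$ inside $\widehat{S}_\alpha^{[1:k]}(f)$. What you buy is a shorter, more self-contained argument; what the paper's version buys is the identification of the full decoupled supremum as a product random sup measure $\widehat{M}_\alpha^{(k)}$ as a set-indexed process, which fits its general framework. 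The only points worth making explicit in your write-up are that the merged $\Gamma$-values are a.s.\ distinct and locally finite (so the sorted enumeration and the rank map are well defined), and that the rescaling $\Gamma_n:=k\Gamma'_n$ of a rate-$k$ arrival sequence indeed yields unit-rate arrivals while the conditional-independence of marks given arrival times delivers the i.i.d.\ $(T'_n)$ independent of $(\Gamma'_n)$ --- both of which you correctly attribute to superposition and marking.
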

\begin{proof}
 Set $M_{\alpha,\ell}  (\cdot)= \bigvee_{i \geq 1} \mathbf{1}_{\left\{T_i^{(\ell)} \in \cdot\right\}} \psi\left(T_i^{(\ell)}\right)^{1 / \alpha} \left(\Gamma_i^{(\ell)}\right)^{-1 / \alpha}$, $\ell=1 \ldots k$, which are i.i.d.\ copies of $M_\alpha^L$ in Definition \ref{defn l}.
 Set $\widehat{M}_\alpha=k^{-1 / \alpha} \bigvee_{1 \leq \ell \leq k} M_{\alpha,\ell}$ and let $\widehat{I}_k^e(f)$ be the multiple extremal integral with respect to $\widehat{M}_\alpha$.  By max-stability, one has $\widehat{M}_\alpha \stackrel{d}{=} M_\alpha^L \stackrel{d}{=} M_\alpha$, and thus $\widehat{I}_k^e(f) \stackrel{d}{=} I_k^e(f)$. Next, 
 observe that a.s. we have
\begin{equation}\label{r4}
\begin{aligned}
   & k^{-k/\alpha} S^{e,\text{de}}_k(f) =   k^{-k/\alpha} \bigvee_{ \boldsymbol{i} \in \bb{Z}_+^k} f\left(T_{i_1}^{(1)},\ldots, T_{i_k}^{(k)} \right) \left( \prod_{\ell=1}^k\psi\left(T_{i_\ell}^{(\ell)}\right)\right)^{1 / \alpha} \left(\prod_{\ell=1}^k \Gamma_{i_\ell}^{(\ell)}\right)^{-1 / \alpha}\notag\\
   \leq   & k^{-k/\alpha} \bigvee_{\substack{ \boldsymbol{i}\in \bb{Z}_+^k,\,
   \boldsymbol{\ell}\in \{1,\ldots,k\}^k,\\(i_d,\ell_d) \neq (i_{d^\prime}, \ell_{d^\prime})  \\ \text{ for any } d \neq d^\prime,\\ d,d^\prime \in \{1,\ldots,k\}}} f\left(T_{i_1}^{(\ell_1)},\ldots, T_{i_k}^{(\ell_k)} \right) \left( \prod_{d=1}^k\psi\left(T_{i_d}^{(\ell_d)}\right)\right)^{1 / \alpha} \left(\prod_{d=1}^k \Gamma_{i_d}^{(\ell_d)}\right)^{-1 / \alpha}=:\widehat{S}_\alpha^{[1: k]}(f).
\end{aligned}
 \end{equation}
Let $\widehat{M}^{(k)}_\alpha$ be defined in terms of $\widehat{M}_\alpha$ as in (\ref{prod M_a}). 
Using arguments similar as in  \cite[]{samorodnitsky1991construction}, it can be shown that 
    $\left(\widehat{S}_\alpha^{[1: k]}\left(\mathbf{1}_A\right)\right)_{A \in \mathcal{E}^{(k)}} \stackrel{d}{=}\left(\widehat{M}_\alpha^{(k)}(A)\right)_{A \in \mathcal{E}^{(k)}}$.  See Proposition \ref{prop B.5} in the Appendix for the details. It then follows from the monotone simple function approximation in Definition \ref{def:mult extr int}   that $\widehat{I}_k^e(f) \stackrel{d}{=} \widehat{S}^{[1:k]}_\alpha (f)$. The proof is concluded combining the above.

\end{proof}

Next, we shall establish  a stochastic lower bound for $S^{e,\text{de}}_k(f)$ which relates to the necessary condition $f \in L_{+}^\alpha\left(\mu^k\right)$. 
We first prepare a preliminary result.
The following lemma concerns a stochastic order relation on multivariate $\alpha$-Fr\'echet distributions.   It essentially follows from \cite[Corollary 4.3]{corradini2024stochastic}, while we still include a short proof.
\begin{lem}\label{jc lem}
Suppose $(E,\cl{E},m)$ is a probability measure space, and
  $f_i:E\mapsto [0,\infty]$ is a measurable function satisfying $\int_E f_i^\alpha(\nu)  m(d\nu)< \infty$, $i  \in \bb{Z}_+$.  Let $\left( \Gamma_j\right)_{j \in \bb{Z}_+}$ be the  arrival times of a standard Poisson process on $[0, \infty)$, and let $\left(T_j\right)_{j \in \bb{Z}_+}$ be a sequence of i.i.d.\ random elements taking value in $E$ with  distribution $m$, independent of $\left( \Gamma_j\right)_{j \in \bb{Z}_+}$.  Then for any $x_i>0$,  $i\in \bb{Z}_+$, we have 
\begin{equation}\label{jc}
\begin{aligned}
   \mathbb{P}\left( \bigvee_{ j \in \mathbb{Z}_+}f_i(T_j) \Gamma_j^{-1/\alpha} \leq x_i, \text{ for } i \in \bb{Z}_+ \right)  \leq  \mathbb{P}\left( \left(\int_E f^\alpha_{i}(v)m(dv) \right)^{1/\alpha}Z \leq x_i , \text{ for } i \in \bb{Z}_+ \right),
\end{aligned}
\end{equation}
where $Z$ is an $\alpha$-Fr\'echet random variable with unit scale coefficient. 
\end{lem}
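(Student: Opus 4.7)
The plan is to reformulate both sides explicitly as void probabilities / Fréchet CDFs and then reduce the inequality to an elementary pointwise–versus–integral inequality.

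First, I would reinterpret the joint event on the left–hand side in terms of the underlying Poisson point process. Setting $g_i(u) := (f_i(u)/x_i)^{\alpha}$, the event
\[
\bigcap_{i \in \mathbb{Z}_+} \Bigl\{ \bigvee_{j \in \mathbb{Z}_+} f_i(T_j)\Gamma_j^{-1/\alpha} \leq x_i \Bigr\}
\]
is precisely the event that for every $j \in \mathbb{Z}_+$ and every $i \in \mathbb{Z}_+$, $\Gamma_j \geq g_i(T_j)$; equivalently, the atoms $(\Gamma_j, T_j)$ of the Poisson point process $N$ on $[0,\infty) \times E$ (with intensity $dt \otimes m(du)$) all lie outside the region $R := \{(t,u) : t < \sup_i g_i(u)\}$. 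Since $N$ is Poisson, the void probability gives
\[
\mathrm{LHS} \;=\; \mathbb{P}(N(R)=0) \;=\; \exp\!\left(-\int_E \sup_{i \in \mathbb{Z}_+} g_i(u)\, m(du)\right).
\]

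Next I would compute the right–hand side. Because the same $Z$ appears in every coordinate, the intersection collapses to a single event:
\[
\bigcap_{i \in \mathbb{Z}_+} \Bigl\{ \bigl(\textstyle\int f_i^\alpha\, dm\bigr)^{1/\alpha} Z \leq x_i \Bigr\} \;=\; \Bigl\{ Z \leq \inf_{i \in \mathbb{Z}_+} x_i \bigl(\textstyle\int f_i^\alpha\, dm\bigr)^{-1/\alpha} \Bigr\}.
\]
Using $\mathbb{P}(Z\leq z) = \exp(-z^{-\alpha})$ for $z>0$, this gives
\[
\mathrm{RHS} \;=\; \exp\!\left(-\sup_{i \in \mathbb{Z}_+} \int_E g_i(u)\, m(du)\right).
\]
(Here, one should treat the degenerate case $\int f_i^\alpha dm = 0$ separately, in which case the $i$th constraint is vacuous and can be dropped; and if the sup is $+\infty$ both sides equal $0$.)

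Finally, the claimed inequality $\mathrm{LHS} \leq \mathrm{RHS}$ becomes
\[
\int_E \sup_{i \in \mathbb{Z}_+} g_i(u)\, m(du) \;\geq\; \sup_{i \in \mathbb{Z}_+} \int_E g_i(u)\, m(du),
\]
which is immediate: for each fixed $j$, $\sup_i g_i \geq g_j$ pointwise, so $\int \sup_i g_i\, dm \geq \int g_j\, dm$, and then take the sup over $j$. A minor technical point is that $\sup_{i \in \mathbb{Z}_+} g_i$ is measurable as a countable supremum of measurable functions, so the integral is well defined. There is no real obstacle here; the only subtlety is recognizing that the LHS collapses to a Poisson void probability, which cleanly captures the Fréchet–Hoeffding-type comparison between the "Poisson-coupled" joint law and the comonotone joint law on the right.
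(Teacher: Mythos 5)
Your proof is correct and follows essentially the same route as the paper's: both rewrite the left-hand side as the void probability $\exp\bigl(-\int_E \sup_i (f_i(v)/x_i)^\alpha\, m(dv)\bigr)$ of the Poisson point process $\sum_j \delta_{(T_j,\Gamma_j)}$ over the hypograph region, identify the right-hand side as $\exp\bigl(-\sup_i \int_E (f_i(v)/x_i)^\alpha\, m(dv)\bigr)$, and conclude from the elementary inequality $\int \sup_i \ge \sup_i \int$. No substantive differences.
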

\begin{proof}
 Set $N=\sum_{j=1}^\infty\delta_{\left(T_j, \Gamma_j \right)}$, which is a Poisson point process on $E\times [0, \infty)$ with intensity measure $  m(dv)\times dx$. Introduce $g: E \mapsto [0,\infty),   g(v)=  \bigvee_{i\in \bb{Z}_+} \frac{f_i^\alpha\left(v\right)}{x_i^\alpha}$, and let $G=\{(v,x)\in E\times [0,\infty) \mid   g(v) > x \}$.
The left hand side of (\ref{jc}) is
\begin{equation} \label{suff single}
\begin{aligned}
     & \mathbb{P}\left( \bigvee_{i \in \mathbb{Z}_+} \frac{f_i^\alpha\left(T_j\right)}{x_i^\alpha} \leq \Gamma_j \text{ for all } j \in \mathbb{Z}_+\right)  \\
     & =\mathbb{P}\left( N\pp{ G}=0 \right)  =    \exp\left\{ - \int_E \int_0^{g(v)} dx\, m(dv)\right\} \\
    = & \exp\left\{ - \int_E  \bigvee_{i \in \mathbb{Z}_+}\frac{f^\alpha_{i}(v)}{x_i^\alpha}   m(dv) \right\} 
     \le    \exp\left\{ - \bigvee_{i \in \mathbb{Z}_+}\int_E \frac{f^\alpha_{i}(v)}{x_i^\alpha}   m(dv) \right\},
\end{aligned}
\end{equation}
where the last expression is precisely the right hand side of (\ref{jc}).
\end{proof}

\begin{lem}\label{lem 3.17}
Suppose $f \in \cl{L}_k$. Then,
\begin{align}\label{122}
 \int_{E^k} f^\alpha(\boldsymbol{u}) \mu(d \boldsymbol{u}) \pp{ \prod_{i=1}^k Z_i} \leq_{s t} S_k^{\text{e, de}}(f) ,
 \end{align}
 where $Z_i$, $i = 1, \ldots,k$, are i.i.d.\ $\alpha$-Fr\'{e}chet random variables with unit scale coefficient. 
\end{lem}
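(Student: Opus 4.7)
The strategy is an iterative peeling of one coordinate at a time by applying Lemma \ref{jc lem} conditionally. Writing $\boldsymbol{i}=(i_1,\boldsymbol{i}')$ with $\boldsymbol{i}'=(i_2,\ldots,i_k)\in\mathbb{Z}_+^{k-1}$, I would first condition on the $\sigma$-field $\mathcal{G}_1$ generated by $\{(T_i^{(\ell)},\Gamma_i^{(\ell)}):\ell\ge 2,\,i\in\mathbb{Z}_+\}$ and observe that
$$S_k^{e,\text{de}}(f)=\bigvee_{\boldsymbol{i}'\in\mathbb{Z}_+^{k-1}}\bigvee_{i_1\in\mathbb{Z}_+}g_{\boldsymbol{i}'}\pp{T_{i_1}^{(1)}}\pp{\Gamma_{i_1}^{(1)}}^{-1/\alpha},$$
where $g_{\boldsymbol{i}'}(v):=f\pp{v,T_{i_2}^{(2)},\ldots,T_{i_k}^{(k)}}\psi(v)^{1/\alpha}\prod_{\ell=2}^{k}\psi\pp{T_{i_\ell}^{(\ell)}}^{1/\alpha}\pp{\Gamma_{i_\ell}^{(\ell)}}^{-1/\alpha}$. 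Applying Lemma \ref{jc lem} to the $\mathcal{G}_1$-conditional law of $(T_i^{(1)},\Gamma_i^{(1)})_{i\in\mathbb{Z}_+}$ with the countable family $\{g_{\boldsymbol{i}'}\}$, choosing all thresholds equal, and integrating out the conditioning yields
$$S_k^{e,\text{de}}(f)\ \ge_{st}\ Z_1\cdot S_{k-1}^{e,\text{de}}\pp{h_1^{1/\alpha}},\qquad h_1(u_2,\ldots,u_k):=\int_E f^\alpha(v,u_2,\ldots,u_k)\,\mu(dv),$$
where $Z_1$ is a unit-scale $\alpha$-Fr\'echet variable that may be taken independent of everything else by enlarging the probability space. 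The key algebraic observation underlying the rearrangement is that, via $\psi\,dm=d\mu$, one has $\int_E g_{\boldsymbol{i}'}^\alpha\,dm=h_1\pp{T_{i_2}^{(2)},\ldots,T_{i_k}^{(k)}}\prod_{\ell=2}^k\psi\pp{T_{i_\ell}^{(\ell)}}\pp{\Gamma_{i_\ell}^{(\ell)}}^{-1}$, so that $\pp{\int_E g_{\boldsymbol{i}'}^\alpha\,dm}^{1/\alpha}$ is exactly the $\boldsymbol{i}'$-th summand of $S_{k-1}^{e,\text{de}}(h_1^{1/\alpha})$.

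The next step is to iterate. Since $Z_1$ is independent of $S_{k-1}^{e,\text{de}}(h_1^{1/\alpha})$, stochastic dominance is preserved under multiplication by $Z_1$, so the same reduction applies to $S_{k-1}^{e,\text{de}}(h_1^{1/\alpha})$, producing an independent factor $Z_2$ and a new kernel $h_2^{1/\alpha}$ with $h_2(u_3,\ldots,u_k)=\int_E h_1(u_2,\ldots,u_k)\mu(du_2)=\int_{E^2}f^\alpha(v_1,v_2,u_3,\ldots,u_k)\mu(dv_1)\mu(dv_2)$. Repeating $k-1$ times,
$$S_k^{e,\text{de}}(f)\ \ge_{st}\ (Z_1\cdots Z_{k-1})\cdot S_1^{e,\text{de}}\pp{h_{k-1}^{1/\alpha}}.$$
The final factor $S_1^{e,\text{de}}(h_{k-1}^{1/\alpha})$ is a single-variable LePage series, hence $\alpha$-Fr\'echet with scale coefficient $\pp{\int_E h_{k-1}(u)\mu(du)}^{1/\alpha}=\pp{\int_{E^k}f^\alpha\,d\mu^k}^{1/\alpha}$, i.e., equal in law to $\pp{\int_{E^k}f^\alpha d\mu^k}^{1/\alpha}Z_k$ for an independent unit-scale $Z_k$. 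Chaining the inequalities yields the claimed bound (with the $1/\alpha$-power on $\int f^\alpha d\mu^k$).

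The main obstacle is the bookkeeping around the \emph{joint} form of Lemma \ref{jc lem}: the same variable $Z$ appears on the right-hand side across all indices $\boldsymbol{i}'$, which is precisely what allows $\max_{\boldsymbol{i}'}c_{\boldsymbol{i}'}Z_1=Z_1\max_{\boldsymbol{i}'}c_{\boldsymbol{i}'}$ and therefore the collapsing of the outer supremum into $Z_1\cdot S_{k-1}^{e,\text{de}}(h_1^{1/\alpha})$. A coordinate-by-coordinate application of the marginal estimate would introduce independent Fr\'echet variables for distinct $\boldsymbol{i}'$ and destroy this identification. A secondary technical point is ensuring that at each iteration the freshly introduced $Z_j$ is jointly independent of $(Z_1,\ldots,Z_{j-1})$ and of all remaining $(T_i^{(\ell)},\Gamma_i^{(\ell)})$; this is standard and handled by a product-space enlargement at each step.
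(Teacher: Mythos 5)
Your proof is correct and follows essentially the same route as the paper's: an iterated conditional application of Lemma \ref{jc lem}, peeling off one coordinate at a time so that a single Fr\'echet factor is introduced per coordinate with the same $Z$ shared across all remaining indices (the paper writes out only the case $k=2$ and notes the general case is analogous). Your final scale coefficient $\bigl(\int_{E^k} f^\alpha\,d\mu^k\bigr)^{1/\alpha}$ is what the repeated applications of Lemma \ref{jc lem} actually produce; the exponent $1/\alpha$ is absent from the display \eqref{122} as printed.
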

\begin{proof}
We may assume without loss of generality that $\mu$ is a probability measure. 
If $\mu$ is not a probability measure, then we replace $f$ by 
$f \cdot (\psi^{\otimes k})^{1/\alpha}$ and take $m$ as the underlying probability 
measure, where $\psi = d\mu/dm \in (0,\infty)$ $m$-a.e.. 
The desired relation \eqref{122} then follows by applying the result proved in the 
probability measure case. In addition,  for notational simplicity, we only treat $k=2$, and the argument easily  extends to the case where $k >2$.    

Suppose in addition that $Z_i$'s are  independent of everything else.
Since  $\left(T_j^{(1)}\right)_{j\in \bb{Z}_+}$  and  $\left(\Gamma_j^{(1)}\right)_{j\in \bb{Z}_+}$ are independent of $\left(T_j^{(2)}\right)_{j\in \bb{Z}_+}$ and $\left(\Gamma_j^{(2)}\right)_{j\in \bb{Z}_+}$,  conditioning on the latter two and applying  Lemma \ref{jc lem}, we have  for any $x>0$, 
 \begin{equation*}\label{compare1}
 \begin{aligned}
  &\mathbb{P}\left( \bigvee_{i \in \mathbb{Z}_+}\bigvee_{j \in \mathbb{Z}_+}f(T_{j}^{(1)},T_{i}^{(2)}) \left(\Gamma_j^{(1)}\right)^{-1/\alpha}  \left(\Gamma_{i}^{(2)}\right)^{-1/\alpha} 
     \leq x \right) \\
     \leq  &  \mathbb{P}\left( \bigvee_{i \in \mathbb{Z}_+}\left(\int_E f^\alpha(v, T_i^{(2)})\mu(dv) \right)^{1/\alpha} \left(\Gamma_{i}^{(2)}\right)^{-1/\alpha}  Z_1 \leq x   \right).
\end{aligned}
 \end{equation*}
Applying  Lemma \ref{jc lem}   again  conditioning on $Z_1$,
the last expression is further bounded by
 \begin{equation*} 
 \begin{aligned}
  \mathbb{P}\left(\int_{E^2} f^\alpha(v, w) \mu(d v)\mu(d w) \, Z_1Z_2 \leq x \right).
\end{aligned}
 \end{equation*}
\end{proof}

\begin{rem}
We have established a sufficient condition for the integrability of multiple extremal integrals in 
Theorem~\ref{suff thm}, and a necessary condition in Theorem~\ref{thm:nec cond}. 
For $k\ge 2$, the necessary condition is in general not sufficient (see Example~\ref{S1} below). 
However, it remains open whether the sufficient condition is also necessary for $k\ge 2$.
\end{rem}

\subsection{A discussion on the moment-type conditions}\label{counter exam}

The previous sections involve several moment-type conditions. 
These include the conditions in \eqref{I2} and \eqref{suff 2} in Theorem~\ref{suff thm}, 
which serve as sufficient conditions for the integrability of multiple extremal integrals, 
as well as the assumption \( f \in L_{+}^\alpha(\mu^k) \) in Theorem~\ref{thm:nec cond}, 
which provides a necessary condition for integrability.
Following the notation of \cite[]{samorodnitsky1989asymptotic}, we introduce additionally 
\begin{equation}\label{defn loglog}
    L^\alpha \ln L \ln \ln L(f,\mu) 
    := \int_{E^2} f^\alpha(\boldsymbol{u})\,\bigl(1 + (\ln_+ f(\boldsymbol{u})) \cdot \ln_+|\ln f(\boldsymbol{u})|\bigr)\, \mu^k(d\boldsymbol{u}), \quad f \in \mathcal{L}_2.
\end{equation}
This quantity will be used to formulate an additional sufficient condition for integrability 
(see Proposition~\ref{prop k=2} below) for $k=2$, 
and it will play a key role from  Section~\ref{sec:tail}  onward. Since multiple moment-type conditions are involved in the case $k=2$, we summarize the relations among  them as below.
\begin{prop}\label{prop k=2}
Suppose $f\in \cl{L}_2$ and $\alpha\in (0,\infty)$.  Consider the following 4 statements:
\begin{enumerate} 
    \item For some probability measure $m$ on $E$ equivalent to $\mu$ with $\psi=d \mu / d m \in(0, \infty)$ $m$-a.e., we have $ L^\alpha \ln L \ln \ln L(f \cdot\left(\psi^{\otimes 2}\right)^{1 / \alpha}, m) < \infty$,  with the   notation defined   in \eqref{defn loglog}.
    \item  For some probability measure $m$ on $E$ equivalent to $\mu$ with  $\psi=d\mu/dm\in (0,\infty)$ $m$-a.e.,  we have $
    L^\alpha \ln  L(f\cdot(\psi^{\otimes 2})^{1/\alpha}, m) < \infty$, with the  notation defined in \eqref{defn l space}.
    \item  We have condition \eqref{I2} holds.
    \item  We have \( f \in L_{+}^\alpha(\mu^2) \).
\end{enumerate}
Then we have
$$\begin{aligned}
\text { (i) } \Rightarrow \text { (ii) } \Longleftrightarrow \text { (iii) } \Rightarrow \text { (iv). }
\end{aligned} $$
\end{prop}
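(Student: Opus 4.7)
The plan is to handle the four implications separately. The implication (iii) $\Rightarrow$ (iv) is immediate: since the $\ln_{+}$ factor in the integrand of (iii) is nonnegative, the ``$1$'' part already gives $\int_{E^{2}} f^{\alpha}\, d\mu^{2} < \infty$. For (i) $\Rightarrow$ (ii) I would work with the same probability measure $m$ given in (i), set $g = f \cdot (\psi^{\otimes 2})^{1/\alpha}$, and split the domain at $\{g \geq e^{2}\}$. On this set $\ln_{+}|\ln g| \geq \ln 2$, so $\ln_{+} g \leq (\ln 2)^{-1} \ln_{+} g \cdot \ln_{+}|\ln g|$; on the complement $\ln_{+} g \leq 2$, and the ``$1$'' part of (i) ensures $\int g^{\alpha}\, dm^{2} < \infty$. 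Adding the two pieces gives (ii) with the same $m$.

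For (ii) $\Rightarrow$ (iii), write $A(s) = \int_{E} f(s,u)^{\alpha}\, \mu(du)$ and $B(t) = \int_{E} f(u,t)^{\alpha}\, \mu(du)$. Using the pointwise subadditivity $\ln_{+}(xyz) \leq \ln_{+} x + \ln_{+} y + \ln_{+} z$ on $(0,\infty)$,
\[
\ln_{+}\!\left(\frac{f}{A^{1/\alpha}(s)\, B^{1/\alpha}(t)}\right) \leq \ln_{+}\!\left(f\, \psi^{1/\alpha}(s) \psi^{1/\alpha}(t)\right) + \ln_{+}\!\left((\psi A)^{-1/\alpha}(s)\right) + \ln_{+}\!\left((\psi B)^{-1/\alpha}(t)\right).
\]
Multiplying by $f^{\alpha}$ and integrating against $d\mu \otimes d\mu$, the first term yields (ii). For the second, since $\int f(s,t)^{\alpha}\, \mu(dt) = A(s)$ and $A(s)\, \mu(ds) = (\psi A)(s)\, m(ds)$, I would obtain
\[
\int_{E^{2}} f^{\alpha} \ln_{+}\!\left((\psi A)^{-1/\alpha}(s)\right) d\mu^{2} = \frac{1}{\alpha} \int_{\{\psi A < 1\}} (\psi A)(s)\, |\ln (\psi A)(s)|\, m(ds) \leq \frac{1}{\alpha e},
\]
using $x|\ln x| \leq e^{-1}$ on $(0,1]$ and $m(E)=1$; the third term is symmetric. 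Together with (ii), this gives (iii).

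For (iii) $\Rightarrow$ (ii), I construct a suitable $m$ explicitly. Since (iii) implies (iv), $C_{f} := \int_{E^{2}} f^{\alpha}\, d\mu^{2} < \infty$. Pick any positive $\mu$-integrable $w$ (available by $\sigma$-finiteness of $\mu$) and set
\[
dm = c\!\left(w + \frac{A \vee B}{2 C_{f}}\right) d\mu, \qquad c = \left(\int_{E} w\, d\mu + \int_{E} \frac{A \vee B}{2 C_{f}}\, d\mu\right)^{-1},
\]
which is a probability measure equivalent to $\mu$ (both $w$ and $A \vee B$ have finite $\mu$-integrals, and $w > 0$ ensures equivalence). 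Then $\psi = d\mu/dm \leq 2C_{f}/(c(A \vee B))$ pointwise, so using $(A \vee B)(s) \geq A(s)$ and $(A \vee B)(t) \geq B(t)$, on $\{A(s) > 0,\ B(t) > 0\}$ one has $\psi^{1/\alpha}(s) \psi^{1/\alpha}(t) \leq K^{2}/(A^{1/\alpha}(s) B^{1/\alpha}(t))$ with $K = (2C_{f}/c)^{1/\alpha}$. On the complementary set $f(s,t) = 0$ for $\mu^{2}$-a.e.\ $(s,t)$. Applying subadditivity of $\ln_{+}$ and integrating,
\[
\int_{E^{2}} f^{\alpha} \ln_{+}\!\left(f\, \psi^{1/\alpha}(s) \psi^{1/\alpha}(t)\right) d\mu^{2} \leq C_{f}\, \ln_{+} K^{2} + \int_{E^{2}} f^{\alpha} \ln_{+}\!\left(\frac{f}{A^{1/\alpha}(s) B^{1/\alpha}(t)}\right) d\mu^{2},
\]
which is finite by (iii), yielding (ii) with this choice of $m$.

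The main obstacle is the last implication: because $\psi$ is univariate while the ``natural'' marginals $A(s)$ and $B(t)$ depend on different coordinates, no single $\psi$ can reproduce the factor $A^{1/\alpha}(s) B^{1/\alpha}(t)$ exactly when $f$ is asymmetric. The remedy is to dominate both $A$ and $B$ by $A \vee B$; the unavoidable multiplicative constant $K$ is then absorbed harmlessly by the subadditivity of $\ln_{+}$.
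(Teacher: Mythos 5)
Your proposal is correct, and for the central equivalence (ii) $\Leftrightarrow$ (iii) it takes a genuinely different route from the paper. The paper proves (ii) $\Rightarrow$ (iii) and (iii) $\Rightarrow$ (ii) by passing through the theory of double symmetric $\alpha$-stable integrals: after reducing to $\alpha\in(0,2)$ and $\psi\equiv 1$, it invokes the known facts that condition (ii) is sufficient for existence of such an integral and that \eqref{I2} is necessary and sufficient, so the equivalence is inherited from the stable literature. Your argument is instead direct and self-contained: for (ii) $\Rightarrow$ (iii) you factor the ratio in \eqref{I2} as $g\cdot(\psi A)^{-1/\alpha}(s)\,(\psi B)^{-1/\alpha}(t)$ with $g=f\cdot(\psi^{\otimes 2})^{1/\alpha}$, apply subadditivity of $\ln_+$, and control the two marginal correction terms by the elementary bound $x|\ln x|\le e^{-1}$ on $(0,1]$ together with $m(E)=1$ --- this avoids the stable-integral machinery entirely and works for all $\alpha>0$ without the power-transform reduction. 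For (iii) $\Rightarrow$ (ii), the paper symmetrizes $f$ via Lemma \ref{lem eqv} and builds $m$ from the normalized marginal of $f^\alpha$ with a correction $\kappa_0$ on its zero set; you instead take $dm \propto (w+\tfrac{A\vee B}{2C_f})\,d\mu$ with $w>0$ integrable, which handles asymmetric $f$ and the zero sets of $A,B$ in one stroke, at the cost only of the harmless constant $K$ absorbed by $\ln_+(ab)\le\ln_+a+\ln_+b$. Your treatments of (i) $\Rightarrow$ (ii) (splitting at $\{g\ge e^2\}$ rather than the paper's $\{f\ge e\}$) and of (iii) $\Rightarrow$ (iv) coincide with the paper's up to the choice of threshold. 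The one point worth making explicit in a final write-up is the $0/0$ convention in \eqref{I2}: your factorization in (ii) $\Rightarrow$ (iii) is only valid on $\{A(s)>0,\,B(t)>0\}$, and you should note (as you do implicitly) that $f$ vanishes $\mu^2$-a.e.\ off this set so the integrand contributes nothing there.
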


\begin{proof}

 For (i) $\Rightarrow$ (ii), assume without loss of generality that $\mu$ is a probability measure and $\psi\equiv 1$.
We have 
$$L^\alpha \ln L \ln \ln L(f \mathbf{1}_{\{0\le f <e\}}, \mu) = \int_{\{ 0 \le f < e\}} f^\alpha d\mu^2,$$ 
while $$ L^\alpha \ln L (f \mathbf{1}_{\{0\le f <e\}}, \mu) = \int_{\{ 0 \le f < e\}} f^\alpha (1+ \ln_+( f\mathbf{1}_{\{1\le f <e\}}))d\mu^2\leq 2\int_{\{ 0 \le f < e\}} f^\alpha d\mu^2.$$
Furthermore, combining the above with the observation that  $$L^\alpha \ln L \ln \ln L(f \mathbf{1}_{\{f \geq e\}}, \mu) \geq L^\alpha \ln L (f \mathbf{1}_{\{f \geq e\}}, \mu),$$ the desired implication follows.

For (ii) $\Rightarrow$ (iii), first note that either of the conditions (ii) and (iii) holds if and only if it remains valid when $f$ is replaced by $f^{r}$ and $\alpha$ by $\alpha/r$ for any $r>0$. Hence, without loss of generality, we may assume $\alpha\in(0,2)$. Also, the double integral in \eqref{I2} is unchanged if $f$ is replaced by $f\cdot(\psi^{\otimes 2})^{1/\alpha}$ and $\mu$ by $m$, where $\psi$ and $m$ are as in (ii); thus we may further assume $\mu$ is a probability measure and $\psi\equiv 1$. 
Next, recall from the proof of Theorem~\ref{suff thm} that condition (ii) is sufficient for the existence of  a double symmetric $\alpha$-stable integral of $f$ (cf. \cite[]{breton2002multiple}) (one may follow the same arguments to reduce further to the case $(E,\mu)=([0,1],\lambda)$). On the other hand, condition \eqref{I2} is necessary and sufficient for the existence of such an integral (see \cite[]{rosifiski1986ito,kwapien1987double}). The conclusion follows.
 
For (iii) $\Rightarrow$ (ii), by Lemma \ref{lem eqv} in the Appendix, it suffices to assume that $f$ is symmetric. We may also assume $\mu^2(f>0)>0$. Let $\|f\|_\alpha^{\alpha}=\int_{E^2} f^\alpha(x, y) \mu(d x) \mu(d y) \in (0,\infty)$.
Define $$\psi^{-1}(x)=\frac{1}{\|f\|_\alpha^{\alpha}}\int_E f^\alpha(x, y) \mu(d y)$$ if $\int_E f^\alpha(x, y) \mu(d y) > 0$ for $\mu$-a.e. $x \in E$; otherwise, set
$$
\psi^{-1}(x)=\frac{1}{2\|f\|_\alpha^{\alpha}} \int_E f^\alpha(x, y) \mu(d y)\mathbf{1}_{\{\int_E f^\alpha(x, y) \mu(d y) >0\}} + \kappa_0(x)\mathbf{1}_{\{\int_E f^\alpha(x, y) \mu(d y) =0\}}, \quad x \in E,
$$
where  $\kappa_0$ is a positive measurable function chosen so that $\int_E \kappa_0(x)\mathbf{1}_{\{\int_E f^\alpha(x, y) \mu(d y) =0\}} \mu (dx)=\frac{1}{2}$. Such a choice is possible since $E$ is $\sigma$-finite. Note that if $x\in E$ satisfies $\int_E f^\alpha(x, y) \mu(d y)=0$, then  
$f(x,y) = 0$ for $\mu$-almost every $y \in E$. Thus in either of the cases above, $L^\alpha \ln  L(f\cdot(\psi^{\otimes 2})^{1/\alpha}, m)$, where $dm=\psi^{-1}d\mu$, equals 
 $$
\int_{E^2} f(s, t)^\alpha\left(1+\ln _{+}\left(\frac{ Cf(s, t)}{\left(\int_E f(s, u)^\alpha \mu(d u)\right)^{1 / \alpha}\left(\int_E f(u, t)^\alpha \mu(d u)\right)^{1 / \alpha}}\right)\right) \mu(d s) \mu(d t),
$$
for some positive constant $C$. The last integral is finite due to condition \eqref{I2} and the inequality $\ln_+(ab) \leq \ln_+ a + \ln_+ b$ for $a, b>0$.

 The implication  (iii) $\Rightarrow$ (iv) is obvious.
\end{proof}




Next, we provide an example that is similar to the one in \cite[]{samorodnitsky1991construction}, demonstrating that the condition 
\( f \in L_{+}^\alpha(\mu^k) \), \( k \geq 2 \), 
which is necessary for the integrability by Theorem~\ref{thm:nec cond}, 
is not sufficient.
 In particular, we show that  it is possible to have   $A \in \mathcal{E}^{(k)}$  with $\mu^k(A)<\infty$, i.e., $\mathbf{1}_A \in  L_{+}^\alpha(\mu^k)$, yet $I_k^e(\mathbf{1}_A)=M_\alpha^{(k)}(A)=\infty$ a.s.. 
\begin{exam}\label{S1}
Fix $k \geq 2$. Suppose  $M_\alpha$ is defined on $\mathbb{R}$ with the Lebesgue control measure $\lambda$. Let $\pp{a_i}_{i \in \bb{Z}_+}$ be a sequence  satisfying
  \begin{enumerate}[label=(\roman*)]
        \item  $\sum_{i=1}^{\infty}a_i^k<\infty$ and  $0< a_i <1$ for each $i\in \mathbb{Z}_+$;
        \item $\sum_{i=1}^{\infty}a_i^k |\ln a_i|^{k-1}=\infty $.
    \end{enumerate}
     Define a sequence of disjoint off-diagonal cubes  
     \begin{equation}\label{Ai}
      A_i = \left[k i, k i+a_i\right] \times\left[k i+1, k i+1+a_i\right] \times \cdots \times\left[k i+k-1, k i+k-1+a_i\right], i\in \bb{Z}_+,   
     \end{equation}
    and set $A = \bigcup_{i \in \mathbb{Z}_+} A_i$.  
     Condition (i) ensures that $\lambda^k(A)<\infty$, whereas (i)-(ii) imply $\sum_i a_i=\infty$; hence, each coordinate projection of $A$ has infinite $\mu$-measure, so $A$ is not contained in any rectangle $B_1 \times B_2 \times \ldots \times B_k$ with $B_j \in \mathcal{E}$,  $\mu\left(B_j\right)<\infty, j=1,2\ldots, k$.
     
Let $\pp{M_i}_{i\in \bb{Z}_+}$ be an i.i.d.\ sequence of standard  $\alpha$-Fr\'echet random variables. By the scaling and independently scattered  properties of  $M_\alpha$, we obtain 
\begin{equation}\label{eq 28}
    M_\alpha^{(k)}(A) \stackrel{d}{=} \bigvee_{i \in \mathbb{Z}_{+}} a_i^{k/\alpha} M_{k i} M_{k i+1}\ldots M_{ki+k-1}.
\end{equation}
Recall that for $\delta_i \geq 0, i \in \mathbb{Z}_+$,  $\bigvee_{i=1}^\infty \delta_i M_i < \infty$ if and only if $\sum_{i=1}^\infty \delta_i^\alpha < \infty$; see, e.g., \cite[(4.1)]{stoev2005extremal}.  By this fact and conditioning on $\pp{ M_{ki}, M_{k i+1},\ldots, M_{ki+k-2}}_{i\in \bb{Z}_+} $,  we have
$$
\bigvee_{i=1}^{\infty} a_i^{k / \alpha} M_{k i} M_{k i+1}\ldots M_{ki+k-1}=\infty \text { a.s. if and only if } \sum_{i=1}^{\infty} a_i^k M_{k i}^\alpha\ldots M_{ki+k-2}^\alpha=\infty \text { a.s.. }
$$
The a.s.\ divergence of the last sum follows from Kolmogorov's three-series theorem, since condition (ii) above implies  $\sum_{i=1}^\infty\bb{E}\pp{a_i^k M_{k i}^\alpha \ldots M_{k i+k-2}^\alpha \mbf{1}_{\{a_i^k M_{k i}^\alpha \ldots M_{k i+k-2}^\alpha\le 1\}}}=\infty$. Indeed, this follows from the elementary calculation that $M_{k i}^\alpha, \ldots, M_{k i+k-2}^\alpha,$ $i \in \mathbb{Z}_+$, are standard i.i.d.\ $1$-Fr\'echet variables, and hence 
$$\mathbb{E}\pp{ M_{k i}^\alpha \ldots M_{k i+k-2}^\alpha  \mbf{1}_{\{M_{k i}^\alpha \ldots M_{k i+k-2}^\alpha \le x\}}}\sim \frac{(\ln x)^{k-1}}{(k-1)!} \text{ as } x\rightarrow\infty.$$

\end{exam}

Interestingly, combining Proposition \ref{prop k=2}  for the case $k=2$ and the following result, when the integrand is a product of   univariate functions, all the  aforementioned moment-type conditions and $f \in L_+^\alpha(\mu^k), k \geq 2,$ are equivalent.
\begin{prop}\label{Pro:tensor int cond}
    Suppose $f$ is a separable non-negative function, i.e. $$f(x_1, x_2\ldots, x_k) = \varphi_1(x_1) \ldots\varphi_k(x_k),$$
for some univariate measurable functions $\varphi_i:E\mapsto [0,\infty], 1 \leq i \leq k$. Then, there exists a probability measure $m$ on $E$ equivalent to $\mu$ with density $\psi=d \mu / d m \in(0, \infty)$ such that  the following equivalences hold:
\begin{enumerate}
\item When $k = 2$, 
$$
f \in L_{+}^\alpha\left(\mu^2\right) \quad \Longleftrightarrow \quad L^\alpha \ln L \ln \ln L\left(f \cdot\left(\psi^{\otimes 2}\right)^{1 / \alpha}, m\right)<\infty.
$$
\item When $k >2$,  
$$
f \in L_{+}^\alpha\left(\mu^k\right) \quad \Longleftrightarrow \quad L^\alpha \ln ^{k-1} L\left(f \cdot\left(\psi^{\otimes k}\right)^{1 / \alpha}, m\right)<\infty.
$$
\end{enumerate}
Hence,  $f \in L_+^\alpha(\mu^k)$  is a sufficient and necessary condition for $I_k^e(f) < \infty$ a.s..
\end{prop}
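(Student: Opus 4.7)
The plan is to handle both equivalences together. The $(\Leftarrow)$ direction is immediate in each case: whichever of the two moment conditions is assumed finite, it dominates $\int f^\alpha \psi^{\otimes k}\,dm^k = \int f^\alpha\,d\mu^k$, so $f\in L_+^\alpha(\mu^k)$. The substantive direction is $(\Rightarrow)$, and the idea is to construct a tailored probability measure $m$ for which $f\cdot(\psi^{\otimes k})^{1/\alpha}$ is \emph{uniformly bounded}; this trivializes the logarithmic corrections.

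First I would dispose of the case $f\equiv 0$ $\mu^k$-a.e., which is trivial, and note that $f\in L_+^\alpha(\mu^k)$ together with the separable form forces $\|\varphi_i\|_\alpha:=(\int\varphi_i^\alpha\,d\mu)^{1/\alpha}\in(0,\infty)$ for every $i=1,\dots,k$. Using $\sigma$-finiteness of $\mu$, I pick a strictly positive measurable $g$ with $\int g\,d\mu<\infty$ (for instance, $g=\sum_n 2^{-n}\mathbf{1}_{E_n}/\mu(E_n)$ on an exhausting sequence $E_n\nearrow E$ with $\mu(E_n)<\infty$). Setting
\[
h(x)=g(x)+\sum_{i=1}^k\varphi_i(x)^\alpha,\qquad c=\int h\,d\mu\in(0,\infty),
\]
I define $dm=c^{-1}h\,d\mu$. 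Then $m$ is a probability measure equivalent to $\mu$, the Radon--Nikodym derivative is $\psi=d\mu/dm=c/h\in(0,\infty)$ $\mu$-a.e., and from $h(x)\ge\varphi_i(x)^\alpha$ I obtain the pointwise bound $\varphi_i(x)\psi(x)^{1/\alpha}\le c^{1/\alpha}$ for each $i$. Multiplying across coordinates gives $f(\mathbf{u})\prod_{i=1}^k\psi(u_i)^{1/\alpha}\le c^{k/\alpha}$ uniformly in $\mathbf{u}\in E^k$.

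With this uniform bound, $\ln_+(f\cdot(\psi^{\otimes k})^{1/\alpha})\le (k/\alpha)\ln_+ c$, so for every $k\ge 2$,
\[
L^\alpha\ln^{k-1}L\bigl(f\cdot(\psi^{\otimes k})^{1/\alpha},m\bigr)\le C_k\int f^\alpha\psi^{\otimes k}\,dm^k = C_k\int f^\alpha\,d\mu^k<\infty.
\]
For the $k=2$ case and the stronger $L^\alpha\ln L\ln\ln L$ condition, one has to handle $\ln_+|\ln(\cdot)|$, which is not bounded near $0$; but in the defining integrand this factor is multiplied by $\ln_+(\cdot)$, which \emph{vanishes} on $\{f(\psi^{\otimes 2})^{1/\alpha}\le 1\}$ and is bounded by $\ln c^{2/\alpha}$ on the complement, so the product is bounded there by a finite constant, and the same estimate yields $L^\alpha\ln L\ln\ln L(f\cdot(\psi^{\otimes 2})^{1/\alpha},m)<\infty$. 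This short case split is the only mildly delicate point of the argument; everything else is routine majorization.

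The concluding claim that $f\in L_+^\alpha(\mu^k)$ is equivalent to $I_k^e(f)<\infty$ a.s.\ follows by combining the above: the $(\Rightarrow)$ direction produces $m$ and $\psi$ verifying the hypothesis of Theorem~\ref{suff thm}, hence $I_k^e(f)<\infty$ a.s.; the converse is Theorem~\ref{thm:nec cond}. I do not anticipate a genuine obstacle—the whole proposition hinges on the simple observation that separability allows one to pick $\psi$ proportional to $\bigl(\sum_i\varphi_i^\alpha+g\bigr)^{-1}$, which simultaneously dominates \emph{every} coordinate function $\varphi_i^\alpha$ and thereby linearizes away all the logarithmic terms.
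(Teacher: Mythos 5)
Your proposal is correct and takes essentially the same route as the paper: both arguments choose $\psi$ inversely proportional to a quantity dominating every $\varphi_i^\alpha$ (the paper uses $C/\sum_i\varphi_i$ after normalizing $\alpha=1$ and patches the zero set $\{\sum_i\varphi_i=0\}$ with an auxiliary positive function, while you add a strictly positive integrable $g$ to the sum — a purely cosmetic difference), so that $f\cdot(\psi^{\otimes k})^{1/\alpha}$ is uniformly bounded and the logarithmic factors become harmless. The deduction of the final integrability claim from Theorems \ref{suff thm} and \ref{thm:nec cond} also matches the paper.
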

\begin{proof}Suppose $f \in L_+^\alpha(\mu^k)$, $k \geq 2$. We aim to show that this implies $L^\alpha \ln L \ln \ln L\left(f \cdot\left(\psi^{\otimes 2}\right)^{1 / \alpha}, m\right)<\infty$ when $k=2$, and implies $L^\alpha \ln ^{k-1} L\left(f \cdot\left(\psi^{\otimes k}\right)^{1 / \alpha}, m\right)<\infty$ when $k >2$, for some $m, \psi$.  Assume for simplicity $\alpha = 1$, and the more general case is similar.

Let $A:=\left\{s \in E\mid \sum_{i=1}^k \varphi_i(s)=0\right\}$.
Define
$$
\psi(s)= \begin{cases}\kappa(s)^{-1}, & s \in A, \\ \frac{C}{\sum_{i=1}^k \varphi_i(s)}, & s \notin A, \end{cases}
$$
where $C>0$ is a constant and $\kappa$ is a positive measurable function to be chosen so that $\int_E \psi^{-1} d \mu=1$. This is possible since $E$ is $\sigma$-finite and  $\int_{E} \varphi_i(s)\mu (ds) <\infty$, $1\le i\le k$.

Under such a choice, $f\cdot \psi^{\otimes k} $ is bounded by a constant $\mu$-a.e. (since $f$ vanishes on $A$). Thus the desired implication follows.  The converse directions are straightforward.

The conclusion that $f \in L_{+}^\alpha\left(\mu^k\right)$ is a sufficient and necessary condition for $I_k^e(f)<\infty$ a.s.\ follows from the equivalence relations in (i)–(ii), while a direct proof is also possible. Indeed, note that $f\in L^\alpha_+(\mu^k)$ if and only if $\varphi_i \in L_+^\alpha(\mu), 1\leq  i \leq k $, and thus $I^e_1(\varphi_i) < \infty$ a.s., or equivalently  $S_1^e(\varphi_i) < \infty$ a.s.\ for $1 \leq i \leq k$ (see \eqref{eq prep 2.4}). Observe that $S_k^e(f) < \prod_{i=1}^k S_1^e(\varphi_i)$ a.s.\ due to the absence of the diagonal terms in  $S_k^e(f)$. Then, the conclusion follows from Theorem \ref{thm:nec cond} and \eqref{eq4}. 
\end{proof}


\section{Convergence of multiple extremal integrals}\label{sec:limit}
 
In \cite[]{stoev2005extremal}, it has been shown that for $k=1$ and $f,f_n\in  L_+^\alpha(\mu)$, the convergence $I^e_1(f_n)\xrightarrow{\mathbb{P}} I^e_1(f)$ as $n\rightarrow\infty$ holds if and only if $\int |f_n-f|^\alpha d\mu\rightarrow 0$ as $n\rightarrow\infty$. Below for the case of $k\ge 2$, we provide a sufficient condition for the convergence $I^e_k(f_n)\xrightarrow{\mathbb{P}} I^e_k(f)$ as $n\rightarrow\infty$. 

\begin{prop}\label{prop 5.44}
 Suppose  $f, f_n \in \cl{L}_k$ and $I_k^e(f_n)<\infty$ a.s.\ for $n\in\bb{Z}_+$ and $k \geq 2$. 
 If  there exists a probability measure $m$ on $E$ equivalent to $\mu$ with  $\psi=d\mu/dm\in (0,\infty)$ $m$-a.e., and as $n\rightarrow\infty$,
\begin{align*}
L^\alpha \ln ^{k-1} L(|f_n-f|\psi^{\otimes k}, m)\rightarrow 0, 
\end{align*}
 then $I_k(f)<\infty$ a.s., and $I_k^e(f_n)  \xrightarrow{\mathbb{P}} I_k^e(f)$ as $n \rightarrow \infty$.
\end{prop}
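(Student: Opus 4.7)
My plan is to reduce the convergence $I_k^e(f_n) \xrightarrow{\mathbb{P}} I_k^e(f)$ to showing $I_k^e(|f_n - f|) \xrightarrow{\mathbb{P}} 0$, and then to establish the latter by following the same chain of reductions used in the proof of Theorem~\ref{suff thm}, combined with a continuity statement for multiple positive stable series.

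For the first reduction, I would apply Corollary~\ref{cor:gen int} to the pointwise bounds $f \le f_n + |f_n - f|$ and $f_n \le f + |f_n - f|$. By monotonicity and the triangle inequality, both $I_k^e(f) \le I_k^e(f_n) + I_k^e(|f_n - f|)$ and $I_k^e(f_n) \le I_k^e(f) + I_k^e(|f_n - f|)$ hold a.s. The assumed convergence implies the $L^\alpha \ln^{k-1} L$ quantity is finite for every $n$, so Theorem~\ref{suff thm} gives $I_k^e(|f_n - f|) < \infty$ a.s. Combined with the assumption $I_k^e(f_n) < \infty$ a.s., the first inequality yields $I_k^e(f) < \infty$ a.s.; together the two inequalities give $|I_k^e(f_n) - I_k^e(f)| \le I_k^e(|f_n - f|)$ a.s., reducing matters to convergence in probability of the right side to $0$.

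To prove $I_k^e(|f_n - f|) \xrightarrow{\mathbb{P}} 0$, I would mimic the proof of Theorem~\ref{suff thm}: using the LePage representation \eqref{eq4}, assume $\mu = m$ is a probability measure and $\psi \equiv 1$; pass to the max-symmetrization via Proposition~\ref{sim inv} and Lemma~\ref{lem eqv} (which preserves the $L^\alpha \ln^{k-1} L$ moment condition up to a constant); and, for $r > \alpha$ chosen so that $\widetilde\alpha := 2\alpha/r \in (0,2)$, use the pointwise bound
\[
\bigl(S_k^e(|f_n - f|)\bigr)^r \;\le\; \sum_{\boldsymbol{j} \in \mathcal{D}_{k,<}} g_n(T_{\boldsymbol{j}})^2 \, [\Gamma_{\boldsymbol{j}}]^{-2/\widetilde\alpha}, \qquad g_n := \bigl(\widetilde{|f_n - f|}\bigr)^{r/2},
\]
recognizing the right side as the LePage-type series representation of a positive multiple $\widetilde\alpha$-stable integral of $g_n^2$. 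A direct computation analogous to the one at the end of the proof of Theorem~\ref{suff thm} shows that $L^\alpha \ln^{k-1} L(|f_n - f|, m) \to 0$ is equivalent to $L^{\widetilde\alpha} \ln^{k-1} L(g_n, m) \to 0$. Invoking the results of \cite{samorodnitsky1989asymptotic} for $k \ge 3$ and \cite{rosifiski1986ito, kwapien1987double, breton2002multiple} for $k = 2$, this convergence of norms forces the multiple stable series on the right to tend to $0$ in probability.

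The main obstacle is precisely this last invocation. Theorem~\ref{suff thm} only delivered a.s.\ finiteness of the comparison multiple stable series, whereas here I need the stronger statement that the series, viewed as a functional of the integrand, is continuous with respect to the $L^{\widetilde\alpha} \ln^{k-1} L$ topology. Extracting this continuity-in-probability from the cited literature — or proving it directly via truncations together with moment estimates of the form \eqref{eq:M_alpha moment control} and a dominated-convergence argument — is the delicate part of the argument, particularly in carefully matching the power exponents between the $\alpha$- and $\widetilde\alpha$-scales after the symmetrization step.
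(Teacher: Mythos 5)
Your proposal is correct and follows essentially the same route as the paper: the reduction to $|I_k^e(f_n)-I_k^e(f)|\le I_k^e(|f_n-f|)$ via triangle inequality and monotonicity, the use of Theorem~\ref{suff thm} for finiteness, the reduction to $\mu=m$, $\psi\equiv 1$, symmetric integrands, and the domination of $(S_k^e(\cdot))^r$ by a positive multiple stable series over $\mathcal{D}_{k,<}$. The ``main obstacle'' you identify --- continuity in probability of that series with respect to the $L^\alpha\ln^{k-1}L$ norm --- is exactly the step the paper settles by citing \cite[Section~4.1.2]{breton2002multiple}, so your plan is complete once that reference is invoked (the paper uses the slightly simpler exponent $r$ rather than your $\widetilde\alpha=2\alpha/r$ squaring, but this is immaterial).
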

\begin{proof}
In view of the triangle inequality and monotonicity items in Proposition \ref{I^e_k up} for general measurable functions  stated in Corollary \ref{cor:gen int}, we   have $$I_k^e(f_n)-I_k^e(f)\le I_k^e((f_n-f)_+)\le I_k^e(|f_n-f|) \text{  a.s.. }$$ Combining this inequality  with an additional one obtained by switching the roles between $f_n$ and $f$, we conclude 
    \begin{equation}\label{eq:diff tri ineq}
    |I_k^e(f_n)-I_k^e(f)|\le I_k^e(|f_n-f|)
     \text{ a.s..}
    \end{equation}
    By the assumption and the sufficient condition for integrability from Theorem \ref{suff thm}, we know that $I_k^e(|f_n-f|)<\infty$ a.s.\ for sufficiently large $n$, and hence $I_k^e(f)<\infty$  a.s.\ by \eqref{eq:diff tri ineq}.   In view of \eqref{eq:diff tri ineq}, it suffices to assume $f \equiv 0$ in the remainder of the proof. Similarly as the proof of Theorem \ref{suff thm},  
without loss of generality, we may take $\mu = \lambda$, $ \psi\equiv 1$ and $E = [0,1]$, with $T_j=U_j, j \in \mathbb{Z}_{+}$, where $\left(U_j\right)_{j \in \mathbb{Z}_{+}}$, are i.i.d.\ uniform random variables on $[0,1]$. Moreover, by Lemma~\ref{lem eqv}, each $f_n$ may  be assumed symmetric.  Thus, to conclude the proof, it suffices to show that as $n \rightarrow \infty$,  
$$
L^\alpha \ln ^{k-1} L\left(f_n, \lambda\right) \rightarrow 0 \quad \Rightarrow \quad I_k^e\left(f_n\right) \xrightarrow{\mathbb{P}} 0 .
$$
Fix  $r>\alpha$. In view of Corollary \ref{cor:gen int}, it suffices to work with the LePage representation $S_k^e(f_n)$. We have 
\begin{equation*}
\left(S_k^e(f_n)\right)^r=\bigvee_{\boldsymbol{j} \in \mathcal{D}_{k,<}} f_n\left(U_{\boldsymbol{j}}\right)^r\left[\Gamma_{\boldsymbol{j}}\right]^{-1 /(\alpha / r)} \leq  \sum_{\boldsymbol{j} \in \mathcal{D}_{k,<}} f_n\left(U_{\boldsymbol{j}}\right)^r\left[\Gamma_{\boldsymbol{j}}\right]^{-1 /(\alpha / r)}.
\end{equation*}
By \cite[Section 4.1.2]{breton2002multiple}, the last term converges to zero in probability if $L^{\alpha/r} \ln ^{k-1} L\left(f_n^r, \lambda\right) \rightarrow 0$ as $n \rightarrow \infty$, which is further equivalent to $L^\alpha \ln ^{k-1} L\left(f_n, \lambda\right) \rightarrow 0$.  

\end{proof}

Next, we state a dominated-convergence-type result for multiple extremal integrals. 
\begin{prop}\label{prop 4.3}
Suppose $k \geq 2$ and $f, g,  f_n \in \cl{L}_k$ for $n \in \mathbb{Z}_+$. Assume that there exists a probability measure $m$ equivalent to $\mu$ with $\psi\in d\mu/dm\in (0,\infty)$ $m$-a.e. such that $L^\alpha \ln ^{k-1} L(g \cdot\left(\psi^{\otimes k}\right)^{1 / \alpha}, m)<\infty$.  Moreover, suppose $f_n \rightarrow f$ and $f_n \leq g$, $\mu^k$-a.e. for all $n \in \mathbb{Z}_+$. Then as $n \rightarrow \infty$, 
 \begin{equation}\label{eq:I_k conv as}
     I_k^e(f_n) \rightarrow   I_k^e(f) \quad\text{ a.s..}
 \end{equation}
\end{prop}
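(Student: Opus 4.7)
The plan is to follow the spirit of Proposition~\ref{prop 5.44} but upgrade convergence in probability to a.s.\ convergence by working with the LePage representation and invoking dominated convergence for countable sums. First, by the triangle inequality and monotonicity from Corollary~\ref{cor:gen int}, on the original probability space we have
\[
|I_k^e(f_n)-I_k^e(f)| \le I_k^e(h_n), \qquad h_n := |f_n-f|, \text{ a.s..}
\]
Since $f = \lim_n f_n \le g$ $\mu^k$-a.e., we have $h_n \le 2g$ and $h_n \to 0$ $\mu^k$-a.e., with $h_n \in \cl{L}_k$. Hence it suffices to show $I_k^e(h_n)\to 0$ a.s..

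By the joint LePage representation in Corollary~\ref{cor:gen int}, $(I_k^e(h_n))_{n\in\bb{Z}_+} \stackrel{d}{=} (S_k^e(h_n))_{n\in\bb{Z}_+}$ in finite-dimensional distributions. Because $\{X_n \to 0\}$ can be written as a countable combination of events each depending on only finitely many coordinates, this equality gives $\bb{P}(I_k^e(h_n)\to 0) = \bb{P}(S_k^e(h_n)\to 0)$, so it is enough to prove $S_k^e(h_n)\to 0$ a.s.\ on the LePage probability space. Fix $r > \alpha$. Using $(\bigvee_{\boldsymbol{j}} a_{\boldsymbol{j}})^r = \bigvee_{\boldsymbol{j}} a_{\boldsymbol{j}}^r \le \sum_{\boldsymbol{j}} a_{\boldsymbol{j}}^r$ for nonnegative $a_{\boldsymbol{j}}$, together with $h_n \le 2g$ $\mu^k$-a.e., we obtain
\[
(S_k^e(h_n))^r \le \sum_{\boldsymbol{j}\in\cl{D}_k} h_n(T_{\boldsymbol{j}})^r \left[\Gamma_{\boldsymbol{j}}\right]^{-r/\alpha}\left[\psi(T_{\boldsymbol{j}})\right]^{r/\alpha} \le 2^r \sum_{\boldsymbol{j}\in\cl{D}_k} g(T_{\boldsymbol{j}})^r \left[\Gamma_{\boldsymbol{j}}\right]^{-r/\alpha}\left[\psi(T_{\boldsymbol{j}})\right]^{r/\alpha} \text{ a.s..}
\]
For each fixed $\boldsymbol{j}\in\cl{D}_k$, $(T_{j_1},\ldots,T_{j_k})$ has distribution $m^{\otimes k}$, which is equivalent to $\mu^{\otimes k}=\mu^k$; hence the $\mu^k$-null set $\{h_n\not\to 0\}$ is also $m^{\otimes k}$-null, so $h_n(T_{\boldsymbol{j}})\to 0$ a.s.\ for every $\boldsymbol{j}$.

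The heart of the argument is the a.s.\ finiteness of the dominating series. Replacing $g$ by its max-symmetrization $\widetilde g \ge g$, which preserves the assumed $L^\alpha \ln^{k-1} L$ condition by Lemma~\ref{lem eqv}, the dominating series is bounded by a constant multiple of $\sum_{\boldsymbol{j}\in\cl{D}_{k,<}} \widetilde g(T_{\boldsymbol{j}})^r [\Gamma_{\boldsymbol{j}}]^{-r/\alpha}[\psi(T_{\boldsymbol{j}})]^{r/\alpha}$, which is a.s.\ finite by the Samorodnitsky-type estimate exploited in the proof of Theorem~\ref{suff thm} (applied with $\widetilde\alpha := 2\alpha/r \in (0,2)$ to the function $\widetilde g^{r/2}\cdot(\psi^{\otimes k})^{1/\alpha}$). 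Dominated convergence for countable sums then yields $\sum_{\boldsymbol{j}\in\cl{D}_k} h_n(T_{\boldsymbol{j}})^r[\Gamma_{\boldsymbol{j}}]^{-r/\alpha}[\psi(T_{\boldsymbol{j}})]^{r/\alpha} \to 0$ a.s., whence $S_k^e(h_n)\to 0$ a.s.. The principal obstacle is precisely this transport of the moment condition on $g$ through the symmetrization and the $\cl{D}_k\to\cl{D}_{k,<}$ reduction so as to legitimately invoke the Samorodnitsky estimate used in Theorem~\ref{suff thm}.
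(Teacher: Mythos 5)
Your proof is correct and follows essentially the same route as the paper's: reduce to the LePage representation, dominate $(S_k^e(|f_n-f|))^r$ by the a.s.\ finite series $\sum_{\boldsymbol{j}} g(T_{\boldsymbol{j}})^r[\Gamma_{\boldsymbol{j}}]^{-r/\alpha}[\psi(T_{\boldsymbol{j}})]^{r/\alpha}$ controlled via the moment condition on $g$, and pass to the limit termwise. The only cosmetic difference is that the paper implements the final step by splitting the index set into $\mathcal{D}_{k,<}^M$ and its tail and sending $n\to\infty$ then $M\to\infty$, which is exactly your ``dominated convergence for countable sums'' unpacked.
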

\begin{proof}
In view of Corollary \ref{cor:gen int}, it suffices to prove an analogous relation with the LePage representation.
Without loss of generality, suppose $\mu$ is a probability measure, $\psi\equiv 1$ in the  LePage representation $S_k^e(f)$  in \eqref{eq4}, and $f$ is symmetric. Fix $r > \alpha$, we have 
\begin{equation}\label{eq 41}
   \pp{ S_k^e(f_n)}^r = \bigvee_{\boldsymbol{j} \in \mathcal{D}_{k,<}} f_n\left(T_{\boldsymbol{j}}\right)^{ r}\left[\Gamma_{\boldsymbol{j}}\right]^{-1 /(\alpha / r)} \leq \sum_{\boldsymbol{j} \in \mathcal{D}_{k,<}} g\left(T_{\boldsymbol{j}}\right)^{ r}\left[\Gamma_{\boldsymbol{j}}\right]^{-1 /(\alpha / r)} < \infty \text{ a.s.,}
\end{equation}
where the last relation   follows from a similar argument to (\ref{eq:S_k^r<inf}), and  the fact that $L^\alpha \ln ^{k-1} L\left(g, \mu\right)<\infty$    and $L^{2\alpha/r} \ln ^{k-1} L\left(g^{r/2} , \mu\right)<\infty$, $ r>\alpha$, are equivalent.
Also, relation (\ref{eq 41}) holds with $f_n$ replaced by $f$ since $f \leq g$ $\mu^k$-a.e.. Observe that
\begin{equation}\label{leq 42}
\begin{aligned}
  S_k^e(|f-f_n|) &= \pp{\bigvee_{\boldsymbol{j} \in  \mathcal{D}_{k,<}^M} \left|f\left(T_{\boldsymbol{j}}\right)-f_n\left(T_{\boldsymbol{j}}\right)\right|\left[\Gamma_{\boldsymbol{j}}\right]^{-1 / \alpha}} \bigvee \pp{\bigvee_{ \mathcal{D}_{k,<} \setminus \mathcal{D}_{k,<}^M} \left|f\left(T_{\boldsymbol{j}}\right)-f_n\left(T_{\boldsymbol{j}}\right)\right|\left[\Gamma_{\boldsymbol{j}}\right]^{-1 / \alpha}},\\
 & \leq \pp{\bigvee_{\boldsymbol{j} \in  \mathcal{D}_{k,<}^M} \left|f\left(T_{\boldsymbol{j}}\right)-f_n\left(T_{\boldsymbol{j}}\right)\right|\left[\Gamma_{\boldsymbol{j}}\right]^{-1 / \alpha}}  + 2\pp{ \sum_{ \mathcal{D}_{k,<} \setminus \mathcal{D}_{k,<}^M} g\left(T_{\boldsymbol{j}}\right)^r\left[\Gamma_{\boldsymbol{j}}\right]^{-1 / (\alpha/r)}}^{1/r},
\end{aligned}
\end{equation}
where $\mathcal{D}_{k,<}^M=\left\{\left(t_1, \ldots, t_k\right) \in \mathbb{Z}_+^k \mid t_1<\ldots<t_k \leq M \right\}$ for $M >0$ and $n \in \mathbb{Z}_+$. By first taking $n \rightarrow \infty$ and then $M \rightarrow \infty$ in (\ref{leq 42}), the last expression in (\ref{leq 42}) tends to zero a.s.. Further, combining the above results with the inequality $\left| S_k^e(f) -  S_k^e(f_n)\right|\leq  S_k^e(|f-f_n|)$, we complete the proof. 
\end{proof}
\begin{rem}
The condition   $L^\alpha \ln ^{k-1} L(g \cdot\left(\psi^{\otimes k}\right)^{1 / \alpha}, m)<\infty$  in Proposition \ref{prop 4.3} can be weakened to requiring only that the last relation in (\ref{eq 41}) holds for some $r>\alpha$. The convergence in \eqref{eq:I_k conv as} also holds in $L^r(\mathbb{P})$ for any $r\in (0,\alpha)$. See Corollary \ref{Cor:I_k conv in L^r} below.
\end{rem}
\section{Tail behavior}\label{sec:tail}
In this section, we examine the joint tail behavior of the random vector $(I_k^e(f_1),\ldots I_k^e(f_d))$, $d\in \bb{Z}_+$, for suitable  integrands $f_1,\ldots,f_d$.  The results will be established for the subclass of integrable functions as described in Definition \ref{def:L_{k,+}} below. 

  For describing the joint tail behavior, recall that 
a $d$-dimensional random vector $\mathbf{X} = (X_1, X_2\ldots,X_d)$  taking values in the nonnegative quadrant $[0, \infty)^d$, $d\in \bb{Z}_+$, is said to be multivariate (or jointly) regularly varying, if there exists a function $\nu:(0,\infty)^d \mapsto (0,\infty)$,
such that  
$$\lim _{t \rightarrow \infty}\mathbb{P}\left[\boldsymbol{X} \in[\mathbf{0}, t\boldsymbol{x}]^c\right] / \mathbb{P}\left[\boldsymbol{X} \in[\mathbf{0}, t\mathbf{1}]^c\right]=\nu\left(\mbf{x}\right),\quad \mbf{x}\in (0,\infty)^d,$$
where we have used the notation $[\mbf{a},\mbf{b}]=[a_1,b_1]\times \cdots \times [a_d,b_d]$  for $\mbf{a}=(a_1,\ldots,a_d)$, $\mbf{b}=(b_1,\ldots,b_d)$, $a_i\le b_i$, $i\in d$, $\mbf{0}:=(0,\ldots,0)$ and $\mbf{1}:=(1,\ldots,1)$. Moreover, the function $\nu$ is necessarily homogeneous: there exists an index $\alpha>0$, such that $\nu(c\mbf{x})=c^{-\alpha}\nu(\mbf{x})$ for all $c>0$ and $\mbf{x}\in (0,\infty)^d$. The convergence above may be equivalently formulated in terms of vague convergence of measures.
For more details, see, e.g., \cite[Section 5.4.2]{resnick2008extreme}.
We denote such class of multivariate regularly varying $\mbf{X}$ with index $\alpha$ as $\mrm{MRV}_d(\alpha)$.  When $d=1$, the notion reduces to  univariate regular variation of the  {distributional} tail.


\begin{lem}\label{lem 1.1}
Let  $X$ and $Y$ be random variables.  Suppose that the right tail of $X$ dominates the right tail of   $Y$ in the sense that $\lim _{t \rightarrow \infty} \mathbb{P}(Y>t)/\mathbb{P}(X>t)=0$. Then, for any  $x>0$,
\begin{equation*}
  \lim_{t \rightarrow \infty}  \mathbb{P}\left( X\vee Y  >t \right) / \mathbb{P}\left(X>t\right) = 1.
\end{equation*}
\end{lem}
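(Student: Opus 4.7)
The plan is to use the elementary union bound together with the obvious lower bound $\{X>t\}\subset\{X\vee Y>t\}$, and then pass to the limit.

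More concretely, I would first observe the deterministic inclusion
\[
\{X>t\}\ \subset\ \{X\vee Y>t\}\ =\ \{X>t\}\cup\{Y>t\},
\]
so taking probabilities and applying subadditivity gives
\[
\mathbb{P}(X>t)\ \le\ \mathbb{P}(X\vee Y>t)\ \le\ \mathbb{P}(X>t)+\mathbb{P}(Y>t).
\]
Since the hypothesis ensures $\mathbb{P}(X>t)>0$ for all sufficiently large $t$ (otherwise the quotient $\mathbb{P}(Y>t)/\mathbb{P}(X>t)$ would not be defined and tending to $0$), I would divide through by $\mathbb{P}(X>t)$ to get
\[
1\ \le\ \frac{\mathbb{P}(X\vee Y>t)}{\mathbb{P}(X>t)}\ \le\ 1+\frac{\mathbb{P}(Y>t)}{\mathbb{P}(X>t)}.
\]
Finally, letting $t\to\infty$ and using the tail-dominance assumption, the right-hand side tends to $1$, and the squeeze yields the claim.

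There is no serious obstacle here: the statement is a one-line sandwich argument. The only minor point worth flagging explicitly is the tacit assumption that $\mathbb{P}(X>t)>0$ eventually, which is implicit in the hypothesis as stated.
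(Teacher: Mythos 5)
Your argument is exactly the paper's proof: the same sandwich between the trivial lower bound $1$ and the union-bound upper bound $1+\mathbb{P}(Y>t)/\mathbb{P}(X>t)$, followed by letting $t\to\infty$. The remark about $\mathbb{P}(X>t)>0$ eventually being implicit in the hypothesis is a fair, minor observation and does not change anything.
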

\begin{proof}
Note that  $\mathbb{P}\left(X\vee Y  >t\right)/ \mathbb{P}\left(X>t\right)$ is bounded below by 1 and bounded above by 
 \begin{align*}
 \frac{ \mathbb{P}\left( X>t \right) + \mathbb{P}\left( Y>t\right) }{\mathbb{P}\left(X>t\right)} \leq  1+ \frac{ \mathbb{P}\left(Y > t  \right)}{\mathbb{P}\left(X > t\right)},
    \end{align*}
    where the last expression tends to 1 as $t \rightarrow \infty$ by assumption. 
\end{proof}
For $k \in \bb{Z}_+$, let $\Lambda_{k}$ denote the set of all nonnegative random variables for which the limit  
\begin{equation}\label{lambda_k}
 \lambda_{k}(X):=\lim _{x \rightarrow \infty} x^\alpha(\ln x)^{-k} \mathbb{P}(X>x)
\end{equation}
exists. Clearly, a random variable $X\in \Lambda_k$ with  $\lambda_k(X)>0$ belongs to $\mrm{MRV}_1(\alpha)$. The following result is an immediate consequence of the previous lemma.
\begin{cor}\label{cor Lambda space}
 Suppose $X$ and $Y$ are as in Lemma \ref{lem 1.1}, and additionally,  $X \in \Lambda_k$. Then $X\vee Y \in \Lambda_k$ and $\lambda_k(X\vee Y)=\lambda_k(X)$.
\end{cor}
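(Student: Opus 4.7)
The plan is to reduce the claim to Lemma \ref{lem 1.1} by a simple multiplicative decomposition of the quantity inside the limit defining $\lambda_k(X\vee Y)$. Concretely, for each $x>0$ with $\mathbb{P}(X>x)>0$, I would write
\begin{equation*}
x^{\alpha}(\ln x)^{-k}\mathbb{P}(X\vee Y>x)
=\frac{\mathbb{P}(X\vee Y>x)}{\mathbb{P}(X>x)}\cdot x^{\alpha}(\ln x)^{-k}\mathbb{P}(X>x).
\end{equation*}
The first factor tends to $1$ as $x\to\infty$ by Lemma \ref{lem 1.1}, since by assumption $\mathbb{P}(Y>x)/\mathbb{P}(X>x)\to 0$. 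The second factor tends to $\lambda_k(X)$ by the assumption $X\in\Lambda_k$. Hence the product converges to $\lambda_k(X)$, which shows both that the limit defining $\lambda_k(X\vee Y)$ exists (so $X\vee Y\in\Lambda_k$) and that $\lambda_k(X\vee Y)=\lambda_k(X)$.

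The only thing to check carefully is that the denominator $\mathbb{P}(X>x)$ is eventually positive, so that the quotient is well-defined. If $\lambda_k(X)>0$ this is immediate; if $\lambda_k(X)=0$ but $\mathbb{P}(X>x)>0$ for all sufficiently large $x$, the argument above still goes through. In the remaining degenerate case where $\mathbb{P}(X>x)=0$ eventually, the tail domination assumption forces $\mathbb{P}(Y>x)=0$ eventually as well, and then $\mathbb{P}(X\vee Y>x)=0$ eventually, so the limit in \eqref{lambda_k} for $X\vee Y$ is trivially $0=\lambda_k(X)$. There is no substantive obstacle here; the corollary is essentially a one-line consequence of Lemma \ref{lem 1.1}, and the proof proposal is simply to display the factorization above and invoke the lemma.
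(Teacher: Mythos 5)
Your proposal is correct and matches the paper's intent: the paper gives no written proof, simply declaring the corollary "an immediate consequence of the previous lemma," and your factorization of $x^{\alpha}(\ln x)^{-k}\mathbb{P}(X\vee Y>x)$ into the ratio handled by Lemma \ref{lem 1.1} times the quantity converging to $\lambda_k(X)$ is exactly that immediate consequence. Your care about the well-definedness of the denominator is a reasonable extra remark but not a substantive issue.
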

We are now ready to state the main result of this section. To do so, we first introduce the following class of integrands, which will play an important role in later sections as
well. Also, recall the class $\mathcal{L}_k$ in \eqref{eq:cl L_k}. 
\begin{defn}\label{def:L_{k,+}}
On a product measure space $(E^k,\cl{E}^k,\mu^k)$,
we denote by  $\mathcal{L}_{k,+}^\alpha (\mu)$ the class of functions in $ \cl{L}_k$ that satisfy 
\begin{equation}
\begin{aligned}
 L^\alpha \ln ^{k-1} L(f,\mu) < \infty, \quad\quad &\text{ if } k>2, \\
L^\alpha \ln L \ln \ln L(f, \mu) < \infty, \quad\quad & \text{ if } k=2 \text {, }
\end{aligned}
\end{equation}
where $L^\alpha \ln ^{k-1} L(f, \mu)$ and $L^\alpha \ln L \ln \ln L(f, \mu) $ are as in (\ref{defn l space})  and (\ref{defn loglog}), respectively.
\end{defn}

\begin{thm}\label{lambda_{k-1}}
Suppose $f_i \in \mathcal{L}_k$,  $i = 1,\ldots,d$, with each $\mu^k(f_i>0)>0$, $k\ge 2$. 
Assume that there exists a probability measure $m$ equivalent to $\mu$ with $\psi =  d\mu/dm\in (0,\infty)$ $m$-a.e.,  so that $f_i\cdot\left(\psi^{\otimes k}\right)^{1 / \alpha} \in \mathcal{L}_{k,+}^\alpha(m)$  for $1\leq i \leq d$, where $d \in \bb{Z}_+$, $k \geq 2$, and $\mathcal{L}_{k,+}^\alpha(m)$ is as in Definition \ref{def:L_{k,+}}. Then, the random vector $(I_k^e(f_1),\ldots I_k^e(f_d))\in \mrm{MRV}_d(\alpha)$, and more specifically, for $\boldsymbol{x} = (x_1, \ldots, x_d)\in (0,\infty)^d$,
\begin{equation}\label{eq 42}
    \begin{aligned}
     &\lim_{t \rightarrow \infty} t^{\alpha} (\ln t)^{-(k-1)} \mathbb{P}\left( \left(  I^e_k(f_i)\right)_{i = 1,\ldots,d} \in [\boldsymbol{0},t\boldsymbol{x}]^c\right)  =  k \alpha^{k-1}(k !)^{-2}  \int_{E^k} \bigvee_{i=1}^d \frac{\widetilde{f}_i^\alpha\left(\boldsymbol{u}\right)}{x_i^\alpha} \mu^k(d\boldsymbol{u}),
 \end{aligned}
\end{equation}
where $\wt{f}_i$, $1\leq i \leq d$, are the max-symmetrization as in \eqref{sym def}.
\end{thm}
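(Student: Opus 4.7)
The strategy is to reduce the joint tail to a univariate tail via max-linearity, then extract an explicit ``dominant'' term from the LePage series and show all other terms are asymptotically negligible.

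\textit{Reduction to a univariate tail.} By the LePage representation in Corollary~\ref{cor:gen int}, $I_k^e(f_i) \EqD S_k^e(f_i)$. Absorbing $\psi^{1/\alpha}$ into each $f_i$ reduces WLOG to the case $\mu=m$ a probability measure and $\psi \equiv 1$; by max-symmetrization invariance (Proposition~\ref{sim inv}), WLOG each $f_i$ is symmetric. Max-linearity (Proposition~\ref{I^e_k up}, extended by Corollary~\ref{cor:gen int}) then collapses the joint event: $\{\exists i: I_k^e(f_i) > tx_i\} = \{I_k^e(R) > t\}$ with $R := \bigvee_i f_i/x_i$. Since $R$ is symmetric, $R^\alpha = \bigvee_i f_i^\alpha/x_i^\alpha$, and $R$ inherits membership in $\mathcal{L}_{k,+}^\alpha(m)$ (using $\ln_+(f_i/x_i) \le \ln_+ f_i + (\ln(1/x_i))_+$ to dominate), the theorem reduces to proving: for every symmetric $g \in \mathcal{L}_{k,+}^\alpha(m)$,
\[
\lim_{t\to\infty} t^\alpha(\ln t)^{-(k-1)} \mathbb{P}(I_k^e(g) > t) = \frac{\alpha^{k-1}}{(k-1)!\, k!} \int g^\alpha\, dm^k.
\]

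\textit{Identifying the dominant term.} In $S_k^e(g) = \bigvee_{\boldsymbol{j}\in\mathcal{D}_{k,<}} g(T_{\boldsymbol{j}}) \prod_r \Gamma_{j_r}^{-1/\alpha}$, the configuration $\boldsymbol{j}^* = (1,2,\ldots,k)$ gives the ``heaviest'' single term $V^* := g(T_1,\ldots,T_k) \prod_{r=1}^k \Gamma_r^{-1/\alpha}$, since $\Gamma_1 < \Gamma_2 < \cdots$. Using either the joint density $e^{-\gamma_k}\mathbf{1}\{0<\gamma_1<\cdots<\gamma_k\}$ or the representation $(\Gamma_1,\ldots,\Gamma_k) \EqD \Gamma_{k+1}(U_{(1)},\ldots,U_{(k)})$ with $\Gamma_{k+1}\sim \mathrm{Gamma}(k+1,1)$ independent of the order statistics of $k$ i.i.d.\ uniforms on $(0,1)$, I would establish the small-ball asymptotic
\[
\mathbb{P}(\Gamma_1 \Gamma_2 \cdots \Gamma_k < \epsilon) \sim \frac{\epsilon\,|\ln \epsilon|^{k-1}}{k!\,(k-1)!} \quad \text{as } \epsilon\to 0,
\]
hence $\mathbb{P}\bigl(\prod_{r=1}^k \Gamma_r^{-1/\alpha} > t\bigr) \sim \frac{\alpha^{k-1}}{k!\,(k-1)!}\, t^{-\alpha}(\ln t)^{k-1}$. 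Conditioning on $(T_1,\ldots,T_k)$ (independent of the $\Gamma_j$'s) and applying dominated convergence, justified by the $\mathcal{L}_{k,+}^\alpha(m)$-integrability (which provides an integrable majorant for $g^\alpha (1 + (\ln_+ g)^{k-1})$), yields
\[
\mathbb{P}(V^* > t) \sim \frac{\alpha^{k-1}}{k!\,(k-1)!}\, t^{-\alpha}(\ln t)^{k-1} \int g^\alpha\, dm^k,
\]
matching the target constant. Since $I_k^e(g) \ge V^*$ a.s., this yields the $\liminf$ lower bound.

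\textit{Upper bound and main obstacle.} For the matching $\limsup$ upper bound, I would truncate: $I_k^e(g) = V^{\le M} \vee V^{>M}$ with $V^{\le M} = \bigvee_{\boldsymbol{j} \in \mathcal{D}_{k,<}^M} V_{\boldsymbol{j}}$ where $\mathcal{D}_{k,<}^M = \{\boldsymbol{j}\in \mathcal{D}_{k,<}: j_k \le M\}$ and $V_{\boldsymbol{j}} = g(T_{\boldsymbol{j}}) \prod_r \Gamma_{j_r}^{-1/\alpha}$. For $\boldsymbol{j} \in \mathcal{D}_{k,<}^M$ with $\boldsymbol{j} \neq \boldsymbol{j}^*$, some $j_r > r$ forces $\Gamma_{j_r}$ to contain at least one additional independent exponential gap bounded away from zero (e.g.\ $\Gamma_{j_r} \ge \Gamma_r + E_{r+1}$), which strips at least one logarithmic factor from the joint small-ball asymptotic of $\prod_r \Gamma_{j_r}$; a case analysis yields $\mathbb{P}(V_{\boldsymbol{j}} > t) = o(t^{-\alpha}(\ln t)^{k-1})$, and union-bounding over the finite set $\mathcal{D}_{k,<}^M$ gives $\mathbb{P}(V^{\le M} > t) \sim \mathbb{P}(V^* > t)$. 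The \textbf{main obstacle} is controlling the remainder tail: I need to establish
\[
\lim_{M\to\infty}\limsup_{t\to\infty} t^\alpha (\ln t)^{-(k-1)} \mathbb{P}(V^{>M} > t) = 0.
\]
The plan here is a moment estimate on $\sum_{\boldsymbol{j}: j_k > M} g(T_{\boldsymbol{j}})^r \prod_r \Gamma_{j_r}^{-r/\alpha}$ for some $r \in (\alpha, 2\alpha)$, paralleling the proof of Theorem~\ref{suff thm} and adapting the tail analysis for multiple symmetric stable integrals in \cite[]{samorodnitsky1989asymptotic}; the class $\mathcal{L}_{k,+}^\alpha$ (sharpened to $L^\alpha \ln L \ln\ln L$ when $k = 2$) is precisely tailored to yield this vanishing. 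Combining the $\liminf$ and $\limsup$ bounds (letting $M\to\infty$), I conclude $\mathbb{P}(I_k^e(g) > t) \sim \mathbb{P}(V^* > t)$, which completes the proof.
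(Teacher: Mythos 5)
Your overall architecture matches the paper's proof: reduce to $d=1$ via max-linearity with $g=\bigvee_i x_i^{-1}f_i$, pass to the LePage series after symmetrization and normalization of $\mu$, identify $V^*=g(T_1,\dots,T_k)\prod_{r=1}^k\Gamma_r^{-1/\alpha}$ as the contributing term via the small-ball asymptotic $\mathbb{P}(\Gamma_1\cdots\Gamma_k<\epsilon)\sim \epsilon|\ln\epsilon|^{k-1}/(k!(k-1)!)$ (this is \cite[Corollary 3.2]{samorodnitsky1989asymptotic} in the paper), and show the remaining supremum is $o(t^{-\alpha}(\ln t)^{k-1})$. The lower bound and the constant are correct.

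The gap is in your treatment of the remainder $V^{>M}$, which you yourself flag as the main obstacle. The tool you propose --- a moment estimate on $\sum_{\boldsymbol{j}:j_k>M} g(T_{\boldsymbol{j}})^r\prod_\ell\Gamma_{j_\ell}^{-r/\alpha}$ for some $r\in(\alpha,2\alpha)$ followed by Markov --- cannot produce the rate $o(t^{-\alpha}(\ln t)^{k-1})$. A Markov bound using a moment of this sum of order $\gamma<\alpha/r$ yields only $O(t^{-r\gamma})$ with $r\gamma<\alpha$, which is polynomially too weak; and the moment of order exactly $\alpha/r$ does not exist, because for $k\ge 2$ even the individual remainder terms have tails of order $t^{-\alpha}(\ln t)^{k-2}$, so $\mathbb{E}[(V^{>M})^{\alpha}]=\infty$. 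What is actually needed (and what the paper uses, via its Proposition \ref{prop6.7}, the analogue of \cite[Proposition 5.1]{samorodnitsky1989asymptotic}) is a split of each summand according to whether $g(T_{\boldsymbol{j}})^\alpha\le[\boldsymbol{j}]$ or $>[\boldsymbol{j}]$, an $L^\alpha$ bound via an $L^r$--H\"older step for the small part, and a Markov inequality against the Orlicz-type function $\Phi_\alpha(x)=\Phi(x^\alpha)$ with $\Phi(x)=x/(\ln(a+x))^{k-1}$ for the large part (this is exactly where the extra $\ln\ln$ in the $k=2$ condition enters); moreover the paper does not truncate at $j_k\le M$ but partitions $\mathcal{D}_{k,<}\setminus\{(1,\dots,k)\}$ by how many trailing coordinates deviate and runs a downward induction over these sub-families, with the one-deviation base case handled separately as in \cite[Proposition 5.2]{samorodnitsky1989asymptotic}. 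You point at the right reference, but as written your remainder step would fail; it needs to be replaced by this finer decomposition and the $\Phi_\alpha$-moment machinery.
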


\begin{rem}\label{Rem:k=1 tail}
We note that \eqref{eq 42} holds in the case $k=1$ under the assumption that each 
$f_i \in L_+^\alpha(\mu)$, $i=1,\ldots,d$, i.e., the necessary and sufficient 
integrability condition for single integrals. This follows from standard facts on 
max-stable random vectors and their spectral (LePage-type) representations; 
see, e.g., \cite[Theorem~13.2.2]{kulik2020heavy}.
\end{rem}

\begin{proof}
 Without loss of generality,  suppose $\psi\equiv 1$ (and thus $\mu$ is a probability measure) in the LePage representation $S_k^e(f)$  in \eqref{eq4}, and $f_i$, $1 \leq i \leq d$, are symmetric.   
We begin with proving the case for $d = 1$, $f = f_1$ and $x_1 = 1$, i.e.,
\begin{equation}\label{case1}
\lambda_{k-1}\left(I_k^e(f)\right)=k \alpha^{k-1}(k!)^{-2} \int_{E^k} f^\alpha\left(\boldsymbol{u}\right)\mu^k(\boldsymbol{u}).
\end{equation}
The proof of (\ref{case1}) essentially follows the approach of \cite[Theorem 5.3]{samorodnitsky1989asymptotic}, and we only give a sketch. 

We first identify the contributing term in the  LePage representation $S_k^e(f)$: It follows from \cite[Corollary 3.2]{samorodnitsky1989asymptotic} that 
\begin{equation}\label{gen cor 3.2}
 \lambda_{k-1} \left(f(T_1,\ldots,T_k)(\Gamma_{1}\ldots\Gamma_k)^{-1/\alpha}\right)=k \alpha^{k-1}(k !)^{-2} \mathbb{E}f^\alpha(T_{\boldsymbol{j}}).
 \end{equation}
Recall the index set $\mathcal{D}_{k,<}$ defined in (\ref{D<}). In view of Corollary \ref{cor Lambda space}, it suffices to show 
 \begin{equation}\label{eq:remainder vanish}
 \lambda_{k-1}\pp{R}=0, \quad R:=\bigvee_{\boldsymbol{j} \in \mathcal{D}_{k,<}\setminus \{(1,\ldots,k)\}}f(T_{\boldsymbol{j}} ) \left[\Gamma_{\boldsymbol{j}}\right]^{-1 / \alpha}.
 \end{equation}
The proof of  \eqref{eq:remainder vanish}  follows  similar arguments as  that for \cite[Theorem 5.3]{samorodnitsky1989asymptotic}.   First, partition the index set $\mathcal{D}_{k, <}\setminus\{(1,\ldots,k)\}$ as in their relation \cite[(5.3)]{samorodnitsky1989asymptotic}, namely,
\begin{equation}\label{p1}
\mathcal{D}_{k, <}\setminus\{(1,\ldots,k)\} = \bigcup_{m=1}^k \mathcal{D}_{k, m, k-m+2}^{<},    
\end{equation}
where for $1 \leq m \leq k$ and $i \geq k-m+2$, we define $$\mathcal{D}_{k, m,i}^{<}=\left\{\left(1,2, \ldots, k-m, j_1, \ldots, j_m\right) \mid\left(j_1, \ldots, j_m\right) \in \mathcal{D}_{m,<}, \, j_1 \geq i\right\}.$$
Define $Y_{k, m, i}=\bigvee_{\boldsymbol{j} \in D_{k, m, i}^<} f(T_{\boldsymbol{j}} ) \left[\Gamma_{\boldsymbol{j}}\right]^{-1 / \alpha}, \, i \geq k-m+2.$
By Corollary \ref{cor Lambda space}, to show (\ref{eq:remainder vanish}), it suffices to prove
\begin{equation}\label{gen}
    \lambda_{k-1}(Y_{k, m, i})=0, m = 1,\ldots, k, \text{ for } i \geq k-m+2.
\end{equation}

When $m=1$,  \eqref{gen}  follows from arguments analogous to those in \cite[Proposition 5.2]{samorodnitsky1989asymptotic}, so we omit the details. Then we proceed by induction. Suppose \eqref{gen} holds for some $1<m \leq k-1$. We now prove it for $m+1$. Partition $\mathcal{D}_{k, m+1, i}^{<}$ for $1 < m \leq k-1$ as
 $$
\mathcal{D}_{k, m+1, i}^{<} =\left(\bigcup_{d=i+1}^{2 k+1} \mathcal{D}_{k, m, d}^{*,<}\right) \bigcup \mathcal{D}_{k, m+1,2 k+1}^{<}, \quad i \geq k-m+1,
$$
where $\mathcal{D}_{k, m, d}^{*,<}=\left\{\left(1,2, \ldots, k-m-1, d-1, j_1, \ldots, j_m\right)\mid \left(j_1, \ldots, j_m\right) \in \mathcal{D}_m, j_1 \geq d\right\}$, $d \geq k -m+1$. In particular,  when $m = k-1$,  $\mathcal{D}_{k, k-1, d}^{*,<} = \left\{\left( d-1, j_1, \ldots, j_{k-1}\right)\mid\left(j_1, \ldots, j_{k-1}\right) \in \mathcal{D}_{k-1}, {j}_1 \geq d\right\}$.  Then we set $Y^*_{k, m, d}=\bigvee_{\boldsymbol{j} \in D_{k, m, d}^{*,<}} f(T_{\boldsymbol{j}} ) \left[\Gamma_{\boldsymbol{j}}\right]^{-1 / \alpha}$  and it follows that for $1 < m \leq k-1$,
$$
Y_{k, m+1, i}=\pp{\bigvee_{d=i+1}^{2 k+1} Y_{k, m, d}^*} \bigvee Y_{k, m+1,2 k+1},\quad i \geq k-m+1.
$$
Since $\mathcal{D}_{k, m, d}^{*,<} \subset \mathcal{D}_{k, m, d}^{<}$, we have   $Y_{k,m,d}^* \leq Y_{k,m,d}$ a.s.\ for all $d \geq k-m+2$, and hence the assumption of induction  implies  $\lambda_{k-1}(Y_{k, m, d+1}^*)=0$ for any $d \geq k-m+1$. By Corollary \ref{cor Lambda space}, it remains to prove $\lambda_{k-1}\left(Y_{k, m+1,2 k+1}\right)=0$. Then, we analyze $\mathbb{P}\left(Y_{k, m+1,2 k+1}>x\right)$ for $x >0$ , following the approach of  \cite[(5.9)]{samorodnitsky1989asymptotic}. This involves applying a Markov’s inequality and a moment bound in view of Proposition \ref{prop6.7} (which plays the role of \cite[Proposition 5.1]{samorodnitsky1989asymptotic}) and the assumption $f \in \mathcal{L}_{k,+}^\alpha(\mu)$.

For the case $d \geq2$, the result follows from max-linearity (Corollary \ref{cor:gen int}):
$$
\mathbb{P}\left( \left(  I^e_k(f_i)\right)_{i = 1,\ldots,d} \in [\boldsymbol{0},t\boldsymbol{x}]^c\right) =\mathbb{P}\left[\bigvee_{i=1}^d x_i^{-1} I_k^e\left(f_i\right)>t\right]=\mathbb{P}\left[I_k^e(g)>t\right],
$$
where $g:=\bigvee_{i=1}^d x_i^{-1} \cdot f_i$.
Applying the tail asymptotics of $\mathbb{P}\left[I_k^e(g)>t\right]$ from the case $d = 1$ yields the desired result.
\end{proof}
The tail estimate in Theorem~\ref{lambda_{k-1}} implies that 
\(I_k^e(f) \in L^r(\mathbb{P})\) for all \(r \in (0,\alpha)\), provided that \(f\) satisfies 
the conditions of the theorem. Moreover, when this integrability is combined 
with the convergence criterion in Proposition~\ref{prop 4.3}, we obtain the 
following result.

\begin{cor}\label{Cor:I_k conv in L^r}
Suppose $k \geq 2$ and $f, g, f_n \in \mathcal{L}_k$ for $n \in \mathbb{Z}_{+}$. Assume that there exists a probability measure $m$ equivalent to $\mu$ with $\psi \in d \mu / d m \in(0, \infty)$ such that  $g\cdot\left(\psi^{\otimes k}\right)^{1 / \alpha} \in \mathcal{L}_{k,+}^\alpha(m)$. Moreover, suppose $f_n \rightarrow f$ and $f_n \leq g,\, \mu^k$-a.e. for all $n \in \mathbb{Z}_{+}$.  Then for any $0<r < \alpha$, as $n \rightarrow \infty$,
$$
I_k^e\left(f_n\right) \rightarrow I_k^e(f) \text{ in } L^r(\mathbb{P}).
$$
\end{cor}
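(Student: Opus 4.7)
The plan is to combine the almost sure convergence from Proposition~\ref{prop 4.3} with a uniform $L^r$ domination coming from the tail estimate in Theorem~\ref{lambda_{k-1}}, and then invoke the dominated convergence theorem. First, I would note that under the hypotheses, Proposition~\ref{prop 4.3} already yields $I_k^e(f_n) \to I_k^e(f)$ almost surely as $n \to \infty$. It therefore remains to produce an integrable dominant for $|I_k^e(f_n) - I_k^e(f)|^r$.

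Second, I would establish the pointwise bound $|f_n - f| \le f_n \vee f \le g$ (valid $\mu^k$-a.e.\ because $f_n \to f$ pointwise a.e.\ and $f_n \le g$ a.e.). Applying the triangle inequality from Corollary~\ref{cor:gen int} together with monotonicity gives
\begin{equation*}
|I_k^e(f_n) - I_k^e(f)| \;\le\; I_k^e(|f_n - f|) \;\le\; I_k^e(g) \quad \text{a.s.},
\end{equation*}
so $|I_k^e(f_n) - I_k^e(f)|^r \le \bigl(I_k^e(g)\bigr)^r$ for every $r > 0$.

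Third, I would verify that $\bigl(I_k^e(g)\bigr)^r \in L^1(\mathbb{P})$ for $r \in (0,\alpha)$. Since $g\cdot(\psi^{\otimes k})^{1/\alpha} \in \mathcal{L}_{k,+}^\alpha(m)$, Theorem~\ref{lambda_{k-1}} (applied with $d=1$, $f_1 = g$, $x_1 = 1$) gives
\begin{equation*}
\mathbb{P}\bigl(I_k^e(g) > t\bigr) \;\sim\; c\, t^{-\alpha}(\ln t)^{k-1} \quad \text{as } t \to \infty,
\end{equation*}
for some constant $c \in [0,\infty)$. Using the layer-cake formula, $\mathbb{E}\bigl[(I_k^e(g))^r\bigr] = \int_0^\infty r t^{r-1}\mathbb{P}(I_k^e(g) > t)\,dt$, which is finite because the integrand behaves like $r c\, t^{r-1-\alpha}(\ln t)^{k-1}$ at infinity and $r < \alpha$ (and is bounded on any compact set away from infinity). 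Hence the dominated convergence theorem applies and yields $\mathbb{E}|I_k^e(f_n) - I_k^e(f)|^r \to 0$.

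The only substantive point to double-check is the passage from the tail equivalence to finite $r$-th moment, which is a standard regular-variation computation; the rest of the argument is a routine assembly of previously proved ingredients, so I do not anticipate any real obstacle.
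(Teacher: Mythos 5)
Your argument is correct and matches the paper's proof in all essentials: both rest on the a.s.\ convergence supplied by Proposition~\ref{prop 4.3} together with moment control for $I_k^e(g)$ coming from the tail asymptotics of Theorem~\ref{lambda_{k-1}}. The only cosmetic difference is that you pass to the limit via dominated convergence with the explicit dominant $\bigl(I_k^e(g)\bigr)^r$ (using $|I_k^e(f_n)-I_k^e(f)|\le I_k^e(|f_n-f|)\le I_k^e(g)$), whereas the paper establishes uniform integrability of $\{|I_k^e(f_n)|^r\}$ from the bound $\sup_n \mathbb{E}|I_k^e(f_n)|^{r+\epsilon}\le \mathbb{E}|I_k^e(g)|^{r+\epsilon}<\infty$ for small $\epsilon\in(0,\alpha-r)$.
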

\begin{proof}
By Theorem  \ref{lambda_{k-1}}, each of the random variables $I_k^e(f), I_k^e(g), I_k^e\left(f_n\right)$, $n \in \mathbb{Z}_{+}$,   belongs to $\mrm{MRV}_1(\alpha)$. Hence, each admits finite $r$-th moments for $0<r<\alpha$. Moreover, since $I_k^e\left(f_n\right) \leq I_k^e(g)$ a.s.\ by monotonicity, we may choose $\epsilon \in(0, \alpha-r)$ such that $\sup _n \mathbb{E}\left|I_k^e\left(f_n\right)\right|^{r+\epsilon} \leq \mathbb{E}\left|I_k^e(g)\right|^{r+\epsilon}<\infty$. Thus,  the family $\left\{\left|I_k^e\left(f_n\right)\right|^r: n \in \mathbb{Z}_{+}\right\}$ is uniformly integrable. Applying Proposition  \ref{prop 4.3} (i), the conclusion follows.
\end{proof}

The tail asymptotics also lead to a strengthened version of the monotonicity property stated in 
Corollary~\ref{cor:gen int}.
\begin{cor}
    Suppose $k \geq 2$ and $f,g \in \mathcal{L}_k$, with  $\mu^k\left(f>0\right)>0$ and $\mu^k\left(g>0\right)>0$. Assume that there exists a probability measure $m$ equivalent to $\mu$ with $\psi=d \mu / d m \in(0, \infty)$, so that $f \cdot\left(\psi^{\otimes k}\right)^{1 / \alpha} , g\cdot\left(\psi^{\otimes k}\right)^{1 / \alpha} \in \mathcal{L}_{k,+}^\alpha(m)$. Then,
    $$
    I_k^e(f) \leq I_k^e(g) \text{ a.s., if and only if  } \widetilde{f}(\boldsymbol{u}) \leq \widetilde{g}(\boldsymbol{u}), \mu^k\text{-a.e.,}
    $$
and the equality holds a.s. if and only if  $\widetilde{f}(\boldsymbol{u}) = \widetilde{g}(\boldsymbol{u}), \mu^k\text{-a.e.,}$
\end{cor}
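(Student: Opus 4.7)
The \textbf{if} direction is immediate from earlier results. By Proposition~\ref{sim inv}, $I_k^e(f) = I_k^e(\widetilde{f})$ and $I_k^e(g) = I_k^e(\widetilde{g})$ a.s., and by the monotonicity item in Corollary~\ref{cor:gen int}, $\widetilde{f} \leq \widetilde{g}$ $\mu^k$-a.e.\ gives $I_k^e(\widetilde{f}) \leq I_k^e(\widetilde{g})$ a.s., hence $I_k^e(f) \leq I_k^e(g)$ a.s. An identical argument gives the a.s.\ equality statement from $\widetilde{f}=\widetilde{g}$ $\mu^k$-a.e.

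For the \textbf{only if} direction on the inequality, the plan is to combine max-linearity with the exact tail asymptotics from Theorem~\ref{lambda_{k-1}}. Suppose $I_k^e(f) \leq I_k^e(g)$ a.s. Using Proposition~\ref{sim inv} together with the max-linearity item in Corollary~\ref{cor:gen int},
\[
I_k^e(\widetilde{f} \vee \widetilde{g}) \;=\; I_k^e(\widetilde{f}) \vee I_k^e(\widetilde{g}) \;=\; I_k^e(f) \vee I_k^e(g) \;=\; I_k^e(g) \quad \text{a.s.}
\]
In particular, the two sides have identical right tails. Since $\widetilde{f} \vee \widetilde{g}$ is already symmetric, Theorem~\ref{lambda_{k-1}} applied (with $d=1$) to both sides yields, after cancelling the positive constant $k\alpha^{k-1}(k!)^{-2}$,
\[
\int_{E^k} (\widetilde{f} \vee \widetilde{g})^\alpha\, d\mu^k \;=\; \int_{E^k} \widetilde{g}^\alpha\, d\mu^k.
\]
Since $\widetilde{f} \vee \widetilde{g} \geq \widetilde{g}$ pointwise and the integrals agree, this forces $\widetilde{f} \vee \widetilde{g} = \widetilde{g}$ $\mu^k$-a.e., i.e.\ $\widetilde{f} \leq \widetilde{g}$ $\mu^k$-a.e. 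For the equality statement, $I_k^e(f) = I_k^e(g)$ a.s.\ splits into two inequalities, and applying the inequality case twice gives $\widetilde{f} \leq \widetilde{g}$ and $\widetilde{g} \leq \widetilde{f}$ $\mu^k$-a.e., hence $\widetilde{f} = \widetilde{g}$ $\mu^k$-a.e.

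The \textbf{main technical obstacle} is verifying that Theorem~\ref{lambda_{k-1}} actually applies to the combined integrand $\widetilde{f}\vee\widetilde{g}$; that is, that $(\widetilde{f}\vee\widetilde{g})\cdot(\psi^{\otimes k})^{1/\alpha}$ belongs to $\mathcal{L}_{k,+}^\alpha(m)$. The hypothesis provides this membership only for $f\cdot(\psi^{\otimes k})^{1/\alpha}$ and $g\cdot(\psi^{\otimes k})^{1/\alpha}$. Closure under symmetrization follows from Lemma~\ref{lem eqv} (already invoked in the proofs of Theorem~\ref{suff thm} and Proposition~\ref{prop 5.44}) together with $|\Theta_k|=k!<\infty$, and closure under the pointwise maximum follows from the pointwise bound $a\vee b \leq a + b$ combined with the elementary sub-additivity inequalities $(a+b)^\alpha \leq 2^\alpha(a^\alpha+b^\alpha)$ and $\ln_+(a+b)\leq \ln_+ a + \ln_+ b + \ln 2$, which together imply that both $L^\alpha\ln^{k-1}L$ (for $k\geq 3$) and $L^\alpha \ln L \ln\ln L$ (for $k=2$) are finite on $\widetilde{f}\vee\widetilde{g}$ whenever they are finite on $\widetilde{f}$ and $\widetilde{g}$ separately. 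The positivity requirement $\mu^k(\widetilde{f}\vee\widetilde{g}>0)>0$ is automatic from $\mu^k(g>0)>0$.
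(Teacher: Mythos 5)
Your proof is correct, and its core mechanism is the same as the paper's: both arguments reduce the problem to comparing the tail constant of $I_k^e(g)$ with that of $I_k^e(f)\vee I_k^e(g)$ via Theorem~\ref{lambda_{k-1}}, arriving at $\int_{E^k}\widetilde{g}^\alpha\,d\mu^k=\int_{E^k}\widetilde{f}^\alpha\vee\widetilde{g}^\alpha\,d\mu^k$ and then using pointwise monotonicity of the integrands. The difference is in how the theorem is invoked. The paper writes $\mathbb{P}\bigl(I_k^e(g)>t\bigr)=\mathbb{P}\bigl((I_k^e(g),I_k^e(f))\in[\boldsymbol{0},t\boldsymbol{1}]^c\bigr)$ and applies the $d=2$ case of Theorem~\ref{lambda_{k-1}} to the pair $(g,f)$; since the theorem's hypotheses are imposed on each $f_i$ separately, this requires nothing beyond the stated assumptions. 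You instead apply the $d=1$ case to the single integrand $\widetilde{f}\vee\widetilde{g}$ (after the max-linearity identity $I_k^e(\widetilde{f}\vee\widetilde{g})=I_k^e(f)\vee I_k^e(g)=I_k^e(g)$ a.s.), which creates the extra obligation of showing $(\widetilde{f}\vee\widetilde{g})\cdot(\psi^{\otimes k})^{1/\alpha}\in\mathcal{L}_{k,+}^\alpha(m)$. Your closure argument for this is valid — though it can be streamlined by observing that the functions $x\mapsto x^\alpha(1+(\ln_+x)^{k-1})$ and $x\mapsto x^\alpha(1+\ln_+x\cdot\ln_+|\ln x|)$ are nondecreasing on $[0,\infty]$, so the corresponding functional of a pointwise maximum is bounded by the sum of the two functionals, with symmetrization handled by Lemma~\ref{lem eqv} exactly as you say. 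In short: same idea, but the paper's use of the multivariate ($d=2$) form of the tail theorem sidesteps the "main technical obstacle" you had to resolve, at no cost. Both routes are sound, and your derivation of the equality case from two applications of the inequality case matches the paper's.
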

\begin{proof}
The ``if'' part of the first conclusion follows from Corollary \ref{cor:gen int}. For the ``only if'' part,
suppose $ I_k^e(f) \leq I_k^e(g)$ a.s.. Then for $t>0$, 
    $$
    \mathbb{P}(I_k^e(g) > t) =  \mathbb{P}(I_k^e(g) \vee I_k^e(f)> t) = \mathbb{P}((I_k^e(g), I_k^e(f)) \in [\boldsymbol{0},t\boldsymbol{1}]^c).
    $$
Thus, by applying Theorem \ref{lambda_{k-1}} to the tail probability of $I_k^e(f)$ and  $(I_k^e(g), I_k^e(f))$ respectively, we obtain that 
$$
\int_{E^k} \widetilde{f}(\boldsymbol{u}) \mu^k(\boldsymbol{u}) =   \int_{E^k} \widetilde{g}(\boldsymbol{u}) \vee \widetilde{f}(\boldsymbol{u}) \mu^k(\boldsymbol{u}).
$$
Since  $ \widetilde{g}(\boldsymbol{u}) \leq \widetilde{g}(\boldsymbol{u}) \vee \widetilde{f}(\boldsymbol{u}), \forall \boldsymbol{u} \in E^k$, it follows that $ \widetilde{g}(\boldsymbol{u}) = \widetilde{g}(\boldsymbol{u}) \vee \widetilde{f}(\boldsymbol{u}), \mu^k$-a.e. and hence  $ \widetilde{g}(\boldsymbol{u}) \leq  \widetilde{f}(\boldsymbol{u}), \mu^k$-a.e.. 

The second conclusion regarding equality follows from the first conclusion.  
 
\end{proof}
The following criterion for pairwise extremal independence \cite[Definition 2.1.7]{kulik2020heavy} between multiple extremal integrals of the same order can be derived based on Theorem \ref{lambda_{k-1}}.  

\begin{cor}\label{asym indep} 
Under the assumption of Theorem \ref{lambda_{k-1}}, the multiple integrals $I^e_k(f_i),  i = 1, \ldots,d$, are pairwise extremally independent, iff $\wt{f}_i(\boldsymbol{u})\wt{f}_j(\boldsymbol{u}) = 0$ $\mu^k$-a.e.  for any $1\leq i<j \leq d$, i.e., the function $\wt{f}_i$'s   have disjoint supports  modulo  $\mu^k$.
\end{cor}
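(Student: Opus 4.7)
The plan is to reduce the pairwise extremal independence of $I_k^e(f_i)$ and $I_k^e(f_j)$ to a limiting computation governed by Theorem~\ref{lambda_{k-1}}, and then use the elementary identity $a+b-a\vee b=a\wedge b$ to turn a ``union'' tail into an ``intersection'' tail.

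First, I would recall the working characterization of pairwise extremal independence for two nonnegative, jointly regularly varying components $X_i=I_k^e(f_i)$ and $X_j=I_k^e(f_j)$ sharing the index $\alpha$: the pair is extremally independent iff
\[
\lim_{t\to\infty}\frac{\mathbb{P}\pp{X_i>tx,\ X_j>ty}}{t^{-\alpha}(\ln t)^{k-1}}=0,\qquad \text{for all } x,y>0,
\]
(any scaling of the denominator by the tail of one component works equally well). Next I would write the inclusion--exclusion identity
\[
\mathbb{P}\pp{X_i>tx,\ X_j>ty}=\mathbb{P}\pp{X_i>tx}+\mathbb{P}\pp{X_j>ty}-\mathbb{P}\pp{(X_i,X_j)\in [\mbf{0},t(x,y)]^c},
\]
so that each of the three right--hand side terms is covered by Theorem~\ref{lambda_{k-1}} (applied with $d=1$ for the first two and $d=2$ for the third).

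Multiplying by $t^\alpha(\ln t)^{-(k-1)}$ and sending $t\to\infty$, the three limits exist and I can combine them termwise. Using $\frac{\wt{f}_i^\alpha}{x^\alpha}+\frac{\wt{f}_j^\alpha}{y^\alpha}-\frac{\wt{f}_i^\alpha}{x^\alpha}\vee\frac{\wt{f}_j^\alpha}{y^\alpha}=\frac{\wt{f}_i^\alpha}{x^\alpha}\wedge\frac{\wt{f}_j^\alpha}{y^\alpha}$ pointwise, I obtain
\[
\lim_{t\to\infty}t^\alpha(\ln t)^{-(k-1)}\mathbb{P}\pp{X_i>tx,\ X_j>ty}=k\alpha^{k-1}(k!)^{-2}\int_{E^k}\pp{\frac{\wt{f}_i^\alpha(\boldsymbol{u})}{x^\alpha}\wedge\frac{\wt{f}_j^\alpha(\boldsymbol{u})}{y^\alpha}}\mu^k(d\boldsymbol{u}).
\]
Taking e.g.\ $x=y=1$, extremal independence is equivalent to $\int_{E^k}\wt{f}_i^\alpha\wedge\wt{f}_j^\alpha\,d\mu^k=0$, which in turn is equivalent to $\wt{f}_i(\boldsymbol{u})\wedge\wt{f}_j(\boldsymbol{u})=0$ $\mu^k$-a.e., i.e.\ $\wt{f}_i(\boldsymbol{u})\wt{f}_j(\boldsymbol{u})=0$ $\mu^k$-a.e. (both directions follow immediately since the integrand is nonnegative). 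This gives the claim for an arbitrary pair $i<j$, and hence for pairwise independence over all such pairs.

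The only nontrivial step is lining up the definition of pairwise extremal independence with the quantity computed above; once the three tail asymptotics from Theorem~\ref{lambda_{k-1}} are in hand, the identity $a+b-a\vee b=a\wedge b$ does all the work, and no further delicate estimation is required.
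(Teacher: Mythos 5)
Your argument is correct and reaches the same conclusion, but by a genuinely more direct route than the paper's. The paper invokes the characterization of extremal independence from \cite[Proposition 2.1.8]{kulik2020heavy} --- the pair is extremally independent iff the limit on the right-hand side of \eqref{eq 42} takes the additive form $\sum_i c_i x_i^{-\alpha}$; sufficiency then follows because disjoint supports turn $\int\bigvee_i \widetilde f_i^{\alpha}/x_i^{\alpha}\,d\mu^k$ into $\sum_i x_i^{-\alpha}\int\widetilde f_i^{\alpha}\,d\mu^k$, and necessity follows by extracting the constants $c_i$ (sending all $x_j\to\infty$ except $x_i$) and then comparing $\int\sum_i\widetilde f_i^{\alpha}\,d\mu^k$ with $\int\bigvee_i\widetilde f_i^{\alpha}\,d\mu^k$ at $x_i=1$. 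You instead work straight from the definition of extremal independence as asymptotic negligibility of the joint exceedance probability, and compute that joint tail by inclusion--exclusion from the three limits supplied by Theorem \ref{lambda_{k-1}} (applied with $d=1$ twice and $d=2$ once), with the identity $a+b-a\vee b=a\wedge b$ doing the algebra. Both proofs rest entirely on Theorem \ref{lambda_{k-1}}; yours has the added benefit of producing the explicit joint-tail limit $k\alpha^{k-1}(k!)^{-2}\int (\widetilde f_i^{\alpha}/x^{\alpha})\wedge(\widetilde f_j^{\alpha}/y^{\alpha})\,d\mu^k$ and avoids the external citation, at the mild cost of having to justify that your working characterization (vanishing of the joint exceedance tail at the common marginal rate) agrees with \cite[Definition 2.1.7]{kulik2020heavy} --- which it does here, since the hypothesis $\mu^k(f_i>0)>0$ makes all marginal tail constants positive, so the choice of normalization is immaterial. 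The final equivalence between $\int\widetilde f_i^{\alpha}\wedge\widetilde f_j^{\alpha}\,d\mu^k=0$ and $\widetilde f_i\widetilde f_j=0$ $\mu^k$-a.e.\ is immediate from nonnegativity, exactly as you say.
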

\begin{proof}

In view of \cite[Proposition 2.1.8]{kulik2020heavy},  the claimed extremal independence can be characterized by the right hand side of (\ref{eq 42}) taking  the form $\sum_{i=1}^d c_i x_i^{-\alpha}$  {for some constant $c_i\in(0,\infty)$, $i=1,\ldots,d$.} Sufficiency immediately follows since under the assumption, 
the limit in \eqref{eq 42} can be written as  $\frac{ \alpha^{k-1}}{(k-1)!k!} \left[ \sum_{i=1}^d x_i^{-\alpha} \int_{E^k}\widetilde{f}_i^\alpha\left(\boldsymbol{u}\right)  \mu^k(d\boldsymbol{u})\right]$.
To see necessity,  letting $x_j$'s tend to infinity except for $x_i$ in the limit in (\ref{eq 42}), we obtain  $c_i = \frac{ \alpha^{k-1}}{(k-1)!k!} \int_{E^k} \widetilde{f}_i^\alpha\left(\boldsymbol{u}\right) \mu^k(d \boldsymbol{u})$.
Comparing   $\sum_{i=1}^d c_i x_i^{-\alpha}$ and \eqref{eq 42}, where we also set all $x_i=1$, we have   
$$    
\int_{E^k} \bigvee_{i=1}^d 
 \widetilde{f}_i^\alpha\left(\boldsymbol{u}\right) \mu^k(d \boldsymbol{u}) =\int_{E^k}  \sum_{i=1}^d \widetilde{f}_i^\alpha \left(\boldsymbol{u}\right)\mu^k(d \boldsymbol{u}).$$
The last relation holds if and only if   $\widetilde{f}_i(\boldsymbol{u})\widetilde{f}_j(\boldsymbol{u}) = 0$ $\mu^k$-a.e., $1\leq i<j \leq k$.

\end{proof}

The multivariate regular variation result in Theorem~\ref{lambda_{k-1}} applies to multiple 
extremal integrals of the same order $k\in\mathbb{Z}_+$. Together with 
Remark~\ref{Rem:k=1 tail}, this shows that extremal integrals of different orders 
have tail asymptotics which, up to multiplicative constants, differ by an additional 
logarithmic factor each time the order $k$ increases by one.

Although the setting involving different orders does not fall within the classical framework 
of multivariate regular variation (which requires that all marginals be tail-equivalent up to 
constants), a meaningful description is still available via \emph{non-standard multivariate 
regular variation}; see \cite[Section~6.5.6]{resnick2007heavy}. 
In particular, in view of \cite[Theorem~6.5]{resnick2007heavy},  non-standard multivariate 
regular variation is equivalent to standard multivariate regular variation after applying 
appropriate monotone marginal transformations. To this end,
the following result shows, when  the orders of two multiple extremal integrals are different, their joint tail behavior forces  extremal independence. Below we adopt the following notation.

\begin{prop}\label{extremal diff order}
     Suppose $I_p^e(f)$ and $I_q^e(g)$ are two multiple extremal integrals with $1 \le p < q$ and 
$p,q \in \mathbb{Z}_+$. Assume that $g$ satisfies the assumption of Theorem~\ref{lambda_{k-1}} 
on a integrand. If $p=1$, we   assume that $f \in L_+^\alpha(\mu)$ and $\mu(f>0)>0$, 
while if $p\ge 2$, we assume that $f$ satisfies the assumption of Theorem~\ref{lambda_{k-1}}.  Suppose    $a,b:(0,\infty)\mapsto(0,\infty)$ satisfy
\begin{equation}\label{lim a b}
    \lim_{t\rightarrow\infty} t\bb{P}\pp{I_p^e(f)>a(t)}=1  \text{ and } \lim_{t\rightarrow\infty} t\bb{P}\pp{I_q^e(g)>b(t)}=1.
\end{equation}
Then 
\begin{equation}\label{doub lim}
    \lim_{t\rightarrow\infty} t\bb{P}\pp{I_p^e(f)>a(t)u, I_p^e(f)>b(t)v}=0
\end{equation}
for any $u,v>0$.
\end{prop}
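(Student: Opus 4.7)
The plan is straightforward, exploiting only the growth-rate gap between $a(t)$ and $b(t)$ that is inherited from the marginal tail asymptotics of $I_p^e(f)$ and $I_q^e(g)$. By Theorem~\ref{lambda_{k-1}} (for the case $p,q\ge 2$) and Remark~\ref{Rem:k=1 tail} (for $p=1$), both tails are regularly varying with logarithmic corrections: $\mathbb{P}(I_p^e(f)>x)\sim \lambda_{p-1}(I_p^e(f))\,x^{-\alpha}(\log x)^{p-1}$ and $\mathbb{P}(I_q^e(g)>x)\sim \lambda_{q-1}(I_q^e(g))\,x^{-\alpha}(\log x)^{q-1}$ as $x\to\infty$, where the convention $(\log x)^0\equiv 1$ is used when $p=1$. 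Substituting these into \eqref{lim a b} and inverting, one reads off
\[
a(t)^{\alpha}\sim \lambda_{p-1}(I_p^e(f))\,\alpha^{-(p-1)}\,t\,(\log t)^{p-1}, \qquad b(t)^{\alpha}\sim \lambda_{q-1}(I_q^e(g))\,\alpha^{-(q-1)}\,t\,(\log t)^{q-1},
\]
so that $b(t)/a(t)\asymp(\log t)^{(q-p)/\alpha}\to\infty$ as $t\to\infty$.

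Given this growth gap, for each fixed $u,v>0$ there exists $t_0=t_0(u,v)$ such that $b(t)v>a(t)u$ for all $t\ge t_0$; on this regime $\{I_p^e(f)>b(t)v\}\subseteq\{I_p^e(f)>a(t)u\}$, so the intersection in \eqref{doub lim} reduces to the single event $\{I_p^e(f)>b(t)v\}$. Applying the marginal tail of $I_p^e(f)$ at threshold $b(t)v$, together with the asymptotics $b(t)^{-\alpha}\sim [\lambda_{q-1}(I_q^e(g))\,\alpha^{-(q-1)}\,t(\log t)^{q-1}]^{-1}$ and $\log b(t)\sim \alpha^{-1}\log t$, one obtains
\[
t\,\mathbb{P}(I_p^e(f)>b(t)v) \;\sim\; \frac{\lambda_{p-1}(I_p^e(f))\,\alpha^{q-p}\,v^{-\alpha}}{\lambda_{q-1}(I_q^e(g))}\cdot (\log t)^{-(q-p)},
\]
which tends to $0$ as $t\to\infty$ since $q>p$.

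I foresee no genuine obstacle: once the marginal tail asymptotics from Theorem~\ref{lambda_{k-1}} and Remark~\ref{Rem:k=1 tail} are in hand, the argument is a short bookkeeping exercise exploiting the logarithmic gap in the normalizing sequences. The only care needed is in the case $p=1$, where the logarithmic correction in the tail of $I_1^e(f)$ is absent; this is handled by the convention $(\log t)^0\equiv 1$ together with Remark~\ref{Rem:k=1 tail}. The message of the result, consistent with the discussion preceding the proposition, is that the different logarithmic rates in the tails of multiple extremal integrals of different orders obstruct any possibility of a common normalizing scale, which underlies the extremal independence between $I_p^e(f)$ and $I_q^e(g)$.
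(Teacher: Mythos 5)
There is a genuine problem: you have proved the statement only under a literal reading of display \eqref{doub lim}, which contains an evident typo. The second event there should be $\{I_q^e(g)>b(t)v\}$, not $\{I_p^e(f)>b(t)v\}$ --- this is clear from the hypotheses (which impose conditions on $g$ and define $b(t)$ through the tail of $I_q^e(g)$), from the surrounding discussion (the proposition is explicitly about the \emph{joint} tail of two integrals of \emph{different orders} forcing extremal independence), and from the paper's own proof, which works throughout with the pair $f(T_1,\ldots,T_p)\prod_{r=1}^p\Gamma_r^{-1/\alpha}$ and $g(T_1,\ldots,T_q)\prod_{r=1}^q\Gamma_r^{-1/\alpha}$. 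Your argument reduces the intersection to a single event precisely because both events involve the same random variable; that reduction is unavailable for the intended statement, and with it the entire content of the proposition disappears. Your marginal computations (the asymptotics $a(t)^\alpha\asymp t(\ln t)^{p-1}$, $b(t)^\alpha\asymp t(\ln t)^{q-1}$, hence $b(t)/a(t)\to\infty$) are correct and are also the first step of the paper's proof, but they are only the beginning.

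The missing idea is the treatment of the \emph{dependence} between $I_p^e(f)$ and $I_q^e(g)$: both integrals are driven by the same random sup measure, so in the joint LePage representation their leading terms share the points $(T_1,\ldots,T_p)$ and the arrival times $(\Gamma_1,\ldots,\Gamma_p)$. The paper first uses Corollary \ref{cor Lambda space} and \eqref{eq:remainder vanish} to reduce to these leading terms, then bounds the joint probability by replacing $\Gamma_r$ with products of independent exponentials, and analyzes
$F_t(x,y)=t\,\mathbb{P}\bigl(E_1\cdots E_p\le a(t)^{-\alpha}x,\ E_1\cdots E_q\le b(t)^{-\alpha}y\bigr)$
via the small-ball asymptotics of products of exponentials (Lemmas \ref{lem F.1}--\ref{lem F.3}), concluding with a domination argument and Pratt's lemma. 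None of this machinery appears in your proposal, and it cannot be bypassed: even though $\{I_q^e(g)>b(t)v\}$ is individually of probability $\sim 1/(tv^{\alpha})$, one must rule out that this event is strongly positively associated with $\{I_p^e(f)>a(t)u\}$ through the shared Poisson points. You should redo the argument for the corrected statement along these lines.
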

\begin{proof}
As in the proof of Theorem \ref{lambda_{k-1}},   we assume $\psi = 1$ without loss of generality and work with the LePage representation $S_p^e(f), S_q^e(g)$. In view of  \eqref{gen cor 3.2}, \eqref{eq:remainder vanish}, \eqref{lim a b} and Corollary \ref{cor Lambda space},  $a(t)$ and $b(t)$ satisfy
$$
\mathbb{P}\left(f(T_1, \ldots, T_p)\prod_{r=1}^p \Gamma_r^{-1/\alpha}  > a(t)\right) \sim \frac{1}{t}, \quad \mathbb{P}\left(g(T_1, \ldots, T_q)\prod_{r=1}^q \Gamma_r^{-1/\alpha}  > b(t)\right) \sim \frac{1}{t},
$$
and hence $a(t) \sim  \pp{(p!(p-1)!)^{-1}\, t(\ln t)^{p-1}}^{1/\alpha}$ and $b(t) \sim \pp{(q!(q-1)!)^{-1}\, t(\ln t)^{q-1}}^{1/\alpha}$ as $t \rightarrow \infty$. Also, again by \eqref{eq:remainder vanish} and  Corollary \ref{cor Lambda space}, to show \eqref{doub lim},  it suffices to show for any fixed $u,v >0$, 
\begin{equation}\label{5.11}
    \lim_{t \rightarrow \infty} t\mathbb{P}\left( f\left(T_1, \ldots, T_p\right) \prod_{r=1}^p \Gamma_r^{-1 / \alpha}>a(t)u, \,g\left(T_1, \ldots, T_q\right) \prod_{r=1}^q \Gamma_r^{-1 / \alpha}>b(t)v\right) =0.
\end{equation}
Let $E_i, i \in \mathbb{Z}_+$ be i.i.d.\ standard exponentials.  Note that $\Gamma_k \stackrel{d}{=} E_1+ E_2\ldots+E_k, k \in \mathbb{Z}_+$ and we have
\begin{align}\label{las}
  &\mathbb{P}\left( f\left(T_1, \ldots, T_p\right) \prod_{r=1}^p \Gamma_r^{-1 / \alpha}>a(t) u, \ g\left(T_1, \ldots, T_q\right) \prod_{r=1}^q \Gamma_r^{-1 / \alpha}>b(t)v\right) \notag\\
  & = \mathbb{P}\left( \prod_{r=1}^p \Gamma_r< a(t)^{-\alpha} u^{-\alpha}f\left(T_1, \ldots, T_p\right)^\alpha, \  \prod_{r=1}^q \Gamma_r< b(t)^{-\alpha} v^{-\alpha} g\left(T_1, \ldots, T_q\right)^\alpha\right)\notag\\
  & \leq \mathbb{P}\left( \prod_{r=1}^p E_r< a(t)^{-\alpha} u^{-\alpha} f\left(T_1, \ldots, T_p\right)^\alpha,\  \prod_{r=1}^q E_r< b(t)^{-\alpha} v^{-\alpha} g\left(T_1, \ldots, T_q\right)^\alpha\right).
\end{align}
Define  $$F_t(x, y):=t \mathbb{P}\left(E_1 \cdots E_p \leq a(t)^{-\alpha} x, E_{1} \cdots E_q \leq b(t)^{-\alpha} y\right)$$ for $x,y >0$, $X := u^{-\alpha} f\left(T_1, \ldots, T_p\right)^\alpha $ and $Y: = v^{-\alpha} g\left(T_1, \ldots, T_q\right)^\alpha$, which are finite a.s.\ based on our assumption. 
In view of \eqref{5.11} and \eqref{las}, it suffices to show that $\mathbb{E}F_t(X,Y) \rightarrow 0$ as $t\rightarrow \infty$. 

Since $\mathbb{P}(E_1 \leq s) \sim s$ as $s \downarrow 0$, Lemma \ref{lem F.1} yields
\begin{equation}\label{5.13}
    \mathbb{P}\left(E_1 \cdots E_p \leq a(t)^{-\alpha} \right) \sim   \frac{p!}{t}, \text{ as } t\rightarrow \infty,
\end{equation}
and thus for all sufficiently large $t$, we have
\begin{equation}\label{5.14}
   t \mathbb{P}\left(E_1 \cdots E_p \leq a(t)^{-\alpha} \right) \leq  2p!.
\end{equation}
Let
$H_t(x):=t \mathbb{P}\left(E_1 \cdots E_p \leq a(t)^{-\alpha} x\right)$ and $L_t(x): = \frac{\mathbb{P}\left(E_1 \cdots E_p \leq a(t)^{-\alpha} x\right)}{\mathbb{P}\left(E_1 \cdots E_p \leq a(t)^{-\alpha} \right)}$, for any $x>0$. It follows from \eqref{5.14} that for all sufficiently large $t$,  we have
\begin{equation}\label{eq:H_t(x)}
H_t(x) = L_t(x) t \mathbb{P}\left(E_1 \cdots E_p \leq a(t)^{-\alpha} \right) \leq 2p! L_t(x).
\end{equation}
Further, Lemma \ref{lem F.3} and \eqref{5.13} imply that for any fixed $x >0$,
\begin{equation}\label{eq:L_t(x)}
\lim_{t \rightarrow \infty} L_t(x) = \lim_{t \rightarrow \infty} (p!)^{-1} H_t(x) = x.
\end{equation}
Moreover, Lemma \ref{lem F.3} and the fact that $f \in L_+^{\alpha}(\mu^p)$ imply $$\lim_{t \rightarrow \infty} \mathbb{E}L_t(X) = \mathbb{E}X < \infty.$$ Combining this with \eqref{eq:H_t(x)} and \eqref{eq:L_t(x)}, it then follows from Pratt's lemma \cite[Theorem 1.23]{kallenberg2021foundations} that 

\begin{equation}\label{eq:E H_t limit}
\lim _{t \rightarrow \infty} \mathbb{E} H_t(X)=(p!)^{-1} \mathbb{E} X,
\end{equation}
Next, by Lemma \ref{lem F.2},  for any fixed  $x, y \in[0, \infty)$, there exists a constant $C>0$, such that we have  $F_t(x, y) \le C  y \pp{\frac{\ln \ln t}{\ln t}}^{q-p}$ for all $t$ sufficiently large,  and hence $F_t(x, y) \rightarrow 0$ as $t \rightarrow \infty$. Observe also that $F_t(x, y) \leq H_t(x)$ for any $x, y >0$.
Combining these with  \eqref{eq:E H_t limit}, we obtain
$$
\lim _{t \rightarrow \infty} \mathbb{E}\left[F_t(X, Y)\right]=\mathbb{E}\left[\lim _{t \rightarrow \infty} F_t(X, Y)\right]=0
$$
by Pratt's lemma.
\end{proof}
Although extremal independence holds for multiple extremal integrals of different orders, 
we shall see in the next section that full independence between two such integrals requires 
additional assumptions.

\section{Independence between multiple extremal integrals}\label{s7}

In this section, we develop a criterion for the independence between two multiple extremal integrals, 
possibly of different orders. As will be shown in  Section~\ref{max sec}, pairwise independence 
of multiple extremal integrals in fact implies their mutual independence.
A key step in our argument is a product formula for multiple extremal integrals, which may be of 
independent interest. A similar idea was used in \cite[]{rosinski1999product} in the context of 
multiple stable integrals. For convenience, throughout this section we work with symmetric 
integrands (see Proposition~\ref{sim inv}).

\subsection{Product formula}
Recall that $\mathcal{D}_p=\left\{\boldsymbol{i} \in \bb{Z}_+^p\mid \text { all } i_1, \ldots, i_p \text { are distinct}\right\}$, $p\in \bb{Z}_+$. 
The following lemma can be established directly in an elementary manner, whose proof is omitted.
\begin{lem}
\label{pre prod}
Let $F: \mathcal{D}_p \mapsto [0,\infty]$ and $G: \mathcal{D}_q \mapsto [0,\infty]$ be symmetric functions, where $p, q \in \bb{Z}_+$. Then, the following identity holds:
$$
\left(\bigvee_{\mathbf{i} \in \mathcal{D}_p} F(\mathbf{i})\right) \left(\bigvee_{\boldsymbol{j} \in \mathcal{D}_q} G(\boldsymbol{j})\right) =\bigvee_{r=0}^{p \wedge q} \bigvee_{\mathbf{k} \in \mathcal{D}_{p+q-r}} U_r(\mathbf{k}),
$$
where
$U_r(\mathbf{k})=F\left(k_1, \ldots, k_r, k_{r+1}, \ldots, k_p\right) G\left(k_1, \ldots, k_r, k_{p+1}, \ldots, k_{p+q-r}\right)$ (note that ${U}_r$ is not necessarily symmetric). 
\end{lem}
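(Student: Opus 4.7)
The plan is purely combinatorial: expand the left-hand side into a single supremum over pairs $(\mathbf{i},\boldsymbol{j})\in\mathcal{D}_p\times\mathcal{D}_q$, stratify these pairs by the size $r$ of the overlap $\{i_1,\dots,i_p\}\cap\{j_1,\dots,j_q\}$, and then use the symmetry of $F$ and $G$ to put each pair into a canonical form indexed by a single tuple $\mathbf{k}\in\mathcal{D}_{p+q-r}$.

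First, since the factors are nonnegative (and using the convention $0\cdot\infty=0$, or simply separating the cases where one factor is $0$), I would write
\[
\Bigl(\bigvee_{\mathbf{i}\in\mathcal{D}_p}F(\mathbf{i})\Bigr)\Bigl(\bigvee_{\boldsymbol{j}\in\mathcal{D}_q}G(\boldsymbol{j})\Bigr)
=\bigvee_{(\mathbf{i},\boldsymbol{j})\in\mathcal{D}_p\times\mathcal{D}_q}F(\mathbf{i})\,G(\boldsymbol{j}).
\]
For each such pair, let $r=r(\mathbf{i},\boldsymbol{j})$ denote the cardinality of $\{i_1,\dots,i_p\}\cap\{j_1,\dots,j_q\}$; clearly $0\le r\le p\wedge q$, so the double supremum splits as
\[
\bigvee_{r=0}^{p\wedge q}\bigvee_{\substack{(\mathbf{i},\boldsymbol{j})\in\mathcal{D}_p\times\mathcal{D}_q\\ r(\mathbf{i},\boldsymbol{j})=r}}F(\mathbf{i})\,G(\boldsymbol{j}).
\]

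Next, for a fixed $r$, I would establish a correspondence between pairs $(\mathbf{i},\boldsymbol{j})$ with $r(\mathbf{i},\boldsymbol{j})=r$ and tuples $\mathbf{k}\in\mathcal{D}_{p+q-r}$. Given such a pair, enumerate the common indices as $k_1,\dots,k_r$, the remaining components of $\mathbf{i}$ as $k_{r+1},\dots,k_p$, and the remaining components of $\boldsymbol{j}$ as $k_{p+1},\dots,k_{p+q-r}$; the resulting $\mathbf{k}$ lies in $\mathcal{D}_{p+q-r}$. By symmetry of $F$ and $G$,
\[
F(\mathbf{i})\,G(\boldsymbol{j})=F(k_1,\dots,k_r,k_{r+1},\dots,k_p)\,G(k_1,\dots,k_r,k_{p+1},\dots,k_{p+q-r})=U_r(\mathbf{k}),
\]
which shows the left side is dominated by the right. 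Conversely, every $\mathbf{k}\in\mathcal{D}_{p+q-r}$ yields via the same recipe a pair $(\mathbf{i},\boldsymbol{j})$ with overlap exactly $r$, so $U_r(\mathbf{k})$ appears in the left double supremum. Taking $\bigvee$ on both ends gives equality.

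There is no real obstacle here beyond bookkeeping; the only thing to be a bit careful about is the boundary case $r=0$ (disjoint supports, the ``off-diagonal product'' term) and the degenerate cases where $F(\mathbf{i})=0$ or $G(\boldsymbol{j})=0$, both of which are handled trivially since we work with $[0,\infty]$-valued functions and the identity becomes tautological on either side.
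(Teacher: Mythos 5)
Your argument is correct: the expansion of the product of suprema into a single supremum over pairs $(\mathbf{i},\boldsymbol{j})$, the stratification by overlap size $r$, and the use of symmetry to pass to the canonical tuple $\mathbf{k}\in\mathcal{D}_{p+q-r}$ (with both inequalities checked) is exactly the elementary combinatorial bookkeeping the paper alludes to when it states the lemma "can be established directly in an elementary manner" and omits the proof. Nothing is missing, and your attention to the $[0,\infty]$-valued conventions is appropriate.
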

The following formula directly follows from LePage representation \eqref{eq4} and  the lemma above.  
\begin{prop}\label{prod formula}
(Product formula).
Suppose $f\in \mathcal{L}_p$ and $g\in \mathcal{L}_q$ are symmetric functions, where $p, q \in \bb{Z}_+$.  Then 
\begin{equation}
\begin{aligned}\label{prodf}
  I_p^e(f) I_q^e(g)& \stackrel{d}{=} \bigvee_{r=0}^{p \wedge q} \left[ \bigvee_{\mathbf{s} \in \mathcal{D}_{p+q-2r}}  \bigvee_{\mathbf{k} \in \mathcal{D}_r} h_r\left(T_{\mathbf{k}}, T_{\mathbf{s}}\right) [\Gamma_{\mathbf{k}}]^{-2/\alpha}  [\psi(T_{\mathbf{k}})]^{2/\alpha} [\Gamma_{\mathbf{s}}]^{-1/\alpha} [\psi(T_{\mathbf{s}})]^{1/\alpha}\right] \\
  & = :\bigvee_{r=0}^{p \wedge q} S_{p,q}^{(r)}(f \otimes g),
\end{aligned}
\end{equation}
where  $h_r:E^{p+q-r}\mapsto [0,\infty]$, $r=0,\ldots,p\wedge q$ is defined as follows: When $p\le q$, we set $$h_r\left(x_1, \ldots, x_{p+q-r}\right)= f \left(x_1, \ldots, x_r,  \ldots, x_p\right) g\left(x_1, \ldots, x_r, x_{p+1}, \ldots, x_{p+q-r}\right),$$ which in terms of the index sets $\mathbf{k} \in \mathcal{D}_r, \mathbf{s} \in \mathcal{D}_{p+q-2 r}$ can be written as
$$
h_r\left(T_{\mathbf{k}}, T_{\mathbf{s}}\right)=f\left(T_{k_1}, \ldots, T_{k_r}, T_{s_1}, \ldots, T_{s_{p-r}}\right) g\left(T_{k_1}, \ldots, T_{k_r}, T_{s_{p-r+1}}, \ldots, T_{s_{p+q-2 r}}\right) .
$$
When $p>q$, $h_r$ is defined similarly switching the roles between $f$ and $g$ above, and $h_0$ is understood as $f\otimes g$.
\end{prop}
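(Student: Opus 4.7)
The plan is to reduce the statement to a deterministic algebraic identity via the joint LePage representation and then apply Lemma~\ref{pre prod} directly, as the paper itself hints. Specifically, Corollary~\ref{cor:gen int} provides the joint equality in distribution $(I_p^e(f), I_q^e(g)) \stackrel{d}{=} (S_p^e(f), S_q^e(g))$, so it suffices to establish the identity with the left-hand side replaced by $S_p^e(f)\cdot S_q^e(g)$, where both series representations are built from the \emph{same} underlying sequences $(T_i)_{i\in\mathbb{Z}_+}$ and $(\Gamma_i)_{i\in\mathbb{Z}_+}$.

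Setting
\[
F(\mathbf{i}) := f(T_{i_1},\ldots,T_{i_p})\prod_{\ell=1}^p \Gamma_{i_\ell}^{-1/\alpha}\psi(T_{i_\ell})^{1/\alpha},\qquad \mathbf{i}\in\mathcal{D}_p,
\]
and defining $G$ analogously on $\mathcal{D}_q$, the assumed symmetry of $f$ and $g$ renders $F$ and $G$ symmetric, so Lemma~\ref{pre prod} yields
\[
S_p^e(f)\, S_q^e(g) \;=\; \bigvee_{r=0}^{p\wedge q}\bigvee_{\mathbf{n}\in\mathcal{D}_{p+q-r}} U_r(\mathbf{n}),
\]
with $U_r(\mathbf{n}) = F(n_1,\ldots,n_p)\, G(n_1,\ldots,n_r,n_{p+1},\ldots,n_{p+q-r})$. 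In $U_r(\mathbf{n})$, the first $r$ indices $n_1,\ldots,n_r$ are shared by $F$ and $G$, so their $\Gamma^{-1/\alpha}\psi^{1/\alpha}$ contributions collapse to $\Gamma^{-2/\alpha}\psi^{2/\alpha}$, while the remaining $p+q-2r$ indices keep their single exponents. Relabeling $\mathbf{k}:=(n_1,\ldots,n_r)$ and $\mathbf{s}:=(n_{r+1},\ldots,n_{p+q-r})$ and gathering the deterministic factors of $f$ and $g$ produces exactly the summand $h_r(T_{\mathbf{k}},T_{\mathbf{s}})[\Gamma_{\mathbf{k}}]^{-2/\alpha}[\psi(T_{\mathbf{k}})]^{2/\alpha}[\Gamma_{\mathbf{s}}]^{-1/\alpha}[\psi(T_{\mathbf{s}})]^{1/\alpha}$ appearing in \eqref{prodf}.

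The main (and essentially only) obstacle is matching the index sets. The identity above has $\mathbf{n}$ ranging over $\mathcal{D}_{p+q-r}$, whereas \eqref{prodf} takes $(\mathbf{k},\mathbf{s})$ over the unconstrained product $\mathcal{D}_r\times \mathcal{D}_{p+q-2r}$. The relabeling is a bijection between $\mathcal{D}_{p+q-r}$ and $\{(\mathbf{k},\mathbf{s})\in\mathcal{D}_r\times\mathcal{D}_{p+q-2r}:\mathbf{k}\cap\mathbf{s}=\emptyset\}$; to enlarge the index set to the full product, one observes that whenever $\mathbf{k}$ and $\mathbf{s}$ share a coordinate, the corresponding $h_r(T_{\mathbf{k}},T_{\mathbf{s}})$ evaluates $f$ (or $g$) at a point of the diagonal $D^{(p)}$ (resp.\ $D^{(q)}$) and hence vanishes by the definition of $\mathcal{L}_p$ and $\mathcal{L}_q$. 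Thus the extra summands contribute $0$, and the two suprema agree, completing the proof.
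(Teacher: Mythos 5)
Your proposal is correct and follows essentially the same route as the paper, which derives \eqref{prodf} directly from the joint LePage representation \eqref{eq4} together with Lemma~\ref{pre prod}. In fact you supply one detail the paper leaves implicit — that the unconstrained index set $\mathcal{D}_r\times\mathcal{D}_{p+q-2r}$ on the right-hand side of \eqref{prodf} agrees with the disjoint-index range produced by Lemma~\ref{pre prod}, because any summand with $\mathbf{k}$ and $\mathbf{s}$ sharing an index forces $f$ or $g$ to be evaluated on its diagonal and hence vanish — and this justification is accurate.
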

 It is  possible to develop an a.s.\ formula for $I_p^e(f)I_q^e(f)$ involving $M_\alpha$. We choose  not to pursue this here since the distributional representation above suffices for our  purpose below.

\subsection{Tail of the product and pairwise independence of extremal integrals}

In order to study the tail behavior of $I_p^e(f) I_q^e(g)$ for suitable integrands $f, g$,  we first describe the tail behavior of each $S_{p,q}^{(r)}(f \otimes g)$ in \eqref{prodf}. Recall the class $\mathcal{L}_{p,+}^\alpha (\mu)$ from Definition \ref{def:L_{k,+}}.
\begin{lem}\label{thm: tail of tensor}
Let  $f\in \mathcal{L}_p$ and $g\in \mathcal{L}_q$ be symmetric functions, where $p, q \in \bb{Z}_+$.
Suppose that there exists a probability measure $m$ equivalent to $\mu$ with $\psi\in d\mu/dm\in (0,\infty)$ $m$-a.e.,  so that
 $f \cdot\left(\psi^{\otimes p}\right)^{1 / \alpha} \in \mathcal{L}_{p,+}^\alpha (m)$ and $g\cdot\left(\psi^{\otimes q}\right)^{1 / \alpha}\in \mathcal{L}_{q,+}^\alpha(m)$. We have for $r=1,\ldots,p\wedge q$,
\begin{equation}\label{eq 50}
\lim _{\lambda \rightarrow \infty} \lambda^{\alpha / 2}(\ln \lambda)^{-(r-1)} \mathbb{P}\left( S_{p,q}^{(r)}(f \otimes g) >\lambda\right)  = k_{r, \alpha} C_r(f, g),
\end{equation}
 where  
$k_{r,\alpha} = (\alpha / 2)^{r-1} [r!(r-1) !]^{-1}$, and 
\begin{align}\label{C_r(f, g)}
  C_r(f, g)=
\int_{E^r} \mathbb{E}\left|I_{p+q-2 r}^e\left(h_r\left(s_1, \ldots, s_r, \cdot\right)\right)\right|^{\alpha / 2} \mu\left(d s_1\right) \ldots \mu\left(d s_r\right),  \quad r=1,\ldots,p\wedge q.   
\end{align} 
Here, $I^e_{p+q-2 r}\left(h_r\left(s_1, \ldots, s_r, \cdot\right)\right)$ is understood as a multiple extremal integral of order $p+q-2 r$ with respect to the random sup measure $M_\alpha$ of $h_r$ regarded as a function of its last $p+q-2 r$ coordinates (when $r=p=q$, $I^e_{0}\left(h_r\left(s_1, \ldots, s_r, \cdot\right)\right)$ is understood as  $f\left(s_1, \ldots, s_r\right) g\left(s_1, \ldots,s_r\right)$).
\end{lem}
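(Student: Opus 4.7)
The plan is to adapt the tail analysis developed for Theorem~\ref{lambda_{k-1}} to the mixed-power structure of $S_{p,q}^{(r)}(f\otimes g)$, which can be viewed informally as an ``outer $(\alpha/2)$-Fr\'echet object of order $r$ with an inner $\alpha$-Fr\'echet object of order $p+q-2r$'': the indices $\mathbf{k}$ carry weight $[\Gamma_{\mathbf{k}}]^{-2/\alpha}$ while the indices $\mathbf{s}$ carry weight $[\Gamma_{\mathbf{s}}]^{-1/\alpha}$. As the first reduction, using the LePage representation I assume without loss of generality that $\mu$ is a probability measure and $\psi\equiv 1$ (replacing $f$ by $f\cdot(\psi^{\otimes p})^{1/\alpha}$ and similarly for $g$), and I take $f,g$ symmetric via Proposition~\ref{sim inv}.

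Next I isolate the dominating configuration $\mathbf{k}=(1,\ldots,r)$, with $\mathbf{s}$ ranging over subsets of $\{r+1,r+2,\ldots\}$ of size $p+q-2r$. Taking the inner supremum over $\mathbf{s}$ with $\mathbf{k}$ fixed yields, up to a contribution from the excluded indices $\{1,\ldots,r\}$ that is negligible by Corollary~\ref{cor Lambda space}, the LePage series of $I_{p+q-2r}^e\bigl(h_r(T_1,\ldots,T_r,\cdot)\bigr)$ in the last $p+q-2r$ coordinates. For the outer object, applying \cite[Corollary~3.2]{samorodnitsky1989asymptotic} (i.e., \eqref{gen cor 3.2}) with index $\alpha/2$ in place of $\alpha$ and order $r$ gives
\[
\mathbb{P}\bigl([\Gamma_{(1,\ldots,r)}]^{-2/\alpha}>\lambda\bigr)\sim r\,(\alpha/2)^{r-1}(r!)^{-2}\,\lambda^{-\alpha/2}(\ln\lambda)^{r-1}.
\]
A Breiman-type argument then produces the prefactor $\mathbb{E}\bigl|I_{p+q-2r}^e(h_r(T_1,\ldots,T_r,\cdot))\bigr|^{\alpha/2}$, which after integration over the $T_i$'s yields $C_r(f,g)$; here I exploit the decomposition $\Gamma_j=\Gamma_r+(\Gamma_j-\Gamma_r)$ for $j>r$, which makes $(\Gamma_1,\ldots,\Gamma_r)$ independent of $(\Gamma_j-\Gamma_r)_{j>r}$, and the fact that $\Gamma_r\to 0$ in the tail regime so that $\Gamma_j\sim \Gamma_j-\Gamma_r$ for $j>r$ asymptotically. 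The combinatorial identity $r(r!)^{-2}=[r!(r-1)!]^{-1}$ then collapses the leading constant to $k_{r,\alpha}$.

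The third step is to show that every other pair $(\mathbf{k},\mathbf{s})$ contributes only $o(\lambda^{-\alpha/2}(\ln\lambda)^{r-1})$. Following the partition scheme of \eqref{p1}, I decompose the remaining index set and bound each block through Markov's inequality combined with a moment estimate of the type provided by \cite[Proposition~5.1]{samorodnitsky1989asymptotic}; the hypotheses $f\cdot(\psi^{\otimes p})^{1/\alpha}\in\mathcal{L}_{p,+}^\alpha(m)$ and $g\cdot(\psi^{\otimes q})^{1/\alpha}\in\mathcal{L}_{q,+}^\alpha(m)$ are used precisely to ensure the requisite $L^\alpha\ln^{\cdot}L$ moments are finite.

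The hardest step is the Breiman computation in step~2: controlling the coupling between $(\Gamma_1,\ldots,\Gamma_r)$ and $(\Gamma_j)_{j>r}$ in the asymptotic regime, simultaneously verifying that the inner factor $I_{p+q-2r}^e(h_r(T_1,\ldots,T_r,\cdot))$ has finite $(\alpha/2)$-th moment, and accounting for the diagonal constraint $\mathbf{k}\cap\mathbf{s}=\emptyset$. The moment finiteness itself is not the obstacle, since by Theorem~\ref{lambda_{k-1}} applied at order $p+q-2r$ the inner integral has tail behaving like $\lambda^{-\alpha}(\ln\lambda)^{p+q-2r-1}$ and therefore possesses all moments of order strictly less than $\alpha$, in particular the $(\alpha/2)$-th moment; the delicate point is converting this marginal moment bound into the conditional Breiman estimate while keeping track of the disjointness constraint on the indices.
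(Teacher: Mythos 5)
Your high-level architecture---isolate the block $\mathbf{k}=(1,\ldots,r)$, decouple the outer factor $[\Gamma_{(1,\ldots,r)}]^{-2/\alpha}$ from the inner supremum via a Breiman-type product-tail argument, and show all other configurations are negligible---coincides with the paper's, which follows \cite[Theorem 3.1]{rosinski1999product}, and your constant bookkeeping ($r(r!)^{-2}=[r!(r-1)!]^{-1}$ with index $\alpha/2$) is correct. But there are two genuine gaps. The first is in the Breiman step. The outer factor and the inner supremum $\bigvee_{\mathbf{s}}h_r(T_{[1:r]},T_{r+\mathbf{s}})[\Gamma_{r+\mathbf{s}}]^{-1/\alpha}$ are not independent, and your proposed remedy ($\Gamma_{r+s}\approx\Gamma_{r+s}-\Gamma_r$ because ``$\Gamma_r\to0$ in the tail regime'') is a heuristic that needs a genuine sandwich argument with uniform control; the paper instead conditions on $\Gamma_{r+1}$ to obtain the exact representation \eqref{Mr} and invokes \cite[Lemma 3.2]{rosinski1999product}. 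More importantly, the multiplier required by any Breiman-type lemma here is precisely $\mathbb{E}|I^e_{p+q-2r}(h_r(T_{[1:r]},\cdot))|^{\alpha/2}=C_r(f,g)$, and its finiteness does not follow from applying Theorem \ref{lambda_{k-1}} conditionally on $T_{[1:r]}$: that yields a conditional moment whose size is governed by $L^\alpha\ln^{p+q-2r-1}L(h_r(s,\cdot),\mu)$, and integrating this over $s$ requires converting moment conditions on the product $h_r=f\cdot g$ into the separately assumed conditions on $f$ and $g$. The paper accomplishes this through a Cauchy--Schwarz splitting combined with the decoupled moment estimates of Corollary \ref{at6.3} and a downward induction, culminating in the uniform bounds $\mathbb{E}Y_x'\le M$ and $\mathbb{E}Y_x''\le M$ of \eqref{leq M}, which are also what justify the dominated-convergence passage to the limit; your plan supplies none of this.

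The second gap is the remainder. The partition \eqref{p1} and the moment bounds of \cite[Proposition 5.1]{samorodnitsky1989asymptotic} are tailored to a single-index supremum with homogeneous weights $[\Gamma_{\mathbf{j}}]^{-1/\alpha}$, whereas the remainder here has a two-tier structure with mixed weights $[\Gamma_{\mathbf{k}}]^{-2/\alpha}[\Gamma_{\mathbf{s}}]^{-1/\alpha}$ and a disjointness constraint linking $\mathbf{k}$ and $\mathbf{s}$. The paper needs a separate decomposition indexed by the overlap $r_0$ of $\mathbf{k}$ with $\{1,\ldots,r\}$, several layers of truncation and downward induction, and once again the Cauchy--Schwarz device to reduce moments of $h_r$ to moments of $f$ and $g$ individually. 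Citing Markov's inequality plus the single-index moment estimates does not cover these double sums, so the negligibility of the remainder is not established by your argument as written.
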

\begin{proof}
  The proof is similar to that of \cite[Theorem 3.1]{rosinski1999product}.  We leave the details to Appendix \ref{app lem 6.3}.
\end{proof}
 For   convenience of stating the next result,  we   set  additionally $k_{0, \alpha} = \alpha^{p+q-1}/[(p+q-1)!(p+q)!]$ and $C_0(f,g)=\|\widetilde{h}_0\|_{L^\alpha\left(\mu^{p+q}\right)}^\alpha$.

\begin{thm}\label{Tail product thm}
  Under the assumption of Lemma \ref{thm: tail of tensor}, suppose in addition that $\mu^p(f>0)>0$ and $\mu^q(g>0)>0$. Let $r=\max \left\{i \mid C_i(f, g) \neq 0\right\}$.
  Then, as $\lambda \rightarrow \infty$, 
\[
\mathbb{P}\left( I_p^e(f) I_q^e(g) >\lambda\right) \sim \begin{cases}k_{0, \alpha} \lambda^{-\alpha}(\ln \lambda)^{p+q-1} C_0(f, g), & \text { if } r=0, \\ k_{r, \alpha} \lambda^{-\alpha / 2}(\ln \lambda)^{r-1} C_r(f, g), & \text { if } r=1, \ldots, p \wedge q.\end{cases}
\]
\end{thm}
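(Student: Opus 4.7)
The plan is to combine the product formula in Proposition~\ref{prod formula} with the term-wise tail estimates in Lemma~\ref{thm: tail of tensor}, together with a dominance argument based on Lemma~\ref{lem 1.1}, to extract the asymptotic behavior of the max. By Proposition~\ref{prod formula},
\[
I_p^e(f)\, I_q^e(g) \stackrel{d}{=} \bigvee_{r=0}^{p\wedge q} S_{p,q}^{(r)}(f\otimes g),
\]
so it suffices to determine which of the $p\wedge q + 1$ terms controls the tail and verify that it does so with the correct constant.

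First I would dispose of the degenerate terms. Note that $C_i(f,g)=0$ for some $i\ge 1$ forces $\mathbb{E}\bigl|I_{p+q-2i}^e(h_i(s_1,\dots,s_i,\cdot))\bigr|^{\alpha/2}=0$ for $\mu^i$-a.e.\ $(s_1,\dots,s_i)$; by the monotonicity in Corollary~\ref{cor:gen int} this forces $h_i=0$ $\mu^{p+q-i}$-a.e., hence $S_{p,q}^{(i)}(f\otimes g)=0$ a.s. Thus the indices $i$ with $C_i=0$ can be removed from the max. For the remaining indices $i\ge 1$, Lemma~\ref{thm: tail of tensor} furnishes the asymptotic
\[
\mathbb{P}\bigl(S_{p,q}^{(i)}(f\otimes g)>\lambda\bigr)\sim k_{i,\alpha}\,\lambda^{-\alpha/2}(\ln\lambda)^{i-1}\,C_i(f,g),
\]
whereas for $i=0$ the sum appearing in $S_{p,q}^{(0)}(f\otimes g)$ is the LePage series of Corollary~\ref{cor:gen int} applied to the order-$(p+q)$ integrand $f\otimes g$, so $S_{p,q}^{(0)}(f\otimes g)\stackrel{d}{=}I_{p+q}^e(f\otimes g)$, and Theorem~\ref{lambda_{k-1}} with $k=p+q$ (combined with Proposition~\ref{sim inv} to pass to $\widetilde{f\otimes g}$) yields
\[
\mathbb{P}\bigl(S_{p,q}^{(0)}(f\otimes g)>\lambda\bigr)\sim k_{0,\alpha}\,\lambda^{-\alpha}(\ln\lambda)^{p+q-1}\,C_0(f,g).
\]

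To finish, I would compare these orders. For any $i\ge 1$ we have $\lambda^{-\alpha}(\ln\lambda)^{p+q-1}=o\bigl(\lambda^{-\alpha/2}(\ln\lambda)^{i-1}\bigr)$, so the $i=0$ term has strictly lighter tail than any surviving $i\ge 1$ term; and among the surviving $i\ge 1$ terms, all decay like $\lambda^{-\alpha/2}$, with larger $i$ winning through a higher power of $\ln\lambda$. Consequently, letting $r=\max\{i:C_i(f,g)\neq 0\}$ as in the statement: if $r=0$, all $i\ge 1$ terms vanish a.s.\ and $I_p^e(f)I_q^e(g)\stackrel{d}{=}S_{p,q}^{(0)}(f\otimes g)$, giving the first case; if $r\ge 1$, an iterated application of Lemma~\ref{lem 1.1} (comparing $S_{p,q}^{(r)}$ successively with every other summand) shows that the tail of the maximum equals that of $S_{p,q}^{(r)}$, giving the second case.

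The main obstacle I anticipate is justifying the $r=0$ asymptotic, because applying Theorem~\ref{lambda_{k-1}} to $I_{p+q}^e(f\otimes g)$ requires verifying $f\otimes g\cdot(\psi^{\otimes(p+q)})^{1/\alpha}\in\mathcal{L}_{p+q,+}^\alpha(m)$ from the analogous conditions on $f$ and $g$; this would proceed by expanding $(\ln_+(FG))^{p+q-1}$ through the binomial formula into a sum of products $(\ln_+ F)^j(\ln_+ G)^{p+q-1-j}$, using Fubini to factor each term over $E^p\times E^q$, and controlling each factor via the hypothesis $F\in\mathcal{L}_{p,+}^\alpha(m)$, $G\in\mathcal{L}_{q,+}^\alpha(m)$ (with minor extra care in the boundary cases $p=1$ or $q=1$, where the required moment on the corresponding factor is merely $L^\alpha$, and higher powers of $\ln_+$ must be absorbed into the opposite factor's log-moment). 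Once this technical check is complete, the remaining arguments are routine dominance and book-keeping.
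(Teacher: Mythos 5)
Your overall route coincides with the paper's: decompose $I_p^e(f)I_q^e(g)$ via the product formula of Proposition~\ref{prod formula}, discard the indices with $C_i(f,g)=0$ by noting that this forces $h_i=0$ $\mu^{p+q-i}$-a.e.\ and hence $S_{p,q}^{(i)}(f\otimes g)=0$ a.s., read off the tails of the surviving terms from Lemma~\ref{thm: tail of tensor} (and, for $i=0$, from the order-$(p+q)$ asymptotics of Theorem~\ref{lambda_{k-1}}), and conclude by the dominance argument of Lemma~\ref{lem 1.1}/Corollary~\ref{cor Lambda space}. For $r\ge 1$ this is exactly the paper's proof, and the identification $I_p^e(f)I_q^e(g)\stackrel{d}{=}S_{p,q}^{(0)}(f\otimes g)=S_{p+q}^e(f\otimes g)\stackrel{d}{=}I_{p+q}^e(f\otimes g)$ when $r=0$ is also the paper's.

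The one place where you add substance --- verifying that $(f\otimes g)\cdot(\psi^{\otimes(p+q)})^{1/\alpha}\in\mathcal{L}_{p+q,+}^\alpha(m)$ so that Theorem~\ref{lambda_{k-1}} applies at order $p+q$ --- is precisely the step that fails as you describe it. Expanding $\bigl(\ln_+F+\ln_+G\bigr)^{p+q-1}$ binomially and factoring by Fubini produces terms $\bigl(\int F^\alpha(\ln_+F)^j\,dm^p\bigr)\bigl(\int G^\alpha(\ln_+G)^{p+q-1-j}\,dm^q\bigr)$ with $j$ ranging up to $p+q-1$, whereas the hypothesis $F\in\mathcal{L}_{p,+}^\alpha(m)$ controls only log-moments of order $p-1$; at the extreme terms the excess cannot be ``absorbed into the opposite factor,'' whose log-power is already $0$ there. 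The failure is not merely one of method: already for $p=q=1$, taking $f$ with $\int f^\alpha\,d\mu<\infty$ but $\int f^\alpha\ln_+f\,d\mu=\infty$ and $g$ a bounded function with support disjoint from that of $f$ (so that $r=0$), one has $L^\alpha\ln L\ln\ln L\bigl(f\otimes g\cdot(\psi^{\otimes 2})^{1/\alpha},m\bigr)=\infty$ for the given $m$, even though the asserted asymptotic still holds (the two single integrals are then independent Fr\'echet variables and the product tail is classical). So in the case $r=0$ the needed input is not a direct application of Theorem~\ref{lambda_{k-1}} to $f\otimes g$ under the originally chosen $m$, but rather the structural consequence of $h_i\equiv 0$ for all $i\ge 1$ --- namely, as in the proof of Theorem~\ref{independence}, that $f$ and $g$ are supported on disjoint coordinate blocks, whence $I_p^e(f)$ and $I_q^e(g)$ are independent and one can invoke product-of-independent-regularly-varying asymptotics as in \cite{kifer2017tails}, or re-choose the reference measure $m$. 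The paper itself glosses over this point, but your proposed binomial reduction does not close it; you should replace it by an argument along the lines just indicated.
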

\begin{proof}

When $r = 0$, we have $C_1(f, g)=\cdots=C_{p \wedge q}(f, g)=0$, which implies that for $r = 1, \ldots, p \wedge q,$ that
$h_r=0$ $\mu^{p+q-r}$-a.e..
Thus, $S^{(r)}_{p,q} (f\otimes g)= 0$ a.s. for $r = 1, \ldots, p \wedge q$. In view of \eqref{prodf} and \eqref{eq4}, it follows that  
$$
 I_p^e(f) I_q^e(g) \EqD S_{p,q}^{(0)}(f\otimes g)= S_{p+q}^e(f\otimes g)\EqD I_{p+q}^e(f\otimes g),
$$
where we recall $f\otimes g=h_0$.  Then the conclusion follows from Lemma \ref{thm: tail of tensor}.

When $r \ge 1$, the conclusion follows 
from  Lemma \ref{thm: tail of tensor}, and  Corollary \ref{cor Lambda space}.

\end{proof}

\begin{thm}\label{independence}
 Under the assumption of Lemma \ref{thm: tail of tensor},
 the multiple extremal integrals $I_p^e(f)$ and $I_q^e(g)$ are independent if and only if there exist disjoint measurable sets $A, B \in \mathcal{E}$ such that $\operatorname{supp}\{f\} \subset A^p \operatorname{modulo} \mu^p$ and $\operatorname{supp}\{g\} \subset B^{ q} \operatorname{modulo} \mu^{q}$.
\end{thm}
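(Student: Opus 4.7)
The plan is to prove the equivalence in two directions; sufficiency follows quickly from the independently scattered property of $M_\alpha$, while necessity requires comparing tail asymptotics under the independence hypothesis with the ones supplied by Theorem~\ref{Tail product thm}. For sufficiency, if $\mathrm{supp}\{f\}\subset A^p$ and $\mathrm{supp}\{g\}\subset B^q$ modulo $\mu^p$ and $\mu^q$ respectively, with $A\cap B=\emptyset$, then a simple-function approximation using \eqref{simple Sk}--\eqref{prod M_a} shows that $I_p^e(f)$ is measurable with respect to $\sigma(M_\alpha(C):C\in\cl{E},\,C\subset A)$ and $I_q^e(g)$ is measurable with respect to $\sigma(M_\alpha(D):D\in\cl{E},\,D\subset B)$, and Definition~\ref{def:mult extr int} transfers this measurability to the general integrand. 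These two $\sigma$-fields are independent by Definition~\ref{defn s}(i), so the two integrals are independent.

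For necessity, assume $I_p^e(f)$ and $I_q^e(g)$ are independent. Theorem~\ref{lambda_{k-1}} combined with Remark~\ref{Rem:k=1 tail} gives
$\mathbb{P}(I_p^e(f) > t) \sim c_f\, t^{-\alpha}(\ln t)^{p-1}$ and $\mathbb{P}(I_q^e(g) > t) \sim c_g\, t^{-\alpha}(\ln t)^{q-1}$ as $t\to\infty$, with $c_f,c_g>0$. A convolution computation, cleanest after the log transformation (so that $\log I_p^e(f)$ and $\log I_q^e(g)$ have right-tail densities asymptotic to multiples of $u^{p-1}e^{-\alpha u}$ and $u^{q-1}e^{-\alpha u}$), together with the Beta identity $\int_0^w u^{p-1}(w-u)^{q-1}\,du = B(p,q)\,w^{p+q-1}$, yields under the independence hypothesis
\begin{equation*}
\mathbb{P}\bigl(I_p^e(f)\, I_q^e(g) > t\bigr) \sim C_{\mathrm{ind}}\, t^{-\alpha}(\ln t)^{p+q-1}, \quad t\to\infty,
\end{equation*}
for some $C_{\mathrm{ind}}>0$. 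Contrast this with Theorem~\ref{Tail product thm}: letting $r=\max\{j:C_j(f,g)\neq 0\}$, the actual tail is of order $t^{-\alpha/2}(\ln t)^{r-1}$ if $r\geq 1$, which decays strictly more slowly than any $t^{-\alpha}(\ln t)^{K}$. Hence $r=0$, i.e., $C_j(f,g)=0$ for every $j=1,\ldots,p\wedge q$.

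In particular $C_1(f,g)=0$, and by \eqref{C_r(f, g)} this forces $f(x,\mbf{u})\,g(x,\mbf{v})=0$ for $\mu^{p+q-1}$-a.e.\ $(x,\mbf{u},\mbf{v})\in E\times E^{p-1}\times E^{q-1}$ (using that $I^e_{p+q-2}(h_1(x,\cdot))=0$ a.s.\ iff $h_1(x,\cdot)=0$ a.e.). Define
\begin{equation*}
A_0:=\Bigl\{x\in E:\int_{E^{p-1}} f^\alpha(x,\mbf{u})\,\mu^{p-1}(d\mbf{u})>0\Bigr\},\ B_0:=\Bigl\{x\in E:\int_{E^{q-1}} g^\alpha(x,\mbf{v})\,\mu^{q-1}(d\mbf{v})>0\Bigr\}.
\end{equation*}
Fubini yields $\mu(A_0\cap B_0)=0$, and the symmetry of $f$ and $g$ propagates the slicewise vanishing to every coordinate, giving $\mathrm{supp}\{f\}\subset A_0^p$ modulo $\mu^p$ and $\mathrm{supp}\{g\}\subset B_0^q$ modulo $\mu^q$. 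Taking $A:=A_0\setminus B_0$ and $B:=B_0$ produces the required disjoint pair. The principal obstacle is the tail asymptotic for the product of independent variables with the $t^{-\alpha}(\ln t)^{k-1}$-type tails; only the decay order, not the constant $C_{\mathrm{ind}}$, matters for the comparison, and this can be justified either by the density-convolution route sketched above or via Mellin-transform techniques for products of regularly varying variables.
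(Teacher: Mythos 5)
Your proposal is correct and follows essentially the same route as the paper: sufficiency via the independently scattered property together with the restriction of the approximating sets to $A$ and $B$, and necessity by comparing the product tail forced by independence, of order $t^{-\alpha}(\ln t)^{p+q-1}$ (the paper cites \cite[Proposition 3.1(ii)]{kifer2017tails} for this step, where you sketch the convolution), against Theorem~\ref{Tail product thm} to conclude $C_1(f,g)=\cdots=C_{p\wedge q}(f,g)=0$ and hence $h_r=0$ a.e.\ (the paper then invokes the argument of \cite[Theorem 4.3]{rosinski1999product}, which your explicit Fubini-plus-symmetry construction of $A_0$, $B_0$ reproduces). The only imprecision is your claim that $\log I_p^e(f)$ has a right-tail \emph{density} asymptotic to $u^{p-1}e^{-\alpha u}$ — only the distribution-function asymptotic is known, so the convolution should be carried out at the level of distribution functions as in the cited reference; this does not affect the conclusion since only the decay order matters.
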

\begin{proof}
We can assume without loss of generality that $\mu^p(f>0)>0$ and $\mu^q(g>0)>0$. Otherwise, the independence trivially holds since at least one of the multiple extremal integrals is $0$ a.s..

To prove sufficiency, note that, in view of definition \ref{defn s}, $\pp{M_\alpha(D \cap A)}_{D \in \mathcal{E}}$ and $\pp{M_\alpha(D \cap B)}_{D \in \mathcal{E}}$ are independent. This implies that
$\pp{M_\alpha^{(k)}(D \cap A^p)}_{D \in \mathcal{E}^{(k)}}$  and  $\pp{M_\alpha^{(k)}(D \cap B^q)}_{D \in \mathcal{E}^{(k)}}$ are independent, which can be seen by restricting the approximation sets in Theorems \ref{claim 5.3} and \ref{thm sf case}  to $A$ and $B$, respectively. Then observe that 
 $I_p^e(f) =I_p^e(f \mathbf{1}_{A^p})$  and $I_q^e(g)=I_q^e(g \mathbf{1}_{B^q})$ a.s.\ by assumption. So the conclusion follows once we approximate $f \mathbf{1}_{A^p}$ and $g \mathbf{1}_{B^q}$ via Definition \ref{def:mult extr int}   restricting  to $A^p$ and  $B^q$, respectively.

Now we show necessity. Since $I_p^e(f)$ and $I_q^e(g)$ are independent, and we know the tail asymptotic of each from Theorem \ref{lambda_{k-1}},  by  \cite[Proposition 3.1 (ii)]{kifer2017tails} (their result was stated for $\alpha\in (0,2)$ but it extends readily to $\alpha\in (0,\infty)$), we can derive $\mathbb{P}\pp{I_p^e(f)I_q^e(g)>\lambda} \sim C \lambda^{-\alpha}(\ln \lambda)^{p+q-1}$ for some constant $C>0$ as $\lambda\rightarrow\infty$.  From Theorem \ref{Tail product thm}, we have $C_1(f, g)=\cdots=$ $C_{p \wedge q}(f, g)=0$. Hence for $r=1, \ldots, p \wedge q$, we have
$
h_r=0 
$
$\mu^{p+q-r}$-a.e..The conclusion then follows from similar arguments as in the proof of \cite[Theorem 4.3]{rosinski1999product}.
\end{proof}
\begin{rem}
For single extremal integrals, which are max-stable random vectors, extremal independence is equivalent to independence \cite[Proposition 5(b)]{papastathopoulos2016conditional} \cite[p. 192, p. 195]{resnick2008extreme}. However, this equivalence does not extend to multiple extremal integrals. Clearly, independence always implies extremal independence. On the other hand, when $p = q$, the condition stated in Theorem~\ref{independence} implies that of Corollary~\ref{asym indep}, but not conversely. In particular, the condition for independence in Theorem~\ref{independence} requires that the symmetrized integrands have supports with disjoint coordinate projections, which is strictly stronger than the requirement in Corollary~\ref{asym indep} that the supports be disjoint.
\end{rem}

\subsection{Max-infinitely divisibility and mutual independence of extremal integrals} \label{max sec}
   Recall that a random vector $\boldsymbol{\zeta}=\left(\zeta_i\right)_{i=1}^d\in [0,\infty]^d$ is said to be max-infinitely divisible if, for any integer $n \geq 1$, there exist i.i.d.\ random vectors $\boldsymbol{\zeta}^{(1)}, \ldots, \boldsymbol{\zeta}^{(n)}$ such that $\boldsymbol{\zeta} \stackrel{d}{=} \bigvee_{j=1}^n \boldsymbol{\zeta}^{(j)}$, where the maximum $\bigvee$ is taken componentwise. The law of such a random vector $\boldsymbol{\zeta}$ is characterized by 
\begin{equation}\label{max-id}
\mathbb{P}[\boldsymbol{\zeta} \leq \boldsymbol{x}]=\exp \left(-\nu\left([\mathbf{0}, \boldsymbol{x}]^c\right)\right), \quad \boldsymbol{x} \in [0,\infty]^d,    
\end{equation}
for some  Borel measure $\nu$ on $[0,\infty]^d \backslash \{\mathbf{0}\}$ satisfying $\nu(A)<\infty$ for every Borel set $A$ bounded away from the origin. We refer to \cite[Section 5.1]{resnick2008extreme} for more details.
Now we show that the random vectors of extremal integrals are max-infinitely divisible.

\begin{thm}\label{max-id them}
Consider general integrands $f_i \in \mathcal{L}_{k_i}$ as in \eqref{eq:cl L_k},   $k_i \in \mathbb{Z}_{+}$ (not necessarily identical across $i$), $i=1,\ldots,d$, $d\in \bb{Z}_+$.
Then the law of random vector $\left(I_{k_i}^e\left(f_i\right)\right)_{i=1}^d$ is max-infinitely divisible.
\end{thm}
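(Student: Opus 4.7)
The plan is to combine the Poisson point process representation with the exponent-measure characterization of max-infinite divisibility (see, e.g., \cite[Chapter~5]{resnick2008extreme}). By Corollary~\ref{cor:gen int} and \eqref{D.2}, we have the distributional identity $(I_{k_i}^e(f_i))_{i=1}^d \stackrel{d}{=} (\kappa_{f_i}(N))_{i=1}^d$, where $N=\sum_j\delta_{(\xi_j,\eta_j)}$ is a Poisson point process on $E\times(0,\infty)$ with intensity $\mu\times\nu_\alpha$ and $\kappa_{f_i}(N)=\bigvee_{\boldsymbol{j}\in\mathcal{D}_{k_i}}f_i(\xi_{\boldsymbol{j}})[\eta_{\boldsymbol{j}}]$. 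By the zero-one law (Proposition~\ref{01law}), if $\mathbb{P}(I_{k_i}^e(f_i)<\infty)<1$ for some $i$, then $I_{k_i}^e(f_i)=\infty$ a.s., and the max-infinite divisibility property becomes degenerate in that coordinate; so one may restrict to the integrable case $I_{k_i}^e(f_i)<\infty$ a.s.\ for all $i$.

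Using the $\alpha$-homogeneity of $\nu_\alpha$, for each $n\in\mathbb{Z}_+$ one can write $N\stackrel{d}{=}\bigoplus_{u=1}^{n} c_n\!\cdot\!\tilde N^{(u)}$, where the $\tilde N^{(u)}$ are i.i.d.\ copies of $N$ and $c_n\!\cdot$ rescales the $\eta$-coordinate by $n^{-1/\alpha}$. The natural i.i.d.\ candidates $\boldsymbol{\zeta}^{(u)}=(n^{-k_i/\alpha}\kappa_{f_i}(\tilde N^{(u)}))_{i=1}^d$ fail to satisfy $(\kappa_{f_i}(N))_{i=1}^d \stackrel{d}{=} \bigvee_{u}\boldsymbol{\zeta}^{(u)}$, since the left-hand side is strictly dominant because $\mathcal{D}_{k_i}$ also indexes ``cross tuples'' whose atoms belong to distinct sub-processes $\tilde N^{(u)}$. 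This discrepancy is the main technical obstacle. To circumvent it, I would instead establish directly the exponent-measure characterization $F(\boldsymbol{x})=\exp(-\nu_{*}([\mathbf{0},\boldsymbol{x}]^c))$ for some Borel measure $\nu_{*}$ on $[0,\infty]^d\setminus\{\mathbf{0}\}$ that is finite on sets bounded away from the origin.

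To construct $\nu_{*}$, first treat simple integrands supported on off-diagonal unions of rectangles built from a common finite partition $\{B_m\}_{m=1}^M$ of $E$. In that setting, $(I_{k_i}^e(f_i))_{i=1}^d$ reduces to a deterministic max-over-products function of the independent Fr\'echet vector $(M_\alpha(B_m))_{m=1}^M$, and one can write $\nu_{*}$ explicitly by pushing forward a product measure of the form $(\mu\times\nu_\alpha)^{\otimes K}$ (for a suitable $K$) under coordinate maps that encode the products, handling the factorial structure through the $\sigma$-maxitivity of $M_\alpha$ in Definition~\ref{defn s}(iii). The extension to general integrands then proceeds via the monotone simple-function approximation of Definition~\ref{def:mult extr int}, together with the fact that the class of max-infinitely divisible laws is closed under monotone increasing and weak limits (a consequence of the vague continuity of the associated exponent measures, cf.\ \cite[Proposition~5.11]{resnick2008extreme}). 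The hardest step will be the explicit construction of $\nu_{*}$ in the simple-integrand case while correctly accounting for all tuples of atoms contributing to each coordinate of the vector.
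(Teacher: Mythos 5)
Your diagnosis of the main obstacle is correct and, in fact, applies to the paper's own argument, which takes precisely the route you dismiss: the paper marks the atoms of the LePage Poisson process with i.i.d.\ uniform labels in $\{1,\ldots,n\}$ so that $N$ splits pathwise into $n$ i.i.d.\ thinned processes $\widehat{N}^{(r)}$, defines $\boldsymbol{\zeta}^{(r)}$ as the supremum over $k_i$-tuples all of whose atoms carry label $r$, and then asserts the a.s.\ identity $\bigvee_{r=1}^n\boldsymbol{\zeta}^{(r)}=(S_{k_i}^e(f_i))_{i=1}^d$. For $k_i\ge 2$ that identity omits exactly the cross-label tuples you point to: for instance the pair of atoms realizing $M_\alpha(B_1)M_\alpha(B_2)$ carries two independent uniform labels, which differ with probability $1-1/n$, in which case the realizing tuple is counted in no $\boldsymbol{\zeta}^{(r)}$. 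So your objection is a genuine issue with the decoupling step, not a misreading.

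However, your proposed repair does not close the gap, because the entire content of the theorem sits in the step you defer, and the form you guess for $\nu_*$ is wrong. Take the simplest rectangle case $d=2$, $k_1=k_2=2$, $X_1=M_\alpha(B_1)M_\alpha(B_2)$, $X_2=M_\alpha(B_1)M_\alpha(B_3)$ with $B_1,B_2,B_3$ disjoint of positive finite measure. Conditioning on $M_\alpha(B_1)$ gives $\mathbb{P}(X_1\le x,\,X_2\le y)=\mathbb{E}\exp\{-W(\mu(B_2)x^{-\alpha}+\mu(B_3)y^{-\alpha})\}$ with $W=M_\alpha(B_1)^\alpha$ an inverse-exponential (hence $1$-Fr\'echet) variable. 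Max-infinite divisibility here is equivalent to $-\log\mathbb{E}e^{-sW}$ being a Bernstein function, i.e.\ to the infinite divisibility of $W$ (true, but a nontrivial input), and the exponent measure one obtains is the mixture $\int_0^\infty \mathbb{P}\bigl((u^{1/\alpha}Z_2,u^{1/\alpha}Z_3)\in\cdot\bigr)\,\rho(du)$ of laws of independent Fr\'echet pairs against the L\'evy measure $\rho$ of $W$. Nothing of the form of a pushforward of $(\mu\times\nu_\alpha)^{\otimes K}$ under coordinate maps produces $\rho$, and neither $\sigma$-maxitivity nor the combinatorics of $\mathcal{D}_k$ supplies it. The outer layers of your plan (reduction to rectangle-based simple integrands, monotone approximation via Definition \ref{def:mult extr int}, closure of the max-id class under weak limits) are sound but routine; as written, the proposal contains no proof of the base case, and hence no proof of the theorem.
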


\begin{proof}

Let $\left(\Gamma_i \right)_{i \in \bb{Z}_+}$ be the arrival times of a standard Poisson process on $[0, \infty)$, and $\pp{T_i}_{i \in \bb{Z}_+}$ be  a sequence of i.i.d.\ random variables with distribution $m$,  a probability measure   equivalent to $\mu$ with $\psi=d\mu/dm\in (0,\infty)$ $m$-a.e., independent of $\pp{\Gamma_i}_{i \in \bb{Z}_+}$. Then $N := \sum_{i=1}^\infty\delta_{(T_i, \,  \Gamma_i^{-1/\alpha})}$ is a Poisson point process on $S = E \times (0,\infty)$ with mean measure $m\times \nu_\alpha$ where $\nu_\alpha(dx) = \alpha x^{-\alpha-1}dx$, $x > 0$.   Let $K$ be the uniform probability measure on $\{1,2 \ldots, n \}$, i.e., $K(\{r\})=\frac{1}{n}$ for $ r=1, \ldots, n $. Suppose $Y_i, i \in \mathbb{Z}_+$ are i.i.d.\ random variables with law $K$, independent of $N$. Define the $K$-marking of $N$ by 
$\widehat{N}=\sum_{i=1}^\infty \delta_{\left(\left(T_i, \,  \Gamma_i^{-1/\alpha}\right), Y_i\right)}$  (see \cite[Definition 5.3]{last2018lectures}).  By the marking theorem \cite[Theorem 5.6]{last2018lectures}, $\widehat{N}$ is a Poisson point process on $S \times \{1,2, \ldots,n\}$ with intensity $\left(m \times \nu_\alpha\right) \times K$. Consequently,  it follows from \cite[Theorem 5.8]{last2018lectures} that the processes $\widehat{N}^{(r)} := \widehat{N}(\cdot \times\{ r\})$,  $1 \leq r \leq n$, are i.i.d.\ Poisson point processes on $S$, each with intensity $\frac{1}{n} m \times \nu_\alpha $.  

   Next, to describe  measurable enumeration of the point processes, we follow the framework of localized Borel space (see \cite[P.15]{kallenberg2021foundations}). In particular, we localize the space $S=E\times [0,\infty)$ by $S_n:=E\times [1/n,\infty)$, $n\in \bb{Z}_+$ . A Borel subset of $S$ is said to be bounded if it is a subset of some $S_n$. A Borel measure on $S$ is said to be locally finite, if it is finite on any bounded Borel subset.
   Write $\mathcal{M}_S$ for the space of locally finite  measures on $S$.  Then, each $\widehat{N}^{(r)}$, with a modification on a zero-probability event if necessary, is a random element that takes value in $\mathcal{M}_S$, $r=1,\ldots,n$.
  Hence, in view of \cite[Theorem 2.19 (i)]{kallenberg2021foundations}, there exist measurable maps $\rho_r: \mathcal{M}_S \mapsto S^{\infty}$, $r = 1, \ldots, n$, such that $\left(T_i, \,  \Gamma_i^{-1/\alpha}\right)_{i : Y_i = r}:=\rho_r(\widehat{N}^{(r)})$. 
For $i = 1,\ldots,d$, define $g_i: S^\infty \mapsto [0,\infty]$ by
$$
g_i\left(\left(x_j, y_j\right)_{j \geq 1}\right):=\bigvee _{\substack{(j_1, \ldots, j_{k_i}): \\ j_\ell\text {'s distinct }}}\left[f_i\left(x_{j_1}, \ldots, x_{j_{k_i}}\right) \prod_{\ell=1}^{k_i} \psi\left(x_{j_{\ell}}\right)^{1 / \alpha} y_{j_{\ell}}\right], 
$$
and define $F: S^{\infty} \rightarrow [0,\infty]^d$ as the map $ \left(x_j, y_j\right)_{j \geq 1}\mapsto\left(g_1\left(\left(x_j, y_j\right)_{j \geq 1}\right), \ldots, g_d\left(\left(x_j, y_j\right)_{j \geq 1}\right)\right)$.
For each $r \in \{ 1,\ldots,n\}$, define the $d$-dimensional vector
$$
\boldsymbol{\zeta}^{(r)} := F(\rho_r(\widehat{N}^{(r)})) = \left(\bigvee_{\substack{\left(j_1, \ldots, j_{k_i}\right):\\ j_\ell\text{'s distinct } \\ Y_{j_{\ell}}=r, \,\text{ for all }1 \leq \ell\leq k_i}} f_i\left(T_{j_1}, \ldots, T_{j_{k_i}}\right)
\prod_{\ell=1}^{k_i} \psi\left(T_{j_{\ell}}\right)^{1 / \alpha} \Gamma_{j_{\ell}}^{-1/\alpha}
\right)_{i = 1, \ldots, d}.
$$
Observe that since $\widehat{N}^{(r)}$, $r = 1\ldots, n$ are i.i.d., we have that $\boldsymbol{\zeta}^{(r)}$, $r = 1\ldots, n$ are also i.i.d.. Note also that
\begin{equation}
  \bigvee_{r = 1}^n \boldsymbol{\zeta}^{(r)}  \overset{\text{a.s.}}{=} \pp{S_{k_i}^e(f_i)}_{i=1,\ldots, d},
\end{equation}
where each of the components on right-hand side above is a LePage representation  as in \eqref{eq4}.
 We thus conclude that  $\left(I_{k_i}^e\left(f_i\right)\right)_{i=1}^d \EqD  \bigvee_{r = 1}^n \boldsymbol{\zeta}^{(r)}  $, which establishes that $\left(I_{k_i}^e\left(f_i\right)\right)_{i=1}^d$ is max-infinitely divisible. The characterization stated as \eqref{max-id} then follows directly from \cite[Proposition 5.8]{resnick2008extreme}.
\end{proof}
\begin{cor}
Under the assumption of Theorem \ref{max-id them}, pairwise independence of the components of the random vector $\left(I_{k_i}^e\left(f_i\right)\right)_{i=1}^d$ implies mutual independence.
\end{cor}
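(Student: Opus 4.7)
The plan is to exploit the max-infinite divisibility of $\boldsymbol{\zeta}=(I^e_{k_i}(f_i))_{i=1}^d$ established in Theorem \ref{max-id them} and the characterization \eqref{max-id}. Let $\nu$ be the associated exponent measure on $[0,\infty]^d\setminus\{\mathbf{0}\}$, so that $\mathbb{P}[\boldsymbol{\zeta}\leq \boldsymbol{x}]=\exp(-\nu([\mathbf{0},\boldsymbol{x}]^c))$ for $\boldsymbol{x}\in [0,\infty]^d$. The components with $\zeta_i=\infty$ a.s.\ are trivially independent of everything else and can be discarded, so we may assume each $\zeta_i<\infty$ a.s..

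The central observation, which I would prove as a preliminary step, is that for a bivariate max-id vector, independence of the two components $\zeta_i,\zeta_j$ is equivalent to $\nu(\{\boldsymbol{y}:y_i>0,\,y_j>0\})=0$. To see this, set all but the $i$-th and $j$-th coordinates of $\boldsymbol{x}$ in \eqref{max-id} to $\infty$ and use inclusion–exclusion:
\[
\nu\bigl(\{y_i>x_i\}\cup\{y_j>x_j\}\bigr)=\nu\{y_i>x_i\}+\nu\{y_j>x_j\}-\nu\{y_i>x_i,\,y_j>x_j\}.
\]
Independence of $\zeta_i$ and $\zeta_j$ is then equivalent to $\nu\{y_i>x_i,\,y_j>x_j\}=0$ for all $x_i,x_j>0$, and letting $x_i,x_j\downarrow 0$ together with monotone continuity of $\nu$ on sets bounded away from $\mathbf{0}$ yields the claim.

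Given this lemma, pairwise independence immediately gives $\nu\{y_i>0,\,y_j>0\}=0$ for every $i\neq j$. Subadditivity over the finitely many pairs yields
\[
\nu\Bigl(\bigcup_{i<j}\{y_i>0,\,y_j>0\}\Bigr)\leq\sum_{i<j}\nu\{y_i>0,\,y_j>0\}=0,
\]
so $\nu$ is concentrated on the union of the coordinate axes $\bigcup_{i=1}^d\{\boldsymbol{y}:y_\ell=0\text{ for all }\ell\neq i\}$. Consequently, for any $\boldsymbol{x}\in(0,\infty]^d$, the sets $\{y_i>x_i\}$, intersected with the support of $\nu$, are pairwise disjoint, giving $\nu([\mathbf{0},\boldsymbol{x}]^c)=\sum_{i=1}^d\nu\{y_i>x_i\}$. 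Plugging this back into \eqref{max-id},
\[
\mathbb{P}[\boldsymbol{\zeta}\leq\boldsymbol{x}]=\exp\Bigl(-\sum_{i=1}^d\nu\{y_i>x_i\}\Bigr)=\prod_{i=1}^d\exp\bigl(-\nu\{y_i>x_i\}\bigr)=\prod_{i=1}^d\mathbb{P}[\zeta_i\leq x_i],
\]
which establishes mutual independence.

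The only delicate point is the preliminary lemma on bivariate max-id independence; the rest is a straightforward inclusion–exclusion/union-bound argument on the exponent measure. I expect no serious obstacle beyond verifying that the standard exponent-measure calculus applies under the slightly more general $[0,\infty]^d$-valued setting here (accommodating possible infinite or degenerate components), which is addressed by first restricting to the a.s.\ finite components as noted above.
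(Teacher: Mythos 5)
Your proposal is correct and follows essentially the same route as the paper: both deduce the corollary from the max-infinite divisibility established in Theorem~\ref{max-id them}. The paper simply cites \cite[Proposition 5.24]{resnick2008extreme} at this point, and your exponent-measure argument (pairwise independence forces the exponent measure $\nu$ to vanish on each set $\{y_i>0,\,y_j>0\}$, hence to concentrate on the coordinate axes, whence $\nu([\mathbf{0},\boldsymbol{x}]^c)$ and therefore the joint distribution factorize) is precisely a correct, self-contained proof of that cited proposition.
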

\begin{proof}
    This follows directly from the fact that the random vector is max-infinitely divisible and  \cite[Proposition 5.24]{resnick2008extreme}.
\end{proof}

\section{A multiple regenerative model} \label{sec:mult reg}

In the recent works \cite[]{bai2023tail,bai2024phase,bai2024remarkable}, a class of regularly varying stationary processes was introduced and shown to exhibit interesting extreme value behavior. In particular, the extreme limits of this class of models exhibit a phase transition from short-range dependence to long-range dependence as a model parameter crosses a critical value. Here, short-range dependence refers to extreme value behavior comparable to the i.i.d.\ case, up to a multiplicative scale factor in the limit theorem, whereas long-range dependence corresponds to qualitatively different behavior, potentially involving different normalization rates and limit distributions; see, e.g., \cite[]{samorodnitsky2016stochastic,dombry2017ergodic} for further background on these notions. Moreover, even in the short-range dependence regime, the model displays an unusual phenomenon in which the so-called candidate extremal index fails to coincide with the true extremal index, the parameter that determines the aforementioned multiplicative scale factor under short-range dependence.

In these works, this class of processes was defined through multiple stable integrals. In this section, we discuss extensions of these processes, as well as the associated results, to models defined via multiple extremal integrals.

We start by  recalling some preliminaries on renewal processes.  More details can be found in  \cite[]{bai2023tail,bai2024phase}. Consider a discrete-time renewal process starting at the origin with the consecutive renewal times denoted by 
\begin{equation}\label{eq:tau no delay}
\boldsymbol{\tau}:=\left\{\tau_0=0, \tau_1, \tau_2, \ldots\right\},
\end{equation}
where $\tau_{i+1}-\tau_i$ are i.i.d.\ $\bb{Z}_+$-valued random variables with the distribution function $F$, that is, $F(x)=\mathbb{P}\left(\tau_{i+1}-\tau_i \leq x\right)$, $i \in \bb{N}_0:= \{0,1,2, \ldots\}$, $x \in \bb{R}$. We assume for some constant $\mathrm{C}_F>0$ that
\begin{equation}\label{eq:C_F}
\bar{F}(x)=1-F(x) \sim \mathrm{C}_F x^{-\beta} \text { as } x \rightarrow \infty \quad \text { with } \quad \beta \in(0,1),
\end{equation}
and the following technical assumption: with $\Delta F(n):=F(n)-F(n-1)$ denoting the probability mass function of the renewal distribution,
$
\sup _{n \in \bb{Z}_+} n \Delta F(n)/\bar{F}(n)<\infty.
$
The condition allows us to equivalently express the condition \eqref{eq:C_F} by the  renewal mass function  $\bb{P}(n\in \boldsymbol{\tau})\sim  (\Gamma(\beta)\Gamma(1-\beta)\mathrm{C}_F)^{-1}  n^{\beta-1}$ as $n\rightarrow\infty$.
Next, we introduce an independent ``random'' shift to the renewal process, so that the starting point may not be the origin. In particular, we recall the stationary shift distribution of the renewal process on $\bb{N}_0$ denoted by $\pi$. Since the renewal distribution has an infinite mean, the stationary shift distribution $\pi$ is a $\sigma$-finite and infinite measure on $\bb{N}_0$ unique up to a multiplicative constant. We shall work with
$
\pi(\{k\}):=\bar{F}(k),
$
$k \in \bb{N}_0 $.
Suppose $d$ is a random shift on $\mathbb{N}_0$ with infinite law $\pi$, independent of the renewal process $\boldsymbol{\tau}$. Then it is  known that the law of the shifted renewal process $d+\boldsymbol{\tau}:=\left\{d, d+\tau_1, d+\tau_2, \ldots\right\}$ is shift-invariant (see, e.g., \cite[Section 2.3]{bai2023tail}). Here by the (infinite) law $\mu$ of
\begin{equation}\label{eq:tau*}
\boldsymbol{\tau}^*:=d+\boldsymbol{\tau} \subset \bb{N}_0,
\end{equation}
we mean an infinite measure   on $(E := \mathcal{P}(\mathbb{N}_0)\setminus \{\emptyset\}, \mathcal{F})$, where $\mathcal{F}$ denotes the cylindrical $\sigma$-field obtained by identifying the power set $\mathcal{P}(\mathbb{N}_0)$ with $\{0,1\}^{\mathbb{N}_0}$ via indicator functions.  In particular, $\mu$ is the pushforward of the product measure of $\pi$ and the distribution of $\boldsymbol{\tau}$, under the map in~\eqref{eq:tau*}. Recall that  $\{0,1\}^{\mathbb{N}_0}$  equipped with the  product discrete topology is a Polish space, and hence $E$ is a Borel space with Borel $\sigma$-field $\cl{F}$. In addition, $E$ admits a localizing sequence 
\begin{equation}\label{eq:E_n}
E_n:=\{ T \subset \bb{N}_0:\ \inf\pp{ T}\le n \},
\end{equation}
$n\in \bb{N}_0$, where $\mu(E_n)=\pi(\{0,\ldots,n\})<\infty$.

We are now ready to introduce the stationary model of interest.  
Consider
$$
\sum_{i=1}^{\infty} \delta_{\left(\eta_i, \boldsymbol{\tau}_i^*\right)} \stackrel{d}{=} \operatorname{PPP}\left((0, \infty) \times \bb{Z}_+,\, \alpha x^{-\alpha-1} d x d \mu\right), \quad \alpha\in(0,\infty),
$$
where the notation $\mathrm{PPP}(A,\nu)$ stands for a Poisson point process on space $A$ with intensity measure $\nu$.
Then, our new $k$-tuple regenerative model, $k\in \bb{Z}_+$,   is defined as the stationary process
\begin{equation}\label{eq:mult reg mod}
\pp{X_t}_{t \in \bb{N}_0}=\pp{\bigvee_{\boldsymbol{i}\in \cl{D}_k}\left[\eta_{\boldsymbol{i}}\right] \boldsymbol{1}_{\left\{t \in \bigcap_{r=1}^k \boldsymbol{\tau}_{i_r}^*\right\}}}_{t \in \bb{N}_0}=\pp{\bigvee_{\boldsymbol{i}\in \cl{D}_k}\left(\eta_{i_1}\ldots\eta_{i_k}\right) \boldsymbol{1}_{\left\{t \in   \boldsymbol{\tau}_{i_1}^*\right\}}\times \ldots \times \boldsymbol{1}_{\left\{t \in   \boldsymbol{\tau}_{i_k}^*\right\}} }_{t \in \bb{N}_0}.
\end{equation}
Note that in view of  Section \ref{alt ppp}, the process $\pp{X_t}_{t\in \bb{Z}_+}$ may be viewed as a multiple extremal integral with   $t$-indexed integrands. Since by shift-invariance, we have 
\begin{equation}\label{eq:mu(t in tau*)}
\mu(t\in \boldsymbol{\tau}^*)=\mu(0\in \boldsymbol{\tau}^*)=\pi(\{0\})=1,
\end{equation}
the integrability of the multiple extremal integral follows from Proposition \ref{Pro:tensor int cond}.
We mention that the representation may be alternatively formulated in terms of a conservative infinite-measure-preserving dynamical system; see \cite[]{bai2020functional}.


\begin{thm}
Let $\boldsymbol{\tau}^{(1)}, \ldots, \boldsymbol{\tau}^{(k)}$ be i.i.d.\ copies of the non-delayed renewal process $\boldsymbol{\tau}$ in \eqref{eq:tau no delay}, and $P_\alpha$ is a standard Pareto law (i.e. $\bb{P}\pp{P_\alpha>x}=x^{-\alpha}$, $x\ge 1$) independent of  $\Theta_s$'s. 
The conditional joint law $\cl{L}\pp{X_0/c,X_1/c,\ldots, X_t/c \mid X_0>c }$ converges weakly to the joint law  $\cl{L}\pp{P_\alpha \Theta_0,\ldots,P_\alpha\Theta_t}$ as $c\rightarrow\infty$, for each fixed $t\in \bb{N}_0$, where $\Theta_s=\mbf{1}_{\{ s\in  \boldsymbol{\tau}^{(1)}\cap \ldots \cap \boldsymbol{\tau}^{(k)} \}}$, $s\in \bb{N}_0$. 
\end{thm}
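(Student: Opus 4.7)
The plan is to exploit the Poisson point process representation from Section \ref{alt ppp}, write the process in terms of the atoms that contain the time origin, and argue that the conditional joint limit is driven by the top $k$ such atoms. Let $N=\sum_i \delta_{(\eta_i,\boldsymbol{\tau}_i^*)}$ be the governing PPP on $(0,\infty)\times E$ with intensity $\nu_\alpha\otimes\mu$, so that
\[
X_s \;=\; \bigvee_{\boldsymbol{i}\in\mathcal{D}_k}[\eta_{\boldsymbol{i}}]\prod_{r=1}^k \mathbf{1}_{\{s\in\boldsymbol{\tau}_{i_r}^*\}},\qquad s\in\bb{N}_0.
\]
Split $N = N^{(0)}+N^{(1)}$ according to whether $0\in\boldsymbol{\tau}^*$. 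By \eqref{eq:mu(t in tau*)}, $\mu(B_0)=1$ with $B_0=\{T\in E: 0\in T\}$, and $\mu|_{B_0}$ coincides with the law of the non-delayed renewal process $\boldsymbol{\tau}$ (because $0\in d+\boldsymbol{\tau}$ forces $d=0$). Enumerating the atoms of $N^{(0)}$ in decreasing order of $\eta$ yields
\[
N^{(0)}\stackrel{d}{=}\sum_{i\ge 1}\delta_{(\Gamma_i^{-1/\alpha},\,\boldsymbol{\tau}^{(i)})},
\]
where $(\Gamma_i)$ are standard Poisson arrival times and $(\boldsymbol{\tau}^{(i)})$ are i.i.d.\ copies of $\boldsymbol{\tau}$ independent of $(\Gamma_i)$; the first $k$ of these will play the role of $\boldsymbol{\tau}^{(r)}$ in the statement.

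Since any $k$-tuple contributing to $X_0$ must consist of atoms in $N^{(0)}$ and all $\eta$'s are positive, the maximal product is realized by the top $k$ atoms, giving $X_0=\prod_{i=1}^k\Gamma_i^{-1/\alpha}$. By Theorem \ref{lambda_{k-1}} (whose integrability hypothesis is supplied by Proposition \ref{Pro:tensor int cond} via $\mu(0\in\boldsymbol{\tau}^*)=1$), $X_0$ is regularly varying of index $\alpha$, and hence $X_0/c\mid X_0>c\Rightarrow P_\alpha$. Define the main term
\[
Y_s := X_0\cdot \Theta_s,\qquad \Theta_s := \prod_{r=1}^k\mathbf{1}_{\{s\in\boldsymbol{\tau}^{(r)}\}},
\]
and write $X_s = Y_s \vee R_s$, where $R_s$ is the supremum of $[\eta_{\boldsymbol{i}}]\prod_r\mathbf{1}_{\{s\in\boldsymbol{\tau}_{i_r}^*\}}$ over all $k$-tuples $\boldsymbol{i}$ distinct from the top $k$ of $N^{(0)}$. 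Since $(\Theta_s)$ depends only on the marks $(\boldsymbol{\tau}^{(i)})$ while $X_0$ depends only on $(\Gamma_i)$, these are independent, so $(Y_s/c)_{s=0}^{t}\mid X_0>c \Rightarrow (P_\alpha\Theta_s)_{s=0}^{t}$.

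The crux is to show that $\mathbb{P}(R_s>\epsilon c\mid X_0>c)\to 0$ for each $s\in\{1,\ldots,t\}$ and $\epsilon>0$; the conclusion then follows by a Slutsky-type argument applied to $X_s/c = (Y_s/c)\vee (R_s/c)$. The key observation is that conditioning on $X_0>c$, equivalently on $\Gamma_1\cdots\Gamma_k<c^{-\alpha}$, forces the i.i.d.\ exponential increments $E_1,\ldots,E_k$ (with $\Gamma_i=E_1+\cdots+E_i$) to be simultaneously small, whereas the increments $E_{k+1},E_{k+2},\ldots$ and the entire $N^{(1)}$ are independent of the conditioning event and hence remain of unit order. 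Consequently, (i) any $k$-subset of $N^{(0)}$ that drops at least one of the top $k$ atoms has product at most $X_0\cdot(\Gamma_k/\Gamma_{k+1})^{1/\alpha}$, which is $o(c)$ conditionally; and (ii) any $k$-tuple involving at least one atom of $N^{(1)}$ trades a small factor $\Gamma_i^{-1/\alpha}$ for an independent unit-order factor, again giving $o(c)$. The main obstacle is to make these bounds uniform over the infinite family of candidate tuples in $R_s$; this is handled by a union-type argument that leverages the ordering of the $\Gamma_i$'s and the independence between $N^{(0)}$ and $N^{(1)}$, analogous in spirit to the multiple stable framework of \cite[]{bai2024phase}.
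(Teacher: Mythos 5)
Your decomposition is a genuinely different route from the one the paper takes. The paper proves this theorem by applying the joint regular-variation result of Theorem \ref{lambda_{k-1}} to the integrands $f_i(\boldsymbol{u})=\mathbf{1}_{\{i\in u_1\}}\cdots\mathbf{1}_{\{i\in u_k\}}$ and then computing the conditional limit by inclusion--exclusion, identifying the limiting measure $\mu^k(\cdot\cap\{0\in u_1,\ldots,0\in u_k\})$ with the law of $(\Theta_i)_i$; all of the hard analysis (negligibility of non-leading index tuples) is thereby inherited from the proof of Theorem \ref{lambda_{k-1}}. You instead work at the level of the Poisson atoms, in the spirit of the tail-process argument of \cite[Theorem 1.1]{bai2023tail} that the paper mentions but does not follow. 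Your setup is correct: $\mu|_{B_0}$ is indeed the law of the non-delayed $\boldsymbol{\tau}$, the decreasing enumeration of $N^{(0)}$ gives $X_0=\prod_{i=1}^k\Gamma_i^{-1/\alpha}$ independent of $(\Theta_s)_s$, and $X_s=Y_s\vee R_s$ with $|X_s-Y_s|\le R_s$, so everything reduces to $\bb{P}(R_s>\epsilon c\mid X_0>c)\to 0$.

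That last step, however, is where your proposal has a genuine gap, and the heuristics you offer for it are not accurate enough to be repaired by a routine ``union-type argument.'' Two specific problems. First, the $\eta$-values of $N^{(1)}$ are \emph{not} of unit order: since $\mu(B_0^c)=\infty$, the supremum of $\eta$ over $N^{(1)}$ is a.s.\ infinite. What is of unit order is the supremum over the atoms of $N^{(1)}$ whose mark contains the fixed time $s$ (a set of $\mu$-measure at most $1$); your bound (ii) must be restricted to those atoms, and this restriction is what makes the factor Fr\'echet-distributed and independent of the conditioning. Second, and more seriously, conditioning on $X_0>c$ makes $\Gamma_k$ small only at a logarithmic rate, $\bb{P}(\Gamma_k>\epsilon\mid X_0>c)=O(1/(\epsilon\ln c))$, while the competing tuples in $R_s$ collectively carry tail mass of the exact same order $c^{-\alpha}(\ln c)^{k-1}$ as $\bb{P}(X_0>c)$ itself. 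A naive union bound over the infinitely many tuples therefore fails by a constant, not by a vanishing factor, and one is forced into the partition-and-induction machinery of \cite[Theorem 5.3]{samorodnitsky1989asymptotic} (as in the proof of \eqref{eq:remainder vanish} and Proposition \ref{prop6.7}) to show that each sub-family of tuples contributes $o(c^{-\alpha}(\ln c)^{k-1})$ jointly with $\{X_0>c\}$. In effect your route requires re-proving the hardest part of Theorem \ref{lambda_{k-1}}, now for the bivariate event $\{X_0>c,\,R_s>\epsilon c\}$, rather than citing it; as written, the crux is asserted rather than established.
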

\begin{proof}
The result can be obtained by arguments similar to those of \cite[Theorem 1.1]{bai2023tail}. Here, we sketch an alternative proof based on the characterization of the multivariate regular variation of multiple extremal integrals in Theorem \ref{lambda_{k-1}}.  

First, by arguing in the same way as below  \eqref{eq:mult reg mod} based on  Proposition \ref{Pro:tensor int cond}, each integrand 
$f_t(u_1,\ldots,u_k)=\boldsymbol{1}_{\left\{t \in   u_1\right\}}\times \ldots \times \boldsymbol{1}_{\left\{t \in   u_k\right\}} 
$   satisfies   the assumption in Theorem \ref{lambda_{k-1}}.  Based on the conclusion of Theorem \ref{lambda_{k-1}} and an elementary calculation via inclusion-exclusion, for $x_0,\ldots,x_t>0$, one has 
\[
\lim_{c\rightarrow\infty}\frac{\bb{P}\pp{\{X_0>c\}\cap  \pp{\bigcup_{i=0}^r\pc{ X_i>cx_i  } } }}
{\bb{P}\pp{X_0>c   }}=\frac{\int_{E^k}   f_0^\alpha(\sbf{u}) \wedge  \pp{\vee_{i=0}^t \frac{{f}_i^\alpha\left(\boldsymbol{u}\right)}{x_i^\alpha}   }\mu^k(d\sbf{u}) }{\int_{E^k} f_0^\alpha(\sbf{u}) \mu^k (d\sbf{u})  }.
\]
The denominator in the last expression  is $1$ in view of \eqref{eq:mu(t in tau*)}.  Define a probability measure $\mathrm{P}_0$ on $E^k$ by
$\mathrm{P}_0(A) = \mu^k(A \cap  \{0\in u_1,\ldots, 0\in u_k\})$. Note that in view of the definition of $\mu$, under the probability measure $\mathrm{P}_0$, the joint law of $(f_i)_{i=0,\ldots,t}$ is equal to that of $ (\Theta_i)_{i=0,\ldots,t}$. Hence 
the numerator above is equal to 
\begin{align*}
\bb{E}\pb{1\wedge \pp{\bigvee_{i=0}^t \Theta_i  x_i^{-\alpha}}}= \bb{P}\pp{ \bigcup_{i=0}^t \pc{  P_\alpha\Theta_i>x_i}}.
\end{align*}

\end{proof}

The process $\pp{\Theta_s}_{s\in \bb{N}_0}$ above is known as the spectral tail process of $\pp{X_t}_{t\in \bb{N}_0}$, while $\pp{P_\alpha\Theta_s}_{s\in \bb{N}_0}$ is known as its tail process \cite[]{basrak2009regularly}. 
Since the time index $t$ is fixed  in the limit theorem above,  the spectral tail process reflects microscopic  characteristics of extreme values of the process $(X_t)_{t\in \bb{N}_0}$. As pointed out in \cite[]{bai2023tail}, the spectral tail process $\pp{\Theta_s}_{s\in \bb{N}_0}$ undergoes a phase transition as $\beta$ increases past the critical value $(k-1)/k$: Setting $\beta_k=k\beta-k+1$, we have $\sum_{s=0}^\infty \Theta_s<\infty$ a.s.\, that is, the intersected renewal $\tau^{(1)}\cap \ldots \cap \tau^{(k)}$ is transient, if and only if $\beta_k<0$. The decay of the spectral tail process as the time index tends to infinity is typically interpreted as an indication of short-range dependence in extremes; see \cite[Section~6.1]{kulik2020heavy}.
Moreover, when $\beta_k<0$, following the argument in \cite[Section 1.3]{bai2023tail}, one can obtain the so-called candidate extremal index as (note that $\Theta_0=1$ always) 
\begin{equation}\label{eq:cei}
\vartheta=\bb{P}(\Theta_s=0  \ \text{for all } s\in \bb{Z}_+)\in (0,1),
\end{equation} 
i.e., the probability of no future intersected renewal after time $0$. In the literature, the candidate extremal index $\vartheta$ is often viewed as  a theoretical prediction of the extremal index $\theta$, a quantity that plays a crucial role in the description of macroscopic (i.e., involving temporal scaling) characteristics of  extreme values of $\pp{X_t}_{t\in \bb{N}_0}$ described below.  See \cite[Section 7.5]{kulik2020heavy} for more information about these extremal indices.

To describe the next macroscopic extreme limit result, let $\widetilde{\mathcal{R}}_\beta$ denote a $\beta$-stable regenerative set, which is the closed range of a $\beta$-stable increasing L\'evy process (subordinator) starting at the origin. Set
$\mathcal{R}_\beta:=\widetilde{\mathcal{R}}_\beta+B_{1-\beta, 1}$, where $B_{1-\beta, 1}$ is a $\operatorname{Beta}(1-\beta, 1)$ distributed variable,  i.e.,  $\mathbb{P}\left(B_{1-\beta, 1} \leq x\right)=x^{1-\beta}, x \in[0,1]$, which is independent from $\widetilde{\mathcal{R}}_\beta$.
Let $\pp{\mathcal{R}_{\beta, i}}_{i \in \bb{Z}_+}$ denote i.i.d.\ copies of $\mathcal{R}_\beta$, and write $\mathcal{R}_{\beta, \boldsymbol{i}}= \mathcal{R}_{\beta, i_1}\cap \ldots \cap \mathcal{R}_{\beta, i_k}$ for $\boldsymbol{i}=(i_1,\ldots,i_k)\in \cl{D}_k$.  Let $\left(\Gamma_i \right)_{i \in \bb{Z}_+}$ be the arrival times of a standard Poisson process on $[0, \infty)$ independent of everything else.
We are now ready to state the following macroscopic extreme limit result which exhibits a phase transition at $\beta_k=0$ as well.

\begin{thm}\label{thm:EVT}
Viewed as  processes indexed by open sets $G$ in $[0,1]$, we have the following convergence of finite-dimensional distributions as $n\rightarrow\infty$:
 \begin{align}\label{ex1}
 \frac{1}{c_n} \bigvee_{t/n\in G} X_t  \overset{f.d.d.}{\longrightarrow} \begin{cases} \mathfrak{C}_{F, k}^{1 / \alpha} \mathcal{M}_{\alpha, \beta, k}(G), & \text { if } \beta_k>0, \\ \mathfrak{C}_{F, k}^{1 / \alpha} \mathcal{M}_\alpha(G), & \text { if } \beta_k \leq 0,\end{cases}
 \end{align}
 where $\mathcal{M}_\alpha$ is an $\alpha$-Fr\'echet random sup measure with   Lebesgue control measure,    $\mathcal{M}_{\alpha, \beta, k}$ is a random sup measure defined by the LePage representation of a multiple extremal integral
 \begin{equation}\label{eq:M alpha beta k}
 \mathcal{M}_{\alpha, \beta, k}(G):=\bigvee_{\boldsymbol{i} \in \mathcal{D}_k}\left[\Gamma_{\boldsymbol{i}}\right]^{-1 / \alpha} \boldsymbol{1}_{\left\{\mathcal{R}_{\beta, \boldsymbol{i}} \cap G \neq \emptyset\right\}},
 \end{equation} 
 and 
 $$
c_n  := \begin{cases} n^{\left(1-\beta_k\right) / \alpha} & \text {if }\beta_k >0,\\ \left(\frac{n(\ln \ln n)^{k-1}}{\ln n}\right)^{1 / \alpha}, & \text {if }\beta_k = 0, \\ \left(n \ln ^{k-1} n\right)^{1 / \alpha}, & \text {if } \beta_k< 0, \end{cases}\qquad \mathfrak{C}_{F, k}  := \begin{cases} \left(\frac{\mathrm{C}_F }{1-\beta}\right)^k, & \text {if }\beta_k >0,\\ \frac{\left(C_F \Gamma(\beta) \Gamma(1-\beta)\right)^k}{k!(k-1)!}, & \text {if }\beta_k = 0, \\ \frac{\vartheta \mathrm{D}_{\beta, k}}{k!(k-1)!}  & \text {if } \beta_k< 0,
\end{cases}
$$ 
where $\mathrm{D}_{\beta,k} = \sum_{q=q_{\beta,k}}^{\infty} (-1)^q \binom{k}{q} (-\beta)^{q-k-1}\in (0,1)$ with $q_{\beta,p}= \min \{ q \in \mathbb{Z}_+ \mid \beta_q < 0 \}$, the constant $\mathrm{C}_F $ is as in \eqref{eq:C_F} and $\vartheta$ is as in \eqref{eq:cei}.
\end{thm}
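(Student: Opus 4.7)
The plan is to leverage the Poisson point process representation from Section~\ref{alt ppp}. Writing $X_t$ through the PPP $\sum_i \delta_{(\boldsymbol{\tau}^*_i,\eta_i)}$ on $E\times(0,\infty)$ with intensity $\mu\times\nu_\alpha$, one has
\begin{equation*}
\bigvee_{t/n \in G} X_t \;=\; \bigvee_{\boldsymbol{i} \in \cl{D}_k}[\eta_{\boldsymbol{i}}]\,\boldsymbol{1}_{\{(\boldsymbol{\tau}^*_{i_1}\cap\cdots\cap\boldsymbol{\tau}^*_{i_k})\cap nG\neq \emptyset\}}.
\end{equation*}
Using the identity \eqref{D.2} together with the localizing sequence $E_n$ in \eqref{eq:E_n}, this can be rewritten via an i.i.d.\ enumeration with points $\{(T_i,\psi(T_i)^{1/\alpha}\Gamma_i^{-1/\alpha})\}_{i\ge 1}$, where the $T_i$ are sampled from a probability measure $m$ equivalent to $\mu$. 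The convergence problem then reduces to analyzing the joint limit (over a finite family of open sets $G_1,\ldots,G_m$) of the weighted LePage sums whose integrands are the functionals $f_{n,G}(u_1,\ldots,u_k)=\boldsymbol{1}_{\{(u_1\cap\ldots\cap u_k)\cap nG\neq\emptyset\}}$.

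For the long-range regime $\beta_k>0$, the key input will be that the rescaled shifted renewal $n^{-1}\boldsymbol{\tau}^*$ converges as a random closed set (in the Fell topology) to $\cl{R}_\beta = \wt{\cl{R}}_\beta + B_{1-\beta,1}$; this is a standard consequence of \eqref{eq:C_F} (see e.g.~\cite[]{bai2023tail}). Combined with the scaling $\mu(E_n)\sim \mathrm{C}_F(1-\beta)^{-1}n^{1-\beta}$, each product $c_n^{-1}\eta_{i_1}\cdots\eta_{i_k}$ (after restriction to the localizing set) approaches $\mathfrak{C}_{F,k}^{1/\alpha}[\Gamma_{\boldsymbol{i}}]^{-1/\alpha}$ with $\mathfrak{C}_{F,k}=(\mathrm{C}_F/(1-\beta))^k$. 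Propagating the joint set-convergence through the intersection $n^{-1}(\boldsymbol{\tau}^*_{i_1}\cap\cdots\cap\boldsymbol{\tau}^*_{i_k})\to \cl{R}_{\beta,\boldsymbol{i}}$ and applying the dominated convergence result Proposition~\ref{prop 4.3} (together with its $L^r$ version Corollary~\ref{Cor:I_k conv in L^r}) to control the tail of the LePage series over $\boldsymbol{i}$ yields the limit $\mathfrak{C}_{F,k}^{1/\alpha}\cl{M}_{\alpha,\beta,k}(G)$ with $\cl{M}_{\alpha,\beta,k}$ as in~\eqref{eq:M alpha beta k}. Tightness of the truncation error will be verified from the integrability condition established for $f_{n,G}$ via Theorem~\ref{lambda_{k-1}}.

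For the short-range regime $\beta_k\le 0$, the intersected renewal $\tau^{(1)}\cap\cdots\cap\tau^{(k)}$ is (barely) transient, so the rescaled intersection $n^{-1}(\boldsymbol{\tau}^*_{i_1}\cap\cdots\cap\boldsymbol{\tau}^*_{i_k})$ collapses to at most a single macroscopic point (the first common renewal, when it exists). Accordingly the limiting random sup measure is generated by point masses with Lebesgue control measure on $[0,1]$, giving $\mathfrak{C}_{F,k}^{1/\alpha}\cl{M}_\alpha(G)$. The factor $\vartheta$ in the constant emerges as the probability that a randomly chosen common renewal point is not followed by another intersection -- the candidate extremal index in~\eqref{eq:cei} -- capturing the microscopic cluster size. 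The factor $\mathrm{D}_{\beta,k}$ is identified by inclusion--exclusion over which subsets $S\subset\{1,\ldots,k\}$ of the renewal copies produce a recurrent intersection, with only ``maximal recurrent'' subsystems contributing (those with $\beta_{|S|}<0$), which manifests as the alternating sum starting at $q_{\beta,k}$. The critical case $\beta_k=0$ is then handled as a boundary calculation with the expected $\ln n$ correction arising from the $L^\alpha\ln^{k-1}L$ boundary integrability highlighted in Section~\ref{sec:tail}.

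The main obstacle will be the precise identification of $\mathfrak{C}_{F,k}$ in the short-range case. It requires sharp renewal estimates for the asymptotic mass of $\bigcap_{r\in S}\boldsymbol{\tau}^{(r)}$ on $\{0,\ldots,n\}$ for each $S\subset\{1,\ldots,k\}$, together with a careful bookkeeping of how these clusters interact through the supremum in the LePage sum. Analogues of this combinatorial--renewal analysis in the sum-stable setting can be found in~\cite[]{bai2023tail,bai2024phase,bai2024remarkable}; they should transfer to the present extremal setting via the sum-to-max translation emphasized throughout this paper, with the triangle inequality and monotonicity in Corollary~\ref{cor:gen int} replacing their sum-stable analogues.
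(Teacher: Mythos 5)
Your treatment of the long-range regime $\beta_k>0$ follows essentially the same route as the paper: pass to a triangular-array LePage/Poisson representation of $(X_t)_{t\le n}$, use Fell-topology convergence of the rescaled intersected renewals $n^{-1}(\boldsymbol{\tau}^*_{i_1}\cap\cdots\cap\boldsymbol{\tau}^*_{i_k})$ to the intersected regenerative sets $\mathcal{R}_{\beta,\boldsymbol{i}}$, match the normalization through $\mu(E_n)\sim \mathrm{C}_F(1-\beta)^{-1}n^{1-\beta}$, and control the series tail by truncation. This part is sound, with the caveat that what is needed is the \emph{joint} convergence of the whole family $\{\mathcal{R}_{n,\boldsymbol{i}}\}_{\boldsymbol{i}}$, which the paper imports from an external result rather than deriving from the single-renewal convergence you cite.

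The genuine gap is in the regime $\beta_k\le 0$. What you offer there is a description of the limit and a heuristic for where $\vartheta$ and $\mathrm{D}_{\beta,k}$ come from, not a proof mechanism, and you yourself flag the identification of $\mathfrak{C}_{F,k}$ as an unresolved obstacle---which is precisely the hard content of the theorem in this regime. The model is deliberately constructed so that classical mixing/anti-clustering conditions fail (that is why $\theta\neq\vartheta$), so one cannot pass from the spectral tail process to the Fr\'echet limit by standard short-range arguments. The paper's actual route is: (i) truncate the LePage index set to a finite family $\mathcal{H}(n,K)$ of tuples with $[\boldsymbol{i}]\le Kw_n^k/c_n^\alpha$; (ii) show that, on an event of probability tending to one, each time block $\mathcal{I}_{n,j}$ is hit by at most one surviving tuple, so each block maximum reduces to a single LePage term; (iii) replace $[\Gamma_{\boldsymbol{i}}]$ by the deterministic $[\boldsymbol{i}]$; and then prove Poisson convergence of the resulting block-level exceedance point process via the two-moment method of Arratia, Goldstein and Gordon---this last step is where the constants $\vartheta\mathrm{D}_{\beta,k}/(k!(k-1)!)$ and the critical-case normalization are actually produced. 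None of these steps appears in your proposal, and the tools you invoke from earlier sections (Proposition~\ref{prop 4.3}, Corollary~\ref{Cor:I_k conv in L^r}, Theorem~\ref{lambda_{k-1}}) do not substitute for them: they concern a fixed sequence of integrands against a fixed random sup measure, whereas here both the integrands and the effective control measure vary with $n$ and the convergence sought is distributional. So the short-range half of the proof is missing its core argument.
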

\begin{proof}[Proof sketch]
The theorem can be proved exactly in a way similar to \cite[Theorem 1.8]{bai2024phase}. We hence provide only a sketch.

  First, it turns out more convenient to work with a triangular-array LePage representation of   $(X_t)_{0=1,\ldots,n}$, $n\in \bb{Z}_+$, described as follows. Recall again that $\mu$ is the law of the delayed stationary renewal $\boldsymbol{\tau}^*$ in \eqref{eq:tau*}. Recall the subspace  $E_n$ in \eqref{eq:E_n},  and denote the restriction of $\mu$ to $E_n$ by $\mu_n$.
  We let the derivative function $\psi=\psi_n$ that appears in the LePage representation \eqref{eq4} be given by
\[
 \psi_n(u)  = w_n, \quad u\in E_n,
\]
where 
\begin{equation}\label{eq:w_n}
w_n:=\mu(E_n) \sim \frac{\mathrm{C}_F }{1-\beta} n^{1-\beta}
\end{equation}
as $n\rightarrow\infty$ (see \cite[Eq.\ (1.7)]{bai2024phase}).
Then the probability measure $m_n$ on $E_n$ given by $ dm_n= \psi_n^{-1}  d\mu_n$ can be regarded as the law of stationary renewal $\boldsymbol{\tau}^*$ conditioned to renew during the time window $\{0,\ldots,n\}$.  Let $\pp{\mathcal{R}_{n,i}}_{i\in \bb{Z}_+}$  be i.i.d.\ renewal sets following the law of $m_n$ which are independent of $\left(\Gamma_i \right)_{i \in \bb{Z}_+}$.

Then, in view of  Corollary \ref{cor:gen int},  relations \eqref{eq:ppp rep}, \eqref{D.2} as well as the symmetry of the integrand, one has
\begin{align}\label{eq:mult reg LePage}
    (X_t)_{t=0,\ldots,n}\EqD \pp{ w_n^{k/\alpha} \bigvee_{\boldsymbol{i} \in \mathcal{D}_{k,<}}\left[\Gamma_{\boldsymbol{i}}\right]^{-1 / \alpha} \boldsymbol{1}_{ \left\{t\in \mathcal{R}_{n, \boldsymbol{i}}   \right\}} },
\end{align}
where $\mathcal{R}_{n, \boldsymbol{i}}:= \mathcal{R}_{n, i_1}\cap \ldots \cap \mathcal{R}_{n, i_k}$ for $\boldsymbol{i}=(i_1,\ldots,i_k)\in \cl{D}_{k,<}$, where the last index set is as in \eqref{D<}.

For the convergence in the super-critical case $\beta_k > 0$, by comparing the representation in \eqref{eq:mult reg LePage} with \eqref{eq:M alpha beta k} and applying a truncation argument, the key   is to establish the convergence of the finite-dimensional joint distributions of the intersected renewals $\{\mathcal{R}_{n,\boldsymbol{i}}\}_{\boldsymbol{i}\in \mathcal{D}_{k,<}}$ toward those of the limiting intersected regenerative sets $\{\mathcal{R}_{\beta,\boldsymbol{i}}\}_{\boldsymbol{i}\in \mathcal{D}_{k,<}}$, where the weak convergence is understood in the Fell topology on the space of closed subsets of $[0,1]$. This is guaranteed by \cite[Theorem~5.4]{samorodnitsky2019extremal}.


The convergences in the critical and sub-critical cases $\beta_k \le 0$ are established via Poisson convergence of point processes, exploiting the classical correspondence between threshold exceedances and hits of the associated exceedance point process. In the regime $\beta_k \le 0$, the model $(X_t)$ is anticipated to exhibit short-range dependence, in contrast to the long-range dependence regime $\beta_k > 0$.
A typical approach in the literature for handling extremes under short-range dependence is to form blocks of time indices
$
\mathcal{I}_{n,j} = \{(j-1)d_n+1,\ldots, jd_n\}$,
where the block size $d_n \to \infty$ and $\left\lfloor n / d_n\right\rfloor \to \infty$, for $1 \le j \le \left\lfloor n / d_n\right\rfloor$. Limit theorems for the maximum then follow once one establishes weak convergence of the block-level point process
\[
\sum_{1 \le j \le \left\lfloor n / d_n\right\rfloor} \delta_{\, (m_{n,j},\, j/ \left\lfloor n / d_n\right\rfloor)}, \qquad
m_{n,j} := c_n^{-1} \bigvee_{t  \in \mathcal{I}_{n,j}} X_t,
\]
as $n \to \infty$, towards a suitable Poisson point process on $(0,\infty) \times [0,1]$, based on mixing-type conditions to control dependence between blocks; see, e.g., \cite[]{kulik2020heavy,mikoschwintenberger2024}.
However, the model \eqref{eq:mult reg LePage} exhibits a special type of temporal dependence, rendering classical approaches based on mixing-type conditions difficult to apply. Instead, the following approximation strategies are employed.

\begin{enumerate}

\item Use a carefully chosen finite set to replace the infinite set $\mathcal{D}_{k,<}$ in \eqref{eq:mult reg LePage}. In particular, following \cite[Eq.\ (4.5)]{bai2024phase}, we introduce
\begin{equation*}
\mathcal{H}(n,K)
:= \Bigl\{
\boldsymbol{i} = (i_1,\ldots,i_k) \in \mathcal{D}_{k,<}
:\ [\boldsymbol{i}] = i_1 \cdots i_k \le K w_n^k / c_n^{\alpha},\ i_k \le w_n
\Bigr\},
\end{equation*}
where $K > 0$ is a truncation parameter that will eventually be taken to approach $\infty$. This yields the truncated LePage representation
\begin{equation}\label{eq:X_n,t^K}
X_{n,t}^{(K)}
:= w_n^{k/\alpha} \bigvee_{\boldsymbol{i} \in \mathcal{H}(n,K)}
\left[\Gamma_{\boldsymbol{i}}\right]^{-1/\alpha}
\mathbf{1}_{\{\, t \in \mathcal{R}_{n,\boldsymbol{i}}\,\}},
\qquad t = 0,1,\ldots,n.
\end{equation}

\item One can show that, on an event whose probability tends to $1$ as $n \to \infty$, the following set cardinalities satisfy
\[
\Bigl|
\bigl\{
\boldsymbol{i} \in \mathcal{H}(n,K)
:\ \mathcal{R}_{n,\boldsymbol{i}} \cap \mathcal{I}_{n,j} \neq \emptyset
\bigr\}
\Bigr|
\le 1,
\qquad 1 \le j \le n/d_n.
\]
This implies, on the same event, that for $1 \le j \le n/d_n$,
\[
\Bigl\{
c_n^{-1} \bigvee_{t \in \mathcal{I}_{n,j}} X_{n,t}^{(K)}
\Bigr\} \cap (0,\infty)
=
\bigl\{
(w_n^{k/\alpha}/c_n)\,[\Gamma_{\boldsymbol{i}}]^{-1/\alpha}
:\ \boldsymbol{i} \in \mathcal{H}(n,K),\
\mathcal{R}_{n,\boldsymbol{i}} \cap \mathcal{I}_{n,j} \neq \emptyset
\bigr\},
\]
since both sides are simultaneously empty or consist of the same singleton.

\item Replace the random product $[\Gamma_{\boldsymbol{i}}] = \Gamma_{i_1} \cdots \Gamma_{i_k}$ by the deterministic product $i_1 \cdots i_k$, motivated by the law of large numbers $\Gamma_i / i \to 1$ a.s.\ as $i \to \infty$.
\end{enumerate}

One can show that these considerations reduce the proof of convergence in the critical and sub-critical cases  $\beta_k \leq 0$  to establishing the Poisson convergence of the point processes
\[
\overline{\eta}_{n,K}
:= \sum_{1 \le j \le \left\lfloor n / d_n\right\rfloor} \;
   \sum_{\boldsymbol{i} \in \mathcal{H}(n,K)}
   \delta_{\bigl(\,(w_n^{k/\alpha}/c_n)[\boldsymbol{i}]^{-1/\alpha},\; j/\left\lfloor n / d_n\right\rfloor \,\bigr)}
   \,\mathbf{1}_{\{\mathcal{R}_{n,\boldsymbol{i}} \cap \mathcal{I}_{n,j} \neq \emptyset\}}.
\]
When evaluating $\overline{\eta}_{n,K}$ on test sets, one encounters sums of dependent Bernoulli random variables, with the dependence structure governed by properties of the renewal sets $\{\mathcal{R}_{n,\boldsymbol{i}}\}$. The two-moment method of \cite[]{arratia1989two} is then employed to establish the desired Poisson convergence.

It is worth pointing out that although the model in \cite[]{bai2024phase} involves a sum (as well as additional Rademacher variables), rather than a supremum as in \eqref{eq:mult reg LePage}, the arguments can be directly adapted. For instance, the justification of truncation in Item~(i) above, corresponding to \cite[Lemma~4.9]{bai2024phase} in the case $\beta_k < 0$ and \cite[Lemma~5.11]{bai2024phase} in the case $\beta_k = 0$, can be adapted by restricting $\alpha \in (0,1)$ via a suitable power transform, and subsequently bounding the supremum of nonnegative quantities by their sum. Moreover, the second-moment estimates used in the proofs of \cite[Lemmas~4.9 and 5.11]{bai2024phase} to handle Rademacher variables can be replaced by first-moment estimates. In addition, the argument in Item~(ii) above remains unchanged regardless of whether a supremum or a sum is used in the definition of \eqref{eq:X_n,t^K}.

There are also corresponding modifications in the form of $\mathcal{M}_{\alpha,\beta,k}$ and in the constant $\mathfrak{C}_{F,k}$ when $\beta_k = 0$ and $\beta_k < 0$ (dropping a factor $1/2$), due to changes in the model $(X_t)_{t \in \mathbb{N}_0}$ compared to \cite[]{bai2024phase}, where the Rademacher variables are now absent.

\end{proof}

It should be noted that since the limit $\mathcal{M}_{\alpha, \beta, k}$ in \eqref{eq:M alpha beta k} obtained in the case $\beta_k>0$   takes the form of a LePage representation of a multiple extremal integral, leading to a non-Fréchet limit law.

We now pay particular attention to the case $\beta_k<0$, where, following the comments after \cite[Corollary 1.9]{bai2024phase}, we can derive based on Theorem \ref{thm:EVT}  an extremal index
\[
\theta= \mathrm{D}_{\beta,k} \vartheta<\vartheta
\]
for the model, where $\vartheta$ is the candidate extremal index in \eqref{eq:cei}. This discrepancy between the candidate extremal index $\vartheta$ and extremal index $\theta$ reveals an unusual behavior of the model. While it exhibits traits of short-range dependence, such as the vanishing tail spectral process and a Fr\'echet limit in \eqref{ex1}, the discrepancy implies the absence of certain aniti-clustering and mixing conditions \cite[Remark 1.4]{bai2023tail}, which are traits of
long-range dependence.  It is worth noting that \cite[]{bai2024remarkable} provides a more refined analysis on the discrepancy for the double  (i.e., $k=2$ ) stable integral case, and the results obtained here naturally extend to the double extremal integral counterpart.

\begin{center}
 \large\textbf{Acknowledgement}
\end{center}
 
 We sincerely thank the anonymous referees for their valuable comments and suggestions, which have led to significant improvements of the paper. In particular,   the max–infinite divisibility of extremal integrals discussed in Section~\ref{max sec}   was kindly pointed out by a referee.

\bibliography{reference}
\bibliographystyle{abbrvnat}

\newpage
\appendix
\begin{center}
    \Large\textbf{Appendix}
\end{center}

\section{Different definitions of random sup measure}\label{dd rsp}

We first recall some background    of sup measure theory of \cite[]{vervaat1988random}. Let $E$  be a topological space, $\mathcal{G}=\mathcal{G}(E)$ the collection of open subsets of $E$, $\mathcal{F}=\mathcal{F}(E)$ the collection of closed subsets of $E$, and  $\mathcal{K}=\mathcal{K}(E)$ the collection of compact subsets of $E$. A map $m: \mathcal{G} \rightarrow[0, \infty]$ is a sup measure, if
\begin{equation}\label{sig alg}
    m\left(\bigcup_\alpha G_\alpha\right)=\bigvee_\alpha m\left(G_\alpha\right)
\end{equation}
for an arbitrary collection of open sets $\left(G_\alpha\right)_\alpha$.\ Given a sup measure $m$, its sup derivative, denoted by $d^{\vee} m: E \rightarrow[0, \infty]$, is defined as
$$
d^{\vee} m(t):=\bigwedge_{G \ni t} m(G), \quad t \in E ,
$$
where the infimum $\bigwedge$ is taken over all open set $G$ containing $t$.
The sup derivative of a sup measure is an upper semicontinuous function, that is a function $f$ such that $\{f<t\}$ is open for all $t>0$. Let $\operatorname{SM}(E)$  be the set of all sup measures on $E$.  Every $m \in$ SM has a \emph{canonical extension} to all subsets of $E$, given by
\begin{equation}\label{can ext}
 m(B)=\bigvee_{t \in B}\left(d^{\vee} m\right)(t) = \bigwedge_{G \supset B} m(G), \quad B \subset E.   
\end{equation}
The maxitivity property \eqref{sig alg} holds for the canonical extension when $G_\alpha$'s are replaced by arbitrary subsets of $E$.
The so-called sup vague  topology on $\mathrm{SM}(E)$ is generated by the subbase consisting of the subsets of the forms
\begin{equation}\label{eq7}
  \{m \in \operatorname{SM}(E): m(G)>z\}, \quad G \in \mathcal{G},\, z \in[0, \infty),  
\end{equation}
and
\begin{equation}\label{eq8}
\{m \in \operatorname{SM}(E): m(K)<z\}, \quad K \in \mathcal{K}, \, z \in(0, \infty].    
\end{equation}
  
 A random sup measure in the sense of \cite[]{vervaat1988random} is a random element taking value in $(\mathrm{SM}, \mathcal{B}(\mathrm{SM}))$ with $\mathcal{B}(\mathrm{SM})$ the Borel $\sigma$-algebra induced by the sup-vague topology.  We are now ready to present the definition of the $\alpha$-Fr\'{e}chet random sup measure in this framework  as follows; see \cite[Definition 11.2]{vervaat1988random}.
\begin{defn}\label{def v}
A $\mathrm{SM}(E)$-valued random element $M_\alpha^V$ is said to be an (independently scattered) $\alpha$-Fr\'{e}chet random sup measure on $(E, \mathcal{G})$ with a control measure $\mu$, if the following conditions are satisfied. 
\begin{enumerate}[label=(\roman*)]
\item (independently scattered) For any collection of disjoint sets $A_j \in \mathcal{G}, 1 \leq j \leq n$, $n \in \bb{Z}_+$, the random variables $M_\alpha^V\left(A_j\right), 1 \leq j \leq n$, are independent.
\item ($\alpha$-Fr\'{e}chet) For any $A \in \mathcal{G}$, we have
$$
\mathbb{P}\left\{M_\alpha^V (A) \leq x\right\}=\exp \left\{-\mu(A) x^{-\alpha}\right\},\ x\in (0,\infty),
$$
namely, $M_\alpha^V(A)$ is $\alpha$-Fr\'{e}chet with scale coefficient $(\mu(A))^{1 / \alpha}$, and we understand $M_\alpha^V(A)$ as a random variable taking value $\infty$ with probability one when $\mu(A)=\infty$.
\end{enumerate}
\end{defn}


Note that with $M_\alpha^V$ given in Definition \ref{def v}, the quantity $M_\alpha^V(A)$ for any $A\subset E$   can be formally defined through the canonical extension \eqref{can ext}, although $M_\alpha^V(A)$ is not guaranteed to be a measurable random variable a priori. Below we show that under some regularity conditions on the space $(E,\cl{E})$, the  measurability of $M_\alpha^V(A)$ follows automatically, and admits a pathwise LePage representation Definition \ref{def v}. We assume the underlying probability space is rich enough to accommodate a randomization as described before  \cite[Theorem 8.17]{kallenberg2021foundations}.

\begin{lem}\label{lem ab}
\cite[Theorem 13.2]{vervaat1988random} Suppose $(E, \mathcal{G})$ is a locally compact second countable Hausdorff (lcscH; see \cite[Appendix A.2]{kallenberg2021foundations}) space. Assume $M_\alpha$ is as in Definition \ref{defn s}. Then  a random sup measure $M^V_\alpha$ in the sense of Definition \ref{def v} exists on the same probability space defining $M_\alpha$, such that for each open set $G \in \mathcal{G}$ (separately), we have \begin{equation}
        M^V_\alpha(G) = M_\alpha(G) \text{ a.s..  }
    \end{equation} 
\end{lem}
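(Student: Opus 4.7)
My strategy is to construct $M_\alpha^V$ pathwise from $M_\alpha$ via a sup-derivative built on a countable base of $E$, and then verify the required a.s.\ identification $M_\alpha^V(G) = M_\alpha(G)$ on each open set $G$. The defining properties of Definition~\ref{def v} will then follow from this a.s.\ identification together with the corresponding properties of $M_\alpha$. The reverse inequality in the a.s.\ identification, together with the measurability of an uncountable supremum, is the main obstacle, and it will be handled by transferring a pathwise identity from the LePage representation $M_\alpha^L$ of Definition~\ref{defn l} via the joint finite-dimensional distribution equality \eqref{m and ml1}.

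\textbf{Setup and sup-derivative.} Since $E$ is lcscH, it admits a countable base $\mathcal{B}=\{B_1,B_2,\ldots\}$ of relatively compact open sets, which I may assume is closed under finite unions. Applying the $\sigma$-maxitivity of $M_\alpha$ (Definition~\ref{defn s}(iii)) to each of the countably many canonical decompositions $C = \bigcup_{B\in\mathcal{B}(C)} B$, where $C\in\mathcal{B}$ and $\mathcal{B}(C):=\{B\in\mathcal{B}: B\subset C\}$, I obtain a single $\mathbb{P}$-null set $N_0$ outside which $M_\alpha(\cdot)(\omega)$ is monotone on $\mathcal{B}$ and satisfies $M_\alpha(C)(\omega)=\sup_{B\in\mathcal{B}(C)} M_\alpha(B)(\omega)$ for every $C\in\mathcal{B}$. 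For $\omega\notin N_0$, define the sup-derivative
\[
g(\omega,x) := \inf\{M_\alpha(B)(\omega) : B\in\mathcal{B},\ x\in B\},
\]
and set $g(\omega,\cdot)\equiv 0$ on $N_0$. Since the infimum is over a countable family, $g(\omega,x)$ is measurable in $\omega$ for each $x$, and $x\mapsto g(\omega,x)$ is upper semi-continuous: if $g(\omega,x)<t$, pick $B\in\mathcal{B}$ with $x\in B$ and $M_\alpha(B)(\omega)<t$, so that $g(\omega,y)<t$ for every $y\in B$. Put $M_\alpha^V(A)(\omega):=\sup_{x\in A} g(\omega,x)$ for all $A\subset E$; by construction this is pathwise a sup measure in the sense of \cite[]{vervaat1988random}, with sup-derivative $g(\omega,\cdot)$.

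\textbf{A.s.\ identification on open sets.} For a fixed $G\in\mathcal{G}$, the upper bound $M_\alpha^V(G)\leq M_\alpha(G)$ a.s.\ is pathwise: for each $x\in G$ the base property yields $B_x\in\mathcal{B}(G)$ with $x\in B_x$, so $g(\omega,x)\leq M_\alpha(B_x)(\omega)\leq\sup_{B\in\mathcal{B}(G)} M_\alpha(B)(\omega)$; taking the supremum over $x\in G$ and invoking $\sigma$-maxitivity for $G=\bigcup\mathcal{B}(G)$ gives the claim. For the reverse bound I transfer from the LePage representation. Metrizability of lcscH allows $\mathcal{B}$ to contain arbitrarily small neighborhoods of each point; since $\Gamma_j\to\infty$ a.s.\ and $\mu$ is locally finite, the set $\{j: \psi(T_j)^{1/\alpha}\Gamma_j^{-1/\alpha}\geq\epsilon\}$ is a.s.\ finite for every $\epsilon>0$. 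A direct calculation then gives $g^L(\omega,T_j(\omega))=\psi(T_j(\omega))^{1/\alpha}\Gamma_j(\omega)^{-1/\alpha}$ and $g^L(\omega,x)=0$ for $x\notin\{T_j(\omega):j\geq 1\}$, whence $\sup_{x\in G} g^L(\omega,x)=M_\alpha^L(G)(\omega)$ pathwise, where $g^L$ is defined from $M_\alpha^L$ by the same formula as $g$. The joint fidi equality $(M_\alpha(B))_{B\in\mathcal{B}\cup\{G\}}\EqD (M_\alpha^L(B))_{B\in\mathcal{B}\cup\{G\}}$ (following from \eqref{m and ml1} extended to countable index sets, which is a direct consequence of Definition~\ref{defn s}) then transfers this pathwise LePage identity to $M_\alpha$, yielding $M_\alpha^V(G)\geq M_\alpha(G)$ a.s.

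\textbf{Measurability and main obstacle.} Measurability of $M_\alpha^V$ as a random element of $\mathrm{SM}(E)$ is immediate from the preceding step: the Borel $\sigma$-algebra on $\mathrm{SM}(E)$ is generated by the evaluations $m\mapsto m(G)$, $G\in\mathcal{G}$, and each $M_\alpha^V(G)$ is a.s.\ equal to the measurable $M_\alpha(G)$. The main subtlety lies in the transfer step itself: the quantity $\sup_{x\in G} g(\omega,x)$ involves an uncountable supremum, so its measurability as a function of the countable family $(M_\alpha(B))_{B\in\mathcal{B}}$ is not immediate, and thus the transfer argument requires care. This can be handled by observing that $\{\sup_{x\in G} g(\omega,\cdot)\geq t\}=\{G\not\subset \bigcup\{B\in\mathcal{B}: M_\alpha(B)(\omega)<t\}\}$ is an analytic subset of $\{0,1\}^{\mathbb{Z}_+}$ (using relative compactness of the $B\in\mathcal{B}$ together with second countability and Lindel\"of), hence universally measurable with respect to any probability law; alternatively, the pathwise LePage identity $\sup_{x\in G} g^L(\omega,x)=\sup_{B\in\mathcal{B}(G)} M_\alpha^L(B)(\omega)$ transfers via fidi equality to yield $\sup_{x\in G} g(\omega,x)=\sup_{B\in\mathcal{B}(G)} M_\alpha(B)(\omega)$ a.s., which is a countable supremum and hence manifestly measurable. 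With measurability established, the conditions (i) and (ii) of Definition~\ref{def v} for $M_\alpha^V$ follow directly from those of $M_\alpha$ via the a.s.\ identification on open sets, completing the proof.
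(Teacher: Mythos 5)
The paper does not actually prove this lemma: it imports it wholesale as \cite[Theorem 13.2]{vervaat1988random}, so your self-contained construction is a genuinely different (and more informative) route. Your approach — build the sup-derivative $g(\omega,x)=\inf\{M_\alpha(B)(\omega):B\in\mathcal{B},\,x\in B\}$ over a countable base of relatively compact sets, define $M_\alpha^V$ as its canonical extension, prove $M_\alpha^V(G)\le M_\alpha(G)$ pathwise via $\sigma$-maxitivity, and obtain the reverse inequality by transferring a pathwise identity from the LePage representation — is essentially the standard proof underlying Vervaat's theorem, and it meshes well with how the paper later uses the lemma in Proposition \ref{prop 2 to 3} (which also passes through the LePage representation).

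Two points need attention before the argument is airtight. First, your lower bound invokes local finiteness of $\mu$ (to get a.s.\ finitely many LePage points of size $\ge\epsilon$ near each $x$), but the lemma only inherits $\sigma$-finiteness from Definition \ref{defn s}; either state local finiteness as a hypothesis (it is assumed in Proposition \ref{prop 2 to 3}, where the lemma is applied) or patch the argument at points where $\mu$ is not locally finite, noting that there every base neighborhood has infinite measure, so $g(\omega,x)=\infty$ a.s.\ and both sides of the identity are $+\infty$ for any open $G$ containing such a point. Second, and more importantly, the step ``the pathwise LePage identity transfers via fidi equality'' is not valid as stated unless the functional $F\bigl((m_B)_{B\in\mathcal{B}}\bigr)=\sup_{x\in G}\inf\{m_B:x\in B\}$ is at least universally measurable on $[0,\infty]^{\mathcal{B}}$: equality of laws on the countable product space only transfers statements about (universally) measurable sets, and an a.s.\ identity for $M_\alpha^L$ involving a non-measurable functional need not carry over to $M_\alpha$. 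Your analytic-set remark can be sharpened into a clean fix: since every open $G$ in a lcscH space is $\sigma$-compact, one has $\{F\le s\}=\bigcap_{s'>s,\,s'\in\mathbb{Q}}\bigl\{G\subset\bigcup\{B\in\mathcal{B}:m_B<s'\}\bigr\}$, and for compact $K$ the event $\{K\subset\bigcup\{B:m_B<s'\}\}$ is open in the coding $(\mathbf{1}_{\{m_B<s'\}})_B\in\{0,1\}^{\mathcal{B}}$ by the finite-subcover property; hence $F$ is genuinely Borel in $(m_B)_{B\in\mathcal{B}}$, the distributional transfer is legitimate, and the measurability of $M_\alpha^V$ as an $\mathrm{SM}(E)$-valued element (including evaluations at compact sets, obtained as countable decreasing limits over open neighborhoods) follows. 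With these two repairs your proof is correct.
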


Recall a measure defined on the Borel $\sigma$-field of a topological space is said to be locally finite, if every point of the space has a neighborhood with finite measure.
\begin{prop}\label{prop 2 to 3}
 Assume $(E, \mathcal{G})$ is a lcscH  topological space, and $(E, \mathcal{E}, \mu)$ is a locally finite measure space where the $\sigma$-field $\cl{E}$ is the Borel $\sigma$-field generated by $\cl{G}$. Suppose $M^V_\alpha$ is as described in Definition \ref{def v}. Then $M^V_\alpha (A)$ for each $A\subset E$ is a measurable random variable. Furthermore,   for any probability measure $m$ on $(E,\cl{E})$ equivalent to $\mu$ with $\psi=d\mu/dm\in (0,\infty)$ $m$-a.e., on the same probability space that defines $M_\alpha^V$, there exist random variables $(\Gamma_j)_{j\in \bb{Z}_+}$ and $(T_j)_{j\in \bb{Z}_+}$    as described in Definition \ref{defn l}, such that with
  $M_\alpha^L(A):= \bigvee_{j\geq 1} \mathbf{1}_{\left\{T_j \in A\right\}} \psi(T_j)^{1/\alpha}\Gamma_j^{-1 / \alpha} $,   $A\subset E$, on an event of probability $1$, we have
       \begin{align}\label{eqA}
      M^V_\alpha (A) =M^L_\alpha (A) \text{  for any }A \subset E.
     \end{align}

    \end{prop}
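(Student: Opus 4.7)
My plan is to identify $M_\alpha^V$ pathwise with a Poisson point process on $E\times(0,\infty)$ extracted from its sup derivative, enumerate the atoms measurably, and reparametrize to obtain the LePage form in \eqref{eqA}.

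I first analyze the sup derivative $\zeta:=d^\vee M_\alpha^V$, which is upper semicontinuous and for which the canonical extension \eqref{can ext} gives $M_\alpha^V(A)=\bigvee_{t\in A}\zeta(t)$ for every $A\subset E$. The key preliminary step is to prove that for each relatively compact open $U\subset E$ with $\mu(U)<\infty$ and each $\varepsilon>0$, the set $\{t\in U:\zeta(t)\ge\varepsilon\}$ is a.s.\ finite. I would establish this by a Vitali-type partitioning argument: since $E$ is lcscH, hence metrizable, the closure $\overline{U}$ can be partitioned into Borel pieces $A_{n,1},\ldots,A_{n,k_n}$ of diameter $<1/n$, and Definition~\ref{def v} together with $1-e^{-x}\le x$ yields
\[
\mathbb{E}\,\bigl|\{j:M_\alpha^V(A_{n,j})\ge\varepsilon\}\bigr|\le\mu(U)\varepsilon^{-\alpha},
\]
uniformly in $n$; Fatou's lemma and the separation of distinct atoms at fine scale then bound the count of atoms above level $\varepsilon$. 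Covering $E$ by countably many such $U$'s (using local finiteness of $\mu$ and second countability of $E$) and letting $\varepsilon\downarrow 0$ along a countable sequence, $\mathrm{supp}(\zeta):=\{t\in E:\zeta(t)>0\}$ is a.s.\ countable with locally finite counting measure above any positive threshold.

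With countability in hand, define $N_\alpha^V:=\sum_{t\in\mathrm{supp}(\zeta)}\delta_{(t,\zeta(t))}$, a.s.\ a locally finite random measure on $E\times(0,\infty)$. The independently scattered $\alpha$-Fr\'{e}chet marginals yield the void probability identity
\[
\mathbb{P}\bigl(N_\alpha^V(G_i\times(x_i,\infty))=0,\ i=1,\ldots,n\bigr)=\prod_{i=1}^n e^{-\mu(G_i)x_i^{-\alpha}}
\]
for pairwise disjoint $G_i\in\mathcal{G}$ with $\mu(G_i)<\infty$, which via a monotone class argument identifies $N_\alpha^V$ as a Poisson point process with intensity $\mu\times\nu_\alpha$, where $\nu_\alpha(dx)=\alpha x^{-\alpha-1}dx$. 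Applying \cite[Theorem 2.19(i)]{kallenberg2021foundations}, I obtain a measurable enumeration $(\xi_i,\eta_i)_{i\in\mathbb{Z}_+}$ with $N_\alpha^V=\sum_i\delta_{(\xi_i,\eta_i)}$. Then on a probability-$1$ event, $M_\alpha^V(A)=\bigvee_{t\in A}\zeta(t)=\bigvee_{i\ge 1}\mathbf{1}_{\{\xi_i\in A\}}\eta_i$ for every $A\subset E$, which coincides with the Poisson-based representation $M_\alpha^P$ of Section~\ref{alt ppp} and yields measurability (with respect to the completed $\sigma$-field) of $M_\alpha^V(A)$ for arbitrary $A\subset E$.

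To pass to the specific LePage form in \eqref{eqA}, I apply the mapping theorem \cite[Theorem 15.3]{kallenberg2021foundations} via $(t,y)\mapsto(t,\psi(t)^{-1/\alpha}y)$ to transform $N_\alpha^V$ into a PPP of intensity $m\times\nu_\alpha$; enumerating the transformed process in decreasing order of the second coordinate produces points of the form $(T_j,\Gamma_j^{-1/\alpha})$, where $(T_j)$ are i.i.d.\ of law $m$, $(\Gamma_j)$ are the arrival times of a unit-rate Poisson process, and the two sequences are independent. Any auxiliary independence needed to realize this exact joint law on the same probability space is supplied by a randomization on the enriched underlying space (available by the assumption made before \cite[Theorem 8.17]{kallenberg2021foundations}). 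Substituting back recovers \eqref{eqA} pathwise for all $A\subset E$ simultaneously on a probability-$1$ event. The main obstacle is the a.s.\ local finiteness of $\mathrm{supp}(\zeta)$, since Definition~\ref{def v} encodes only distributional information on open sets, so upgrading to pathwise atom structure requires both the sup-vague canonical extension and the Vitali-type moment estimate outlined above; once this is in place, the PPP manipulations and the measurable reparametrization are standard.
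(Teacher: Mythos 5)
Your argument is essentially correct, but it takes a genuinely different route from the paper. The paper works abstractly: it realizes both $M_\alpha^V$ and an independently built LePage sup measure as random elements of the compact metrizable space $\mathrm{SM}(E)$, shows they have the same law via Lemma \ref{lem ab} and \cite[Theorem 11.5]{vervaat1988random}, and then invokes the transfer theorem \cite[Corollary 8.18]{kallenberg2021foundations} to produce $(T_j,\Gamma_j)_{j\ge 1}$ on the same probability space with $M_\alpha^V = M_\alpha^L$ as $\mathrm{SM}$-valued elements; the passage from agreement on open sets to agreement on all subsets is then carried out on the \emph{LePage} side, by proving that the random function $\varphi(x)=\bigvee_i \mathbf{1}_{\{T_i=x\}}\psi(T_i)^{1/\alpha}\Gamma_i^{-1/\alpha}$ is a.s.\ upper semicontinuous (via a positive-stable tail estimate). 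You instead analyze the atom structure of $M_\alpha^V$ \emph{directly}: you prove local finiteness of $\{\zeta\ge\varepsilon\}$ via a covering/first-moment bound, assemble the atoms into a point process, identify its law as Poisson by void probabilities, and then reparametrize by the mapping theorem. Your route is more constructive and avoids the transfer theorem entirely, at the cost of having to establish the pathwise discreteness of $\zeta$ and the R\'enyi-type identification yourself; the paper's route outsources the hard pathwise analysis to the explicit LePage side where everything is computable. Two points in your sketch deserve care. First, Definition \ref{def v} prescribes the Fr\'echet marginal only for \emph{open} sets, so the bound $\mathbb{P}(M_\alpha^V(A_{n,j})\ge\varepsilon)\le \mu(A_{n,j})\varepsilon^{-\alpha}$ for Borel pieces requires passing to open supersets via outer regularity of the locally finite measure $\mu$, with the total slack $k_n\delta_n$ controlled uniformly in $n$ (and one should count exceedances over the open supersets to sidestep the not-yet-established measurability of $M_\alpha^V$ on Borel sets). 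Second, your void-probability identity is stated only for pairwise disjoint $G_i$; R\'enyi's theorem needs the avoidance function on a class closed under finite unions, so for overlapping open sets you must use the pathwise complete maxitivity of the $\mathrm{SM}$-valued $M_\alpha^V$ (equivalently, \cite[Theorem 11.5]{vervaat1988random}) to pin down the joint law — your ``monotone class argument'' should be understood as resting on this input. Both issues are fixable and do not affect the soundness of the overall strategy.
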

 \begin{proof}
  We apply \cite[Corollary 8.18]{kallenberg2021foundations}. Since $E$ is a lcscH space, it is Polish \cite[Lemma A2.4]{kallenberg2021foundations}.  It follows from \cite[Corollary 4.4, Theorem 5.5]{vervaat1988random} that $\text{SM}(E)$ is a compact  metrizable space. Therefore, both $E^{\bb{Z}_+} \times \mathbb{R}^{\bb{Z}_+}$ and $\text{SM}(E)$ are Polish spaces, and thus Borel as well according to \cite[Theorem 1.8]{kallenberg2021foundations}. So the regularity requirement of \cite[Corollary 8.18]{kallenberg2021foundations} on the spaces involved is satisfied.  Note that  the assumptions made also imply that $\mu$ is $\sigma$-finite.

 For any $(t_1, t_2, \ldots) \in E^{\bb{Z}_+}$, $(\gamma_1, \gamma_2, \ldots) \in \mathbb{R}^{\bb{Z}_+}$, one can construct a sup measure in the sense of \cite[Definition 11.2]{vervaat1988random} through the map $g: E^{\bb{Z}_+} \times \mathbb{R}^{\bb{Z}_+} \mapsto \text{SM}(E)$, $(t_j, \gamma_j )_{j \geq 1} \mapsto \bigvee_{j\geq 1}\boldsymbol{1}_{\{t_j \in \cdot\}}\psi(t_j)^{1/\alpha} \gamma_{j}^{-1/\alpha}$.
 Below we check that $g$ is a measurable map as required by \cite[Corollary 8.18]{kallenberg2021foundations}.  Indeed, for any $G \in \mathcal{G}, z \in[0, \infty)$,  the preimage   
 \begin{equation*} 
 \begin{aligned}
   g^{-1}\pc{ \ \bigvee_{j\geq 1}\boldsymbol{1}_{\{t_j \in G\}} \psi(t_j)^{1/\alpha} \gamma_{j}^{-1/\alpha} > z} 
    =\bigcup_{j\in \bb{Z}_+}\left\{   t_j \in G, \gamma_{j}^{-1/\alpha} > z \right\} \in \mathcal{B}(E)^{\bb{Z}_+} \times \mathcal{B}(\mathbb{R})^{\bb{Z}_+}.
 \end{aligned}
 \end{equation*}
Similarly, we can check for any $K \in \mathcal{K}, z \in(0, \infty]$, the preimage of $\pc{   \bigvee_{j\geq 1}\boldsymbol{1}_{\{t_j \in K\}} \psi(t_j)^{1/\alpha} \gamma_{j}^{-1/\alpha} < z}$ belongs to $ \mathcal{B}(\mathbb{R})^{\bb{Z}_+} \times \mathcal{B}(\mathbb{R})^{\bb{Z}_+}$.  The meausrability of $g$ then follows since sets of the form in \eqref{eq7} and  (\ref{eq8}) generate the Borel $\sigma$-field $\cl{B}(\text{SM})$ \cite[Theorem 11.1]{vervaat1988random}.
 
Next, by \eqref{m and ml1} and Lemma \ref{lem ab}, we have  $$ \pp{ M^V_\alpha (A) }_{A \in \mathcal{G}}\stackrel{d}{=}  \pp{  \wt{M}_\alpha^L(A)  }_{A \in \mathcal{G}}:=  \pp{ \bigvee_{j\geq 1} \mathbf{1}_{\left\{\wt{T}_j \in A\right\}} \psi(\wt{T}_j)^{1/\alpha}\wt{\Gamma}_j^{-1 / \alpha}}_{A \in \mathcal{G}},
$$
where $(\wt{T}_j,\wt{\Gamma}_j)_{j\ge 1}\EqD ({T}_j,{\Gamma}_j)_{j\ge 1}$.
Hence in view of \cite[Theorem 11.5]{vervaat1988random}, both $M^V_\alpha$ and $\wt{M}^L_\alpha=g\pp{(\wt{T}_j,\wt{\Gamma}_j)_{j\ge 1}}$ share the same law as random elements taking value in $\text{SM}(E)$.
As a result of  \cite[Corollary 8.18]{kallenberg2021foundations},  one may  construct $  \pp{ {T}_j, {\Gamma}_j}_{j \ge 1} \stackrel{d}{=}\pp{ \widetilde{T}_j, \widetilde{\Gamma}_j}_{j \ge 1}$ on the same probability space defining $M_\alpha^V$, such that $M^V_\alpha=g\pp{({T}_j,{\Gamma}_j)_{j\ge 1}}$ a.s.\ as random element taking value in $\text{SM}(E)$. This implies on an event of probability $1$, we have
 \begin{align}\label{eq tilde RG}
  M^V_\alpha (G) = M^L_\alpha(G):= \bigvee_{j\geq1} \boldsymbol{1}_{\{ {T}_j \in G\}} \psi( {T}_j)^{1/\alpha} {\Gamma}_j^{-1 / \alpha}\  \text{for all } G \in \mathcal{G}. 
 \end{align}
Set the random function $\varphi(x): = \bigvee_{i\geq 1} \boldsymbol{1}_{\left\{{T}_i= x \right\}} \psi({T}_i)^{1/\alpha} {\Gamma}_i^{-1 / \alpha}$, $x\in E$, so that $M^L_\alpha(A) =\bigvee_{x \in A}\varphi(x)$ for any $A \subset E$.  Combining  (\ref{eq tilde RG}) and the canonical extension relation (\ref{can ext}), the conclusions immediately follow if one shows on an event of probability $1$, we have 
 \begin{equation}\label{key}
\bigwedge_{\mathcal{G}\ni G \supset A} 
M^L_\alpha(G) =M^L_\alpha(A)  \ \text{for all $A \subset E$},
 \end{equation}
noting also that each $M^L_\alpha(A)$ is a measurable random variable.  Indeed, the relation  (\ref{key}) can be expressed as 
 \begin{equation}\label{eq sat} 
 \bigwedge_{\mathcal{G}\ni G \supset A} \bigvee_{x\in G} \varphi(x) = \bigvee_{x\in A} \varphi(x) \text{ for all } A \subset E,
 \end{equation}
  which follows  if $\varphi$ is an upper semicontinuous (usc) function on an event of probability $1$; see \cite[Definition 1.1, Theorem 2.5 (c)]{vervaat1988random}.   
 Note that for each $m\in\bb{Z}_+$, one can write
 \begin{align*}
   \varphi(x) & = \left( \bigvee_{1\leq i\leq m} \boldsymbol{1}_{\left\{{T}_i =x \right\}} \psi({T}_i)^{1/\alpha} {\Gamma}_i^{-1 / \alpha}\right) \bigvee \left(\bigvee_{i\geq m+1} \boldsymbol{1}_{\left\{{T}_i =x\right\}} \psi({T}_i)^{1/\alpha} {\Gamma}_i^{-1 / \alpha} \right) \\
   &=:\varphi_{1, m}(x)\bigvee \varphi_{2, m}(x),
\end{align*}
where the random function $\varphi_{1, m}$ is usc as a finite supremum of usc functions (recall that a singleton set $\{T_i\}$ is a closed set  in a Hausdorff space) for every $m\in \bb{Z}_+$. 
To establish the   upper semicontinuity of $\varphi$,  it suffices to show that on an event of probability 1, for any $x \in E$, there exists an open neighborhood $U$ of $x$ such that $\sup_{x \in U} |\varphi_{1,m}(x) - \varphi(x)| \rightarrow 0 $ as $m \rightarrow \infty$. Since $E$ is  second countable and $\mu$ is locally finite, one can express $E = \bigcup_{n=1}^\infty U_n$, where $U_n \in \mathcal{G}$  and $\mu(U_n)<\infty$ for each $n \in \mathbb{Z}_+$. Moreover, since $|\varphi_{1,m}(x) - \varphi(x)| \leq \varphi_{2, m}(x)$, it suffices to show that for each $n \in \mathbb{N}$, on an event of probability 1, we have
\begin{equation}\label{eq 10}
    \sup_{x\in U_n}\varphi_{2, m}(x) = \bigvee_{i \geq m+1} \mathbf{1}_{\left\{T_i\in U_n\right\}} \psi\left(T_i\right)^{1 / \alpha} \Gamma_i^{-1 / \alpha} \rightarrow 0,  
\end{equation}
as $m \rightarrow \infty$. 
Choose any $r > \alpha$, and note that 
\begin{equation}\label{leq sum}
   \left( \bigvee_{i \geq m+1} \mathbf{1}_{\left\{T_i\in U_n\right\}} \psi\left(T_i\right)^{1 /\alpha } \Gamma_i^{-1 / \alpha} \right)^r\leq \sum_{i = m+1}^\infty \mathbf{1}_{\left\{T_i\in U_n\right\}} \psi\left(T_i\right)^{1 /(\alpha/r)} \Gamma_i^{-1 / (\alpha/r)}.
\end{equation}
By \cite[Theorem 3.10.1]{samorodnitsky1991construction},   the condition 
 $$
\int_E\left(\mathbf{1}_{\left\{x \in U_n\right\}} \psi(x)^{1 /( \alpha / r)}\right)^{\alpha / r} m(d x)=\mu\left(U_n\right)<\infty
$$
ensures that as the LePage representation of a positive $(\alpha/r)$-stable random variable, the series
 $$
\sum_{i = 1}^\infty \mathbf{1}_{\left\{T_i \in U_n\right\}} \psi\left(T_i\right)^{1 /(\alpha / r)} \Gamma_i^{-1 /(\alpha / r)}<\infty \quad \text { a.s.. }
$$
Hence, the right hand side of (\ref{leq sum}) converges to zero a.s.\ as $m \rightarrow \infty$ and the desired convergence in (\ref{eq 10}) follows.

\end{proof}

Combining Lemma \ref{lem ab} and Proposition \ref{prop 2 to 3} yields the following result.
\begin{cor}
    Under the assumptions 
    Proposition \ref{prop 2 to 3},  suppose   $M_\alpha$ is as in Definition \ref{defn s}. Then for any probability measure $m$ on $(E,\cl{E})$ equivalent to $\mu$, on the same probability space defining $M_\alpha$, there exists a LePage random sup measure $M_\alpha^L$ as described in Proposition \ref{prop 2 to 3}, such that
    for each open set $G \in \mathcal{G}$ (separately), we have
    \begin{equation}\label{as}
        M_\alpha(G) = M_\alpha^L(G) \text{ a.s..  }
    \end{equation} 
\end{cor}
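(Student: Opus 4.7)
The plan is to obtain the corollary by straightforwardly chaining the two preceding results, with the LePage representation serving as the ``bridge'' between Definition \ref{defn s} and Definition \ref{def v}.

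First, starting from $M_\alpha$ as in Definition \ref{defn s}, I would invoke Lemma \ref{lem ab}: under the lcscH hypothesis, there exists a random sup measure $M_\alpha^V$ in the sense of Definition \ref{def v}, defined on the same underlying probability space as $M_\alpha$, such that for each open set $G \in \mathcal{G}$ we have $M_\alpha^V(G) = M_\alpha(G)$ almost surely. Note that the null set here may depend on $G$, which is fine since the final conclusion is also stated ``for each $G$ separately.''

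Next, I would apply Proposition \ref{prop 2 to 3} to $M_\alpha^V$. Since $(E,\mathcal{G})$ is lcscH and $\mu$ is locally finite, for any probability measure $m$ on $(E,\mathcal{E})$ equivalent to $\mu$ with $\psi = d\mu/dm \in (0,\infty)$ $m$-a.e., one can construct (on the same probability space defining $M_\alpha^V$, hence on the one defining $M_\alpha$, possibly enlarging it to accommodate a randomization as in \cite[Theorem 8.17]{kallenberg2021foundations}) sequences $(\Gamma_j)_{j \in \mathbb{Z}_+}$ and $(T_j)_{j \in \mathbb{Z}_+}$ as in Definition \ref{defn l}, producing a LePage sup measure
\[
M_\alpha^L(A) := \bigvee_{j \geq 1} \mathbf{1}_{\{T_j \in A\}}\, \psi(T_j)^{1/\alpha}\, \Gamma_j^{-1/\alpha}, \qquad A \subset E,
\]
such that on an event of probability one, $M_\alpha^V(A) = M_\alpha^L(A)$ holds simultaneously for every $A \subset E$.

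Combining the two displays, for any fixed open set $G \in \mathcal{G}$,
\[
M_\alpha(G) \stackrel{\text{a.s.}}{=} M_\alpha^V(G) \stackrel{\text{a.s.}}{=} M_\alpha^L(G),
\]
which yields \eqref{as}. I do not anticipate any substantive obstacle: both ingredients are already in place, and the only minor point to check is that the probability space inherited from $M_\alpha$ is rich enough to support the randomization producing $(T_j, \Gamma_j)_{j \in \mathbb{Z}_+}$ — which, as noted before Lemma \ref{lem ab}, is assumed throughout.
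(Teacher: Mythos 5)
Your proposal is correct and follows exactly the route the paper takes: the paper's own proof consists of the single remark that the corollary is obtained by combining Lemma \ref{lem ab} (passing from $M_\alpha$ to a Vervaat-type $M_\alpha^V$ agreeing on each open set a.s.) with Proposition \ref{prop 2 to 3} (producing an indistinguishable LePage representation of $M_\alpha^V$). Your chaining of the two almost-sure identities, together with the observation that the $G$-dependent null set from Lemma \ref{lem ab} is harmless because the conclusion is stated setwise, is precisely the intended argument.
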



\section{Auxiliary results for product random sup measures}
\subsection{Product random sup measure on unions of rectangles}
 In this section, we follow the notation in Section \ref{S 2.2}. 

\begin{lem}\label{d ind}
  Suppose $A_i, B_j \in \mathcal{C}_k$, $1 \leq i \leq m$, $1 \leq j \leq n$, $m,n\in \bb{Z}_+$, and  $\bigcup_{i=1}^m A_i = \bigcup_{j = 1}^n B_j$. Then we have $\bigvee_{i=1}^m M_\alpha^{(k)}(A_i) = \bigvee_{j=1}^n M_\alpha^{(k)}(B_j)$ a.s..
\end{lem}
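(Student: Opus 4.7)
The plan is to refine both sides by a common finite atomic partition of $E$, then compare via $\sigma$-maxitivity of $M_\alpha$ and the product definition \eqref{prod M_a}. Write $A_i = A_{i,1}\times\cdots\times A_{i,k}$ and $B_j = B_{j,1}\times\cdots\times B_{j,k}$. Collect the (finitely many) factor sets $\{A_{i,\ell},\, B_{j,\ell}\}$ and let $C_1,\dots,C_N\in\mathcal{E}$ be the atoms of the finite algebra they generate. Each atom is either contained in, or disjoint from, every factor set, and so $A_{i,\ell}=\bigsqcup_{p\in I_{i,\ell}} C_p$ for a suitable index set $I_{i,\ell}\subset\{1,\dots,N\}$; analogously one defines $I'_{j,\ell}$ for $B_{j,\ell}$. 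Since $A_i\in\mathcal{C}_k$, the sets $A_{i,1},\dots,A_{i,k}$ are pairwise disjoint, hence so are $I_{i,1},\dots,I_{i,k}$ (and similarly for $I'_{j,\ell}$).

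Next, by $\sigma$-maxitivity from Definition~\ref{defn s}(iii), for each $i,\ell$ we have $M_\alpha(A_{i,\ell}) = \bigvee_{p\in I_{i,\ell}} M_\alpha(C_p)$ a.s., and similarly for $B_{j,\ell}$. Taking the union of the finitely many exceptional null sets, we may work on one event of full probability. Then, using \eqref{prod M_a} and distributing a finite product over a finite supremum of nonnegative quantities,
\[
M_\alpha^{(k)}(A_i) \;=\; \prod_{\ell=1}^{k} M_\alpha(A_{i,\ell})
\;=\; \bigvee_{(p_1,\dots,p_k)\in I_{i,1}\times\cdots\times I_{i,k}} \prod_{\ell=1}^{k} M_\alpha(C_{p_\ell})
\;=\; \bigvee_{\boldsymbol{p}\in I_{i,1}\times\cdots\times I_{i,k}} M_\alpha^{(k)}\bigl(C_{p_1}\times\cdots\times C_{p_k}\bigr),
\]
where the last equality uses that the $I_{i,\ell}$ are pairwise disjoint, so every tuple appearing has distinct components and $C_{p_1}\times\cdots\times C_{p_k}\in\mathcal{C}_k$. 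The analogous identity holds for $M_\alpha^{(k)}(B_j)$ with $I'_{j,\ell}$.

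Setting $J_A = \bigcup_{i=1}^m I_{i,1}\times\cdots\times I_{i,k}$ and $J_B = \bigcup_{j=1}^n I'_{j,1}\times\cdots\times I'_{j,k}$, the proof reduces to showing $J_A = J_B$ (restricted to tuples for which each $C_{p_\ell}$ is nonempty, since empty factors contribute zero). The key observation is: a tuple $(p_1,\dots,p_k)$ with each $C_{p_\ell}\neq\emptyset$ lies in $J_A$ if and only if $C_{p_1}\times\cdots\times C_{p_k}\subset A_i$ for some $i$. The forward direction is immediate. For the converse, pick any point $(x_1,\dots,x_k)$ in the nonempty rectangle; since $\bigcup_i A_i = \bigcup_j B_j$ contains this point, $(x_1,\dots,x_k)\in A_i$ for some $i$, so $x_\ell\in A_{i,\ell}$ and the atom property forces $C_{p_\ell}\subset A_{i,\ell}$ for every $\ell$, i.e. $(p_1,\dots,p_k)\in I_{i,1}\times\cdots\times I_{i,k}\subset J_A$. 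The same reasoning with $B_j$ in place of $A_i$ yields the characterization of $J_B$, and since $\bigcup_i A_i = \bigcup_j B_j$, both characterizations coincide. Hence $J_A = J_B$, concluding the proof.

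The main obstacle is the combinatorial step $J_A=J_B$: it must be shown that containment of an atomic off-diagonal rectangle in the \emph{union} $\bigcup_i A_i$ already forces containment in a \emph{single} $A_i$. This is where the off-diagonal assumption and the atom property of the $C_p$'s do the essential work, since a rectangle of atoms cannot be split across several $A_i$'s without intersecting a factor set it is supposed to be disjoint from.
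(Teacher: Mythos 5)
Your proof is correct, and it is organized differently from the paper's. The paper proceeds in two steps: it first proves a refinement lemma (its relation (B.1)) stating that if finitely many disjoint off-diagonal rectangles partition a \emph{single} rectangle in $\mathcal{C}_k$, then $M_\alpha^{(k)}$ of the union equals the supremum of the pieces --- this is shown by a per-coordinate atomic refinement of the factor sets, exactly in the spirit of your $C_p$'s --- and then handles the general statement by reducing to disjoint collections and writing $A_i=\bigcup_j(A_i\cap B_j)$, $B_j=\bigcup_i(A_i\cap B_j)$, so that both sides expand over the common family $\{A_i\cap B_j\}$. You instead perform one global atomic refinement of \emph{all} factor sets of both collections at once, expand every $M_\alpha^{(k)}(A_i)$ and $M_\alpha^{(k)}(B_j)$ into a supremum over atomic rectangles via $\sigma$-maxitivity and the distributivity of a finite product over finite suprema of nonnegative quantities, and then prove the purely set-theoretic identity $J_A=J_B$ by the observation that a nonempty atomic rectangle meeting $\bigcup_i A_i$ is automatically contained in a single $A_i$ (the atom property pins each coordinate factor inside the corresponding $A_{i,\ell}$). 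This buys you a single self-contained argument with no intermediate lemma and no need to first disjointify the $A_i$'s and $B_j$'s; the paper's route, by contrast, isolates the partition-of-a-rectangle statement (B.1), which is reusable elsewhere, and keeps the combinatorics of the refinement confined to one coordinate at a time. Both arguments share the same implicit convention $0\cdot\infty=0$ when a factor set is empty while another has infinite control measure, so this is not a defect specific to your write-up.
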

 \begin{proof}
First, we show that for any finite collection of disjoint sets $S^{(1)}, S^{(2)}, \ldots, S^{(N)}\in \mathcal{C}_k, N \in \mathbb{Z}_{+}$, satisfying $\bigcup_{j=1}^N S^{(j)} \in \mathcal{C}_k$, that is, $\{S^{(1)}, S^{(2)}, \ldots, S^{(N)} \}$ forming a partition of an off-diagonal rectangle, the following holds: 
\begin{equation}\label{eq 11}
    M_\alpha^{(k)}\left(\bigcup_{j=1}^N S^{(j)}\right)=\bigvee_{j=1}^N M_\alpha^{(k)}\left(S^{(j)}\right) \text{ a.s..}
\end{equation}
Define $S^{(j)}=S_1^{(j)} \times \cdots \times S_k^{(j)}$, for $1 \leq j \leq N$, where $S_t^{(j)} \in \mathcal{E}$ are  disjoint with respect to $t=1,\ldots,k$. We claim that $\bigcup_{j=1}^N S^{(j)} \in \mathcal{C}_k$ can be decomposed into a  collection of off-diagonal rectangles $L_t$,  $t=1,\ldots,\widetilde{N}$, $\widetilde{N}\in \mathbb{Z}_+$, such that $\bigcup_{j=1}^N S^{(j)}=\bigcup_{t=1}^{\widetilde{N}} L^{(t)}$, and $ L^{(t)} \in \mathcal{C}_k$ are   disjoint, $ 1\leq t\leq \widetilde{N}$.  Furthermore,  for any  pair  $L^{(t_1)}= L_1^{(t_1)}\times  \cdots \times L_k^{(t_1)}  \in \mathcal{C}_k$ and $L^{(t_2)}=L_1^{(t_2)} \times \cdots \times L_k^{(t_2)} \in \mathcal{C}_k$, $t_1 \neq t_2$, either one of the following conditions holds:

\begin{enumerate}[label=(\alph*)]
\item The sets $L_1^{(t_1)},  \cdots, L_k^{(t_1)}, L_1^{(t_2)},  \cdots, L_k^{(t_2)} $ are all disjoint. 
\item There exists exactly one $u\in\{1,\ldots,k\}$ such that $L_u^{(t_1)} \cap L_u^{(t_2)} = \emptyset $, and $L_d^{(t_1)} = L_d^{(t_2)}  $ for all $d \neq u, 1 \leq d \leq k$.
\end{enumerate}

We now describe the procedure for forming the collection $\left\{L^{(t)}, 1 \leq  t \leq \widetilde{N}\right\}$. 
For each coordinate index $\ell\in \{1,\ldots,k\}$, do the following: 
\begin{enumerate}
    \item     For any subset $I \subset\{1,2, \ldots, N\}$, set
$$
R_{\ell, I}=\pp{\bigcap_{i \in I} S_\ell^{(j)}} \setminus  \pp{\bigcup_{j \notin I} S_\ell^{(j)}} .
$$
\item   All non-empty $R_{\ell, I}$ 's form a partition of the union $\bigcup_{j=1}^N S_{\ell}^{(j)}$. The resulting partition is denoted as
$$
\mathcal{P}_\ell = \left\{R_{\ell, I}\mid I \subset\{1,2, \ldots, N\}, R_{\ell,I} \neq \emptyset\right\}.
$$
\end{enumerate}
Once these steps are completed for each coordinate index $\ell\in \{1,\ldots,k\}$, construct the collection:
$$
\mathcal{J}=\left\{R_1 \times \cdots \times R_k \mid R_{\ell} \in \mathcal{P}_{\ell}, 1 \leq \ell \leq k\right\}.
$$
Lastly, enumerate the elements of $\mathcal{J}$ as $\mathcal{J}=\left\{L^{(t)}\mid 1 \leq t \leq \widetilde{N}\right\}$, where $\widetilde{N} \in \mathbb{Z}_{+}$.

Since   $\bigcup_{i=1}^N S^{(j)} \in \mathcal{C}_k$ by  assumption,  we can write $\bigcup_{j=1}^N S^{(j)}=U_1 \times \cdots \times U_k$, where $U_{\ell}=\bigcup_{R_{\ell} \in \mathcal{P}_{\ell}} R_{\ell}$, $1\leq \ell \leq k $, are disjoint. By $\sigma$-maxitivity of $M_\alpha$, we have a.s. 
\begin{align}\label{eq 13}
M_\alpha^{(k)}\left(\bigcup_{j=1}^N S^{(j)}\right)& =M_\alpha\left(U_1\right) \times \cdots \times M_\alpha\left(U_k\right) \notag \\
   & = \bigvee_{R_{\ell} \in \mathcal{P}_{\ell},1 \leq \ell \leq k} M_\alpha\left(R_1\right) \times \cdots \times M_\alpha\left(R_k\right).
   \end{align}
On the other hand, we can express $S_\ell^{(j)} = \bigcup_{R_\ell \in A(\ell,j)}  R_\ell$, $1 \leq \ell \leq k$, $1 \leq j \leq N$, for some unique $A(\ell,j) \subset\mathcal{P}_\ell, \ell=1, \ldots, k$, where $\cup_{1\le j\le N}A(\ell,j)=\cl{P}_\ell$.  Consequently, we have $$S^{(j)}=\bigcup_{R_{\ell} \in A(\ell, j), 1 \leq \ell \leq k} R_1 \times \cdots \times R_k.$$  Again by  $\sigma$-maxitivity of $M_\alpha$, we have a.s.
\begin{align}\label{eq 14}
    \bigvee_{j=1}^N M_\alpha^{(k)}\left(  S^{(j)}\right) & =  \bigvee_{j=1}^N M_\alpha( S_1^{(j)}) \times \cdots \times M_\alpha( S_k^{(j)}) \notag \\
    & =    \bigvee_{j=1}^N M_\alpha\pp{\bigcup_{R_1 \in A(1,j)}  R_1} \times \cdots \times  M_\alpha \pp{\bigcup_{R_k \in A(k,j)}  R_k} \notag \\
    & = \bigvee_{\substack{R_\ell \in A(\ell,j),1 \leq \ell \leq k,\\ 1\leq  j \leq N}} M_\alpha(R_1)\times \cdots \times M_\alpha(R_k). 
\end{align}
Comparing \eqref{eq 13} and \eqref{eq 14},  the conclusion \eqref{eq 11} follows by noticing $ \cl{J}=\{R_1\times \ldots \times R_k \mid R_\ell\in A(\ell,j),  \ 1\le\ell\le k,  \ 1\le j\le N  \}$.

We now proceed to prove the desirable relation: $\bigvee_{i=1}^m M_\alpha^{(k)}\left(A_i\right)=\bigvee_{j=1}^n M_\alpha^{(k)}\left(B_j\right)$ a.s., where $\bigcup_{i=1}^m A_i = \bigcup_{j = 1}^n B_j\in \cl{F}_k$.  One may assume without loss of generality that $A_i$'s are disjoint and $B_j$'s are disjoint; otherwise, properly partition each $A_i$ and $B_j$  further into disjoint rectangles and apply  \eqref{eq 11}.  Then, note that
$$
A_i=\bigcup_{j=1}^n\left(A_i \cap B_j\right), \quad 1 \leq i \leq m, \quad \text { and } \quad B_j=\bigcup_{i=1}^m\left(A_i \cap B_j\right), \quad 1 \leq j \leq n,
$$
where each $A_i \cap B_j \in \mathcal{C}_k$. Applying \eqref{eq 11}, we have for any $i\in \{1,\ldots,m\}$ that
$$
M_\alpha^{(k)}\left(A_i\right)=M_\alpha^{(k)}\left(\bigcup_{j=1}^n\left(A_i \cap B_j\right)\right)=\bigvee_{j=1}^n M_\alpha^{(k)}\left(A_i \cap B_j\right)\text { a.s..}
$$
Similarly, for any $j\in \{1,\ldots,n\}$,
$$
M_\alpha^{(k)}\left(B_j\right)=M_\alpha^{(k)}\left(\bigcup_{i=1}^m\left(A_i \cap B_j\right)\right)=\bigvee_{i=1}^m M_\alpha^{(k)}\left(A_i \cap B_j\right)  \text { a.s..}
$$
Combining these results completes the proof.
\end{proof}

\subsection{Proof of Theorem \ref{claim 5.3} }\label{Pf 2.6}

The next results will be useful in the proof of  Theorem \ref{thm sf case}. Recall we have assumed throughout that $(E,\cl{E})$ is a Borel space.

\begin{lem}\label{lem2.1}
 Suppose $\mu$ is a finite measure on $\cl{E}$. For any $A\in \mathcal{E}^{(k)}$, there exists a sequence of sets $(A_n)_{n \in \bb{Z}_+}$ with $A_n \in \mathcal{F}_k$ such that $\mu^k(A_n \Delta A) \rightarrow 0$ as $n \rightarrow \infty$.
\end{lem}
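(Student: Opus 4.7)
The plan is a two-stage approximation: first approximate $A$ by a finite union of ordinary measurable rectangles using the standard density theorem in product measure spaces, and then refine each such rectangle into off-diagonal rectangles by exploiting the Borel space hypothesis on $(E,\cl{E})$. The second stage is where the geometry of the diagonal $D^{(k)}$ actually enters.

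In the first stage, since the algebra generated by measurable rectangles is dense in $\cl{E}^k$ in the $\mu^k$-symmetric-difference metric, for any $\epsilon>0$ one can produce $\tilde B_\epsilon=\bigcup_{i=1}^m C^{(i)}$ with each $C^{(i)}=C_1^{(i)}\times\cdots\times C_k^{(i)}$ and $C_j^{(i)}\in\cl{E}$, such that $\mu^k(\tilde B_\epsilon\,\Delta\,A)<\epsilon$. Because $A\cap D^{(k)}=\emptyset$, this also gives $\mu^k\bigl((\tilde B_\epsilon\setminus D^{(k)})\,\Delta\,A\bigr)<\epsilon$.

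In the second stage, fix the Borel isomorphism $\iota:E\to S\subset[0,1]$ and define, for each $N\in\bb{Z}_+$, the partition $\{F_j^{(N)}:1\le j\le 2^N\}$ of $E$ obtained by pulling back the level-$N$ dyadic partition of $[0,1]$. This family refines under $N\mapsto N+1$ and separates points of $E$, since distinct points of $[0,1]$ eventually fall into distinct dyadic cells. For each $i$ set
\begin{equation*}
B_N^{(i)}:=\bigcup_{\substack{(j_1,\ldots,j_k)\in\{1,\ldots,2^N\}^k\\ j_1,\ldots,j_k \text{ pairwise distinct}}}\bigl(C_1^{(i)}\cap F_{j_1}^{(N)}\bigr)\times\cdots\times\bigl(C_k^{(i)}\cap F_{j_k}^{(N)}\bigr),
\end{equation*}
a finite union of off-diagonal rectangles belonging to $\cl{F}_k$. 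The refinement property yields $B_N^{(i)}\subset B_{N+1}^{(i)}$, while the separating property forces $\bigcup_N B_N^{(i)}=C^{(i)}\setminus D^{(k)}$: any $k$-tuple with pairwise distinct coordinates eventually lies in some $B_N^{(i)}$, whereas any $k$-tuple meeting $D^{(k)}$ is excluded from every $B_N^{(i)}$.

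Finally, put $A_N:=\bigcup_{i=1}^m B_N^{(i)}\in\cl{F}_k$. Monotone convergence of measures applied to each $i$, combined with $\mu^k$-subadditivity, gives $\mu^k\bigl((\tilde B_\epsilon\setminus D^{(k)})\setminus A_N\bigr)\to 0$; since also $A_N\subset\tilde B_\epsilon\setminus D^{(k)}$, for $N$ large enough $\mu^k(A_N\,\Delta\,A)<2\epsilon$. Taking $\epsilon=1/n$ and choosing a suitable $N_n$ produces the desired sequence. The main obstacle will be the second stage: without the Borel space structure there is no reason to have a refining, point-separating sequence of measurable partitions, and the exhaustion $B_N^{(i)}\uparrow C^{(i)}\setminus D^{(k)}$ that drives the off-diagonal approximation would fail in general.
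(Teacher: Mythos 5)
Your proof is correct, but it takes a genuinely different route from the paper's. The paper transfers the problem to $\bb{R}$ via the Borel isomorphism and then exploits \emph{topology}: a finite Borel measure on $\bb{R}^k$ is outer regular, the off-diagonal set $\bb{R}^{(k)}$ is open with a countable base of open off-diagonal rectangles, so $A$ is approximated from outside by an open $O_n\supset A$ with $\mu^k(O_n\setminus A)<1/n$, which is a countable union of off-diagonal rectangles that one then truncates to a finite union. The diagonal is handled ``for free'' because its complement is open. You instead stay purely measure-theoretic: the algebra of finite unions of ordinary rectangles is dense in $\cl{E}^k$ for the finite measure $\mu^k$, and the off-diagonal structure is imposed afterwards by intersecting each rectangle with a refining, point-separating sequence of finite partitions (pulled back from dyadic intervals via the Borel isomorphism) and keeping only the tuples with pairwise distinct cells; monotone convergence of $B_N^{(i)}\uparrow C^{(i)}\setminus D^{(k)}$ then closes the gap. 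Your verification that each $B_N^{(i)}$ lies in $\cl{F}_k$ (the factors sit inside pairwise disjoint partition cells), that the refinement gives monotonicity, and that $A\cap D^{(k)}=\emptyset$ lets you pass from $\tilde B_\epsilon$ to $\tilde B_\epsilon\setminus D^{(k)}$ at no cost, are all sound. The paper's argument is shorter once outer regularity is invoked; yours avoids any appeal to regularity of Borel measures on metric spaces and makes explicit that the only role of the Borel hypothesis is to supply a countable separating refining family of measurable partitions --- a point you correctly flag as the essential obstruction in a general measurable space. Both arguments are valid and both ultimately rest on the same structural assumption.
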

\begin{proof}
We first show that the conclusion holds for $E = \mathbb{R}$. 
Since $\mu^k$ is a finite measure on the metric space $\mathbb{R}^k$, it is outer regular \cite[Lemma 1.36]{kallenberg2021foundations}. Furthermore, since $\mathbb{R}^{(k)} $   is an open subset of $\mathbb{R}^k$, for any $A \in \cl{E}^{(k)}$  and any $n\in \bb{Z}_+$, there is    $O_n \in \cl{E}^{(k)}$ open in $\bb{R}^{k}$ such that $A \subset O_n$ and
$$
\mu^k\left(O_n \backslash A\right)<1 / n.
$$
 Now since one can express $O_n  = \bigcup_{j\in \bb{Z}_+} R_{j,n}$, where each $R_{j,n} \in \mathcal{C}_0$ (it suffices to consider open rectangles),   for each $n\in\bb{Z}_+$, there exists $N_n\in \bb{Z}_+$,  such that $$\mu^k \pp{ O_n \backslash A_n}  < 1/n,$$ where $A_n= \bigcup_j^{N_n} R_j\in \cl{F}_k$. 
It follows from  a triangle inequality for set symmetric differences  and the inequalities above that $$\mu^k\pp{A \Delta A_n} \leq \mu^k\left(A \Delta O_n\right) +\mu^k\left( O_n \Delta A_n\right) < 2/n.$$ So the conclusion holds for $E=\bb{R}$.  

Now, suppose $E$ is a Borel space. By definition, there exists a bijection $\iota: E\leftrightarrow S$ for a Borel subset  $S$ of $[0,1]$ equipped with   $\sigma$-field $\cl{S}=\cl{B}(\bb{R})\cap S$, such that both $\iota$ and $\iota^{-1}$ are measurable with respect to $\cl{E}$ and $\cl{S}$. Define a map $I: E^{(k)} \rightarrow S^{(k)}$ as $I(x_1, \ldots, x_k) = (\iota(x_1), \ldots, \iota(x_k))$, $x_1,\ldots,x_k\in E$. One can verify that $I$ is a  bijection between $\pp{E^{(k)},\mathcal{E}^k} $  and  $\pp{S^{(k)}, \cl{S}^{(k)}}$ such that both $I$ and $I^{-1}$ are measurable, where $\cl{S}^{(k)}$ is the off-diagonal $\sigma$-field  defined similarly as $\cl{E}^{(k)}$. Next, define the measure $\mu_S^k(\cdot): = \mu^k(I^{-1}(\cdot))$ on $\cl{S}^{(k)}$, which is also a finite measure given $\mu$ is finite.  Fix a set $A \in \mathcal{E}^{(k)}$, and note that $I(A)$ is in  $\cl{S}^{(k)}$, and hence also in the off-diagonal $\sigma$-field of $\bb{R}^k$.  So
by the result for $E = \mathbb{R}$ above,  there exists a sequence of $(\widetilde{A}_n)_{n \in \bb{Z}_+}$, where each $\widetilde{A}_n$ is a finite union of off-diagonal rectangles in $\cl{S}^{(k)}$ (obtained by restricting  the off-diagonal rectangles found in $\bb{R}^{(k)}$ to $S^{(k)}$), such that $\mu_S^k(I(A) \Delta \widetilde{A}_n) \rightarrow 0$ as $n \rightarrow \infty$. Hence, it follows that $\mu^k(A \Delta A_n)\rightarrow 0$ as $n \rightarrow \infty$, where each $A_n = I^{-1}(\widetilde{A}_n)$. To conclude the proof, it suffices to note that the measurable inverse $I^{-1}: S^{(k)} \rightarrow E^{(k)}$ is given by $I^{-1}(s_1, \ldots, s_k) = (\iota^{-1}(s_1), \ldots, \iota^{-1}(s_k))$ which preserves rectangles, and therefore $A_n\in \mathcal{F}_k$ for each $n\in \bb{Z}_+$.  
\end{proof}
Recall that $ \mathcal{D}_{k}=\left\{\boldsymbol{j} = (j_1,\ldots
 ,j_k)\in \bb{Z}_+^k\mid \text { all } j_1, \ldots, j_k \text { are distinct}\right\}$. Throughout the sequences $(\Gamma_j)_{j\in \bb{Z}_+}$, $(T_j)_{j\in \bb{Z}_+}$, the probability measure $m$ and $\psi=d\mu/dm$ are as in Definition \ref{defn l}.

\begin{lem}\label{lem 2.3}
Suppose $\alpha >0$. 
For any  $\gamma\in (0,\alpha)$ and $r>\alpha$, there is a constant $c>0$  which depends on $(r,\gamma, k)$, such that 
\begin{equation}\label{ta 2.3}
\left\| \bigvee_{\boldsymbol{j} \in \mathcal{D}_k}\left[\Gamma_{\boldsymbol{j}}\right]^{-1 / \alpha}  g\pp{T_{\boldsymbol{j}} }  \right\|_{\gamma }    \leq c \|g(T_1,\ldots,T_k)\|_r
\end{equation}
 for any measurable function  $g:E^{k}\mapsto[0,\infty]$ that vanishes on the diagonals. Here,  $\|\cdot\|_\gamma=\left(\mathbb{E}|\cdot|^\gamma\right)^{1 / \gamma}$ is interpreted as the usual $L^\gamma$ quasi-norm when $0<\gamma<1$, and the (quasi-)norm  value  may be $+\infty$.
\end{lem}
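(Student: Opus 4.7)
The plan is to reduce the problem to a tail/moment estimate for a positive multiple $\beta$-stable integral via a power transform. By homogeneity of both sides in $g$, we may normalize $\|g(T_1,\ldots,T_k)\|_r = 1$ (the case $\|g\|_r=0$ is trivial and $\|g\|_r=\infty$ makes the inequality vacuous). Set $h := g^r$ and $\beta := \alpha/r \in (0,1)$. Using $(\bigvee_{\boldsymbol{j}} x_{\boldsymbol{j}})^r = \bigvee_{\boldsymbol{j}} x_{\boldsymbol{j}}^r \le \sum_{\boldsymbol{j}} x_{\boldsymbol{j}}^r$ for nonnegative terms,
$$
\left(\bigvee_{\boldsymbol{j}\in \mathcal{D}_k} g(T_{\boldsymbol{j}})\,[\Gamma_{\boldsymbol{j}}]^{-1/\alpha}\right)^r
\;\le\;
\Sigma
\;:=\;
\sum_{\boldsymbol{j}\in \mathcal{D}_k} h(T_{\boldsymbol{j}})\,[\Gamma_{\boldsymbol{j}}]^{-1/\beta},
$$
whose right-hand side is, up to the factorial symmetry built into $\mathcal{D}_k$, the LePage series representation of a positive multiple $\beta$-stable integral of $h$ over $(E^k,m^k)$. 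Since $\|X\|_\gamma^r=\|X^r\|_{\gamma/r}$ for nonnegative $X$, with $q:=\gamma/r\in(0,\beta)$ it suffices to show $\|\Sigma\|_q\le C_{r,\gamma,k}$.

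The core of the argument is the uniform-in-$x$ tail estimate
$$
\mathbb{P}(\Sigma>x)\;\le\; C_{k,\beta}\,\min\!\bigl(1,\;x^{-\beta}(\ln(x\vee e))^{k-1}\bigr)\,\|h\|_\beta^\beta,\qquad x>0.
$$
Once this is available, integrating $\mathbb{E}[\Sigma^q]=q\int_0^\infty x^{q-1}\mathbb{P}(\Sigma>x)\,dx$ against it, and using $\|h\|_\beta^\beta=\|g\|_\alpha^\alpha\le \|g\|_r^\alpha=1$ by Jensen (since $m^k$ is a probability measure and $\alpha<r$), yields $\mathbb{E}[\Sigma^q]\le C_{r,\gamma,k}$. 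Rearranging with $\|S\|_\gamma^r=\|S^r\|_{\gamma/r}\le\|\Sigma\|_q$ gives $\|S\|_\gamma\le C_{r,\gamma,k}^{1/\gamma}$, completing the argument (and restoring $\|g\|_r$ via homogeneity gives the advertised bound). The asymptotic form (as $x\to\infty$) of the above tail estimate is standard for positive multiple $\beta$-stable integrals, obtainable by the methodology of \cite[Section~4.1.2]{breton2002multiple} and the partition scheme of \cite[Proposition~5.2]{samorodnitsky1989asymptotic} applied to $\mathcal{D}_{k,<}$.

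The main obstacle is the uniform (non-asymptotic) tail bound. Note that the naive sub-additivity bound $\mathbb{E}[\Sigma^q]\le \sum_{\boldsymbol{j}}\mathbb{E}[h(T_{\boldsymbol{j}})^q][\Gamma_{\boldsymbol{j}}]^{-q/\beta}$ fails immediately: individual summands are infinite whenever $j_\ell\le q/\beta$ (because $\mathbb{E}[\Gamma_j^{-s}]=\Gamma(j-s)/\Gamma(j)=\infty$ for $s\ge j$), and the remaining sum still diverges since $q/\beta<1$. The delicate step is therefore a partition of $\mathcal{D}_{k,<}$ into a finite block of small-index tuples — handled by a direct conditioning argument using the exponential laws of the increments $\Gamma_i-\Gamma_{i-1}$ and the normalization $\|h\|_1=1$ — and a tail block of tuples with $\max_\ell j_\ell$ above a threshold chosen large enough that the moments $\mathbb{E}[\Gamma_{j_\ell}^{-q/\beta}]\sim j_\ell^{-q/\beta}$ combine with the factorial constraint to recover, via $\sum_{1\le j_1<\cdots<j_k\le N}\prod_\ell j_\ell^{-q/\beta}$ estimates, the precise $(\ln x)^{k-1}x^{-\beta}$ polylogarithmic enhancement of the tail.
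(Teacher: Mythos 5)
Your opening reduction (power transform, then bounding the supremum by the sum) matches the paper's first step, but the estimate you designate as the ``core of the argument'' is false, and the gap is not repairable within your setup. For $k\ge 2$ there is no uniform bound of the form
\begin{equation*}
\mathbb{P}(\Sigma>x)\ \le\ C_{k,\beta}\,\min\bigl(1,\ x^{-\beta}(\ln (x\vee e))^{k-1}\bigr)\,\|h\|_\beta^\beta .
\end{equation*}
If such a bound held uniformly in $h$, then every $h\in L^\beta(m^k)$ would satisfy $\mathbb{P}(\Sigma>x)\to 0$, i.e.\ $\Sigma<\infty$ a.s.; but membership in $L^\beta$ is only \emph{necessary}, not sufficient, for a.s.\ finiteness of the multiple positive $\beta$-stable series when $k\ge 2$ (this is precisely the phenomenon behind Theorem \ref{thm:nec cond} versus Theorem \ref{suff thm} and Example \ref{S1}; a counterexample on a probability space is obtained by taking $h=\sum_i b_i\mathbf{1}_{C_i\times D_i}$ with shrinking rectangles and $b_i$ chosen so that $\sum_i b_i^\beta|C_i||D_i|<\infty$ while $\sum_i b_i^\beta|C_i||D_i|\,|\ln(|C_i||D_i|)|=\infty$). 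The tail constant $\propto\|h\|_\beta^\beta$ is an \emph{asymptotic} statement for a fixed $h$ that is already known to be integrable; it cannot be upgraded to a uniform-in-$h$ inequality with that constant. Replacing $\|h\|_\beta^\beta$ by $\|h\|_1$ does not rescue the plan either: by choosing the power to be exactly $r$ (so $h=g^r$, $\beta=\alpha/r$), you have pushed yourself to the endpoint where the required moment inequality for $\Sigma$ must be stated with the \emph{first} moment of $h$. The available estimates (Krakowiak--Szulga type; the decoupled versions underlying \cite[Corollary 2.1]{samorodnitsky1991construction}, and Proposition \ref{prop6.7} in this paper) control $\|\Sigma\|_q$ by $\|h\|_{r'}$ with $r'>1$ strictly, or by $L^1\ln^{k}L$-type functionals of $h$, neither of which is dominated by $\|h\|_1$; the $r'=1$ endpoint is exactly where the logarithmic obstructions live.

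The paper's proof avoids this by keeping a free intermediate exponent $\beta\in(\alpha,r)$: write the $\gamma$-th moment of the supremum as the $(\gamma/\beta)$-th moment of $\bigvee_{\boldsymbol{j}}[\Gamma_{\boldsymbol{j}}]^{-\beta/\alpha}g^\beta(T_{\boldsymbol{j}})$, bound the supremum by the sum, split $\mathcal{D}_k$ into the $k!$ ordered copies $\mathcal{D}_{k,<}^{\pi}$, apply the quasi-triangle inequality for $\|\cdot\|_{\gamma/\beta}$ (valid since $\gamma/\beta<1$), and then invoke \cite[Corollary 2.1]{samorodnitsky1991construction} for each ordered sum, which yields a bound by $c\,(\mathbb{E}[g(T_1,\ldots,T_k)^r])^{\gamma/r}$ precisely because the integrand $g^\beta$ is measured in $L^{r/\beta}$ with $r/\beta>1$. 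This is also why the lemma's right-hand side carries $\|g\|_r$ with $r>\alpha$ rather than $\|g\|_\alpha$. If you want to keep your architecture, you must either retreat to an intermediate power as the paper does, or supply a genuinely new uniform tail bound for $\Sigma$ with constant $\|h\|_1$ --- which is not in the literature you cite and is the actual content you would need to prove.
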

\begin{proof}

 We want to apply \cite[Corollary 2.1]{samorodnitsky1991construction}, which states a similar inequality  with the maximum $\bigvee$ replaced by a summation,  the index set $\cl{D}_k$ replaced by $\cl{D}_{k,<}$, and $\alpha$ restricted to $(0,1)$.   To do so, we introduce an additional auxiliary parameter $\beta>\alpha$.
 Define also $\mathcal{D}_{k,<}^\pi=\left\{\left(t_{\pi(1)}, \ldots, t_{\pi(k)}\right) \in \bb{Z}_+^k\mid t_1<\ldots<t_k\right\}$, where $\pi \in \Theta_k$, and $\Theta_k$ denotes the set of all permutations of $\{1,2,\ldots,k\}$. Note that $\mathcal{D}_{k} = \bigcup_{\pi \in \Theta_k}\mathcal{D}_{k,<}^\pi$.  Then, 
\begin{align*}
   \mathbb{E} \pb{\pp{  \bigvee_{\boldsymbol{j} \in \mathcal{D}_k}\left[\Gamma_{\boldsymbol{j}}\right]^{-1 / \alpha}  g\pp{T_{\boldsymbol{j}} } }^\gamma}  &= 
   \bb{E}\pb{\pp{ \bigvee_{\pi \in \Theta_k}\bigvee_{\boldsymbol{j} \in \mathcal{D}^\pi_{k, <}} \left[\Gamma_{\boldsymbol{j}}\right]^{-1 / (\alpha/ \beta)}    g^\beta\pp{T_{\boldsymbol{j}}}}^{\gamma/\beta}}  
   \\
   &\leq 
   \bb{E}\pb{\pp{ \sum_{\pi \in \Theta_k}\sum_{\boldsymbol{j} \in \mathcal{D}^\pi_{k, <}} \left[\Gamma_{\boldsymbol{j}}\right]^{-1 / (\alpha/ \beta)}    g^\beta\pp{T_{\boldsymbol{j}}}}^{\gamma/\beta}}\\
   & \leq \sum_{\pi \in \Theta_k} \mathbb{E}\pb{\left( \sum_{\boldsymbol{j} \in \mathcal{D}^\pi_{k, <}}\left[\Gamma_{\boldsymbol{j}}\right]^{-1 / (\alpha/\beta)}  g^\beta\pp{T_{\boldsymbol{j}} }  \right)^{\gamma/\beta }} 
\end{align*} 
where the last inequality uses the quasi-triangle inequality since $0< \gamma/\beta<1$. 
Then, for each $\pi$-term, it follows from \cite[Corollary 2.1]{samorodnitsky1991construction} that there is a constant $c>0$ which depends only on $(\gamma,r, k)$ such that
\begin{equation*} 
  \mathbb{E}\pb{\left( \sum_{\boldsymbol{j} \in \mathcal{D}^\pi_{k, <}}\left[\Gamma_{\boldsymbol{j}}\right]^{-1 / (\alpha/\beta)}  g^\beta\pp{T_{\boldsymbol{j}} }  \right)^{\gamma/\beta }}  
     \leq c  \pc{\mathbb{E}\pb{g(T_1,\ldots,T_k)^r}}^{\gamma/r}.
\end{equation*}
The proof is then concluded by combining the above.
\end{proof}


\begin{lem}\label{lem 2.4}
    Suppose $\mu$ is a finite measure on $\cl{E}$, and $\alpha >0$ and $0<\gamma  < \alpha /k$. Suppose $A \in \mathcal{E}^{(k)}$, $k\in \bb{Z}_+$,  and a sequence  $A_n \in \mathcal{F}_k$, $n\in \bb{Z}_+$,  satisfy $\mu^k(A_n \Delta A) \rightarrow 0$ as $n\rightarrow\infty$ (see Lemma \ref{lem2.1}).  For $k \geq 1$, let 
    \begin{equation}\label{s_ka}
        \widehat{S}_{k,\alpha} (A) = \bigvee_{\boldsymbol{j} \in \mathcal{D}_{k}}\left[\Gamma_{\boldsymbol{j}}\right]^{-1 / \alpha} \left[\psi(T_{\boldsymbol{j}})\right]^{1/\alpha}   \mathbf{1}_{\left\{T_{\boldsymbol{j}} \in A \right\}}, \quad A \in \mathcal{E}^{(k)}.
    \end{equation}  
    Then we have  
\begin{equation} \label{sbound}
\begin{aligned}
\mathbb{E} \left| \widehat{S}_{k,\alpha} (A_n)  - \widehat{S}_{k,\alpha} (A) \right |^{\gamma} \leq  \mathbb{E} \left| \widehat{S}_{k,\alpha} (A_n\Delta A) \right|^{\gamma} \rightarrow 0,
\end{aligned}
\end{equation}
 as $n \rightarrow \infty$.
\end{lem}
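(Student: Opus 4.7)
The first inequality in \eqref{sbound} is a direct consequence of $|\mathbf{1}_A - \mathbf{1}_B| = \mathbf{1}_{A \Delta B}$ combined with the elementary bound $\bigl|\bigvee_j a_j - \bigvee_j b_j\bigr| \leq \bigvee_j |a_j - b_j|$ for nonnegative sequences. Applied termwise in the LePage sum, this yields
\[
\bigl|\widehat{S}_{k,\alpha}(A_n) - \widehat{S}_{k,\alpha}(A)\bigr| \leq \bigvee_{\boldsymbol{j} \in \mathcal{D}_k} [\Gamma_{\boldsymbol{j}}]^{-1/\alpha}[\psi(T_{\boldsymbol{j}})]^{1/\alpha} \mathbf{1}_{\{T_{\boldsymbol{j}} \in A_n \Delta A\}} = \widehat{S}_{k,\alpha}(A_n \Delta A)
\]
almost surely, after which raising to the $\gamma$-th power and taking expectations delivers the first bound.

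For the convergence, my plan is to reduce to the case where the Radon--Nikodym derivative $\psi$ is constant. The key observation, justified in Section \ref{alt ppp}, is that $\sum_i \delta_{(T_i,\,\psi(T_i)^{1/\alpha}\Gamma_i^{-1/\alpha})}$ is a Poisson point process on $E \times (0,\infty)$ with intensity measure $\mu \times \nu_\alpha$, which depends only on $\mu$. Since $\widehat{S}_{k,\alpha}(B)$ is a symmetric measurable functional of the atoms of this point process, the joint distribution of $\bigl(\widehat{S}_{k,\alpha}(B)\bigr)_{B \in \mathcal{E}^{(k)}}$ is determined by $\mu$ alone and does not depend on the auxiliary probability measure $m$. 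Because the $L^\gamma$-norms appearing in \eqref{sbound} depend only on the distribution, I may compute them after specializing to the choice $m = \mu/\mu(E)$, for which $\psi \equiv \mu(E)$ and
\[
\widehat{S}_{k,\alpha}(B) \stackrel{d}{=} \mu(E)^{k/\alpha} \bigvee_{\boldsymbol{j} \in \mathcal{D}_k} [\Gamma_{\boldsymbol{j}}]^{-1/\alpha} \mathbf{1}_{\{T_{\boldsymbol{j}} \in B\}}.
\]

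Finally, I will fix any $r > \alpha$ (admissible since the hypothesis $\gamma < \alpha/k \leq \alpha$ verifies the condition $\gamma \in (0,\alpha)$ required by Lemma \ref{lem 2.3}) and apply that lemma with $g = \mathbf{1}_B$. Since $\|\mathbf{1}_B(T_1,\ldots,T_k)\|_r = \bigl(m^k(B)\bigr)^{1/r} = \bigl(\mu^k(B)/\mu(E)^k\bigr)^{1/r}$, this yields
\[
\bigl\|\widehat{S}_{k,\alpha}(B)\bigr\|_\gamma \leq c\,\mu(E)^{k/\alpha - k/r}\,\bigl(\mu^k(B)\bigr)^{1/r}
\]
for a constant $c$ depending only on $(r,\gamma,k)$. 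Taking $B = A_n \Delta A$ and invoking the hypothesis $\mu^k(A_n \Delta A) \to 0$ then delivers the claimed convergence. The only nonroutine step is the distributional reduction to constant $\psi$; once that is in place the estimate follows directly from Lemma \ref{lem 2.3}.
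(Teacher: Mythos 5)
Your proposal is correct, and the convergence step takes a genuinely different (and arguably cleaner) route than the paper's. The first inequality is handled identically in both. For the convergence, the paper bounds $\widehat{S}_{k,\alpha}(A\Delta A_n)^r$ by the \emph{sum} $S^*_{k,\alpha/r}(A\Delta A_n)$ from \eqref{eq:bound S_k alpha} and then invokes part B of the proof of Theorem 3.1 of Samorodnitsky--Szulga together with the quasi-triangle inequality for $\|\cdot\|_{\gamma/r}$; you instead reduce to constant $\psi$ via the Poisson-point-process intensity argument of Section \ref{alt ppp} (which correctly avoids any circularity with \eqref{eq:M^k S_k eq d}, since that identity is only established later in Theorem \ref{claim 5.3}) and then apply the already-stated moment bound of Lemma \ref{lem 2.3} directly to $g=\mathbf{1}_{A_n\Delta A}$. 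What your route buys: it is self-contained within the paper's stated lemmas, and it proves the convergence for the full range $\gamma\in(0,\alpha)$ rather than only $\gamma<\alpha/k$ --- in effect it already delivers the moment bound \eqref{imp} and would make the uniform-integrability upgrade in Step 2 of the proof of Theorem \ref{claim 5.3} unnecessary. What the paper's route buys is essentially nothing extra here; it is just a different packaging of the same underlying Samorodnitsky--Szulga estimates.

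One small point you should make explicit: before applying $\bigl|\bigvee_j a_j-\bigvee_j b_j\bigr|\le\bigvee_j|a_j-b_j|$ you need $\widehat{S}_{k,\alpha}(A)<\infty$ and $\widehat{S}_{k,\alpha}(A_n)<\infty$ a.s.\ so that the difference is well defined (the paper devotes the first part of its proof to this via \eqref{eq:bound S_k alpha}). In your setup this is immediate from the very same application of Lemma \ref{lem 2.3} with $B=A$ and $B=A_n$, since $\mu^k(A),\mu^k(A_n)<\infty$, but it should be stated.
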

\begin{proof}
We first point out that under the assumption $\mu(E)<\infty$, both $\widehat{S}_{k,\alpha} (A) < \infty$ and  $\widehat{S}_{k,\alpha} (A_n) < \infty$  a.s., $n \in \mathbb{Z}_+$.  
Indeed, taking the first relation as an example, we have for $r>\alpha$ that
 
\begin{align}
\pp{\widehat{S}_{k,\alpha} (A)}^{r}&\le \sum_{\boldsymbol{j} \in \mathcal{D}_{k}}\left[\Gamma_{\boldsymbol{j}}\right]^{-1 / (\alpha/r)} \left[\psi(T_{\boldsymbol{j}})\right]^{1/(\alpha/r)}   \mathbf{1}_{\left\{T_{\boldsymbol{j}} \in A \right\}}\label{eq:bound S_k alpha}=:S_{k,\alpha/r}^*(A)\\
&\le \pp{\sum_{j\in\bb{Z}_+} \Gamma_j^{1/(\alpha/r)} \psi(T_j)^{\alpha/r}}^k. \notag
\end{align}
The last expression is a positive $(\alpha/r)$-stable random variable raised to power $k$, and hence finite a.s. (see the argument below \eqref{leq sum}).
Then, the first inequality in \eqref{sbound} follows from the inequality $|\bigvee_{i \in \bb{Z}_+} a_i -  \bigvee_{i \in \bb{Z}_+} b_i| \leq \bigvee_{i \in \bb{Z}_+}|a_i - b_i|$ for real-valued sequences $\left\{a_i\right\}_{i \in \bb{Z}_+}$ and $\left\{b_i\right\}_{i \in \bb{Z}_+}$ and the relation $|\mathbf{1}_{\left\{T_{\boldsymbol{j}} \in A \right\}}-\mathbf{1}_{\left\{T_{\boldsymbol{j}} \in A_n \right\}}|=\mathbf{1}_{\left\{T_{\boldsymbol{j}} \in A\Delta A_n \right\}}$.

Now with $S_{k,\alpha/r}^*$ in \eqref{eq:bound S_k alpha} above, together with \cite[part B of the proof of Theorem 3.1]{samorodnitsky1991construction}
and writing $\mathcal{D}_k=\bigcup_{\pi \in \Theta_k} \mathcal{D}_{k,<}^\pi$, we apply quasi-triangle inequality for $\|\cdot\|_{\gamma/r}, 0<\gamma/r <1$ as in the proof of Lemma \ref{lem 2.3}. This yields  $\mathbb{E}|S_{k,\alpha/r}^*(A\Delta A_n) |^{\gamma/r} \rightarrow 0$  as $n \rightarrow \infty$. So the convergence in  \eqref{sbound} follows  from \eqref{eq:bound S_k alpha} with $A$ replaced by $A\Delta A_n$.

\end{proof}

\begin{proof}[Proof of Theorem \ref{claim 5.3}]
 By Lemma \ref{lem2.1}, for any $A \in \mathcal{E}^{(k)}$, $k \geq 1$,  there is a sequence of sets $(A_n)_{n \in  \bb{Z}_+}$, where $A_n \in \mathcal{F}_k$ for all $n \in \bb{Z}_+$, such that $\mu^k(A_n \Delta A) \rightarrow 0$ as $n \rightarrow \infty$.

 \medskip
 
\noindent\textbf{Step 1}: Suppose first $\gamma \in (0,\alpha/k)$. We shall show that $M_\alpha^{(k)}(A):=\lim _{n \rightarrow \infty} M_\alpha^{(k)}\left(A_n\right)$ exists in $L^\gamma$, and $M_\alpha^{(k)}$ is $\sigma$-maxitive. 

In view of \eqref{m and ml1} and the definition of $M^{(k)}$ on $\cl{F}_k$, we have 
\begin{equation}\label{eq:eqd on F_k}
\left(M_\alpha^{(k)}(A)\right)_{A \in \cl{F}_k,\, {k \in \mathbb{Z}_+}} \stackrel{d}{=} \left(\widehat{S}_{k, \alpha}(A)\right)_{A \in \cl{F}_k,\, {k \in \mathbb{Z}_+}},
\end{equation}
where  $\widehat{S}_{k,\alpha}$ is as defined in (\ref{s_ka}). 
Hence if $\left(\widehat{S}_{k, \alpha}\left(A_n\right)\right)_{n \in \bb{Z}_+}$ forms a Cauchy sequence in $L^\gamma$, so does $\left(M_\alpha^{(k)}\left(A_n\right)\right)_{n \in \bb{Z}_+}$. By Lemma \ref{lem 2.4}, 
the sequence $\widehat{S}_{k,\alpha} (A_n)  \rightarrow \widehat{S}_{k,\alpha} (A)$ in $L^\gamma$ as $n \rightarrow \infty$,
  and hence forms a Cauchy sequence in $L^\gamma$.  Thus,  $M_\alpha^{(k)}(A) : = \lim _{n \rightarrow \infty} M_\alpha^{(k)}\left(A_n\right)$ exists in $L^\gamma$  (uniquely a.s.). 
Moreover, as a consequence of the $L^\gamma$ approximation and \eqref{eq:eqd on F_k}, we have 
\begin{equation}\label{eq:M^k S_k eq d}
\left(M_\alpha^{(k)}(A)\right)_{A \in \mathcal{E}^{(k)},{ k \in \mathbb{Z}_+}} \stackrel{d}{=} \left(\widehat{S}_{k, \alpha}(A)\right)_{A \in \mathcal{E}^{(k)},{ k \in \mathbb{Z}_+}}.
\end{equation}
It also follows that     $M_\alpha^{(k)}$ is $\sigma$-maxitive because $\widehat{S}_{k, \alpha}$ is so in view of its definition \eqref{s_ka}.

\medskip
\noindent\textbf{Step 2}: Suppose now $\gamma \in (0,\alpha)$; note that the range of $\gamma$ here is larger than that in Step 1. We shall show that for any  $r > \alpha$, there is a constant $c >0$  depending on $(r, \gamma)$ such that
\begin{equation}\label{imp}
\left\|\widehat{S}_{k, \alpha}\left(A\right)\right\|_\gamma  = \left\|M_\alpha^{(k)}\left(A\right)\right\|_\gamma  \leq c \left( \mu^k\left(A\right)\right)^{1/r},
\end{equation}
for any $A \in \mathcal{E}^{(k)}$. If this holds, we can conclude by uniform integrability that $M_\alpha^{(k)}(A)=\lim _{n \rightarrow \infty} M_\alpha^{(k)}\left(A_n\right)$ in $L^\gamma$ for $\gamma \in (0,\alpha)$ and $\alpha >0$.

Using \eqref{eq:M^k S_k eq d}, we have $\| M_\alpha^{(k)}( A)\|_\gamma= \| \widehat{S}_{k,\alpha} ( A) \|_\gamma$.  Viewing $\left(\widehat{S}_{k,\alpha} ( A)\right)_{A \in \mathcal{E}^{(k)}}$ as a set-indexed process, we note that different choices of the probability measure $m$ and the associated $\psi$  result in different modifications of the process. Nevertheless, all modifications have identical finite-dimensional distributions due to \eqref{eq:M^k S_k eq d}. 
Hence, we can without loss of generality set $\psi(x)=\mu(E)$, $x\in E$, i.e., $m=\mu / \mu(E)$, which gives
\begin{equation}\label{term}
 \| M_\alpha^{(k)}( A)\|_\gamma  = \| \widehat{S}_{k,\alpha} ( A) \|_\gamma\le  \mu(E)^{k/\alpha} \left\|\sum_{\boldsymbol{j} \in \mathcal{D}_k}\left[\Gamma_{\boldsymbol{j}}\right]^{- 1 / \alpha}  \mathbf{1}_{\left\{T_{\boldsymbol{j}} \in A\right\}}\right\|_{\gamma}.
\end{equation}
At last, applying Lemma \ref{lem 2.3} by letting $g(x_1,\ldots,x_k) = \mathbf{1}_{\{(x_1, \cdots, x_k) \in A\}}$, the bound in  (\ref{term}) is further bounded above by $\left(\mu^k\left(A \right)\right)^{1 / r}$ with $r > \alpha$, up to a constant that does not depend on $A$. 

\medskip
\noindent\textbf{Step 3:}   We shall show the definition of $M_\alpha^{(k)}(A)$, $A \in \mathcal{E}^{(k)}$, is invariant to the choice of the approximating sequence of sets $A_n\in \cl{F}_k$ such that $\mu^k(A_n\Delta A)\rightarrow 0$ as $n\rightarrow\infty$.

Apart from $\left(A_n\right)_{n \in \bb{Z}_+}$, consider another sequence $\left(B_n\right)_{n \in \bb{Z}_+}$, where $B_n \in \mathcal{F}_k$ for each $n \in \bb{Z}_+$, satisfying $\mu^k\left(B_n \Delta A\right) \rightarrow 0$ as $n \rightarrow \infty$. 

Suppose $r > \alpha$ and $\gamma \in (0,\alpha)$. Using the first relation in \eqref{sbound} and the conclusions in \eqref{imp}, we derive 
$$\|\widehat{S}_{k, \alpha}\left(A_n\right)-\widehat{S}_{k, \alpha}\left( B_n\right)\|_\gamma \leq \|\widehat{S}_{k, \alpha}\left(A_n \Delta B_n\right)\|_\gamma \leq c \left(\mu^k(A_n \Delta B_n)\right)^{1 / r},   \ n\in\bb{Z}_+,
$$ 
where $c$ is a positive constant. As $n \rightarrow \infty$, we have $\mu^k\left(A_n \Delta B_n\right)\le  \mu^k\left(A_n \Delta A\right)+\mu^k\left(A \Delta B_n\right) \rightarrow 0$, and therefore,
$$\left\| M_\alpha^{(k)}\left(A_n\right)-M_\alpha^{(k)}\left(B_n\right) \right\|_\gamma \rightarrow 0.$$
\end{proof}

\subsection{Auxiliary results for Proposition \ref{prop 4.6}}
Recall the $\sigma$-finite measure $\mu$, the probability measure $m$ equivalent to $\mu$, the derivative $\psi=d\mu/dm$,  $\left(T_j\right)_{j \in \mathbb{Z}_{+}}$ and $\left(\Gamma_j\right)_{j \in \mathbb{Z}_{+}}$ in Definition \ref{defn l}. Recall also that $\left(T_j^{(\ell)}\right)_{j \in \mathbb{Z}_{+}}, \ell=1, \ldots, k$, are i.i.d. copies of $\left(T_j\right)_{j \in \mathbb{Z}_{+}}$, and $\left(\Gamma_j^{(\ell)}\right)_{j \in \mathbb{Z}_{+}}, \ell=1, \ldots, k$, are i.i.d. copies of $\left(\Gamma_j\right)_{j \in \mathbb{Z}_{+}}$, and the two collections are independent. 

For a measurable $f: E^k \mapsto[0, \infty]$ that vanishes on the diagonals, $k \geq 2$, we define 
$$
\widehat{S}_\alpha^{[1: k]}(f) = k^{-k / \alpha} \bigvee_{(\boldsymbol{i}, \boldsymbol{\ell}) \in 
 \widehat{\mathcal{D}}_k} f\left(T_{i_1}^{\left(\ell_1\right)}, \ldots, T_{i_k}^{\left(\ell_k\right)}\right)\left(\prod_{d=1}^k \psi\left(T_{i_d}^{\left(\ell_d\right)}\right)\right)^{1 / \alpha}\left(\prod_{d=1}^k \Gamma_{i_d}^{\left(\ell_d\right)}\right)^{-1 / \alpha},
$$
where
$$
\widehat{\mathcal{D}}_k = \left\{ (\boldsymbol{i}, \boldsymbol{\ell})\mid \boldsymbol{i}\in \bb{Z}_+^k,
 \boldsymbol{\ell}\in \{1,\ldots,k\}^k, (i_d,\ell_d) \neq (i_{d^\prime}, \ell_{d^\prime})  \text{ for any } d \neq d^\prime,\, d,d^\prime \in \{1,\ldots,k\} \right\}.
$$
\begin{prop} \label{prop B.5}
Set $$
M_{\alpha, \ell}(\cdot)=\bigvee_{i \geq 1} \mathbf{1}_{\left\{T_i^{(\ell)} \in \cdot\right\}} \psi\left(T_i^{(\ell)}\right)^{1 / \alpha}\left(\Gamma_i^{(\ell)}\right)^{-1 / \alpha}, \ell=1 \ldots k,$$
and $\widehat{M}_\alpha=k^{-1 / \alpha} \bigvee_{1 \leq \ell \leq k} M_{\alpha, \ell}$. Let $\widehat{M}_\alpha^{(k)}$ be constructed by $\widehat{M}_\alpha$ as in Section \ref{S 2.2}. Then we have
\begin{equation}\label{r26}
    \left(\widehat{S}_\alpha^{[1: k]}\left(\mathbf{1}_A\right)\right)_{A \in \mathcal{E}^{(k)}} \stackrel{d}{=}\left(\widehat{M}_\alpha^{(k)}(A)\right)_{A \in \mathcal{E}^{(k)}}.
\end{equation}
\end{prop}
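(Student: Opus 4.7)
The plan is to mirror the three-step construction of $M_\alpha^{(k)}$ from Section \ref{S 2.2}, applied to the specific $\alpha$-Fr\'echet random sup measure $\widehat{M}_\alpha=k^{-1/\alpha}\bigvee_{\ell=1}^k M_{\alpha,\ell}$. By $\alpha$-max-stability one has $\widehat{M}_\alpha\EqD M_\alpha^L$, so $\widehat{M}_\alpha$ satisfies Definition \ref{defn s} with control measure $\mu$ and Theorem \ref{claim 5.3} produces the product random sup measure $\widehat{M}_\alpha^{(k)}$ on $(E^{(k)},\mathcal{E}^{(k)})$. The goal is to establish the pointwise (in $A$) a.s.\ identity $\widehat{S}_\alpha^{[1:k]}(\mathbf{1}_A)=\widehat{M}_\alpha^{(k)}(A)$ on the common probability space; the joint distributional equality \eqref{r26} is then immediate.

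The heart of the argument is the calculation on off-diagonal rectangles. For $A=A_1\times\cdots\times A_k\in\mathcal{C}_k$ with pairwise disjoint $A_d$'s, the defining relation \eqref{prod M_a} gives $\widehat{M}_\alpha^{(k)}(A)=\prod_{d=1}^k\widehat{M}_\alpha(A_d)$ a.s. On the LePage side, the key observation is that the distinctness constraint encoded in $\widehat{\mathcal{D}}_k$ becomes automatic under disjointness: if $(i_d,\ell_d)=(i_{d'},\ell_{d'})$ for some $d\neq d'$, the corresponding summand vanishes because a single pair $(i,\ell)$ cannot have $T_i^{(\ell)}$ in both $A_d$ and $A_{d'}$. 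Removing the constraint lets the supremum over $\widehat{\mathcal{D}}_k$ factor across the coordinates $d=1,\dots,k$ and reproduces $\prod_{d=1}^k \widehat{M}_\alpha(A_d)$, yielding a.s.\ equality on $\mathcal{C}_k$. A standard max-linearity check then extends the identity to $\mathcal{F}_k$: both $\widehat{M}_\alpha^{(k)}(\cdot)$ and $\widehat{S}_\alpha^{[1:k]}(\mathbf{1}_\cdot)$ respect disjoint finite unions (by \eqref{max} for the former, and by splitting of the constrained supremum over a disjoint target for the latter).

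For a general $A\in\mathcal{E}^{(k)}$, pick $A_n\in\mathcal{F}_k$ with $\mu^k(A_n\Delta A)\to 0$ via Lemma \ref{lem2.1}. Theorem \ref{claim 5.3} provides $\widehat{M}_\alpha^{(k)}(A_n)\to\widehat{M}_\alpha^{(k)}(A)$ in $L^\gamma$ for every $\gamma\in(0,\alpha)$, and the previous step identifies these with $\widehat{S}_\alpha^{[1:k]}(\mathbf{1}_{A_n})$. It remains to show $\widehat{S}_\alpha^{[1:k]}(\mathbf{1}_{A_n})\to\widehat{S}_\alpha^{[1:k]}(\mathbf{1}_A)$ in $L^\gamma$, and I expect this matching step to be the main technical obstacle. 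A clean route is as follows. Via the monotonicity bound $|\widehat{S}_\alpha^{[1:k]}(\mathbf{1}_{A_n})-\widehat{S}_\alpha^{[1:k]}(\mathbf{1}_A)|\le\widehat{S}_\alpha^{[1:k]}(\mathbf{1}_{A_n\Delta A})$, the task reduces to a moment estimate of the form $\|\widehat{S}_\alpha^{[1:k]}(\mathbf{1}_B)\|_\gamma\le c\,(\mu^k(B))^{1/r}$ for $r>\alpha$ and $c$ independent of $B$. To obtain this, re-enumerate the pairs $(i,\ell)$ so that $\tilde{\Gamma}_m:=\Gamma_{i_m}^{(\ell_m)}$ is increasing and set $\tilde{T}_m:=T_{i_m}^{(\ell_m)}$; by the superposition and scaling properties of Poisson processes, $(\tilde{T}_m,\tilde{\Gamma}_m)_m\EqD(T_m,\Gamma_m/k)_m$, and a direct substitution identifies $\widehat{S}_\alpha^{[1:k]}(\mathbf{1}_B)$ in distribution with $\widehat{S}_{k,\alpha}(B)$ from \eqref{s_ka}. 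The bound then follows verbatim from Lemma \ref{lem 2.3}, which, together with Step 2, yields the desired a.s.\ equality and hence \eqref{r26}.
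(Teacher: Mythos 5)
Your overall strategy---identify the two sides a.s.\ on off-diagonal rectangles, extend to $\mathcal{F}_k$ by max-linearity, then pass to general $A\in\mathcal{E}^{(k)}$ through the symmetric-difference approximation and an $L^\gamma$ estimate---matches the paper's, and your rectangle computation (the distinctness constraint in $\widehat{\mathcal{D}}_k$ is vacuous on a product of pairwise disjoint sets, so the unconstrained supremum factorizes into $\prod_{d=1}^k\widehat{M}_\alpha(A_d)$) is precisely the step the paper leaves implicit. Where you genuinely diverge is in the moment control for $\widehat{S}_\alpha^{[1:k]}(\mathbf{1}_{A_n\Delta A})$: the paper bounds the supremum by the decoupled multiple sum $\widehat{S}^*_{k,\alpha/r}$ and invokes the estimates from part B of the proof of Theorem 3.1 of Samorodnitsky and Szulga, in parallel with Lemma \ref{lem 2.4}; you instead superpose the $k$ independent Poisson samples, re-enumerate by increasing $\Gamma$-values, and observe that $\widehat{S}_\alpha^{[1:k]}(\mathbf{1}_B)\EqD\widehat{S}_{k,\alpha}(B)$ jointly in $B$, the prefactor $k^{-k/\alpha}$ exactly absorbing the rate change $\Gamma_m\mapsto\Gamma_m/k$. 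This is correct and arguably cleaner, since it reduces the decoupled array to the single-sequence series \eqref{s_ka} already analyzed. One small caveat: to extract the bound $c(\mu^k(B))^{1/r}$ from Lemma \ref{lem 2.3} you should take $\psi$ constant (i.e.\ $m=\mu/\mu(E)$), as in Step 2 of the proof of Theorem \ref{claim 5.3}, or simply quote the convergence conclusion of Lemma \ref{lem 2.4} directly. In fact, your superposition identity combined with \eqref{rep} yields \eqref{r26} with essentially no further work in the finite-measure case.

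The one genuine omission is the extension to an infinite ($\sigma$-finite) control measure. Lemma \ref{lem2.1} and the $L^\gamma$ approximation in Theorem \ref{claim 5.3}, on which your argument leans, are stated only for finite $\mu$, and for $\sigma$-finite $\mu$ the object $\widehat{M}_\alpha^{(k)}$ is defined by the increasing limit \eqref{eq:gen M_alpha k def} over an exhausting sequence $(E_n)$, not by Theorem \ref{claim 5.3} directly. Since Proposition \ref{prop B.5} is invoked in Proposition \ref{prop 4.6} for general $\sigma$-finite $\mu$, you need the final (routine) step the paper includes: establish \eqref{r26} with $A$ replaced by $A\cap E_n^k$ via the finite-measure case, then let $n\to\infty$ and use monotone convergence on both sides, noting $\widehat{S}_\alpha^{[1:k]}(\mathbf{1}_{A\cap E_n^k})\nearrow\widehat{S}_\alpha^{[1:k]}(\mathbf{1}_{A})$ a.s..
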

\begin{proof}
Consider first the case $\mu(E)<\infty$. Suppose $A \in \mathcal{E}^{(k)}$, $k\in \bb{Z}_+$,  and a sequence  $A_n \in \mathcal{F}_k$, $n\in \bb{Z}_+$,  satisfy $\mu^k(A_n \Delta A) \rightarrow 0$ as $n\rightarrow\infty$ (see Lemma \ref{lem2.1}). Then both $\widehat{S}_\alpha^{[1: k]}\left(\mathbf{1}_A\right)<\infty$ and $\widehat{S}_\alpha^{[1: k]}\left(\mathbf{1}_{A_n}\right)<\infty$ a.s., $n \in \mathbb{Z}_+$. Indeed, taking the first relation as an example, we have for $r>\alpha$ that
\begin{equation}\label{eq 27}
\begin{aligned}
&\left(\widehat{S}_\alpha^{[1: k]}\left(\mathbf{1}_A\right)\right)^r \\
& \leq  k^{-k /  (\alpha/r)}\sum_{(\boldsymbol{i}, \boldsymbol{\ell}) \in \widehat{\mathcal{D}}_k}\mathbf{1}_{\{(T_{i_1}^{\left(\ell_1\right)}, \ldots, T_{i_k}^{\left(\ell_k\right)}) \in A\}}\left(\prod_{d=1}^k \psi\left(T_{i_d}^{\left(\ell_d\right)}\right)\right)^{1 /  (\alpha/r)}\left(\prod_{d=1}^k \Gamma_{i_d}^{\left(\ell_d\right)}\right)^{-1 / (\alpha/r)} \notag \\
& =: \widehat{S}_{k, \alpha / r}^*(A) 
 \leq\left( k^{-r / \alpha} \sum_{m=1}^k\sum_{j \in \mathbb{Z}_{+}} (\Gamma_j^{(m)})^{-1 /(\alpha / r)} \psi\left(T_j^{(m)}\right)^{1/(\alpha / r)}\right)^k.
\end{aligned}
\end{equation}
The last expression is a positive $(\alpha / r)$-stable random variable raised to power $k$, and hence finite a.s. (see the argument \eqref{leq sum}). 

Next, for  $\gamma \in (0,\alpha/k)$, note that
\begin{align*}
 \mathbb{E}\left|   \widehat{S}_\alpha^{[1: k]}\left(\mathbf{1}_{A}\right) - \widehat{S}_\alpha^{[1: k]}\left(\mathbf{1}_{A_n}\right)\right|^\gamma & \leq  \mathbb{E}\left|   \widehat{S}_\alpha^{[1: k]}\left(\mathbf{1}_{A \Delta A_n}\right) \right|^\gamma\leq \mathbb{E}\left|\widehat{S}_{k, \alpha / r}^*(A \Delta A_n) \right|^{\gamma/r}\rightarrow 0,
\end{align*}
as $n\rightarrow\infty$, where the last relation  follows similarly as the proof of Lemma \ref{lem 2.4}  
 using arguments as in \cite[part B of the proof of Theorem 3.1]{samorodnitsky1991construction}. Therefore, the relation \eqref{r26} follows.

For the case where $\mu$ is $\sigma$-finite, let $E_n$ and $\widehat{M}^{(k)}_{\alpha, E_n}$, $n \in \mathbb{Z}_+$, be as described prior to Theorem \ref{thm sf case}, and recall by definition $\widehat{M}_\alpha^{(k)}=\bigvee_{n=1}^{\infty} \widehat{M}^{(k)}_{\alpha, E_n}$.  Further, in view of the finite-measure case that has been proved, we have
\begin{equation*}
    \left(\widehat{S}_\alpha^{[1: k]}\left(\mathbf{1}_{A \cap E_n^k}\right)\right)_{A \in \mathcal{E}^{(k)},n\in \bb{Z}_+} \stackrel{d}{=}\left(\widehat{M}_{\alpha,E_n}^{(k)}(A )\right)_{A \in \mathcal{E}^{(k)},n\in \bb{Z}_+}.
\end{equation*}
The relation (\ref{r26}) follows by letting $n\rightarrow\infty$ above, and noting  $\widehat{S}_\alpha^{[1: k]}\left(\mathbf{1}_{A \cap E_n^k}\right)\rightarrow \widehat{S}_\alpha^{[1: k]}\left(\mathbf{1}_{A }\right)$ a.s..\ by monotonicity.
\end{proof}

\section{Definition consistency for multiple extremal integrals}

\begin{lem}[Consistency]\label{General consistency}
For $k \geq 1$, suppose $f_n \in \mathcal{S}_k$ for each $n \in \bb{Z}_+$ and  $f_n \nearrow f$ as $n\rightarrow\infty$. Assume another $g\in \mathcal{S}_k$ satisfies $0 \leq g \leq f$, then $\lim _n  I_k^e(f_n) \geq  I_k^e(g)$.
Hence, given $f_n \nearrow f$, $ g_n \nearrow f$  as $n\rightarrow\infty$
where $f_n, g_n \in \mathcal{S}_k$ for each $n \in \mathbb{Z}_+$, we have $\lim_{n} I_k^e(f_n) = \lim_{n} I_k^e(g_n)$ a.s..
\end{lem}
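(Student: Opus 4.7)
The plan mirrors the classical consistency argument in Lebesgue's theory (e.g., \cite[Lemma 1.20]{kallenberg2021foundations}), with countable additivity replaced by the $\sigma$-maxitivity of $M_\alpha^{(k)}$ established in Theorems \ref{claim 5.3} and \ref{thm sf case}.

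First I would reduce the second assertion to the first. Both $\lim_n I_k^e(f_n)$ and $\lim_m I_k^e(g_m)$ exist a.s.\ in $[0,\infty]$ by the monotonicity item of Proposition \ref{I^e_k up}. Applying the first assertion with the sequence $(f_n)$ and the simple function $g_m$ (which satisfies $g_m \le f$) yields $\lim_n I_k^e(f_n) \ge I_k^e(g_m)$ for every $m$, hence $\lim_n I_k^e(f_n) \ge \lim_m I_k^e(g_m)$; swapping the roles of the two sequences yields the reverse inequality.

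For the first assertion, I would write $g = \bigvee_{i=1}^N a_i \mathbf{1}_{A_i}$ with disjoint $A_i \in \cl{E}^{(k)}$ and $a_i > 0$, fix $c \in (0,1)$, and set
$$B_n := \{cg \le f_n\} \cap \bigcup_{i=1}^N A_i \ \in \cl{E}^{(k)}.$$
Since $f_n \nearrow f \ge g > cg$ on $\bigcup_i A_i$ (modulo a $\mu^k$-null set), the sets $B_n$ are increasing and $\bigcup_n B_n = \bigcup_i A_i$ modulo a $\mu^k$-null set. Combining monotonicity and max-linearity (Proposition \ref{I^e_k up}) with \eqref{simple Sk},
$$I_k^e(f_n) \ \ge\ I_k^e(f_n \mathbf{1}_{B_n}) \ \ge\ I_k^e(cg\,\mathbf{1}_{B_n}) \ =\ c \bigvee_{i=1}^N a_i\, M_\alpha^{(k)}(A_i \cap B_n) \quad \text{a.s.}$$
The key step is to let $n \rightarrow \infty$ and invoke $\sigma$-maxitivity of $M_\alpha^{(k)}$ on the increasing sequence $(A_i \cap B_n)_n$, giving $M_\alpha^{(k)}(A_i \cap B_n) \nearrow M_\alpha^{(k)}(A_i)$ a.s. This yields $\lim_n I_k^e(f_n) \ge c\, I_k^e(g)$ a.s., and letting $c \uparrow 1$ along a countable sequence finishes the proof.

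I do not anticipate a serious obstacle. The one subtlety is that the identity $\bigcup_n B_n = \bigcup_i A_i$ and the pointwise inequalities used to obtain it only hold modulo $\mu^k$-null sets; however, the moment bound \eqref{eq:M_alpha moment control}---extended to the $\sigma$-finite setting through \eqref{eq:gen M_alpha k def}---ensures that every $\mu^k$-null set is assigned value $0$ by $M_\alpha^{(k)}$ a.s., so these discrepancies do not affect the conclusion.
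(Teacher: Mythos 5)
Your proof is correct and follows essentially the same route as the paper's: the paper first reduces to indicator integrands $g=\mathbf{1}_A$ via max-linearity and then uses the increasing sets $B_n=\{\boldsymbol{u}\in A \mid f_n(\boldsymbol{u})\ge 1-\epsilon\}$ together with $\sigma$-maxitivity of $M_\alpha^{(k)}$ before letting $\epsilon\downarrow 0$, which is the same ``compare with $(1-\epsilon)g$ on where $f_n$ has nearly caught up'' device as your $cg$-trick, merely carried out in two steps instead of one. Your observation that the $\mu^k$-null discrepancies are harmless because of \eqref{eq:M_alpha moment control} is also the right way to close that minor gap.
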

\begin{proof}
If the claim holds for $g=\mathbf{1}_{A}$, where $A \in \mathcal{E}^{(k)}$, then it extends to any general simple $g \in \mathcal{S}_k$. Indeed, assume that the claim has been shown for these special indicator $g$'s. Now suppose $g = \sum_{i=1}^N a_i \mathbf{1}_{A_i}$, where $a_1, \ldots, a_N \in (0,\infty)$, and $A_1, \ldots, A_N \in \mathcal{E}^{(k)}$ are  disjoint. Since $f_n\mathbf{1}_{A_i} \nearrow f \mathbf{1}_{A_i}$ and $0 \leq a_i\mathbf{1}_{A_i} \leq f \mathbf{1}_{A_i}$, we have by assumption $\lim _n  I_k^e(f_n\mathbf{1}_{A_i}) \geq  I_k^e(a_i \mathbf{1}_{A_i})$ a.s.\ for $1 \leq i \leq N$. Then by Corollary \ref{cor:gen int}, 
the following relations hold a.s.:
$$
\lim _n  I_k^e(  f_n) \geq \lim _n  I_k^e\left( \bigvee_{i = 1}^N f_n\mathbf{1}_{A_i}\right) = \bigvee_{i = 1}^N\lim _n  I_k^e(f_n\mathbf{1}_{A_i}) \geq \bigvee_{i = 1}^N I_k^e(a_i \mathbf{1}_{A_i}) = I_k^e(g).
$$
Now, we prove the case where  $g=\boldsymbol{1}_{A}$ for $A \in \mathcal{E}^{(k)}$.  For  $\epsilon\in (0,1)$, define $$B_n=\left\{\boldsymbol{u}\in A \mid f_n(\boldsymbol{u}) \geq 1-\epsilon\right\}.$$
Observe that  $B_n \in \mathcal{E}^{(k)}$, $B_n\subset B_{n+1}$, $n\in \bb{Z}_+$,    $ \bigcup_{n=1}^{\infty} B_n  = A$, and $\boldsymbol{1}_{B_n} \nearrow \boldsymbol{1}_{A} $.
Using the monotonicity property in Corollary \ref{cor:gen int}, we can derive the following inequalities that hold a.s.:
\begin{equation}\label{eq 25}
I_k^e(f_n) \geq I_k^e(f_n\boldsymbol{1}_{B_n}) \geq I_k^e((1-\epsilon) \boldsymbol{1}_{B_n}) = (1-\epsilon)  I_k^e(\boldsymbol{1}_{B_n}).
\end{equation}
 In view of the $\sigma$-maxitive property of $M_\alpha^{(k)}$, we have $M_\alpha^{(k)}(B_n) \xrightarrow{\text{a.s.}}  M_\alpha^{(k)}(A)$ as $n \rightarrow \infty$.
 The conclusion follows from first letting $n\rightarrow\infty$ in \eqref{eq 25}, and then letting $\epsilon\rightarrow 0$.
\end{proof}

\section{Auxiliary results for LePage representation}

Suppose $\alpha\in (0,\infty)$, and 
$\pp{X_{\boldsymbol{j}}}_{\boldsymbol{j}\in \mathcal{D}_{k,<}}$ is an array of marginally identically distributed (possibly dependent) nonnegative random variables that are independent of the  unit-rate Poisson arrival times $\pp{ \Gamma_j}_{j \in \bb{Z}_+}$. For $m\in \bb{Z}_+$,   
introduce 
\begin{equation}\label{T}
\begin{aligned}
\mathcal{T}_{k, m}^{\prime}  & =  \bigvee_{\boldsymbol{j} \in \mathcal{D}_{k,<}, j_1 \geq m}\left[\Gamma_{\boldsymbol{j}}\right]^{-1 / \alpha} X_{\boldsymbol{j}}\,\,\boldsymbol{1}_{\{X_{\boldsymbol{j}}^\alpha \leq[\boldsymbol{j}]\}},   \\
\mathcal{T}_{k, m}^{\prime\prime}&=  \bigvee_{\boldsymbol{j} \in \mathcal{D}_{k,<}, j_1 \geq m}\left[\Gamma_{\boldsymbol{j}}\right]^{-1 / \alpha} X_{\boldsymbol{j}}\,\, \boldsymbol{1}_{\{X_{\boldsymbol{j}}^\alpha>[\boldsymbol{j}]\}}.
\end{aligned}
\end{equation}

The following proposition is an adaptation of \cite[Proposition 5.1]{samorodnitsky1989asymptotic}. 
\begin{prop}\label{prop6.7}~\\
(a) Let $r>\alpha, m \geq m_0>k r / \alpha$, and $k \geq 1$. Then, there exists a finite constant $C^{\prime}>0$, depending only on $\alpha, r, m_0$, and  {$k$}, but independent of $m$ and the distribution of $\pp{X_{\boldsymbol{j}}}_{\boldsymbol{j}\in \mathcal{D}_{k,<}}$, such that
$$
\mathbb{E}\left|\mathcal{T}_{k, m}^{\prime}\right|^\alpha \leq C^{\prime}\left\{ \mathbb{E}\left[ X_{\boldsymbol{j}} ^\alpha\left(1+\left(\ln _{+} X_{\boldsymbol{j}}\right)^{k-1}\right)\right]\right\}^{\alpha / r} .
$$
(b) Consider $\Phi(x)=x /(\ln (a+x))^{k-1}$ with $a$ chosen large enough to have $\Phi$ belong to the class $\mathscr{K}_\alpha$ as defined in \cite[Section 1]{samorodnitsky1989asymptotic}, and $\Phi_\alpha(x)=\Phi\left(x^\alpha\right)$, $x \geq 0$. Let $k\ge 2$ and $m \geq m_0>k$. Then, there is a finite constant $C^{\prime \prime}>0$ depending only on $\alpha, m_0$ and $k$, but independent of $m$ and the law of $\pp{X_{\boldsymbol{j}}}_{\boldsymbol{j}\in \mathcal{D}_{k,<}}$, such that
$$
\mathbb{E} \Phi_\alpha\left( \mathcal{T}_{k, m}^{\prime \prime} \right) \leq \begin{cases}C^{\prime \prime} \mathbb{E}\left[X_{\boldsymbol{j}}^\alpha\left(1+\left(\ln _{+}X_{\boldsymbol{j}}\right)^{k-1}\right)\right], & \text { if } k>2, \\ C^{\prime \prime} \mathbb{E}\left[X_{\boldsymbol{j}}^\alpha\left(1+\ln _{+}X_{\boldsymbol{j}} \ln _{+}|\ln X_{\boldsymbol{j}}|\right)\right], & \text { if } k=2.\end{cases}
$$
(c) Let $k \geq 1$ and $m \geq m_0>k$. Then, there exists a finite constant $C^{\prime \prime \prime}>0$, depending only on $\alpha$, $m_0$, and $k$, but independent of $m$ and the distribution of $\pp{X_{\boldsymbol{j}}}_{\boldsymbol{j} \in \mathcal{D}_{k,<}}$, such that
$$
\mathbb{E}\left|\mathcal{T}_{k, m}^{\prime \prime}\right|^\alpha \leq C^{\prime \prime \prime} \mathbb{E}\left[X_{\boldsymbol{j}}^\alpha\left(1+\left(\ln _{+}X_{\boldsymbol{j}}\right)^k\right)\right].
$$
\end{prop}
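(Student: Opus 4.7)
The plan is to adapt the proof of \cite[Proposition 5.1]{samorodnitsky1989asymptotic}, which establishes analogous estimates for the corresponding additive (multiple stable) LePage series. The essential simplification in our setting is that all summands are nonnegative and no Rademacher signs appear, so we may repeatedly dominate the supremum by a series via the elementary inequality $\bigl(\bigvee_{\boldsymbol{j}} a_{\boldsymbol{j}}\bigr)^s \le \sum_{\boldsymbol{j}} a_{\boldsymbol{j}}^s$, valid for any $s>0$ and nonnegative $a_{\boldsymbol{j}}$. Consequently each claim reduces to bounding the first moment of a nonnegative series, after which the technique of Samorodnitsky and Taqqu applies, as explained below.

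For part (a), raising to the $r$-th power, applying the sup-to-sum bound, and taking expectation while exploiting independence of $(X_{\boldsymbol{j}})$ and $(\Gamma_j)$ reduces the estimate to
\[
\sum_{\boldsymbol{j}\in\mathcal{D}_{k,<},\,j_1\ge m}\mathbb{E}\bigl[[\Gamma_{\boldsymbol{j}}]^{-r/\alpha}\bigr]\,\mathbb{E}\bigl[X^r\mathbf{1}_{\{X^\alpha\le[\boldsymbol{j}]\}}\bigr],
\]
where $X$ has the common marginal law of the $X_{\boldsymbol{j}}$'s. The condition $m>kr/\alpha$ ensures that the $\Gamma$-moments are finite and admit the scaling $\mathbb{E}[[\Gamma_{\boldsymbol{j}}]^{-r/\alpha}]\asymp[\boldsymbol{j}]^{-r/\alpha}$, obtained, for example, via successive conditioning on the Gamma increments together with Stirling's formula. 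After a Fubini interchange, the resulting inner harmonic-type sum $\sum_{\boldsymbol{j}:[\boldsymbol{j}]\ge y,\,j_1\ge m}[\boldsymbol{j}]^{-r/\alpha}$ is $O\bigl(y^{1-r/\alpha}(1+(\ln_+y)^{k-1})\bigr)$ by a standard iterated-integral approximation. This yields $\mathbb{E}(\mathcal{T}_{k,m}')^r\le C\,\mathbb{E}[X^\alpha(1+(\ln_+X)^{k-1})]$, and the conclusion in (a) follows from Jensen's inequality with exponent $\alpha/r\in(0,1)$. The proof of (c) is entirely parallel: the sup-to-sum bound applied to $(\mathcal{T}_{k,m}'')^\alpha$ reduces the problem to estimating $\sum_{\boldsymbol{j}:[\boldsymbol{j}]<y,\,j_1\ge m}[\boldsymbol{j}]^{-1}=O((\ln_+y)^k)$, which directly delivers the claimed bound.

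Part (b) is the most delicate. Monotonicity of $\Phi_\alpha$ gives $\Phi_\alpha(\mathcal{T}_{k,m}'')\le\sum_{\boldsymbol{j}}\Phi_\alpha\bigl([\Gamma_{\boldsymbol{j}}]^{-1/\alpha}X_{\boldsymbol{j}}\bigr)\mathbf{1}_{\{X_{\boldsymbol{j}}^\alpha>[\boldsymbol{j}]\}}$, and the logarithmic denominator in $\Phi_\alpha(x)=x^\alpha/(\ln(a+x^\alpha))^{k-1}$ is designed precisely to cancel one power of the iterated logarithm that appeared in the argument for (c). For $k>2$ this yields the bound $C''\,\mathbb{E}[X^\alpha(1+(\ln_+X)^{k-1})]$. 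The main obstacle will be the borderline case $k=2$, where the logarithmic cancellation is sharp and the residual contribution produces the extra $\ln_+|\ln X|$ factor; this requires a two-regime splitting of the sum over $\boldsymbol{j}$ according to the relative size of $X^\alpha$ and $[\boldsymbol{j}]$, together with a careful endpoint analysis of the slowly varying weight, following \cite[Proposition 5.1(b)]{samorodnitsky1989asymptotic} essentially verbatim.
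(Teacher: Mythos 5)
Your proposal is correct and takes essentially the same route as the paper: both reduce the supremum to a sum (raising to the $r$-th power for (a), using monotonicity of $\Phi_\alpha$ for (b), and the power-$\alpha$ sup-to-sum bound for (c)) and then run the moment/Fubini machinery of \cite[Proposition 5.1]{samorodnitsky1989asymptotic}, with the Lyapunov/Jensen step $\mathbb{E}|\mathcal{T}'_{k,m}|^\alpha\le(\mathbb{E}|\mathcal{T}'_{k,m}|^r)^{\alpha/r}$ appearing in both arguments. Your version spells out more of the underlying estimates (the $\Gamma$-moment scaling and the harmonic-sum bounds) than the paper's terse proof, but the strategy is identical.
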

\begin{proof}

We only highlight the difference compared to the proof of \cite[Proposition 5.1]{samorodnitsky1989asymptotic}. 

For (a), by Hölder inequality, we have $ \mathbb{E}\left|\mathcal{T}_{k, m}^{\prime}\right|^\alpha \leq\left(\mathbb{E}\left|\mathcal{T}_{k, m}^{\prime}\right|^r\right)^{\alpha / r}$. Bounding supremum by sum, the right hand side of this inequality is further bounded by 
\begin{align*}
      \left[\sum_{\boldsymbol{j} \in \mathcal{D}_{k,<}, j_1 \geq m} \mathbb{E}\left[\Gamma_{\boldsymbol{j}}\right]^{-r / \alpha} \mathbb{E}\left[X_{\boldsymbol{j}}^r \boldsymbol{1}_{\left\{X_{\boldsymbol{j}}^\alpha \leq[\boldsymbol{j}]\right\}}\right]\right]^{\alpha / r}\text{.}
\end{align*}

For (b), since $\Phi_\alpha$ is an increasing function,   we can thus place the supremum  outside the function, and then bound the superemum by sum as: 
\begin{align*}
\mathbb{E} \Phi_\alpha\left(\left|\mathcal{T}_{k, m}^{\prime \prime}\right|\right) & \leq \sum_{\boldsymbol{j} \in \mathcal{D}_{k,<}, j_1 \geq m} \mathbb{E} \left[\Phi_\alpha\left(X_{\boldsymbol{j}}\left[\Gamma_{\boldsymbol{j}}\right]^{-1 / \alpha}\right) \boldsymbol{1}_{\left\{X_{\boldsymbol{j}}^\alpha>[\boldsymbol{j}]\right\}}\right] \text{.}
\end{align*}
The treatment for (c), and the rest of the proof, all  follow exactly similar arguments as in the proof of \cite[Proposition 5.1]{samorodnitsky1989asymptotic}. In the context of \cite[]{samorodnitsky1989asymptotic}, the range of $\alpha$ was restricted to $\alpha\in (0,2)$, while an inspection shows that the argument works for all $\alpha\in (0,\infty)$.
\end{proof}
We introduce the following corollary, a decoupled variant of \cite[Proposition 5.1]{samorodnitsky1989asymptotic},  which will be useful in proving Lemma \ref{thm: tail of tensor}.
\begin{cor}\label{at6.3}
Suppose $\alpha \in (0,1)$ and $p,q \in \mathbb{Z}_+$. Let
$\pp{X_{\boldsymbol{i}, \boldsymbol{j}}}_{\boldsymbol{i}\in \mathcal{D}_{p,<},\boldsymbol{j}\in \mathcal{D}_{q,<}}$ be an array of marginally identically distributed (possibly dependent) nonnegative random variables, independent of $\pp{\Gamma_j}_{j\in \bb{Z}_+}$. Further, set  
\begin{equation*} 
\begin{aligned}
\mathcal{T}_{p,q, m}^{\prime}  & =  \sum_{\substack{\boldsymbol{i} \in \mathcal{D}_{p,<}, \boldsymbol{j} \in \mathcal{D}_{q,<},\\ i_1 \geq m, j_1 \geq m, i_w \neq j_v,\\ 1 \leq w \leq p, 1\leq  v \leq q}}\left[\Gamma_{\boldsymbol{i}}\right]^{-1 / \alpha} \left[\Gamma_{\boldsymbol{j}}\right]^{-1 / \alpha} X_{\boldsymbol{i},\boldsymbol{j}}\,\,\boldsymbol{1}_{\{X_{\boldsymbol{i},\boldsymbol{j}}^\alpha \leq [\boldsymbol{i}][\boldsymbol{j}]\}},   \\
\mathcal{T}_{p,q, m}^{\prime\prime} &=  \sum_{\substack{\boldsymbol{i} \in \mathcal{D}_{p,<}, \boldsymbol{j} \in \mathcal{D}_{q,<},\\ i_1 \geq m, j_1 \geq m, i_w \neq j_v,\\1 \leq w \leq p, 1\leq  v \leq q}}\left[\Gamma_{\boldsymbol{i}}\right]^{-1 / \alpha} \left[\Gamma_{\boldsymbol{j}}\right]^{-1 / \alpha} X_{\boldsymbol{i},\boldsymbol{j}}\,\, \boldsymbol{1}_{\{X_{\boldsymbol{i},\boldsymbol{j}}^\alpha>[\boldsymbol{i}][\boldsymbol{j}]\}}.
\end{aligned}
\end{equation*}
Then, the conclusions in (a) and (c) of Proposition \ref{prop6.7} hold when $\mathcal{T}^{\prime}_{k,m}$ is replaced by    $\mathcal{T}_{p,q, m}^{\prime}$, $\mathcal{T}^{\prime\prime}_{k,m}$ is replaced by    $\mathcal{T}_{p,q, m}^{\prime\prime}$,  $X_{\boldsymbol{j}}$  replaced by  $X_{\boldsymbol{i},\boldsymbol{j}}$, $k$  replaced by $p+q$,  with $r$ in (a) further restricted to $r\in (\alpha,1)$,  and the constants $C'$ and $C'''$ depending only on $p,q,m_0$, but independent of $m$ and the marginal distribution of  $X_{\boldsymbol{i},\boldsymbol{j}}$.
\end{cor}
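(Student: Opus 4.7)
\medskip

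\noindent\textbf{Proof proposal for Corollary \ref{at6.3}.}
The plan is to mirror the arguments used for Proposition \ref{prop6.7}, while exploiting two structural simplifications available in the decoupled setting: (i) the quantities $\cl{T}'_{p,q,m}$ and $\cl{T}''_{p,q,m}$ are written as sums (rather than suprema) from the outset, so no initial ``sup bounded by sum'' step is required; (ii) since $\alpha\in(0,1)$, one has direct access to the subadditivity inequality $(\sum_i a_i)^s\le \sum_i a_i^s$ for $s\in(0,1)$ and $a_i\ge0$, which  will drive both estimates.  A further novelty is that the product $[\Gamma_{\boldsymbol{i}}]\,[\Gamma_{\boldsymbol{j}}]$ involves $p+q$ distinct Gamma variables (the condition $i_w\ne j_v$ guarantees this), so after taking expectations the relevant summation index set effectively has dimension $p+q$, which is precisely why $k$ is replaced by $p+q$ in the conclusions.

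For part (a), I would first apply Jensen's inequality to reduce to the $L^r$ quasi-norm: since $\alpha/r<1$, we have $\bb{E}|\cl{T}'_{p,q,m}|^\alpha\le (\bb{E}|\cl{T}'_{p,q,m}|^r)^{\alpha/r}$. Because $r\in(\alpha,1)$, the inequality $(\sum_\iota a_\iota)^r\le\sum_\iota a_\iota^r$ gives
\[
\bb{E}|\cl{T}'_{p,q,m}|^r
\le \sum_{\boldsymbol{i},\boldsymbol{j}}
\bb{E}[\Gamma_{\boldsymbol{i}}]^{-r/\alpha}\,\bb{E}[\Gamma_{\boldsymbol{j}}]^{-r/\alpha}\,
\bb{E}\bigl[X_{\boldsymbol{i},\boldsymbol{j}}^{r}\mathbf{1}_{\{X_{\boldsymbol{i},\boldsymbol{j}}^\alpha\le [\boldsymbol{i}][\boldsymbol{j}]\}}\bigr],
\]
where I use independence of $(\Gamma_j)$ from $(X_{\boldsymbol{i},\boldsymbol{j}})$ and the fact that the $p+q$ Gamma factors involved are mutually different. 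The standard bound $\bb{E}[\Gamma_j^{-s}]\le c_s j^{-s}$ for $j\ge m_0$ (together with $m>(p+q)r/\alpha$) then reduces the outer sum to an absolutely convergent quantity comparable to $\sum_{\boldsymbol{i},\boldsymbol{j}}[\boldsymbol{i}]^{-r/\alpha}[\boldsymbol{j}]^{-r/\alpha}$. At this stage I would use Fubini to swap expectation and summation and bound $\bb{E}[X^r\mathbf{1}_{\{X^\alpha\le y\}}]$ via the layer-cake identity $\bb{E}[X^r\mathbf{1}_{\{X^\alpha\le y\}}]=(r/\alpha)\int_0^y u^{r/\alpha-1}\bb{P}(X^\alpha>u)du$, then   apply a lattice-point counting estimate $\sum_{[\boldsymbol{i}][\boldsymbol{j}]\ge u,\,i_1,j_1\ge m}[\boldsymbol{i}]^{-r/\alpha}[\boldsymbol{j}]^{-r/\alpha}\lesssim u^{-(r/\alpha-1)}(\ln_+ u)^{p+q-1}$. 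Combining and invoking Hölder of exponent $r/\alpha$ in the resulting integral against $\bb{E}[X^\alpha]$ yields, after raising to power $\alpha/r$, the required bound involving $\{\bb{E}[X_{\boldsymbol{i},\boldsymbol{j}}^\alpha(1+(\ln_+X_{\boldsymbol{i},\boldsymbol{j}})^{p+q-1})]\}^{\alpha/r}$.

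For part (c), since $\alpha\in(0,1)$ we can skip the Hölder step and directly use subadditivity:
\[
\bb{E}|\cl{T}''_{p,q,m}|^\alpha
\le \sum_{\boldsymbol{i},\boldsymbol{j}}\bb{E}[\Gamma_{\boldsymbol{i}}]^{-1}\bb{E}[\Gamma_{\boldsymbol{j}}]^{-1}\,
\bb{E}\bigl[X_{\boldsymbol{i},\boldsymbol{j}}^{\alpha}\mathbf{1}_{\{X_{\boldsymbol{i},\boldsymbol{j}}^\alpha> [\boldsymbol{i}][\boldsymbol{j}]\}}\bigr].
\]
Replacing $\bb{E}[\Gamma_j^{-1}]$ by a constant multiple of $j^{-1}$, applying Fubini, and using the elementary bound
\[
\sum_{\substack{\boldsymbol{i}\in\mathcal{D}_{p,<},\boldsymbol{j}\in\mathcal{D}_{q,<}\\ i_1,j_1\ge m,\,[\boldsymbol{i}][\boldsymbol{j}]<y}}[\boldsymbol{i}]^{-1}[\boldsymbol{j}]^{-1}
\le \tfrac{1}{p!\,q!}\Bigl(\sum_{i\ge m}^{i\le y}i^{-1}\Bigr)^{p+q}
\le c\,(1+\ln_+ y)^{p+q},
\]
specialized at $y=X_{\boldsymbol{i},\boldsymbol{j}}^\alpha$, produces the desired bound $\bb{E}[X_{\boldsymbol{i},\boldsymbol{j}}^\alpha(1+(\ln_+ X_{\boldsymbol{i},\boldsymbol{j}})^{p+q})]$, up to an absolute constant depending only on $\alpha,p,q,m_0$. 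The uniform constants $C',C'''$ do not depend on $m\ge m_0$ because the relevant series decay is dominated by the first few indices $j\ge m_0$.

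The main obstacle is the counting estimate for the tail sum over $\{(\boldsymbol{i},\boldsymbol{j}):[\boldsymbol{i}][\boldsymbol{j}]\ge u\}$ in part (a): one must verify that dropping the restrictions $\boldsymbol{i}\in\mathcal{D}_{p,<}$, $\boldsymbol{j}\in\mathcal{D}_{q,<}$ and $i_w\ne j_v$ only loses a combinatorial factor absorbable into $C'$, and that the resulting bound is sharp enough to yield exactly the exponent $p+q-1$ on the logarithm. This mirrors the analogous step in \cite[Prop.~5.1]{samorodnitsky1989asymptotic}, whose argument  transfers verbatim once the convergence of the auxiliary series is confirmed under $m>(p+q)r/\alpha$.
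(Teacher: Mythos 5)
Your overall route is the same as the paper's: for (a), Lyapunov/H\"older to pass from the $\alpha$-th to the $r$-th moment, then the subadditivity $(\sum_\iota a_\iota)^r\le\sum_\iota a_\iota^r$ for $r\in(\alpha,1)$, Fubini, a moment bound on the Gamma products, and a lattice counting estimate over $\{[\boldsymbol{i}][\boldsymbol{j}]\ge u\}$ producing the $(\ln u)^{p+q-1}$ factor; for (c), direct subadditivity with exponent $\alpha<1$ and the counting bound $\sum_{[\boldsymbol{i}][\boldsymbol{j}]\le y}[\boldsymbol{i}]^{-1}[\boldsymbol{j}]^{-1}\lesssim(\ln_+ y)^{p+q}$. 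Your use of a continuous layer-cake identity in place of the paper's decomposition over unit shells $\{k-1<X^\alpha\le k\}$ is an immaterial variant, and both give the stated moment functionals.

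There is, however, one step that is not valid as written. You factor
\[
\mathbb{E}\Bigl[[\Gamma_{\boldsymbol{i}}]^{-r/\alpha}[\Gamma_{\boldsymbol{j}}]^{-r/\alpha}\Bigr]
=\mathbb{E}\bigl[[\Gamma_{\boldsymbol{i}}]^{-r/\alpha}\bigr]\,\mathbb{E}\bigl[[\Gamma_{\boldsymbol{j}}]^{-r/\alpha}\bigr],
\]
justified by the indices being ``mutually different.'' Distinctness of the indices does not make $\Gamma_{\boldsymbol{i}}$ and $\Gamma_{\boldsymbol{j}}$ independent: the $\Gamma_k$ are partial sums of the same i.i.d.\ exponentials, so all the factors are dependent, and since each $\Gamma_k^{-s}$ is a decreasing function of these exponentials the variables are positively associated --- the product of expectations is a \emph{lower} bound for the expectation of the product, which is the wrong direction. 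The same issue affects your replacement of $\mathbb{E}[\Gamma_j^{-1}]$ by $cj^{-1}$ factor by factor in part (c). The fix is exactly what the paper invokes: the joint moment bound $\mathbb{E}\bigl[\prod_{\ell=1}^{p+q}\Gamma_{k_\ell}^{-s}\bigr]\le C\prod_{\ell}k_\ell^{-s}$ for distinct indices with $\min_\ell k_\ell\ge m_0>(p+q)s$ (Eq.\ (3.2) of Samorodnitsky--Szulga), applied to the combined $(p+q)$-tuple of distinct indices guaranteed by the constraint $i_w\ne j_v$. With that substitution your argument goes through and matches the paper's proof; the remaining details (the counting estimates, the absorption of the combinatorial factors from dropping the ordering and distinctness constraints, and the uniformity of the constants in $m\ge m_0$) are handled as you describe.
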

\begin{proof}
Below, let $C$ be a positive constant whose value may vary from one expression to another, depending only on $\alpha$, $m_0$, $p$ and $q$, but independent of $m$ and the marginal distribution of  $X_{\boldsymbol{i},\boldsymbol{j}}$. Recall that the bracket notation $[\ \cdot \ ]$ with a vector index inside stands for a product with respect to the indices; for example,
$[\boldsymbol{i}]=i_1\times \ldots \times i_p$ for $\boldsymbol{i}=(i_1,\ldots,i_p)\in \mathcal{D}_{p,<}$.\\
For (a),  by  Hölder inequality,  the inequality $\pp{\sum_\ell a_\ell}^r\le \sum_{\ell} a_\ell^r $ for positive sequence $(a_\ell)$ and $r\in (0,1)$, and   Fubini's theorem, we conclude that 
\begin{align}\label{eq 30}
\mathbb{E}\left|\mathcal{T}_{p,q, m}^{\prime}\right|^\alpha \le & \pp{\mathbb{E}\left|\mathcal{T}_{p,q, m}^{\prime}\right|^r}^{\alpha/r} \notag \\\leq & C\left[\sum_{\substack{\boldsymbol{i} \in \mathcal{D}_{p,<}, \boldsymbol{j} \in \mathcal{D}_{q,<},\\ i_1 \geq m, j_1 \geq m, i_w \neq j_v,\\ 1 \leq w \leq p, 1\leq  v \leq q}} \mathbb{E}\pp{\left[\Gamma_{\boldsymbol{i}}\right]^{-r / \alpha} \left[\Gamma_{\boldsymbol{j}}\right]^{-r / \alpha} }\mathbb{E}\pp{X_{\boldsymbol{i},\boldsymbol{j}}^r\mathbf{1}_{\left\{X_{\boldsymbol{i},\boldsymbol{j}}^\alpha \leq [\boldsymbol{i}][\boldsymbol{j}]\right\}}}\right]^{\alpha / r}.
\end{align}
  Further, first note  that $m >m_0 \geq (p+q)r/\alpha\ge 2$.  
 By \cite[Eq.\ (3.2)]{samorodnitsky1989asymptotic}, and the fact that $X_{\boldsymbol{i},\boldsymbol{j}}$'s are marginally identically distributed, the right hand side of (\ref{eq 30}) is bounded above by 
\begin{align}\label{eq 31}
& C\left[\sum_{\substack{\boldsymbol{i} \in \mathcal{D}_{p,<}, \boldsymbol{j} \in \mathcal{D}_{q,<},\\ i_1 \geq m_0, j_1 \geq m_0, i_w \neq j_v,\\ 1 \leq w \leq p, 1\leq  v \leq q}} [\boldsymbol{i}]^{-r / \alpha} [\boldsymbol{j}]^{-r / \alpha} \sum_{k=1}^{[\boldsymbol{i}][\boldsymbol{j}]} \mathbb{E}\left(X_{\boldsymbol{i},\boldsymbol{j}}^r \mathbf{1}_{\left\{k-1<X_{\boldsymbol{i},\boldsymbol{j}}^\alpha \leq k\right\}}\right)\right]^{\alpha / r} \notag\\
 \leq & C\left[\sum_{k=1}^{\infty} \mathbb{E}\left(X_{\boldsymbol{i}_0,\boldsymbol{j}_0}^r \mathbf{1}_{\left\{k-1<X_{\boldsymbol{i}_0,\boldsymbol{j}_0}^\alpha \leq k\right\}}\right) \sum_{\substack{[\boldsymbol{i}][\boldsymbol{j}] \geq k\\ [\boldsymbol{i}]>2,[\boldsymbol{j}]>2}}[\boldsymbol{i}]^{-r / \alpha} [\boldsymbol{j}]^{-r / \alpha}\right]^{\alpha / r},
\end{align}
where $\boldsymbol{i}_0$ and $\boldsymbol{j}_0$ are two fixed elements in $\cl{D}_{p,<}$ and $\cl{D}_{q ,<}$ respectively, and the second summation in \eqref{eq 31} is over all $\boldsymbol{i}\in \cl{D}_{p,<}$ and $\boldsymbol{j}\in \cl{D}_{q,<}$ satisfying the  constraint  indicated below the summation sign (similar notation will be used below).
Note that by \cite[Lemma 4.1 (ii), (iv)]{samorodnitsky1989asymptotic}, we have
\begin{align*}
   \sum_{\substack{[\boldsymbol{i}][\boldsymbol{j}] \geq k\\ [\boldsymbol{i}]>2,[\boldsymbol{j}]>2}}[\boldsymbol{i}]^{-r / \alpha} [\boldsymbol{j}]^{-r / \alpha} &\leq \sum_{2<[\boldsymbol{i}]\leq k} [\boldsymbol{i}]^{-r / \alpha}\sum_{[\boldsymbol{j}] \geq k/[\boldsymbol{i}]}  [\boldsymbol{j}]^{-r / \alpha} + \sum_{[\boldsymbol{i}]> k} [\boldsymbol{i}]^{-r / \alpha}  \sum_{[\boldsymbol{j}] >2}[\boldsymbol{j}]^{-r / \alpha}\\
    & \leq C \sum_{2<[\boldsymbol{i}]\leq k} [\boldsymbol{i}]^{-r / \alpha}\pp{k/[\boldsymbol{i}]}^{1-r/\alpha}(\ln \pp{k/[\boldsymbol{i}]})^{q-1} + C k^{1-r / \alpha}(\ln k)^{p-1}\\
    & \leq C\sum_{2<[\boldsymbol{i}]\leq k}[\boldsymbol{i}]^{-1}  k^{1-r / \alpha}(\ln k)^{q-1} + C k^{1-r / \alpha}(\ln k)^{p-1} \\
    &\leq C k^{1-r / \alpha}(\ln k)^{p+q-1}.
\end{align*}
Then, the  expression in (\ref{eq 31})   is bounded above by 
\begin{equation}
\begin{aligned}
&  C\left[\mathbb{E}\left(X_{\boldsymbol{i},\boldsymbol{j}}^r \mathbf{1}_{\left\{0<X_{\boldsymbol{i},\boldsymbol{j}}^\alpha \leq 2\right\}}\right) +\sum_{k=3}^{\infty} k(\ln k)^{p+q-1} \mathbb{P}\left(k-1<X_{\boldsymbol{i},\boldsymbol{j}}^\alpha \leq k\right)\right]^{\alpha / r} \\
& \leq C\left\{\mathbb{E}\left[X_{\boldsymbol{i},\boldsymbol{j}}^\alpha\left(1+\left(\ln _{+}X_{\boldsymbol{i},\boldsymbol{j}}\right)^{p+q-1}\right)\right]\right\}^{\alpha / r} .
\end{aligned}
\end{equation}
For (c), again by the inequality $\pp{\sum_\ell a_\ell}^\alpha\le \sum_{\ell} a_\ell^\alpha $ and \cite[Eq.\ (3.2)]{samorodnitsky1989asymptotic}, we have 
\begin{align*}
    \mathbb{E}\left|\mathcal{T}_{p,q, m}^{\prime\prime}\right|^\alpha  &\le \sum_{\substack{\boldsymbol{i} \in \mathcal{D}_{p,<}, \boldsymbol{j} \in \mathcal{D}_{q,<},\\ i_1 \geq m, j_1 \geq m, i_w \neq j_v,\\ 1 \leq w \leq p, 1\leq  v \leq q}} \mathbb{E} \pp{\left[\Gamma_{\boldsymbol{i}}\right]^{-1 }  \left[\Gamma_{\boldsymbol{j}}\right]^{-1 }  }\mathbb{E}\pp{X_{\boldsymbol{i},\boldsymbol{j}}^\alpha\,\, \boldsymbol{1}_{\{X_{\boldsymbol{i},\boldsymbol{j}}^\alpha>[\boldsymbol{i}][\boldsymbol{j}]\}}}\\
    &\leq C\sum_{\substack{\boldsymbol{i} \in \mathcal{D}_{p,<}, j_1 \geq m\\ \boldsymbol{j} \in \mathcal{D}_{q,<}, i_1 \geq m}} \left[\boldsymbol{i}\right]^{-1 }  \left[\boldsymbol{j}\right]^{-1 }  \mathbb{E}\pp{X_{\boldsymbol{i},\boldsymbol{j}}^\alpha\,\, \boldsymbol{1}_{\{X_{\boldsymbol{i},\boldsymbol{j}}^\alpha>[\boldsymbol{i}][\boldsymbol{j}]\}}}\\
    & \le C \sum_{\substack{\boldsymbol{i} \in \mathcal{D}_{p,<}, j_1 \geq m_0\\ \boldsymbol{j} \in \mathcal{D}_{q,<}, i_1 \geq m_0}} \left[\boldsymbol{i}\right]^{-1 }  \left[\boldsymbol{j}\right]^{-1 }  \sum_{k = [\boldsymbol{i}][\boldsymbol{j}]}^\infty\mathbb{E}\pp{X_{\boldsymbol{i},\boldsymbol{j}}^\alpha\,\, \boldsymbol{1}_{\{k<X_{\boldsymbol{i},\boldsymbol{j}}^\alpha \le k+1\}}}.
\end{align*}
 So changing the order of summation, and making use of \cite[Lemma 4.1(ii)]{samorodnitsky1989asymptotic}, recalling  $m_0>1$,  we obtain
\begin{align*}
    \mathbb{E}\left|\mathcal{T}_{p,q, m}^{\prime\prime}\right|^\alpha &\le C \sum_{k = 1}^\infty \sum_{[\boldsymbol{i}][\boldsymbol{j}] \le k} \left[\boldsymbol{i}\right]^{-1 }  \left[\boldsymbol{j}\right]^{-1 } (k+1)\mathbb{P}\pp{k<X_{\boldsymbol{i}_0,\boldsymbol{j}_0}^\alpha \le k+1}\\
    &\le C \sum_{k=1}^\infty (k+1)\mathbb{P}\pp{k<X_{\boldsymbol{i}_0,\boldsymbol{j}_0}^\alpha \le k+1} \sum_{[\boldsymbol{i}] \leq k} [\boldsymbol{i}]^{-1} \sum_{[\boldsymbol{j}] \leq k} [\boldsymbol{j}]^{-1}\\
    & \le C\sum_{k=1}^\infty (\ln k)^{p+q} (k+1)\mathbb{P}\pp{k<X_{\boldsymbol{i},\boldsymbol{j}}^\alpha \le k+1} \le C\mathbb{E}X_{\boldsymbol{i},\boldsymbol{j}}^\alpha\left(1+\left(\ln _{+}X_{\boldsymbol{i},\boldsymbol{j}}\right)^{p+q}\right).
\end{align*}
\end{proof}

\section{Additional discussions on sufficient conditions for integrability}

\begin{lem}\label{lem eqv}
    Suppose $\mu$ is a probability measure and $f: E^k \mapsto [0,\infty]$ is a measurable function. Let $\wt{f}$ be the max-symmetrization as in \eqref{sym def}. 
    \begin{enumerate}
        \item For $k = 2$, condition (\ref{I2}) holds if and only if it holds with  $f$  replaced by $\widetilde{f}$. 
        \item For $k >2$,
        $$
L^\alpha \ln ^{k-1} L(f, \mu) < \infty \Longleftrightarrow L^\alpha \ln ^{k-1} L(\widetilde{f}, \mu) < \infty.
$$
\item For $k = 2$, 
$$
L^\alpha \ln L \ln \ln L(f, \mu) < \infty \Longleftrightarrow L^\alpha \ln L \ln \ln L(\widetilde{f}, \mu) < \infty.
$$
\end{enumerate}
\end{lem}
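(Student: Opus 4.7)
The plan is to prove all three equivalences in parallel, exploiting two common facts: the product measure $\mu^k$ is invariant under coordinate permutations, and $\widetilde{f} = \bigvee_{\pi \in \Theta_k} f_\pi$ with $f_\pi(\boldsymbol{u}) := f(u_{\pi(1)},\ldots,u_{\pi(k)})$. In each item, the direction ``$\widetilde{f}$ satisfies the condition implies $f$ satisfies it'' will be relatively straightforward, while the forward direction will rely on a pointwise inequality bounding the $\widetilde{f}$-integrand by a combination of the $f_\pi$-integrands.

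For items (ii) and (iii), each condition is of the form $\int \phi(g)\,d\mu^k < \infty$ for a non-decreasing $\phi:[0,\infty) \to [0,\infty)$: namely $\phi(x) = x^\alpha(1+(\ln_+ x)^{k-1})$ for (ii) and $\phi(x) = x^\alpha(1+\ln_+ x\cdot\ln_+|\ln x|)$ for (iii), monotonicity of the latter being a short piecewise check (on $[0,1]$ and $[1,e]$ it reduces to $x^\alpha$; on $[e,\infty)$ an explicit derivative computation suffices). The converse direction is then immediate: $f \le \widetilde{f}$ pointwise and monotonicity of $\phi$ give $\int\phi(f)\,d\mu^k \le \int\phi(\widetilde{f})\,d\mu^k$. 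For the forward direction, monotonicity of $\phi$ yields $\phi(\widetilde{f}(\boldsymbol{u})) = \bigvee_{\pi} \phi(f_\pi(\boldsymbol{u})) \le \sum_{\pi} \phi(f_\pi(\boldsymbol{u}))$; integrating and using $\int\phi(f_\pi)\,d\mu^k = \int\phi(f)\,d\mu^k$ by permutation invariance of $\mu^k$ gives $\int \phi(\widetilde{f})\,d\mu^k \le k!\int\phi(f)\,d\mu^k$.

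For item (i), denote $A_g(s) := (\int g(s,u)^\alpha\mu(du))^{1/\alpha}$, $B_g(t) := (\int g(u,t)^\alpha\mu(du))^{1/\alpha}$, $R_g(s,t) := g(s,t)/(A_g(s)B_g(t))$, and $\Phi_g(s,t) := g^\alpha(s,t)(1+\ln_+ R_g(s,t))$, so that condition (\ref{I2}) reads $\int \Phi_g\,d\mu^2 < \infty$. Since $\widetilde{f} \ge f$ and $\widetilde{f}(s,u) \ge f(u,s)$, one has $A_{\widetilde{f}}(s) \ge A_f(s) \vee B_f(s)$; together with the symmetry $A_{\widetilde{f}} = B_{\widetilde{f}}$, this is the key structural ingredient. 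For the forward direction, the pointwise bound $\Phi_{\widetilde{f}}(s,t) \le \Phi_f(s,t) + \Phi_f(t,s)$ is established by cases: if $\widetilde{f}(s,t) = f(s,t)$, the above bounds yield $R_{\widetilde{f}}(s,t) \le R_f(s,t)$ and hence $\Phi_{\widetilde{f}}(s,t) \le \Phi_f(s,t)$; if $\widetilde{f}(s,t) = f(t,s)$, symmetrically $\Phi_{\widetilde{f}}(s,t) \le \Phi_f(t,s)$. Integration with $\mu^2$-invariance then gives $\int\Phi_{\widetilde{f}}\,d\mu^2 \le 2\int\Phi_f\,d\mu^2$.

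The main obstacle is the converse of item (i), since $R_f$ can pointwise exceed $R_{\widetilde{f}}$ when $A_f,B_f$ are much smaller than $A_{\widetilde{f}}$, so a naive monotonicity argument is unavailable. The plan is to exploit the identity $R_f = R_{\widetilde{f}}\cdot(f/\widetilde{f})\cdot(A_{\widetilde{f}}(s)/A_f(s))(A_{\widetilde{f}}(t)/B_f(t))$, in which $f/\widetilde{f} \le 1$ while the other two factors are at least $1$; applying $\ln_+$ together with the elementary bound $(a+b+c)_+ \le a_+ + b + c$ for $b,c \ge 0$ gives
\[
\ln_+ R_f(s,t) \le \ln_+ R_{\widetilde{f}}(s,t) + \ln(A_{\widetilde{f}}(s)/A_f(s)) + \ln(A_{\widetilde{f}}(t)/B_f(t)).
\]
Multiplying by $f^\alpha$ and integrating, the $\ln_+R_{\widetilde{f}}$-piece is bounded by $\int \widetilde{f}^\alpha \ln_+ R_{\widetilde{f}}\,d\mu^2$ using $f \le \widetilde{f}$. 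For the cross piece, Fubini gives $\int f^\alpha(s,t)\ln(A_{\widetilde{f}}(s)/A_f(s))\,d\mu^2 = \alpha^{-1}\int A_f(s)^\alpha \ln(A_{\widetilde{f}}(s)^\alpha/A_f(s)^\alpha)\,\mu(ds)$; the elementary inequality $-u\ln u \le 1/e$ on $u \in (0,1]$, applied with $u = A_f^\alpha/A_{\widetilde{f}}^\alpha \in (0,1]$, bounds the integrand pointwise by $A_{\widetilde{f}}^\alpha/e$, so the cross piece is at most $(e\alpha)^{-1}\int\widetilde{f}^\alpha\,d\mu^2$, and analogously for the third piece. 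Combining yields $\int\Phi_f\,d\mu^2 \le (1 + 2/(e\alpha))\int\Phi_{\widetilde{f}}\,d\mu^2 < \infty$, completing the converse of (i).
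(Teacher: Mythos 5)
Your proposal is correct and follows essentially the same route as the paper: items (ii)--(iii) via monotonicity of the integrand and $\bigvee_\pi \le \sum_\pi$ with permutation invariance of $\mu^k$, the forward direction of (i) by splitting according to which permutation attains the max, and the converse of (i) via the identical multiplicative factorization of the ratio together with $\ln_+(xy)\le \ln_+ x+\ln_+ y$ and a Fubini argument for the cross terms. The only (harmless) deviation is your bound on the cross terms, where you use $-u\ln u\le 1/e$ in place of the paper's combination of $A_{\widetilde f}\le A_f+B_f$ with $\ln(1+x)\le x$; both yield the required finiteness.
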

\begin{proof}
For each permutation $\pi \in \Theta_k$, define $f_\pi: E^k \mapsto[0, \infty]$ by $f_\pi\left(u_1, \ldots, u_k\right)=f\left(u_{\pi(1)}, \ldots, u_{\pi(k)}\right)$, and set $D_\pi=\left\{\boldsymbol{u} \in E^k \mid \widetilde{f}(\boldsymbol{u})=f_\pi(\boldsymbol{u})\right\}$.
Consider the case $k = 2$ and suppose condition (\ref{I2}) holds.     Let $D_{(1,2)}=D_{\mathrm{id}}$ where $\mathrm{id}$ is identity and let $D_{(2,1)}=D_{\pi}$ where $\pi(1)=2$ and $\pi(2)=1$.   Note that $E^2 = D_{(1,2)} \cup D_{(2,1)}$, and for any $\left(u_1, u_2\right) \in E^2$,  
$$
\int_E f\left(u_1, v\right)^\alpha \mu\left(d v\right) \leq \int_E \widetilde{f}\left(u_1, v\right)^\alpha  \mu(dv), \quad \int_E f\left(w, u_2\right)^\alpha \mu\left(d w\right) \leq \int_E \widetilde{f}\left(w, u_2\right)^\alpha  \mu(dw).
$$
Hence, 
\begin{align*}
    &\int_{E^2} \widetilde{f}(u_1, u_2)^\alpha\left(1+\ln _{+}\left(\frac{\widetilde{f}(u_1, u_2)}{\left(\int_E \widetilde{f}(u_1, v)^\alpha \mu(dv)\right)^{1 / \alpha}\left(\int_E \widetilde{f}(w, u_2)^\alpha \mu(dw)\right)^{1 / \alpha}}\right)\right) \mu(d u_1) \mu(d u_2)\\
    \leq  & \int_{D_{(1,2)}} f(u_1, u_2)^\alpha\left(1+\ln _{+}\left(\frac{f(u_1, u_2)}{\left(\int_E f(u_1, v)^\alpha \mu(d v)\right)^{1 / \alpha}\left(\int_E f(w, u_2)^\alpha \mu(d w)\right)^{1 / \alpha}}\right)\right) \mu(d u_1) \mu(d u_2) \\
   &+  \int_{D_{(2,1)}} f(u_2, u_1)^\alpha\left(1+\ln _{+}\left(\frac{f(u_2, u_1)}{\left(\int_E f(v, u_1)^\alpha \mu(d v)\right)^{1 / \alpha}\left(\int_E f(u_2, w)^\alpha \mu(d w)\right)^{1 / \alpha}}\right)\right) \mu(d u_1) \mu(d u_2)\\
    \leq & 2  \int_{E^2} f(u_1, u_2)^\alpha\left(1+\ln _{+}\left(\frac{f(u_1, u_2)}{\left(\int_E f(u_1, v)^\alpha \mu(dv)\right)^{1 / \alpha}\left(\int_E f(w, u_2)^\alpha \mu(dw)\right)^{1 / \alpha}}\right)\right) \mu(d u_1) \mu(d u_2) < \infty.
\end{align*}
Now, we assume (\ref{I2}) holds with $f$ replaced by $\wt{f}$, and we want to establish \eqref{I2}. For any   measurable function $h: E^2 \mapsto [0,\infty]$, define
$$
A_h\left(u_1\right):=\int_E h\left(u_1, u_2\right)^\alpha \mu\left(d u_2\right), \quad B_h\left(u_2\right):=\int_E h\left(u_1, u_2\right)^\alpha \mu\left(d u_1\right).
$$
The assumption can be written as
$$
\int_{E^2} \widetilde{f}(u_1,u_2)^\alpha\left(1+\ln_+\frac{\widetilde{f}(u_1, u_2)}{A_{\widetilde{f}}\left(u_1\right)^{1 / \alpha} B_{\widetilde{f}}\left(u_2\right)^{1 / \alpha}}\right) \mu(d u_1)\mu(du_2)<\infty .
$$
Since $f \leq \widetilde{f}$ pointwise, we also have $A_f \leq A_{\widetilde{f}}$ and $B_f \leq B_{\widetilde{f}}$ pointwise. Moreover,
$$
\frac{f\left(u_1, u_2\right)}{A_f\left(u_1\right)^{1 / \alpha} B_f\left(u_2\right)^{1 / \alpha}}=\frac{\widetilde{f}\left(u_1, u_2\right)}{A_{\widetilde{f}}\left(u_1\right)^{1 / \alpha} B_{\widetilde{f}}\left(u_2\right)^{1 / \alpha}} \cdot \frac{f}{\widetilde{f}} \cdot\left(\frac{A_{\widetilde{f}}\left(u_1\right)}{A_f\left(u_1\right)}\right)^{1 / \alpha}\left(\frac{B_{\widetilde{f}}\left(u_2\right)}{B_f\left(u_2\right)}\right)^{1 / \alpha} .
$$
Using the inequality $\ln _{+}(x y) \leq \ln _{+} x+\ln _{+} y$, for all $x,y> 0$, it follows that
$$
\ln _{+} \frac{f\left(u_1, u_2\right)}{A_f\left(u_1\right)^{1 / \alpha} B_f\left(u_2\right)^{1 / \alpha}} \leq \ln _{+} \frac{\widetilde{f}\left(u_1, u_2\right)}{A_{\widetilde{f}}\left(u_1\right)^{1 / \alpha} B_{\widetilde{f}}\left(u_2\right)^{1 / \alpha}}+\frac{1}{\alpha} \ln \frac{A_{\widetilde{f}}\left(u_1\right)}{A_f\left(u_1\right)}+\frac{1}{\alpha} \ln \frac{B_{\widetilde{f}}\left(u_2\right)}{B_f\left(u_2\right)} .
$$
Note that $A_{\widetilde{f}}\left(u_1\right) \leq A_f\left(u_1\right)+B_f\left(u_1\right)$ pointwise. So by Fubini's theorem and the inequality $\ln (1+ x) \leq x$, $x\ge 0$, we obtain
$$
\begin{aligned}
\int_{E^2} f\left(u_1, u_2\right)^\alpha \ln \frac{A_{\wt{f}}\left(u_1\right)}{A_f\left(u_1\right)}  \mu\left(du_1\right)  \mu\left(d u_2\right) & =\int_E\left[\ln \frac{A_{\wt{f}}(u_1)}{A_{f}(u_1)}\right]\left(\int_E f(u_1, u_2)^\alpha  \mu(du_2)\right)  \mu(du_1) \\
& =\int_E A_f(u_1) \ln \frac{A_{\wt{f}}(u_1)}{A_f(u_1)} d \mu(u_1) \\
& \leq \int_E A_f(u_1) \ln \left(1+\frac{B_f(u_1)}{A_f(u_1)}\right) \mu(du_1)\\
&\leq \int_E B_f(u_1) \mu(du_1) = \int_{E^2} f(u_1, u_2)^\alpha\mu(d u_1)\mu(d u_2).
\end{aligned}
$$
A similar calculation yields
$$
\int_{E^2} f^\alpha(u_1, u_2) \ln \frac{B_{\wt{f}}(u_2)}{B_f(u_2)}  \mu(d u_1)  \mu(d u_2) \leq \int_{E^2} f^\alpha((u_1,u_2)) \mu(d u_1)\mu(d u_2).
$$
Combining the above inequalities yields
\begin{align*}
    &\int_{E^2} f^\alpha((u_1,u_2))\left(1+\frac{f\left((u_1,u_2)\right)}{A_f^{1 / \alpha}\left(u_1\right) B_f^{1 / \alpha}\left(u_2\right)}\right)\mu(d u_1)\mu(d u_2) \\
    & \leq \int_{E^2} \wt{f}^\alpha((u_1,u_2))\left(1+\ln _{+} \frac{\wt{f}(u_1,u_2)}{A_{\wt{f}}^{1 / \alpha}(u_1) B_{\wt{f}}^{1 / \alpha}(u_2)}\right) \mu(d u_1)\mu(d u_2)
     +\frac{2}{\alpha} \int_{E^2} f^\alpha((u_1,u_2)) \mu(d u_1)\mu(d u_2) < \infty.
\end{align*}
For part (ii), we have by monotonicity that
$$L^\alpha \ln ^{k-1} L(f , \mu) \leq L^\alpha \ln ^{k-1} L(\widetilde{f} , \mu) \le \sum_{\pi \in \Theta_k}L^\alpha \ln ^{k-1} L( f_\pi \mathbf{1}_{D_\pi}, \mu) \leq k! L^\alpha \ln ^{k-1} L(f , \mu).$$
Hence, the conclusion follows.

Also, part (iii) follows by an argument similar to the proof of part (ii).
\end{proof}
The following example shows that the integrability condition \eqref{suff 2} in Theorem \ref{suff thm} may hold for one probability measure equivalent to $\mu$, but fail for another such equivalent probability measure.
\begin{exam}\label{exam thm 3.2}
 Let $E=(0,1)$ and $\mu=\lambda$. Define two probability measures equivalent to $\lambda$.  The first is $m_1$, the uniform law, so $\psi_1(x)=(d \lambda/ d m_1)(x)=1$, whereas the second is $m_2$, with density
$\frac{d m_2}{d \lambda}(x)=C^{-1} e^{-(\ln (1 / x))^2}$, $C=\int_0^1 e^{-(\ln (1 / t))^2} d t$ so that $\psi_2(x)=(d \lambda / d m_2)(x)=Ce^{(\ln (1 / x))^2}$.
Fix $\alpha>0$,  define
$$
f(x, y)=\left(\frac{1}{x L(x)^3} \cdot \frac{1}{y L(y)^3}\right)\quad \text{on }\left(0, e^{-e}\right)^2,
$$
and set $f \equiv 1$ elsewhere on $(0,1)^2$, where $L(x)=\ln (1 / x)$. For simplicity, we take $\alpha$ in condition \eqref{suff 2} to be $1$ throughout.
Then, consider
$$
I_i=L^\alpha \ln L\left(f \cdot \psi_i^{\otimes 2}, m_i\right) = \iint f(x, y)\left(1+\ln _{+}\left(f(x, y) \psi_i(x) \psi_i(y)\right)\right)  \lambda( d x)  \lambda(dy), \quad i=1,2.
$$
On $\left(0, e^{-e}\right)^2$, we have
\begin{equation}\label{eq:f(x,y)<= 1+L+L}
\ln f(x, y)= L(x)+L(y)-3 \ln L(x)-3 \ln L(y) \leq  L(x)+L(y),
\end{equation}
where $L(x),L(y)>0$.
Thus, on $\left(0, e^{-e}\right)^2$, we have
\begin{equation*}
f(x,y)\left(1+\ln _{+}\left(f(x,y)   \psi_1(x) \psi_1(y)\right)\right) \leq \frac{1}{x L(x)^3} \frac{1}{y L(y)^3}\left(1+ L(x)+L(y)\right) .
\end{equation*}
To see that the double integral of the  right-hand side above over $\left(0, e^{-e}\right)^2$ is finite, note that   $\int_0^{e^{-e}} \frac{d x}{x L(x)^3}=\int_1^{\infty} \frac{d t}{t^3}<\infty $ and $\int_0^{e^{-e}} \frac{  d x}{x L(x)^2}=\int_1^{\infty} \frac{d t}{t^2}<\infty$. Hence, one can conclude $I_1<\infty$.

For $i=2$, we have $\ln \left(f(x, y) \psi_2(x) \psi_2(y)\right)=\ln f(x, y)+2\ln C+L(x)^2+L(y)^2$. In view of \eqref{eq:f(x,y)<= 1+L+L}, one can choose  $\delta>0$  such that when 
$(x,y)\in (0,\delta)^2$, we have 
$$
\ln _{+}\left(f(x, y) \psi_2(x) \psi_2(y)\right) \geq \frac{1}{2}\left(L(x)^2+L(y)^2\right),
$$
which leads to 
$$
f(x,y)\left(1+\ln _{+}\left(f(x,y) \psi_2(x) \psi_2(y)\right)\right) \geq \frac{1}{2}\left(\frac{1}{x L(x)} \cdot \frac{1}{y L(y)^3}+\frac{1}{x L(x)^3} \cdot \frac{1}{y L(y)}\right).
$$
Consequently, 
$$I_2 \geq \frac{1}{4}\left(\int_0^\delta \frac{d x}{x L(x)}\right)\left(\int_0^\delta \frac{d y}{y L(y)^3}\right)+\frac{1}{4}\left(\int_0^\delta \frac{d x}{x L(x)^3}\right)\left(\int_0^\delta \frac{d y}{y L(y)}\right),$$
where $\int_0^\delta \frac{d x}{x L(x)}=\int_{\ln (1 / \delta)}^{\infty} \frac{d t}{t}=\infty$. Hence, $I_2=\infty$.
\end{exam}

The following example shows that the condition $L^\alpha \ln ^{k-1} L(f, \mu)<\infty$ is not sufficient for integrability, that is, $I_k^e(f)<\infty$ a.s..
\begin{exam}\label{not suff exam}
Under the setup of Example \ref{S1}, let $f = \mathbf{1}_A$, where $A=\bigcup_{i \in \mathbb{Z}_{+}} A_i$ and sets $A_i$, $i\in \mathbb{Z}_+$ are as defined in \eqref{Ai}. We know from Theorem \ref{suff thm} that failure of integrability implies that $L^\alpha \ln ^{k-1} L\left(f \cdot\left(\psi^{\otimes k}\right)^{1 / \alpha}, m\right) = \infty$ for any probability measure $m$ equivalent to $\lambda$ and $\psi = d\mu / d m \in (0,\infty)$ $m$-a.e.. However, note that 
$$L^\alpha \ln ^{k-1} L\left(f, \mu\right)= \int_{\mathbb{R}^k} f d\lambda^k = \lambda^k(A) < \infty.$$
\end{exam}

\section{Auxiliary results for Proposition \ref{extremal diff order}}


\begin{lem}\label{lem F.1}
Suppose $X_1, X_2, \ldots X_n$, $n \in \mathbb{Z}_+$, are i.i.d.\ nonnegative random variables and satisfy
$$
\mathbb{P}\left(X_1 \leq x\right) \sim c x^\alpha, \quad \text { as } x \downarrow 0,
$$
for some constants $c, \alpha>0$. Then
\begin{equation}\label{Fn}
    F_n(x): = \mathbb{P}\left(X_1 X_2 \cdots X_n\leq x\right) \sim \frac{\alpha^{n-1} c^n}{(n-1)!} x^{\alpha}(\ln(1/x))^{n-1} , \text{ as } x \downarrow 0.
\end{equation}
\end{lem}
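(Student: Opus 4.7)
The plan is to reduce, via a logarithmic change of variables, to a tail estimate for sums of i.i.d.\ random variables with one-sided exponential tail, and then to proceed by induction on $n$. Set $Y_i := -\ln X_i$, which is a.s.\ finite since the hypothesis forces $\mathbb{P}(X_1 = 0) = 0$ (and $\mathbb{P}(X_1 = \infty) = 0$). Substituting $t = \ln(1/x)$, the claim $F_n(x) \sim K_n\, x^\alpha (\ln(1/x))^{n-1}$ with $K_n := \alpha^{n-1} c^n/(n-1)!$ is equivalent to
$$
\mathbb{P}(S_n \geq t) \;\sim\; K_n \, t^{n-1} e^{-\alpha t} \quad \text{as } t \to \infty, \qquad S_n := Y_1 + \cdots + Y_n,
$$
under the translated hypothesis $\bar{G}(t) := \mathbb{P}(Y_1 \geq t) \sim c\, e^{-\alpha t}$ as $t \to \infty$. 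The base case $n = 1$ is immediate.

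For the inductive step, I would use the convolution representation
$$
\mathbb{P}(S_n \geq t) = \int_{\mathbb{R}} \bar{G}(t - u) \, d F^{(n-1)}(u),
$$
where $F^{(n-1)}$ denotes the CDF of $S_{n-1}$, and split the integral at $u = t - M$ for a large parameter $M > 0$. The contribution from $\{u > t - M\}$ is bounded by $\mathbb{P}(S_{n-1} \geq t - M)$, which by the induction hypothesis is $O(t^{n-2} e^{\alpha M} e^{-\alpha t}) = o(t^{n-1} e^{-\alpha t})$ for each fixed $M$. On $\{u \leq t - M\}$ one has $t - u \geq M$, hence $\bar{G}(t - u) = (c + \varepsilon_M(t-u))\,e^{-\alpha(t-u)}$ with $\sup_{s \geq M}|\varepsilon_M(s)| \to 0$ as $M \to \infty$, uniformly in $t$. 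This reduces the task to evaluating the one-sided exponential moment
$$
\int_{-\infty}^{t} e^{\alpha u} \, d F^{(n-1)}(u) \;\sim\; \frac{c^{n-1} \alpha^{n-1}}{(n-1)!} \, t^{n-1},
$$
which I would establish by Stieltjes integration by parts,
$$
\int_{-\infty}^{t} e^{\alpha u} \, d F^{(n-1)}(u) = -e^{\alpha t} \bar{F}^{(n-1)}(t) + \alpha \int_{-\infty}^{t} e^{\alpha u} \, \bar{F}^{(n-1)}(u)\, du,
$$
combined with the inductive asymptotic $\bar{F}^{(n-1)}(u) \sim K_{n-1}\, u^{n-2} e^{-\alpha u}$: the boundary term is $O(t^{n-2})$ and of lower order, while the integral contributes the dominant $\alpha K_{n-1}\, t^{n-1}/(n-1) = c^{n-1}\alpha^{n-1} t^{n-1}/(n-1)!$. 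The truncation from $\int_{-\infty}^{t}$ to $\int_{-\infty}^{t-M}$ is absorbed in the same $O(t^{n-2} e^{\alpha M})$ error. Multiplying by $c e^{-\alpha t}$ and then letting $M \to \infty$ after $t \to \infty$ yields $\mathbb{P}(S_n \geq t) \sim K_n\, t^{n-1} e^{-\alpha t}$.

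The main subtlety lies in the fact that $\mathbb{E}[e^{\alpha Y_1}] = \infty$, which precludes any direct moment-generating-function or dominated-convergence argument; the convolution is driven by the interaction between the exponential tail of $\bar{G}$ and the polynomial $\times$ exponential tail of $\bar{F}^{(n-1)}$, rather than by the bulk of either law. This is what forces the truncation at $u = t - M$, with the two limits $t \to \infty$ and $M \to \infty$ taken in the correct order. Making explicit the uniformity of the relation $\bar{G}(t-u)/e^{-\alpha(t-u)} \to c$ on the domain $\{u \leq t - M\}$ is the crucial technical point.
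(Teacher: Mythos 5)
Your proof is correct, but it is a genuinely different route from the paper's: the paper disposes of this lemma with a one-line citation to a known result on tails of products of i.i.d.\ random variables (Kasahara--Kosugi, eq.\ (1.6) of the cited reference), whereas you give a self-contained argument. Your reduction via $Y_i=-\ln X_i$ turns the statement into the classical fact that convolving $n$ i.i.d.\ laws with right tail $\sim c\,e^{-\alpha t}$ produces a tail $\sim \frac{\alpha^{n-1}c^{n}}{(n-1)!}\,t^{n-1}e^{-\alpha t}$, and your induction is sound: the split at $u=t-M$ correctly isolates the regime where $\bar G(t-u)/e^{-\alpha(t-u)}$ is uniformly close to $c$; the error from $\{u>t-M\}$ is $O(t^{n-2}e^{\alpha M}e^{-\alpha t})$ and hence negligible for each fixed $M$; the Stieltjes integration by parts gives $\int_{-\infty}^{t}e^{\alpha u}\,dF^{(n-1)}(u)\sim \alpha K_{n-1}t^{n-1}/(n-1)=\alpha^{n-1}c^{n-1}t^{n-1}/(n-1)!$ (the integral is finite near $-\infty$ since $\bar F^{(n-1)}\le 1$, and the boundary term is $O(t^{n-2})$); and the constants recombine to $K_n=\alpha^{n-1}c^n/(n-1)!$, matching the statement after substituting back $t=\ln(1/x)$. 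You also correctly identify the one real subtlety — $\mathbb{E}[e^{\alpha Y_1}]=\infty$ rules out any moment-generating-function shortcut, so the two limits $t\to\infty$ then $M\to\infty$ must be taken in that order. What your approach buys is independence from the external reference and transparency about where each factor of $\alpha$, $c$, and $1/(n-1)!$ comes from; what the citation buys is brevity and, in the cited source, a more general statement allowing slowly varying corrections to $cx^{\alpha}$. Two cosmetic points you may wish to make explicit if writing this up: the hypothesis only gives $\mathbb{P}(X_1\le x)\sim cx^{\alpha}$, but a one-line sandwich argument shows $\mathbb{P}(X_1<x)\sim cx^{\alpha}$ as well, so the distinction between $\{S_n\ge t\}$ and $\{S_n>t\}$ is immaterial; and the case $n=2$ of the induction (where the boundary term is $O(1)$ rather than $O(t^{n-2})$ with $n-2\ge 1$) is covered by the same bounds but deserves a word.
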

\begin{proof}
 It follows     from \cite[(1.6)]{kasahara2018product}.
\end{proof}
\begin{lem}\label{lem F.2}
Let $X_i, i \in \mathbb{Z}_+$, be a sequence of i.i.d.\ nonnegative random variables and satisfy
$$
\mathbb{P}\left(X_1 \leq x\right) \sim c x^\alpha, \quad \text { as } x \downarrow 0,
$$
for some constants $c, \alpha>0$.
Let $1 \leq p<q$.  There exists a constant $C>0$, such that for all $0<2y<x<1/2$, we have 
    \begin{equation}
        \mathcal{P}_{p, q}(x, y): = \mathbb{P}\pp{X_1 \ldots X_p \leq x, X_1\ldots X_q\leq y} \le C y\pp{\ln \pp{1/y}}^{p-1}\pp{\ln \pp{ x/y}}^{q-p}.
    \end{equation}
    \end{lem}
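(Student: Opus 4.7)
The plan is to reduce $\mathcal{P}_{p,q}(x,y)$ to a one-variable Stieltjes integral, then invoke Lemma \ref{lem F.1} pointwise. Setting $U := X_1\cdots X_p$ and $V := X_{p+1}\cdots X_q$, which are independent, the event $\{U \le x,\ X_1\cdots X_q \le y\}$ coincides with $\{U \le x,\ UV \le y\}$; conditioning on $U$ gives
$$
\mathcal{P}_{p,q}(x,y) \;=\; \int_0^x F_{q-p}(y/u)\, dF_p(u),
$$
where $F_k$ denotes the distribution function of $X_1\cdots X_k$. Lemma \ref{lem F.1}, combined with $F_k\le 1$, furnishes for each $k$ a pointwise bound $F_k(v) \le A_k\, v^{\alpha}(1+\ln(1/v))^{k-1}$ on $v\in(0,1)$.

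I would split the integral at $u=y$. On $(0,y]$, the crude estimate $F_{q-p}(y/u)\le 1$ yields $\int_0^y F_{q-p}(y/u)\, dF_p(u) \le F_p(y) \le A_p\,y^{\alpha}(\ln(1/y))^{p-1}$, which is dominated by the claimed right-hand side since $\ln(x/y)\ge\ln 2$. On $(y,x)$, where $y/u\in(y/x,1)$, applying the bound to $F_{q-p}$ gives
$$
\int_y^x F_{q-p}(y/u)\, dF_p(u)
\;\le\; C\, y^{\alpha} \int_y^x u^{-\alpha}\bigl(1+\ln(u/y)\bigr)^{q-p-1}\, dF_p(u).
$$
Integration by parts converts the right-hand side (up to $Cy^\alpha$) into boundary terms bounded by $A_p(\ln(1/y))^{p-1}\,[(\ln(x/y))^{q-p-1}+1]$, together with a bulk integral involving $F_p$. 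Inserting $F_p(u) \le A_p\, u^{\alpha}(1+\ln(1/u))^{p-1}$ into the bulk and changing variables $u = y e^{s}$, with $L := \ln(1/y)$ and $T := \ln(x/y) \in [\ln 2, L)$, one obtains
$$
J \;:=\; \int_0^T (1+L-s)^{p-1}(1+s)^{q-p-1}\, ds,
$$
which the bulk is bounded by up to a constant times $y^\alpha A_p$.

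The key technical point, and the main obstacle, is to bound $J$ so as to recover the decisive factor $(\ln(x/y))^{q-p}$ instead of the cruder $(\ln(1/y))^{q-p}$, which would be too weak for the application in Proposition \ref{extremal diff order}. Because $y<1/4$ forces $L\ge\ln 4$, one has $1+L-s \le 1+L \le CL$ uniformly on $s\in[0,T]$, so $(1+L-s)^{p-1}\le C^{p-1}L^{p-1}$ can be pulled outside, leaving $\int_0^T (1+s)^{q-p-1}\,ds \le C'\,T^{q-p}$. Hence $J \le C\,L^{p-1}T^{q-p}$. Gathering the boundary and bulk contributions, and absorbing lower-order terms via $T \ge \ln 2$, yields $\mathcal{P}_{p,q}(x,y) \le C\,y^{\alpha}(\ln(1/y))^{p-1}(\ln(x/y))^{q-p}$, which matches the stated bound under the specialisation $\alpha=1$ that is used for i.i.d.\ exponentials in Proposition \ref{extremal diff order}.
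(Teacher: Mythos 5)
Your argument is correct and follows essentially the same route as the paper's proof: conditioning on $X_1\cdots X_p$ to obtain the Stieltjes integral $\int_0^x F_{q-p}(y/u)\,dF_p(u)$, splitting the range (you cut at $u=y$, the paper at $u=2y$ — immaterial), invoking Lemma \ref{lem F.1}, integrating by parts, and reducing the bulk term to the same logarithmic integral, which both you and the paper estimate by discarding the factor involving $\ln(1/u)$ relative to $\ln(1/y)$. Your observation that the derivation naturally yields $y^{\alpha}$ rather than $y$ is consistent with the paper's own proof, which likewise produces $y^{\alpha}(\ln(1/y))^{p-1}(\ln(x/y))^{q-p}$; the exponent in the displayed statement is tied to the application with $\alpha=1$.
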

\begin{proof}
Throughout, $C\in (0,\infty)$ denotes a generic constant whose value can change from expression to expression.  First note that 
\begin{equation}
    \mathcal{P}_{p, q}(x, y)  =\mathbb{E} \left[\mathbb{P}\left(X_1 \cdots X_p<x, \left.X_{p+1} \cdots X_q<\frac{y}{X_1 \cdots X_p} \right\rvert\, X_1 \cdots, X_p\right)\right].
\end{equation}
This can be rewritten as
$$\begin{aligned} \mathcal{P}_{p, q}(x, y) &  =\int_{0<s \leq x} F_{q-p}\left(\frac{y}{s}\right) F_p(d s) \\
    & =  \int_{2 y<s \leqslant x} \quad F_{q-p}\left(\frac{y}{s}\right) F_p(d s) +  \int_{\substack{0<s \leqslant x\\ y \geq \frac{s}{2}}} \quad F_{q-p}\left(\frac{y}{s}\right) F_p(d s) =: \mathrm{I}_{p,q}(x,y) +  \Pi_{p,q}(x,y),
    \end{aligned}$$
where $F_p, F_{q-p}$ are as defined in \eqref{Fn}. 

Lemma \ref{lem F.1} implies that when $0<2y<x < \frac{1}{2}$,
\begin{equation}\label{IIpq}
 \Pi_{p, q}(x, y) = \int_{s \leq 2 y} F_p(d s)\leq C \left(y^\alpha(\ln( 1/y))^{p-1}\right),
\end{equation}
and
\begin{equation}\label{Ipq}
\begin{aligned}
    \mathrm{I}_{p,q}(x, y) &\leq C y^\alpha \int_{2 y<s \leqslant x}\frac{1}{s^\alpha} \pp{\ln \left(s/y\right)}^{q-p-1} F_p(d s)\\
    & =: C y^\alpha \int_{2 y<s \leqslant x} g(s) F_p(d s)   = C y^\alpha \left[ \left[g(s) F_p(s)\right]\Big|_{2 y}^x -\int_{2 y}^x F_p(s-)  g^\prime(s) ds\right],
\end{aligned}
\end{equation}
where $F_p(s-)$ denotes the left limit of $F_p$ at $s$.
Since $$g^\prime(s) = s^{-\alpha-1}\left[(q-p-1)(\ln (s / y))^{q-p-2}-\alpha(\ln (s / y))^{q-p-1}\right],$$
we have $|g^\prime(s)|\leq C s^{-\alpha-1} \pp{\ln \left(s/y\right)}^{q-p-1}$ for $s\in (2y, x]$.
Applying Lemma \ref{lem F.1} again, we obtain
\begin{equation}\label{last}
    \begin{aligned}
   \left|\int_{2 y}^x F_p(s-) d g(s)\right|  &\leq C \int_{2 y}^x \frac{F_p (s-)}{s^{\alpha + 1}} \pp{\ln \left(s/y\right)}^{q-p-1} d s \\
    &\leq C \int_{2 y}^x s^\alpha(\ln (1 / s))^{p-1} \frac{1}{s^{\alpha + 1}} \pp{\ln \left(s/y\right)}^{q-p-1} d s.
\end{aligned}
\end{equation}
By a change of variable $u=\frac{\ln (s / y)}{\ln (1 / y)}$, the last integral is \eqref{last} is equal to 
\begin{align*}
        \pp{\ln \left(1/y\right)}^{q-1} \int_{\frac{\ln 2}{\ln (1 / y)}}^{\frac{\ln (x / y)}{\ln (1 / y)}} u^{q-p-1}(1-u)^{p-1} d u &\leq C\pp{\ln \left(1/y\right)}^{q-1} \int_0^{\frac{\ln (x /y)}{\ln (1/y)}} u^{q-p-1} d u  \\&= C\pp{\ln \left(1/y\right)}^{p-1} \ln \left(x/y\right)^{q-p}.
\end{align*}
Also, by Lemma \ref{lem F.1} and the restriction $0<2y<x<1/2$, 
\begin{align}\label{F.4}
\left|  \left[g(s) F_p(s)\right] \Big|_{2 y}^x \right|   & \leq C [\ln \left(x/y\right)^{q- p-1} \ln \left(1/x\right)^{p-1}+\ln \left(1/y\right)^{p-1}]  \leq C [\pp{\ln \left(1/y\right)}^{p-1} \ln \left(x/y\right)^{q-p}].
\end{align}
Hence, combining \eqref{Ipq}-\eqref{F.4}, we have $$\mathrm{I}_{p, q}(x, y) \leq C y^\alpha(\ln (1 / y))^{p-1} \ln (x / y)^{q-p}, \quad 0<2y<x<1/2.$$ Together with \eqref{IIpq}, this completes the proof.

\end{proof}

\begin{lem}\label{lem F.3}
 Suppose random variables $X, Y \geq 0$ a.s.\ with $X$ independent of $Y$ and $\mathbb{E}[X]<\infty$. Assume   
$$
\mathbb{P}(Y \leq t) \sim   t L(t),\quad \text{ as } t \downarrow 0,
$$
for some  positive slowly varying function $L$ non-increasing in a neighborhood of $0$.
Then  for any $x >0$, we have $\frac{\mathbb{P}( Y\leq tx)}{\mathbb{P}(Y \leq t)} \longrightarrow x$ as $t \downarrow 0$ and
$$
\frac{\mathbb{P}( Y\leq tX)}{\mathbb{P}(Y \leq t)} \longrightarrow \mathbb{E}[X], \quad \text{ as }t \downarrow 0.
$$
\end{lem}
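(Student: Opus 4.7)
My plan is to handle the two claims separately, exploiting that the first claim is essentially the definition of slow variation for $L$ while the second reduces to a dominated convergence argument after conditioning on $X$.

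For the first claim, I would write
\[
\frac{\mathbb{P}(Y\le tx)}{\mathbb{P}(Y\le t)}
= \frac{tx\, L(tx)\,(1+o(1))}{t\,L(t)\,(1+o(1))}
= x\cdot\frac{L(tx)}{L(t)}\cdot(1+o(1)),
\]
and use that $L$ is slowly varying near $0$ (equivalently $u\mapsto L(1/u)$ is slowly varying at $\infty$), so that $L(tx)/L(t)\to 1$ as $t\downarrow 0$. This yields $\mathbb{P}(Y\le tx)/\mathbb{P}(Y\le t)\to x$.

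For the second claim, I would condition on $X$ and use the independence of $X$ and $Y$ to write, setting $F_Y(s):=\mathbb{P}(Y\le s)$,
\[
\frac{\mathbb{P}(Y\le tX)}{\mathbb{P}(Y\le t)} \;=\; \mathbb{E}\!\left[\frac{F_Y(tX)}{F_Y(t)}\right].
\]
The pointwise limit is supplied by the first claim: for any $X(\omega)=x\ge 0$, the integrand tends to $x$ as $t\downarrow 0$ (with the convention that the limit is $0$ if $x=0$, which is consistent with $F_Y(0)=0$, guaranteed by $tL(t)\to 0$). The task is then to produce an integrable dominating function so that DCT applies.

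The key step, and the one I expect to require the most care, is the domination for large $X$. For $X\le 1$, monotonicity of $F_Y$ gives $F_Y(tX)/F_Y(t)\le 1$ trivially. For $X>1$, I would first fix $\varepsilon\in(0,1)$ and choose $t_0>0$ small enough that $(1-\varepsilon)sL(s)\le F_Y(s)\le (1+\varepsilon)sL(s)$ for all $0<s\le t_0$; then use that $L$ is non-increasing on a neighborhood of $0$, so that for $X\ge 1$ and $t$ small enough one has $L(tX)\le L(t)$, yielding
\[
\frac{F_Y(tX)}{F_Y(t)} \;\le\; \frac{1+\varepsilon}{1-\varepsilon}\cdot X\cdot \frac{L(tX)}{L(t)} \;\le\; \frac{1+\varepsilon}{1-\varepsilon}\cdot X.
\]
Combining the two cases gives $F_Y(tX)/F_Y(t)\le C(1+X)$ for all $t$ sufficiently small, which is integrable since $\mathbb{E}[X]<\infty$. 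An application of dominated convergence then delivers $\mathbb{E}[F_Y(tX)/F_Y(t)]\to\mathbb{E}[X]$, completing the proof. The subtle point is precisely the use of the monotonicity hypothesis on $L$, without which one would need to invoke Potter's bounds and would only obtain $F_Y(tX)/F_Y(t)\le CX^{1+\delta}$ locally in $X$, together with a separate tail argument to handle the region $tX$ outside the Potter range.
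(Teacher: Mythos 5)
Your overall strategy coincides with the paper's: establish the pointwise limit $G_t(x):=F_Y(tx)/F_Y(t)\to x$ from slow variation, then apply dominated convergence with a dominating function of the form $C(1+X)$. The first claim and the case $X\le 1$ are fine. However, there is a genuine gap in your domination for $X>1$.

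Your bound
\[
\frac{F_Y(tX)}{F_Y(t)}\;\le\;\frac{1+\varepsilon}{1-\varepsilon}\cdot X\cdot\frac{L(tX)}{L(t)}
\]
uses the two-sided estimate $(1-\varepsilon)sL(s)\le F_Y(s)\le(1+\varepsilon)sL(s)$ at the point $s=tX$, which is only valid when $tX\le t_0$. For a dominated convergence argument you need a single bound valid for \emph{all} values of $X$ simultaneously on a fixed interval $t\in(0,\delta)$; your phrase ``for $X\ge 1$ and $t$ small enough'' makes the threshold on $t$ depend on $X$, which is exactly what DCT does not permit. The region $\{X>t_0/t\}$, where $tX$ exits the neighborhood in which both the asymptotic equivalence and the monotonicity of $L$ hold, is left uncovered. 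This is precisely where the paper does extra work: there one uses the trivial bound $F_Y(tX)\le 1$ together with $1/t\le X/\delta$ and $L(t)\ge L(\delta)$ (monotonicity again) to get $F_Y(tX)/F_Y(t)\le C\,X/(\delta L(\delta))$ on that region. With this one additional case your dominating function $C(1+X)$ is restored and the rest of your argument goes through unchanged. Note also that your closing remark is slightly off: the monotonicity of $L$ does not remove the need for a separate treatment of the region where $tX$ is not small; rather, it is what makes that separate treatment produce a \emph{linear} bound in $X$ instead of the $X^{1+\delta}$ one would get from Potter-type estimates.
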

\begin{proof}
For $x\ge 0$ and $t>0$, set
$
G_t(x)=\frac{\mathbb{P}(Y \leq x t)}{\mathbb{P}(Y \leq t)}.
$
The first conclusion $$G_t(x) \rightarrow x \quad  \text{ as } t \downarrow 0$$
for any $x\ge 0$
follows from the slow variation of $L$.


   We derive the second conclusion via the dominated convergence theorem. 
Fix $\delta>0$ sufficiently small so that, for some constant $C_1\in(0,\infty)$,
\[
G_t(x)\;\le\;
\begin{cases}
1, & 0\le x\le 1,\\[4pt]
C_1\,\dfrac{\mathbb{P}(Y\le tx)}{t\,L(t)}, & x>1,
\end{cases}
\qquad \text{for all } t\in(0,\delta).
\]
Moreover, for all $t\in(0,\delta)$ and $x>1$,  applying the non-increasing assumption on $L$ (adjusting $\delta>0$ smaller if needed), we have
\[
\frac{\mathbb{P}(Y\le tx)}{t\,L(t)}
\;\le\;
\begin{cases}
x\,\dfrac{L(tx)}{L(t)}\le x, & \text{ if } 0< tx\le\delta,\\ 
\dfrac{1}{t\,L(t)}\le  \dfrac{x}{\delta L(\delta)}, & \text{ if } tx\ge \delta.
\end{cases} ~
\] 
Consequently, combining the above bounds yields
\[
G_t(x)\le C_2\,(x+1)
\]
for some constant $C_2\in(0,\infty)$. 
Since $\mathbb{E}(X+1)<\infty$, the dominated convergence theorem applies.


\end{proof}
\section{Proof of Lemma \ref{thm: tail of tensor} }\label{app lem 6.3}
The proof here overall follows arguments similar to those in the proof of \cite[Theorem 3.1]{rosinski1999product}. We include some details for reader's convenience.
Without loss of generality, suppose $\mu$ is a probability measure, and $\psi\equiv 1$. We introduce the following notation: for a sequence $(a_i)_{i \in \mathbb{Z}_+}$, write $a_{[1:m]} = (a_1, \ldots, a_m)$, and $a_{\boldsymbol{s}+c} = (a_{s_1+c},a_{s_1+c},\ldots,a_{s_m+c})$, where $m \in \mathbb{Z}_+$, $c \in \mathbb{Z}$ and $\boldsymbol{s}= (s_1, \ldots, s_m)\in \bb{Z}_+^m$. Also for convenience, set $\Gamma_0 = 0$.

By the symmetry of $f$ and $g$, one can write
\begin{align*}
S_{p,q}^{(r)}(f \otimes g) &=\bigvee_{\mathbf{k} \in \mathcal{D}_{r,<}} \bigvee_{\mathbf{s} \in \mathcal{D}_{p+q-2 r}}  h_r\left(T_{\mathbf{k}}, T_{\mathbf{s}}\right)\left[\Gamma_{\mathbf{k}}\right]^{-2 / \alpha}\left[\Gamma_{\mathbf{s}}\right]^{-1 / \alpha}, 
\end{align*}
for $r=1,\ldots,p\wedge q$. If $p + q -2 r = 0$, the conclusion follows from Theorem \ref{lambda_{k-1}}. For $p+q-2r \geq 1$, we identify the main term of $S_{p,q}^{(r)}(f \otimes g)$ as 
\begin{equation*}
    M_r: = \Gamma_1^{-2/\alpha}\Gamma_2^{-2/\alpha}\ldots\Gamma_r^{-2/\alpha} \bigvee_{\mathbf{s} \in \mathcal{D}_{p+q-2r}} h_r(T_{[1:r]}, T_{r+\mathbf{s}}) [\Gamma_{r+\mathbf{s}}]^{-1/\alpha},
\end{equation*}
and will show that 
\begin{equation}\label{main}
\lim _{\lambda \rightarrow \infty} \lambda^{\alpha / 2}(\ln \lambda)^{-(r-1)} \mathbb{P}\left( M_r >\lambda\right)  = k_{r, \alpha} C_r(f, g).
\end{equation}
Then we look at the  remainder term:
\begin{align*}
    R_r: = \bigvee_{\substack{{\mathbf{s} \in \mathcal{D}_{p+q-2 r}}\\ \mathbf{k} \in \mathcal{D}_{r,<}\backslash\{(1,2,\ldots, r)\}  }} [\Gamma_{\mathbf{k}}]^{-2 / \alpha}  h_r\left(T_{\mathbf{k}}, T_{\mathbf{s}}\right)[\Gamma_{\mathbf{s}}]^{-1/\alpha}.
\end{align*}
In view of Lemma \ref{lem 1.1}, once we show that 
\begin{equation}\label{reminder}
\lim _{\lambda \rightarrow \infty} \lambda^{\alpha / 2}(\ln \lambda)^{-(r-1)} \mathbb{P}\left(R_r>\lambda\right)=0,
\end{equation}
the conclusion follows. 

We now outline the proof of the relation (\ref{main}). Let $U_i$, $1\leq i \leq r$,  be i.i.d.\ uniformly distributed random variables in $[0,1]$. Using similar arguments as those used to establish \cite[Eq.(3.7)]{rosinski1999product}, one can derive for $\lambda>0$ that
\begin{align}\label{Mr}
     \mathbb{P}\left(M_r>\lambda\right)=\int_0^{\infty} e^{-x} \frac{x^r}{r !} P_x(\lambda) d x,
 \end{align}
 where
$ 
  P_x(\lambda)=\mathbb{P}\left( Y_x \prod_{i=1}^r U_i^{-1} >\lambda^{\alpha / 2} x^r\right), 
$
 and
$$
\begin{gathered}
Y_x:= \left[\bigvee_{\mathbf{s} \in \mathcal{D}_{p+q-2 r}} h_r\left(T_{[1:r]}, T_{r+\mathbf{s}}\right)\left[x+\Gamma_{\mathbf{s} -1 }\right]^{-1 / \alpha} \right]^{\alpha/2},
\end{gathered}
$$
for arbitrary $x>0$ but fixed. 
 Splitting the supremum in $Y_x$ according to whether $s_1 \geq 2$, or $s_1=1$, we get
\begin{align}
    Y_x = Y_x^\prime \vee \pp{x^{-1/2} Y_x^{\prime \prime}}\le  Y_x^\prime +  x^{-1/2} Y_x^{\prime \prime},
\end{align}
where 
\begin{equation*}
Y_x^\prime : = \left[\bigvee_{\mathbf{s} \in \mathcal{D}_{p+q-2 r}} h_r\left(T_{[1: r]}, T_{r+\boldsymbol{s}}\right) \left[x+\Gamma_{\boldsymbol{s}}\right]^{-1 / \alpha} 
 \right]^{\alpha/2},
\end{equation*}
and
\begin{equation*}
Y_x^{\prime \prime}:=\left[\bigvee_{\boldsymbol{s} \in \mathcal{D}_{p+q-2 r-1}} h_r\left(T_{[1: r+1]}, T_{r+1+\boldsymbol{s}}\right) \left[x+\Gamma_{\boldsymbol{s}}\right]^{-1 / \alpha} \right]^{\alpha / 2} .
\end{equation*}
We claim that there exists $M>0$ such that
\begin{align}\label{leq M}
\mathbb{E}[Y_x^\prime] \leq M \quad \text{  and   }\quad \mathbb{E}[Y_x^{\prime\prime}] \leq M,  \qquad  \text{for all }x>0.
\end{align}
If this is the case, then $\mathbb{E}[Y_x] < \infty$,  and the relation
\begin{equation}\label{eq 33}
\lim _{\lambda \rightarrow \infty} \lambda^{\alpha / 2}(\ln \lambda)^{-(r-1)} P_x(\lambda)=\frac{\alpha^{r-1} x^{-r}}{2^{r-1}(r-1) !} \mathbb{E} Y_x 
\end{equation}
follows from \cite[Lemma 3.2]{rosinski1999product}. Moreover, the limit  (\ref{eq 33}) leads to the following:
\begin{equation*}
\begin{aligned}
& \lim _{\lambda \rightarrow \infty} \lambda^{\alpha / 2}(\ln \lambda)^{-(r-1)} \int_0^{\infty} e^{-x} \frac{x^r}{r !} P_x(\lambda) d x  
  =\int_0^{\infty} e^{-x} \frac{x^r}{r !} \left(\frac{\alpha}{2}\right)^{r-1} \frac{x^{-r}}{(r-1) !} \mathbb{E} Y_x d x\\
 =  &\frac{1}{r!(r-1)!} \left(\frac{\alpha}{2} \right)^{r-1} \mathbb{E}\left(\bigvee_{\mathbf{s} \in \mathcal{D}_{p+q-2 r}} h_r \left(T_{[1:r]}, T_{r+\mathbf{s}}\right)\left[\Gamma_{\mathbf{s}}\right]^{-1 / \alpha} \right)^{\alpha/2}\\
 = &\frac{1}{r!(r-1)!} \left(\frac{\alpha}{2} \right)^{r-1} \mathbb{E}\left| I^e_{p+q-2r}\left( h_r\left(T_{[1:r]}, \cdot\right)\right) \right|^{\alpha/2}.
\end{aligned}
\end{equation*}
Here, one can verify the interchange of integral and limit in the first step in a similar manner to P.15 of \cite[]{rosinski1999product} via the dominated convergence theorem with the help of \eqref{leq M}. Then \eqref{main} follows combining the above with \eqref{Mr}.
Now we  verify (\ref{leq M}). Introduce the following index set
\begin{align*}
& \mathcal{D}_{k}^{j,N} = \pc{  \boldsymbol{s} \in \mathcal{D}_{k} \mid \sum_{i=1}^k \mathbf{1}_{\{s_i< N\}}=j,\,\sum_{i=1}^k \mathbf{1}_{\{s_i\geq N\}}=k-j }, \,k\ge 1, \,  0 \leq j \leq k,\text{ and } N \in \mathbb{Z}_+.
\end{align*}
 We only present the proof of the first relation in \eqref{leq M} and the second follows similar arguments.  It suffices to show for every $J=0,1, \ldots, p+q-2 r$,
\begin{align}\label{3.21}
  \mathbb{E}  \left[\bigvee_{\mathbf{s} \in \mathcal{D}_{p+q-2 r-J}} h_r\left(T_{[1:r+J]}, T_{\mathbf{s}+r+J}\right) \left[\Gamma_{[1:J]}\right]^{-1/\alpha} \left[\Gamma_{\boldsymbol{s} +J}\right]^{-1/\alpha}\right]^{\alpha/2}  < \infty,
\end{align}
where $\Gamma_{[1:0]} = 1$. Note that  $Y_x^\prime$ in \eqref{leq M} corresponds to $J=0$ above, where   $x$ is eliminated by monotonicity. When $J=p+q-2 r$, relation (\ref{3.21}) holds trivially since the supremum above is understood as $h_r\left(T_{[1: p+q-r]}\right)\left[\Gamma_{[1: p+q-2r]}\right]^{-1 / \alpha}$. We establish (\ref{3.21}) by downward induction in $J$.  Assume that (\ref{3.21}) holds for all $J>J_0$, for some $J_0\in \{0,\ldots, p+q-2 r-1\}$. We now prove it for $J=J_0$.  Observe that for any $N \geq 1$ fixed,
\begin{align}
    \mathcal{D}_{p+q-2r-J_0} = \bigcup_{j = 0}^{p+q-2r-J_0} \mathcal{D}_{p+q-2r-J_0}^{j,N}.
\end{align}
We claim that  for $1 \leq j \leq p+q-2r-J_0$ and $J = J_0$, if  in (\ref{3.21}) the supremum is restricted to $\boldsymbol{s} \in D_{p+q-2r-J_0}^{j,N}$, the expectation is finite. Indeed, in this case,   the restriction to $D_{p+q-2r-J_0}^{j,N}$ consists of a finite number of terms for each of which we can apply the induction hypothesis  by exploring the monotonicity of $\Gamma_j^{-1/\alpha}$ in $j$.
Hence, it is enough to prove  (\ref{3.21}) when the supremum is restricted to $\boldsymbol{s} \in D_{p+q-2r-J_0}^{0,N}$  for some large enough $N\geq 1$. This can be verified through a similiar argument as on P.15-18 in \cite[]{rosinski1999product}, which now relies on  Proposition \ref{prop6.7} and the assumptions that $f \in \mathcal{L}_{p,+}^\alpha(\mu)$ and $g \in \mathcal{L}_{q,+}^\alpha(\mu)$. 

Next,  we establish (\ref{reminder})  using the {decomposition} $R_r=\bigvee_{r_0=0}^{r-1} R_{r, r_0}$,  where for  $r_0\in\{0,1,\ldots,r-1\}$,
\begin{align*}
 R_{r, r_0}:= &
 \left[\Gamma_{[1:r_0]}\right]^{-2 / \alpha} \bigvee_{r_0 +2\leq i_1 < \ldots < i_{r-r_0}}\left[\Gamma_{\boldsymbol{i}_{[1:r-r_0]}}\right]^{-2/\alpha}  \times  \\
& \bigvee_{
\substack{\boldsymbol{j} \in \mathcal{D}_{p+q-2 r},\\ j_v+r_0 \neq i_w \text{ for }\\
1\leq v \leq  p+q-2 r, 1\leq w\leq r-r_0}
} 
\left[\Gamma_{\boldsymbol{j}_{[1:p+q-2r]}+r_0} \right]^{-1/\alpha}     h_r\left(T_{[1:r_0]}, T_{\boldsymbol{i}_{[1:r-r_0]}}, T_{\boldsymbol{j}_{[1:p+q-2r]}+r_0}\right). 
\end{align*}
In view of Lemma \ref{lem 1.1}, the relation (\ref{reminder}) will follow once we check that for each $r_0\in\{0,1,\ldots,r-1\}$,
\begin{align*}
   \lim _{\lambda \rightarrow \infty} \lambda^{\alpha / 2}(\ln \lambda)^{-(r-1)} \mathbb{P}\left(R_{r, r_0}>\lambda\right)=0. 
\end{align*}
To prove the relation above, without loss of generality, we assume $\alpha\in (0,1)$; otherwise, it can be reduced to this case via a suitable power transformation applied to $R_{r,r_0}$. 

Next, an upper bound of $R_{r, r_0}$ is
\begin{equation}\label{eq:R_r r_0 2}
\begin{aligned}
 R_{r, r_0}^{(2)}= & \left[\Gamma_{[1:r_0]}\right]^{-2 / \alpha} \sum_{r_0 +2\leq i_1 < \ldots < i_{r-r_0}}\left[\Gamma_{\boldsymbol{i}_{[1:r-r_0]}}\right]^{-2/\alpha} \times  \\
&  \sum_{\substack{\boldsymbol{j} \in \mathcal{D}_{p+q-2 r}, j_v+r_0 \neq i_w \\
1\leq v\leq  p+q-2 r, 1\leq w\leq r-r_0}} \left[\Gamma_{\boldsymbol{j}_{[1:p+q-2r]}+r_0}  \right]^{-1/\alpha} h_r\left(T_{[1:r_0]}, T_{\boldsymbol{i}_{[1:r-r_0]}}, T_{\boldsymbol{j}_{[1:p+q-2r]}+r_0}\right). 
\end{aligned}
\end{equation}
Then $\mathbb{P}\left(R_{r, r_0}>\lambda\right) 
    \leq  \mathbb{P}\left(R_{r, r_0}^{(2)}>\lambda \right)$ for all $\lambda>0$. Hence, the relation (\ref{reminder}) follows once we establish that
\begin{align}\label{3.32}
 \lim _{\lambda \rightarrow \infty} \lambda^{\alpha / 2}(\ln \lambda)^{-(r-1)} \mathbb{P}\left(R_{r, r_0}^{(2)}>\lambda\right)=0,   
\end{align}
for every $r_0\in \{0, \ldots, r-1\}$. To show (\ref{3.32}), we start with a version of $R_{r, r_0}^{(2)}$ that truncates small $i$ and $j$ indices. For an $M \geq 1$ and $r_0\in \{0, \ldots, r-1\}$, define
\begin{align}
 R_{r, r_0}^{(2)}(M)=&\left[\Gamma_{[1:r_0]}\right]^{-2 / \alpha} \sum_{M\leq i_1 < \ldots < i_{r-r_0}}[\Gamma_{\boldsymbol{i}_{[1:r-r_0] }}]^{-2/\alpha} \sum_{\substack{\boldsymbol{j} \in \mathcal{D}_{p+q-2 r},\,j_v \geq M,\, j_v+r_0 \neq i_w \notag \\
1\leq v\leq  p+q-2 r, 1\leq w\leq r-r_0}}   \\
& \left[\Gamma_{\boldsymbol{j}_{[1:p+q-2r]}+r_0}  \right]^{-1/\alpha}  h_r\left(T_{[1:r_0]}, T_{\boldsymbol{i}_{[1:r-r_0]}}, T_{\boldsymbol{j}_{[1:p+q-2r]}+r_0}\right)\notag\\
 \leq  & \left[\Gamma_{[1:r_0]}\right]^{-2 / \alpha} \sum_{M\leq i_1 < \ldots < i_{r-r_0}}[\Gamma_{\boldsymbol{i}_{[1:r-r_0]-r_0}}^{(1)}]^{-2/\alpha} \sum_{\substack{\boldsymbol{j} \in \mathcal{D}_{p+q-2 r},\,j_v \geq M,\, j_v+r_0 \neq i_w \\
1\leq v\leq  p+q-2 r, 1\leq w\leq r-r_0}}  \label{M2} \\
& \left[\Gamma^{(1)}_{\boldsymbol{j}_{[1:p+q-2r]}}  \right]^{-1/\alpha}  h_r\left(T_{[1:r_0]}, T_{\boldsymbol{i}_{[1:r-r_0]}}, T_{\boldsymbol{j}_{[1:p+q-2r]}+r_0}\right)\notag \\
=&:   \left[\Gamma_{[1:r_0]}\right]^{-2 / \alpha} \times R_{r, r_0}^{(3)}(M), \notag
\end{align}
 where $\Gamma^{(1)}_i = \Gamma_{i+r_0} - \Gamma_{r_0}$ for any $i \in \mathbb{Z}_+$ is independent of $\Gamma_{j}$, $1\leq j \leq r_0$.
 We claim that for all $M$ large enough,
\begin{align}\label{3.33}
    \lim _{\lambda \rightarrow \infty} \lambda^{\alpha / 2}(\ln \lambda)^{-(r-1)} \mathbb{P}\left(R_{r, r_0}^{(2)}(M)>\lambda\right)=0,
\end{align}
 for every $r_0\in \{0, \ldots, r-1\}$. Indeed, by \cite[Lemma 3.3]{rosinski1999product}, it suffices to show for all $M$ sufficiently large, 
\begin{align}\label{3.36}
 \mathbb{E}\left|R_{r, r_0}^{(3)}(M)\right|^{\alpha  / 2}<\infty,   
\end{align}
 for every $r_0\in \{0, \ldots, r-1\}$.
 To verify \eqref{3.36}, 
 recall $$h_r\left(x_1, \ldots, x_{p+q-r}\right)= f \left(x_1, \ldots, x_r,  \ldots, x_p\right) g\left(x_1, \ldots, x_r, x_{p+1}, \ldots, x_{p+q-r}\right), \ x_1,\ldots, x_{p+q-r}\in E.
 $$
By first properly enlarging the summation domain of the $i$, $j$ indices in \eqref{M2}, and then 
applying the Cauchy–Schwarz inequality, we have
\begin{align*}
&  \mathbb{E}\left|R_{r, r_0}^{(3)}(M)\right|^{\alpha / 2} \leq\mathbb{E}\Bigg[  \Bigg(  \sum_{\substack{\boldsymbol{i} \in \mathcal{D}_{r-r_0} \\
i_w \geq M - r_0, 1\leq w\leq r-r_0}}\left[\Gamma^{(1)}_{\boldsymbol{i}_{[1:r-r_0]}}\right]^{-1/\alpha}\sum_{\substack{\boldsymbol{j} \in \mathcal{D}_{p-r} \\ j_v \geq M,\, j_v \neq i_w\\1\leq  v\leq  p-r, 1\leq w \leq r-r_0}}  \left[\Gamma^{(1)}_{\boldsymbol{j}_{[1:p-r]}}  \right]^{-1/\alpha} f\big( T_{[1:r_0]}, \\
& \quad  T_{\boldsymbol{i}_{[1:r-r_0]}}, T_{\boldsymbol{j}_{[1:p-r]}+r_0}\big)  \Bigg)\times \Bigg(\sum_{\substack{ \boldsymbol{i}\in \mathcal{D}_{r-r_0} \\
i_w \geq M - r_0, 1\leq w\leq  r-r_0  }}  \left[\Gamma^{(1)}_{\boldsymbol{i}_{[1:r-r_0]}}\right]^{-1/\alpha} \sum_{\substack{\boldsymbol{j} \in \mathcal{D}_{q-r}, \\ j_v \geq M,j_v \neq i_w  \\1\leq v\leq  q-r, 1 \leq w \leq r-r_0}}\left[\Gamma^{(1)}_{\boldsymbol{j}_{[1:p-r]}}  \right]^{-1/\alpha}  \\
&  g\big(T_{[1:r_0]},   T_{\boldsymbol{i}_{[1:r-r_0]}}, T_{\boldsymbol{j}_{[1:q-r]}+r_0}\big) \Bigg) \Bigg]^{\alpha/2}\\
& \leq \Bigg\{ \mathbb{E}\Bigg[\sum_{\substack{\boldsymbol{i} \in \mathcal{D}_{r-r_0} \\
i_w \geq M - r_0, 1\leq w\leq r-r_0}}\left[\Gamma^{(1)}_{\boldsymbol{i}_{[1:r-r_0]}}\right]^{-1/\alpha}\sum_{\substack{\boldsymbol{j} \in \mathcal{D}_{p-r} \\ j_v \geq M,\, j_v \neq i_w\\1\leq  v\leq  p-r, 1\leq w \leq r-r_0}}  \left[\Gamma^{(1)}_{\boldsymbol{j}_{[1:p-r]}}  \right]^{-1/\alpha} f\big(T_{[1:r_0]},  T_{\boldsymbol{i}_{[1:r-r_0]}}, \\
& T_{\boldsymbol{j}_{[1:p-r]}+r_0} \big) \Bigg]^{\alpha} \Bigg\}^{1/2}\Bigg\{ \mathbb{E}\Bigg[\sum_{\substack{ \boldsymbol{i}\in \mathcal{D}_{r-r_0} \\
i_w \geq M - r_0, 1\leq w\leq  r-r_0  }}  \left[\Gamma^{(1)}_{\boldsymbol{i}_{[1:r-r_0]}}\right]^{-1/\alpha} \sum_{\substack{\boldsymbol{j} \in \mathcal{D}_{q-r}, \\ j_v \geq M,j_v \neq i_w  \\1\leq v\leq  q-r, 1 \leq w \leq r-r_0}}\left[\Gamma^{(1)}_{\boldsymbol{j}_{[1:p-r]}}  \right]^{-1/\alpha} \\
& g\big(T_{[1:r_0]}, T_{\boldsymbol{i}_{[1:r-r_0]}}, T_{\boldsymbol{j}_{[1:q-r]}+r_0}\big) \Bigg]^{\alpha} \Bigg\}^{1/2},
\end{align*}
where both expectations in the last expression are finite for $M$ large enough due to Corollary \ref{at6.3} above and the assumptions that $f \in \mathcal{L}_{p,+}^\alpha(\mu), g \in \mathcal{L}_{q,+}^\alpha(\mu)$. 


The next step is consider $R_{r, r_0}^{(4)}(M)$, a version of $R_{r, r_0}^{(2)}$ that truncates only small $j$ indices  (thus ``less truncated'' compared to $R_{r, r_0}^{(2)}(M)$). Specifically,   
 $R_{r, r_0}^{(4)}(M)$ is defined by  modifying the first summation in $R_{r, r_0}^{(2)}(M)$, replacing the range $M\leq i_1 < \ldots < i_{r-r_0}$ with $r_0 +2\leq i_1 < \ldots < i_{r-r_0}$ for $M\geq 1$.
 We claim that for all $M$ large enough, 
\begin{align}\label{3.38}
\lim _{\lambda \rightarrow \infty} \lambda^{\alpha / 2}(\ln \lambda)^{-(r-1)} \mathbb{P}\left(R_{r, r_0}^{(4)}(M)>\lambda\right)=0,
\end{align}
for every $r_0\in \{0, \ldots, r-1\}$. Indeed,
observe that for all $M \geq r_0+2$, 
\begin{equation}\label{decomp}
R_{r, r_0}^{(4)}(M)=R_{r, r_0}^{(2)}(M)+\sum_{n=r_0+2}^{M-1} R_{r, r_0}(M, n),  
\end{equation}
where for $n\in \{r_0+2,\ldots, M\}$, 
\begin{align*}
R_{r, r_0}(M, n)= & \left[\Gamma_{[1:r_0]}\right]^{-2/\alpha}  \Gamma_n^{-2 / \alpha} \sum_{n+1\leq i_2 < \ldots < i_{r-r_0}} [\Gamma_{\boldsymbol{i}_{[2:r-r_0]}}]^{-2 / \alpha} \times \sum_{\substack{\boldsymbol{j} \in \mathcal{D}_{p+q-2 r}, j_v \geq M, j_v+r_0 \neq i_w \\
1\leq v\leq  p+q-2 r, 1\leq w\leq r-r_0}}\\
& \left[\Gamma_{\boldsymbol{j}_{[1: p+q-2 r] +r_0}}\right]^{-1 /\alpha }  h_r\left(T_{[1:r_0]}, T_n, T_{\boldsymbol{i}[2:r-r_0]},T_{\boldsymbol{j}[1:p+q-2r]+r_0} \right).
\end{align*}
In light of (\ref{decomp}), since (\ref{3.33}) holds for all $M$ large enough,  the relation (\ref{3.38}) will follow once we establish that for all fixed $M$  large enough,
\begin{align}\label{3.39}
\lim _{\lambda \rightarrow \infty} \lambda^{\alpha / 2}(\ln \lambda)^{-(r-1)} \mathbb{P}\left(R_{r, r_0}(M, n)>\lambda\right)=0,
\end{align}
for every $n\in \{ r_0+2, \ldots, M-1\}$.
Applying arguments similar to  those in the derivation of  \eqref{3.36}, which involves \cite[Lemma 3.4]{rosinski1999product}, applying the Cauchy-Schwartz inequality and Proposition \ref{prop6.7},
one can show  for all $M$ large enough  that
\begin{align}\label{3.40}
\lim _{\lambda \rightarrow \infty} \lambda^{\alpha / 2}(\ln \lambda)^{-(r-1)} \mathbb{P}\left(R_{r, r_0}^{(5)}(M)>\lambda\right)=0, \quad r_0 \in \{ 0,\ldots, r-1\},
\end{align}
where
\begin{align*}
R_{r, r_0}^{(5)}(M)=& [\Gamma_{[1:r_0]}]^{-2/\alpha}   [\Gamma_{[r_0+2:r+1]}]^{-2/\alpha} \times
\\ & \sum_{\substack{\boldsymbol{j} \in \mathcal{D}_{p+q-2 r} \\
j_v \geq M, 1 \leq v \leq p+q-2 r}}     \left[\Gamma_{\boldsymbol{j}_{[1: p+q-2 r]+r_0}}\right]^{-1 /\alpha }  h_r\left(T_{[1:r_0]}, T_{[r_0+2:r+1]}, T_{\boldsymbol{j}[1:p+q-2r]+r_0}\right).
\end{align*}
Then combining (\ref{3.40}) with (\ref{3.33}) yields (\ref{3.39}) via a downward induction similar to the one used to derive \eqref{3.21} above. We have thus concluded \eqref{3.38}.

Fix an $M$ for which (\ref{3.38}) holds. Observe that   (\ref{3.32}) follows  from combining \eqref{3.38} and  the following relation: for every   $r_0\in\{0,1,\ldots,r-1\}$ and every $n\in\{ 1 , \ldots, M-1\}$,
\begin{align}\label{3.41}
    \lim _{\lambda \rightarrow \infty} \lambda^{\alpha / 2}(\ln \lambda)^{-(r-1)} \mathbb{P}\left(R_{r, r_0}^{(6)}(n)>\lambda\right)=0,
\end{align}
where  
\begin{align*}
 R_{r, r_0}^{(6)}(n)=&\left[\Gamma_{[1:r_0]}\right]^{-2/\alpha} \left(\Gamma_n\right)^{-1 / \alpha} \sum_{\substack{r_0+2 \leq i_1<\ldots<i_{r-r_0} \\
i_w \neq n,\,1\leq w\leq r-r_0}} \left[\Gamma_{\boldsymbol{i}_{\left[1: r-r_0\right]}}\right]^{-2 /\alpha}  \sum_{\substack{\boldsymbol{j} \in \mathcal{D}_{p+q-2 r-1}, j_v>n, j_v+r_0 \neq i_w \\
1\leq v\leq p+q-2 r-1, \,1\leq w\leq r-r_0}} \\
& \left[\Gamma_{\boldsymbol{j}_{[1: p+q-2 r-1]+r_0}}\right]^{-1 /(\alpha / l)} h_r\left(T_{[1:r_0]}, T_{\boldsymbol{i}[1:r-r_0]},  T_{n+r_0}, T_{\boldsymbol{j}[1:p+q-2r-1]+r_0}\right).
\end{align*}
Now we prove \eqref{3.41}. With the help of (\ref{3.38}), we repeat the truncation and downward induction argument used earlier,  so that   (\ref{3.41}) follows from  
\begin{align}\label{3.42}
\lim _{\lambda \rightarrow \infty} \lambda^{\alpha / 2}(\ln \lambda)^{-(r-1)} \mathbb{P}\left(R_{r, r_0}^{(7)}>\lambda\right)=0,
\end{align}
where  
$$
\begin{aligned}
& R_{r, r_0}^{(7)}= \left[\Gamma_{[1:r_0]}\right]^{-2/\alpha}  \left[\Gamma_{[r_0+2:r+1]}\right]^{-2/\alpha}  \left[\Gamma_{[1+r_0+r: p+q-r+r_0]}\right]^{-1 /\alpha} f\left(T_{[1:p]}\right)g\left(T_{[1:r]}, T_{[p+1:p+q-r]}\right).
\end{aligned}
$$
Following similar arguments as in the derivation of \cite[Lemma 3.4]{rosinski1999product}, establishing (\ref{3.42}) reduces to proving that  $ \mathbb{E} \left|  f\left(T_{[1:p]}\right)
g\left(T_{[1:r]}, T_{[p+1:p+q-r]}\right)\right|^{\alpha / 2}<\infty$. This follows from $f \in \mathcal{L}_{p,+}^\alpha(\mu)$ and $g \in \mathcal{L}_{q,+}^\alpha(\mu)$ and Hölder's inequality. This proves (\ref{3.41}), and consequently (\ref{3.32}).  The proof is now complete.

\end{document}